\numberwithin{itemcounter}{subsection}
\theoremstyle{plain}
\newtheorem*{thm}{Theorem}
\newtheorem*{thmA}{Theorem A}
\newtheorem*{thmB}{Theorem B}
\newtheorem*{thmC}{Theorem C}
\newtheorem*{Conj}{Conjecture}
\newtheorem{theorem}{Theorem}[section]
\newtheorem{lemma}[theorem]{Lemma}
\newtheorem{lemma-definition}[theorem]{Lemma-Definition}
\newtheorem{definition-lemma}[theorem]{Definition-Lemma}
\newtheorem{proposition}[theorem]{Proposition}
\newtheorem{corollary}[theorem]{Corollary}
\theoremstyle{definition}
\newtheorem{definition}[theorem]{Definition}
\theoremstyle{remark}
\newtheorem{remark}[theorem]{Remark}
\newtheorem{example}[theorem]{Example}
\numberwithin{equation}{section}
\def\bbA{\mathbb{A}}
\def\bbC{\mathbb{C}}
\def\bbG{\mathbb{G}}
\def\bbN{\mathbb{N}}
\def\bbQ{\mathbb{Q}}
\def\bbX{\mathbb{X}}
\def\bbY{\mathbb{Y}}
\def\bbZ{\mathbb{Z}}
\def\frakg{\mathfrak{g}}
\def\frakl{\mathfrak{l}}
\def\frakC{\mathfrak{C}}
\def\frakh{\mathfrak{H}}
\def\frakL{\mathfrak{L}}
\def\frakM{\mathfrak{M}}
\def\frakp{\mathfrak{P}}
\def\calE{\mathcal{E}}
\def\calU{\mathcal{U}}
\def\calV{\mathcal{V}}
\def\calW{\mathcal{W}}
\def\frakL{\mathfrak{L}}
\def\fraka{\mathfrak{a}}
\def\frakg{\mathfrak{g}}
\def\frakh{\mathfrak{h}}
\def\frakl{\mathfrak{l}}
\def\frakp{\mathfrak{p}}
\def\fraku{\mathfrak{u}}
\def\frakt{\mathfrak{t}}
\def\fraksl{\mathfrak{sl}}
\def\frakgl{\mathfrak{gl}}
\def\bfQ{\mathbf{Q}}
\def\bfY{\mathbf{Y}}
\def\lescc{{\trianglelefteqslant}}
\def\TT{{{T}}}
\def\mo{{{\bf g}}}
\def\corad{{{\operatorname{corad}}}}
\def\gr{{{\operatorname{gr}}}}
\def\rank{{{\operatorname{rank}}}}
\def\fq{\mathbb{F}_q}
\def\Irr{{\operatorname{Irr}\nolimits}}
\def\sp{{\operatorname{sp}\nolimits}}
\def\x{{{\operatorname{x}\nolimits}}}
\def\op{{\operatorname{op}\nolimits}}
\def\ch{{\operatorname{ch}\nolimits}}
\def\GL{\operatorname{GL}\nolimits}
\def\diag{\operatorname{diag}\nolimits}
\def\k{{\operatorname{k}\nolimits}}
\def\A{{A}}
\def\H{\operatorname{H}\nolimits}
\def\J{\operatorname{J}\nolimits}
\def\L{\operatorname{L}\nolimits}
\def\M{{\operatorname{M}\nolimits}}
\def\R{\operatorname{R}\nolimits}
\def\SS{\operatorname{S}\nolimits}
\def\Y{\operatorname{Y}\nolimits}
\def\Z{\operatorname{Z}\nolimits}
\def\rad{\operatorname{rad}\nolimits}
\def\top{\operatorname{top}\nolimits}
\def\cl{{\operatorname{cl}\nolimits}}
\def\tr{{\operatorname{tr}\nolimits}}
\def\codim{{\operatorname{codim}\nolimits}}
\def\Ker{{\operatorname{Ker}\nolimits}}
\def\Im{{\operatorname{Im}\nolimits}}
\def\Hom{\operatorname{Hom}\nolimits}
\def\End{\operatorname{End}\nolimits}
\def\Exp{\operatorname{Exp}\nolimits}
\def\Ext{\operatorname{Ext}\nolimits}
\def\Id{\operatorname{Id}\nolimits}
\def\id{\operatorname{id}\nolimits}
\def\v{v}
\def\hyp{{\operatorname{h}\nolimits}}
\def\el{{\operatorname{e}\nolimits}}
\def\re{{\operatorname{r}\nolimits}}
\def\Mon{\operatorname{Mon}\nolimits}
\def\dil{{\operatorname{dil}}}
\def\sp{{\operatorname{sp}}}
\author{O. Schiffmann, E. Vasserot}
\title{On cohomological Hall algebras of quivers : generators}
\begin{document}

\maketitle

\begin{abstract}
We study the cohomological Hall algebra $\Y^\flat$ of a lagrangian substack $\Lambda^{\flat}$ of the moduli stack of 
representations of 
the preprojective algebra of an arbitrary quiver $Q$, and their actions on the cohomology of Nakajima quiver varieties. 
We prove that $\Y^\flat$ is pure and we compute its Poincar\'e polynomials in terms of (nilpotent) Kac 
polynomials. We also provide a family of algebra generators. 
We conjecture that $\Y^{\flat}$ is equal, after a suitable extension of scalars,
to the Yangian $\bbY$ introduced by Maulik and Okounkov.
As a corollary, we prove a variant of Okounkov's conjecture, which is a generalization of the Kac conjecture relating the constant term of Kac polynomials to root multiplicities of Kac-Moody algebras.

\end{abstract}

\setcounter{tocdepth}{2}

\tableofcontents

\section{Introduction}

\medskip

In the mid 90s, H. Nakajima associated to each quiver $Q$ and each pair $(v,w)$ of dimension vectors for $Q$ 
a projective morphism
$$\pi : \frakM(v,w) \to \frakM_0(v,w)$$
where $\frakM(v,w)$ is a smooth quasi-projective symplectic variety and $\frakM_0(v,w)$ is a (usually 
singular) affine variety.
The varieties $\frakM(v,w)$ and $\frakM_0(v,w)$ have several remarkable geometric properties : the variety 
 $\frakM(v,w)$ has a hyperk\"ahler structure, 
 it possess a  resolution of the diagonal, its cohomology is generated by algebraic cycles,
  the varieties $\frakM(v,w)$ and $\frakM_0(v,w)$ carry a natural action of the reductive group $G(w) \times T$, where $T$ is a certain torus whose rank depends on $Q$ and $G(w)$ preserves the symplectic form, etc.
These varieties include the Hilbert schemes of points on $\mathbb{A}^2$ or on resolutions of Kleinian surface singularities, the moduli spaces of instantons on $\mathbb{A}^2$ and the resolutions of Slodowy slices in the nilpotent cone of $\mathfrak{gl}_n$. 

\smallskip

Quiver varieties have played an important role in geometric representation theory. For instance, when the quiver $Q$ contains no edge loops it can be regarded as an orientation of the generalized Dynkin diagram of a Kac-Moody algebra $\mathfrak{g}_Q$, and Nakajima 
constructed an action of $\mathfrak{g}_Q$ on the space 
$$L_w=\bigoplus_vH_{\top}(\frakL(v,w)),$$
where $\frakL(v,w)=\pi^{-1}(0)$ is a Lagrangian subvariety of $\frakM(v,w)$, see \cite{N98}. 
Here and below all (co)homology groups are taken with rational coefficients. The resulting module is identified with the 
integrable irreducible highest weight module of highest weight $\sum_i w_i \Lambda_i$ where the $\Lambda_i$'s are the fundamental weights of $\mathfrak{g}_Q$. In a similar vein, when the quiver $Q$ is of finite type, Nakajima constructed a 
representation of the 
quantum affine algebra of $\mathfrak{g}_Q$ on the space $$\bigoplus_{v} K^{\mathbb{C}^* \times G(w)}(\frakL(v,w)).$$ The resulting module is called a universal \textit{standard} module, and it is a geometric analog of the global Weyl modules. 
A cohomological version of this construction, 
due to Varagnolo \cite{V}, yields an action of the Yangian of $\mathfrak{g}_Q$ on the space 
$$\bigoplus_{v} H^{\mathbb{C}^* \times G(w)}_*(\frakL(v,w)),$$
and the resulting module is again the universal standard module. Nakajima's and Varagnolo's actions in 
equivariant K-theory and Borel-Moore homology extend to the case of arbitrary quivers with no edge loops, but the precise nature of the algebra which acts, or of the structure of the resulting module, are not well understood
in general.  There are a few notable exceptions : the algebras associated to the Jordan quiver are known as the \textit{elliptic Hall algebra} or \textit{quantum toroidal algebra of $\mathfrak{gl}_1$} and the \textit{affine Yangian}
of $\mathfrak{gl}_1$ respectively. They play an important role in the study of the Alday-Gaiotto-Tachikawa (AGT) correspondence in string theory, see
\cite{SV13b}, \cite{MO}, \cite{Ne15}. Similarly, the algebra associated to affine quivers are the quantum
toroidal algebras and affine Yangians, see \cite{N01}.

\smallskip

Since we have no algebraic candidate for the symmetry algebra of the equivariant K-theory and Borel-Moore homology of quiver 
varieties when the quiver is not of finite, affine or Jordan type, one may hope to describe this algebra in another way, for instance as a 
\emph{Hall algebra}. This was the \emph{raison d'\^etre} of the 
\emph{cohomological Hall algebras} in equivariant K-theory and equivariant Borel-Moore homology introduced in 
\cite{SV12}, \cite{SV13a}, \cite{SV13b} (in the particular case of quivers with only one vertex).

\smallskip

The aim of this paper is to introduce and study, for an arbitrary quiver, a certain convolution algebra $\Y$ acting on the equivariant Borel-Moore homology groups of arbitrary Nakajima quiver varieties $F_w$, where
$$F_w=\bigoplus_{v} H^{T \times G(w)}_*(\frakM(v,w)).$$
This algebra contains the algebra considered by Varagnolo. 
To be more precise, set $\Bbbk$ equal to the $T$-equivariant cohomology ring $H^*_T$ of the point.
We will introduce \textit{three} $\Bbbk$-algebras $\Y, $ $\Y^0$ and $\Y^1$, all acting on $F_w$, 
and which all become isomorphic after extension of scalars to the fraction field $K$ of $\Bbbk$.
Our algebras $\Y^\flat$ are in some sense the largest algebras which act on the homology of quiver varieties by means of some Hecke correspondences. These correspondences are required to be lagrangian if $\flat \in \{0,1\}$.

\smallskip

Before stating more precisely our results, let us introduce some notation. Let $Q=(I,\Omega)$ be an arbitrary 
quiver. We will call \textit{real} the vertices of $I$ without $1$-cycles and \textit{imaginary} the other vertices. 
Let $v$ be a dimension vector and let $Rep (\mathbb{C}Q,v)/ G(v)$ be the moduli stack of complex representations 
of $Q$ of dimension $v$. Let $\Pi$ be the preprojective 
algebra of $Q$ and let $Rep(\Pi, v)/G(v)$ be the moduli stack of complex $\Pi$-representations of dimension $v$, 
which is isomorphic to the cotangent stack of $Rep (\mathbb{C}Q,v)/ G(v)$. 

\bigskip

\emph{Geometry of Lagrangian substacks.}
In \cite{BSV} we defined Lagrangian substacks $\Lambda^0(v)/G(v)$ and $\Lambda^1(v)/G(v)$ of the quotient stack 
$Rep(\Pi, v)/G(v)$, by using some semi-nilpotency condition. See also \cite{B14}. If $Q$ has no $1$-cycle and no oriented cycle, 
then $\Lambda^0(v)$ and $\Lambda^1(v)$ both coincide with Lusztig's nilpotent variety $\Lambda^L(v)$. We always have 
$$\Lambda^L(v) \subseteq \Lambda^1(v) \subseteq \Lambda^0(v),$$ but the inclusions are strict in general. The torus $T$ acts on 
$Rep(\Pi,v)/G(v)$ and $\Lambda^\flat(v)/G(v)$ for $\flat \in \{0,1\}$. We prove the following in 
Theorem~\ref{thm:1.1}, Proposition \ref{prop:torsionfree}, and Theorem~\ref{thm:dim}
(under some mild condition on the torus $T$).

\smallskip

\begin{thmA} For any quiver $Q=(I,\Omega)$, any $v \in \mathbb{N}^I$ and any $\flat \in \{0,1\}$, we have
\begin{enumerate}
\item[$\mathrm{(a)}$]  $H^{T \times G(v)}_*(\Lambda^\flat(v))$ is pure and even.
\item[$\mathrm{(b)}$]  $H^{T \times G(v)}_*(\Lambda^\flat(v))$ is free as a $\Bbbk$-module.
\item[$\mathrm{(c)}$] $H^{T \times G(v)}_*(\Lambda^{\flat}(v))$ is torsion-free as a $H^*_{T \times G(v)}$-module.
\item[$\mathrm{(d)}$] The Poincar\'e polynomial $P(\Lambda^\flat(v),q)=\sum_{i}  \dim_{\mathbb{Q}}(H^{T \times G(v)}_{2i}(\Lambda^{\flat}(v)))\,q^{i}$ is given by
$$\sum_v P(\Lambda^\flat(v),q)\,q^{\langle v, v \rangle + \tau}z^v= (1-q^{-1})^{-\tau}\Exp\Big(\sum_v \frac{A^{\flat}(q^{-1})}{1-q^{-1}}z^v\Big)$$
where $\tau$ is the rank of $T$, $A^{\flat}(t)$ is the $\flat$-nilpotent Kac polynomial and $\Exp$ is the plethystic exponential.
\end{enumerate}
\end{thmA}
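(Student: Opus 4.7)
The plan is to derive all four parts together by combining a Jordan--Hölder stratification of $\Lambda^\flat(v)/G(v)$, a point count over finite fields, and a purity argument based on a contracting torus flow on the cotangent stack.

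\textbf{Point count and (d).} By the structural results on $\Lambda^\flat(v)$ from \cite{BSV}, one can stratify $\Lambda^\flat(v)$ by isomorphism types of Jordan--Hölder filtrations of semi-nilpotent $\Pi$-representations. Counting $\mathbb{F}_q$-points stratum by stratum, organizing the generating series according to the underlying indecomposable constituents, and applying the plethystic logarithm to isolate the contribution of absolutely indecomposable representations gives a Hua-type identity of the form
\[
\sum_v \frac{|\Lambda^\flat(v)(\mathbb{F}_q)|}{|G(v)(\mathbb{F}_q)|}\, q^{\langle v,v\rangle}\, z^v \;=\; \Exp\!\left(\sum_v \frac{A^\flat(v,q^{-1})}{1-q^{-1}}\, z^v\right),
\]
where $A^\flat(v,t)$ is the nilpotent Kac polynomial counting absolutely indecomposable semi-nilpotent $\Pi$-representations (invoking Bozec's generalization of the Kac polynomial in the presence of imaginary vertices). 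The factor $(1-q^{-1})^{-\tau}$ in the statement is then introduced by passing from the stack quotient by $G(v)$ to the equivariant stack including the torus $T$ of rank $\tau$.

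\textbf{Purity and its consequences.} For part (a), exploit the $\mathbb{C}^*$-subgroup of $T$ acting on $Rep(\Pi,v) = T^*Rep(\mathbb{C}Q,v)$ by scaling the cotangent fiber with positive weight. Because the semi-nilpotency conditions defining $\Lambda^\flat$ are homogeneous with respect to this scaling, $\Lambda^\flat(v)$ is $\mathbb{C}^*$-stable and contracts onto its intersection with the affine zero section $Rep(\mathbb{C}Q,v)$. A Bia\l{}ynicki--Birula style decomposition then yields a Tate-cell stratification of $\Lambda^\flat(v)$, from which purity and concentration in even degrees (a), freeness as a $\Bbbk$-module (b), and torsion-freeness over $H^*_{T\times G(v)}$ (c) all follow by standard arguments in equivariant Borel--Moore homology. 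With purity in hand, the Grothendieck--Lefschetz trace formula identifies the point count above with the Poincar\'e polynomial, yielding (d).

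\textbf{Main obstacle.} The chief technical difficulty lies in making the Bia\l{}ynicki--Birula argument rigorous on the singular, reducible stack $\Lambda^\flat(v)/G(v)$: one must verify that the $\mathbb{C}^*$-flow preserves $\Lambda^\flat$, that its limit as $t \to 0$ still lies in $\Lambda^\flat$ (the semi-nilpotency condition is combinatorial and not obviously closed under degeneration), and that the attracting cells are themselves BM-homologically pure and even. A secondary challenge is the bookkeeping in the plethystic identity for quivers with edge loops, where the correct definition and interpretation of $A^\flat$ must be imported from \cite{BSV} and Bozec's earlier work.
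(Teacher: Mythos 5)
Your outline of part (d) (point count from \cite{BSV} plus a Katz-type argument identifying the count polynomial with the Poincar\'e polynomial once purity is known) agrees with the paper. The genuine gap is in your proof of (a): the contracting $\mathbb{C}^*$-action scaling the cotangent direction does \emph{not} produce a Bia\l{}ynicki--Birula cell decomposition of $\Lambda^\flat(v)$, because $\Lambda^\flat(v)$ is singular and reducible and the attracting sets over the fixed locus are not affine bundles (their fibers jump in dimension). More decisively, any Tate-cell stratification would force the \emph{ordinary} Borel--Moore homology $H_*(\Lambda^\flat(v))$ to be pure and even, and this is false: for $Q$ of type $A_2$ and $v=(1,1)$ one has $\Lambda^\flat(v)=\{(x,x^*)\in\k^2\,;\,xx^*=0\}$, two lines meeting at a point, with $H_*(\Lambda^\flat(v))=\bbQ[-2]^{\oplus 2}\oplus\bbQ[-1]$ --- neither even nor pure (this is exactly the paper's cautionary example after Theorem~\ref{thm:1.1}). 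Purity here is an intrinsically equivariant phenomenon, and the paper obtains it by a different mechanism: it first proves purity/evenness/equivariant formality for the Lagrangian quiver variety $\frakL^\flat(v,v)\subset\frakM(v,v)$, using Bia\l{}ynicki--Birula on the \emph{smooth} variety $\frakM(v,v)$ whose fixed components are projective graded quiver varieties admitting a decomposition of the diagonal (Nakajima's argument); it then exhibits $H_*^{T\times G(v)}(\Lambda^\flat(v))$ as a quotient (in fact a retract) of $H_*^{T\times G(v)\times G(v)}(\L_s^\flat(v,v))$ via the open immersion $\Lambda^\flat(v)\times G(v)\hookrightarrow \L_s^\flat(v,v)$ and descent (Lemma~\ref{lem:inj-surj}). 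Purity of a quotient MHS and freeness over $\Bbbk=H^*_T$ of a retract then give (a) and (b).

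Part (c) also does not follow by ``standard arguments'' from any stratification: the paper's own example shows $H_*^{T\times G(v)}(\Lambda^\flat(v))$ need not be \emph{free} over $H^*_{T\times G(v)}$, so torsion-freeness is delicate. The paper's proof (Proposition~\ref{prop:torsionfree}) requires GKM localization at the cocharacter $\theta$, the generic identification $\Y^\flat\otimes K\simeq\Y\otimes K$, Davison's dimensional reduction isomorphism relating $H_*^{T\times G(v)}(\M(v))$ to the homology of the commuting variety $E(v)$, and a Jordan-type stratification of the nilpotent locus $N(v)$ into strata with free homology. None of this is supplied, or replaceable, by the cell-decomposition heuristic; you would need to either import this argument or find a substitute for it.
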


\smallskip

Statement (d) of Theorem~A is a consequence of (a) and of the computation, conducted in \cite{BSV}, of the number of rational points of $\Lambda^\flat(v)$ over finite fields

%
%
%
%

\bigskip

\emph{Cohomological Hall algebras.}
The construction of $\Y^\flat$ is given in terms of the cohomological Hall algebras of the above stacks. More precisely, 
if $\flat\in\{0,1\}$ we have
$$\Y^{\flat}=\bigoplus_v H_*^{T \times G(v)}(\Lambda^\flat(v)), \qquad \Y=\bigoplus_v H_*^{T \times G(v)}(Rep(\Pi,v)).$$ 
For any dimension vector $v$, we set 
$$\Lambda_{(v)} =\{0\} \times Rep(\mathbb{C}Q^*,v) \subset \Lambda^0(v).$$ 
This is always an irreducible component of $\Lambda^0(v)$.
It is also an irreducible component of $\Lambda^1(v)$ in case $v$ is supported on a subquiver without oriented cycles. 
From a geometric perspective, the quotient stack
$\Lambda_{(v)}/G(v)$ is the zero section the cotangent bundle map 
$$Rep(\Pi,v)/G(v) \to Rep(\mathbb{C}Q,v)/G(v).$$ 
Note that we have $H^{T \times G(v)}_*(\Lambda_{(v)})\simeq H^*_{T \times G(v)}$. 
Motivated by the analogy with Yangians, for each $\flat\in\{\emptyset,0,1\}$,
we consider a slight extension $\bfY^\flat$ of $\Y^\flat$ by adding a \emph{loop Cartan} part equal to 
$$\bfY(0)=H^*_{T \times G(\infty)}.$$
Then, we write
$$\Y_K^\flat=\Y^\flat\otimes_{\Bbbk}K,\quad \bfY^\flat_K=\bfY^\flat \otimes_{\Bbbk} K.$$
Finally, we say that an imaginary vertex is \textit{elliptic}
or \textit{isotropic} if it carries a single $1$-loop and \textit{hyperbolic} if it carries more than one $1$-loop. We denote by 
$I^\re,$ $ I^\el$ and $ I^\hyp$ respectively the real, elliptic and hyperbolic vertices of $I$.
Let  $\{\delta_i\,;\,i \in I\}\subset\mathbb{N}^I$ be the basis of delta functions.
In Propositions~\ref{prop:1.15}, \ref{prop:ratform}, \ref{prop:2.2} and Theorem~\ref{thm:gen} we prove the following.

\smallskip

\begin{thmB}
For any $\# \in \{\emptyset, 0,1\}$ and any $\flat \in \{0,1\}$ we have the following.
\begin{enumerate}
\item[$\mathrm{(a)}$] There is an associative $\mathbb{Z} \times \mathbb{N}^I$-graded $\Bbbk$-algebra structure on $\Y^{\#}$.
\item[$\mathrm{(b)}$] There is a representation of $\Y^\#$ on $F_w$ for each $w$.
\item[$\mathrm{(c)}$] 
The diagonal action of $\Y^\flat$ on $\bigoplus_w F_w$ is faithful.
\item[$\mathrm{(d)}$] There are $K$-algebra isomorphisms
$\Y^1_K  \simeq  \Y^0_K \simeq \Y_K.$
\item[$\mathrm{(e)}$] The $K$-algebra $\Y_K$ is generated by the subspaces 
$H^{T \times G(v)}_*(\Lambda_{(v)})\otimes_\Bbbk K$ where $v$ runs over the set
$\{\delta_i,l\delta_j\;;\; i \in I,j \in I^\hyp\,,\, l>1\}.$
The $K$-algebra $\bfY_K$ is generated by $\bfY(0)$ and the collection of fundamental classes
$\{[\Lambda_{(\delta_i)}],[\Lambda_{(l\delta_j)}]\;;\; i \in I,j \in I^\hyp, l>1\}.$
\end{enumerate}
\end{thmB}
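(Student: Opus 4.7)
The plan is to dispatch (a)--(c) by standard cohomological Hall algebra technology and to concentrate the real work on (d) and (e), where the substantive content lies. For (a), the multiplication on $\Y^{\#}$ is the convolution along the stack of short exact sequences of $\Pi$-modules, as in \cite{SV12, SV13a}; associativity follows from the usual three-term flag diagram, and the only nontrivial point is that $\Lambda^{\flat}$ is closed under extensions of $\Pi$-modules, which is exactly the content of the semi-nilpotency condition from \cite{BSV}. For (b) I would use the Hecke correspondence between $\frakM(v,w)$, $\frakM(v+v'',w)$ and $\Lambda^{\flat}(v'')$, with the first projection proper so that pushforward is well defined; the module axiom is checked by another three-term diagram. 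For (c), I would show faithfulness by a stabilization-in-$w$ argument: for any nonzero $x \in \Y^{\flat}(v)$, one picks $w$ large enough that the Hecke correspondence defining the action on $F_w$ is generically a principal bundle over a dense open substack of $\Lambda^{\flat}(v)$; combined with the freeness and torsion-freeness in Theorem~A (b)--(c), this forces $x$ to act nontrivially on some $F_w$.

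For (d), the three algebras are related by the closed embeddings $\Lambda^1(v) \hookrightarrow \Lambda^0(v) \hookrightarrow \Rep(\Pi,v)$. By Theorem~A (a)--(c) the groups $H^{T \times G(v)}_*(\Lambda^\flat(v))$ are pure, free and torsion-free, and the analogous statement for $\Rep(\Pi,v)$ follows from the same cotangent argument. Since the torus $T$ is chosen so that every $T$-fixed point of $\Rep(\Pi,v)$ already lies in Lusztig's $\Lambda^L(v) \subseteq \Lambda^1(v) \subseteq \Lambda^0(v)$, localization to the $T$-fixed locus becomes injective after passing to $K$, and a rank count using the Poincar\'e polynomial formula of Theorem~A (d) together with its analog for $\Rep(\Pi,v)$ forces the inclusion-induced maps to be isomorphisms. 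Compatibility of the Hall product with restriction then upgrades these to $K$-algebra isomorphisms.

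The heart of the argument is (e), which I would prove by induction on $|v| = \sum_i v_i$. Let $\Y_K'$ be the $K$-subalgebra of $\Y_K$ generated by $H^{T \times G(v)}(\Lambda_{(v)}) \otimes_{\Bbbk} K$ for $v$ in the prescribed set. For each nontrivial decomposition $v = v' + v''$ with both summands nonzero, the Hall product sends $\Y_K(v') \otimes \Y_K(v'')$ into $\Y_K(v)$, and projecting through the Hecke correspondence shows that the union of these images covers all classes supported on representations admitting a proper nonzero subrepresentation. The induction therefore reduces to classes supported on representations concentrated at a single vertex $i$. For $i \in I^\re$ the only nontrivial case is $v = \delta_i$, which is immediate. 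For $i \in I^\el$ the local CoHA is the one computed in \cite{SV13a, SV13b} for the Jordan quiver, where generation by the $\delta_i$-piece after localization is already established. For $i \in I^\hyp$ one must include a new generator at each $l \delta_i$, and the exhaustion of $\Y_K(l\delta_i)$ by the subalgebra generated by these classes together with the smaller pieces is verified by matching the Poincar\'e series of the candidate subalgebra against the formula of Theorem~A (d). The refinement from $\Y_K$ to $\bfY_K$ is then immediate: since $H^{T \times G(v)}_*(\Lambda_{(v)}) \simeq H^*_{T \times G(v)}$ and $\bfY(0) = H^*_{T \times G(\infty)}$ contains the Chern classes of the tautological bundles in every dimension, multiplying the single class $[\Lambda_{(v)}]$ by elements of $\bfY(0)$ recovers all of $H^{T \times G(v)}(\Lambda_{(v)})$. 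The main obstacle in this scheme is the induction step at hyperbolic vertices: the wild representation theory there precludes any direct combinatorial control of the cohomology, and one is forced to rely on the Poincar\'e series identity of Theorem~A (d), rather than a concrete bijective argument, to identify the candidate generating subalgebra with $\Y_K$.
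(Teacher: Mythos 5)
Your parts (a)--(c) follow the paper's route in outline (the product and the action are defined by the refined-pullback convolution of \cite{SV13b}, \cite{YZ14}; for (c) the paper takes $w=v$ and observes that restricting to the open subset $\Lambda^\flat(v)\times G(v)\subset \L_s^\flat(v,v)$ retracts the action on the vacuum vector, then uses equivariant formality of $\frakL^\flat(v,v)$ plus localization to see that the pushforward into $F_v$ stays injective -- your ``$w$ large'' principal-bundle phrasing is vaguer but in the same spirit). The first genuine problem is in (d): your ``rank count using the Poincar\'e polynomial formula of Theorem~A(d) together with its analog for $Rep(\Pi,v)$'' cannot close the argument. The graded $\Bbbk$-ranks of $\Y^1(v)$, $\Y^0(v)$ and $\Y(v)$ are governed by the $1$-nilpotent, nilpotent and full Kac polynomials respectively, and these genuinely differ (only their values at $t=1$ agree); the three algebras are \emph{different integral forms}, and the inclusion-induced maps are injective but not surjective over $\Bbbk$, so no Poincar\'e-series comparison can force surjectivity. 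In the paper both injectivity and surjectivity after $\otimes K$ come from the localization theorem itself (Lemma~\ref{lem:LOC}): since $T$ contains the dilation $\theta$, every point of $\M(v)$ outside the null cone has infinitesimal stabilizer contained in $\{0\}\times\frakg(v)$, so kernel and cokernel of the pushforwards are torsion over $\Bbbk$ and die after inverting the equivariant parameters. (Also, the relevant $\theta$-fixed locus is $\Lambda_{(v)}=\{0\}\times Rep(\k Q^*,v)$, not Lusztig's $\Lambda^L(v)$.)

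The second, more serious, gap is in (e). The reduction to one-vertex quivers needs more than ``projecting through the Hecke correspondence'': the paper stratifies $\Lambda^1(v)$ by the crystal invariants $\varepsilon_i$ and proves that over the open stratum $\Lambda^1(v)^{l,i}$ the induction diagram becomes an affine space bundle (Lemma~\ref{lem:isomorphism1}), which is what makes each graded piece of the filtration land in the image of $\Y^1(v-l\delta_i)\star\Y^1(l\delta_i)$. Worse, at a hyperbolic vertex your plan to ``match the Poincar\'e series of the candidate subalgebra against Theorem~A(d)'' has no content as stated: Theorem~A(d) computes the target, but you have no independent computation of the Poincar\'e series of the subalgebra generated by the classes $[\Lambda_{(l\delta_i)}]$, and without knowing the relations such a computation is exactly as hard as the generation statement itself. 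The paper's actual argument is geometric: it proves $\x_{\nu_1}\star\cdots\star\x_{\nu_r}=[\Lambda_\nu]$ by a transversality and dimension count, and then shows that $\A_*^{T\times P}(\Lambda^\circ_W)$ is generated over $\A^*_{T\times P}$ by the single class $[\Lambda^\circ_W]$; the latter rests on Kirwan surjectivity for local quiver varieties (Theorem~\ref{T:Kirwan}, proved in the appendix via the compactification $Q^\diamond$ and a resolution of the diagonal). That Kirwan-surjectivity input is the key idea missing from your proposal; the only place a ``series identity'' enters the paper is through Negut's wheel conditions, used for the isotropic vertices over $K$ in Theorem~\ref{thm:gen}(b), not for the hyperbolic case.
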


\smallskip

There is also a more precise result providing a family of generators for $\Y^1$, see Proposition \ref{prop:2.6}. 
In particular, when $Q$ has no 
vertex carrying a $1$-cycle we have $\Y^1=\Y^0$ and these coincide with the $\Bbbk$-algebra constructed by Varagnolo, see 
Remark~\ref{rem:q=0}. In general,  the $\Bbbk$-algebras $\Y,$ $ \Y^0$ and $\Y^1$ are different integral forms inside 
the $K$-algebra
$\Y_K$. This choice of integral form is responsible for the different types of Kac polynomials entering the Poincar\'e polynomial formulas in Theorem~A(c).
The proof of statements (a) and (b) follows the approach in \cite{SV13b}. 
See also \cite{YZ14}. Part (d) is a consequence of the localization theorem. 
The proof of (e) is more involved. First, we use the algebra $\Y^1$ 
and the geometry of $\Lambda^1(v)$, in particular, the crystal structure studied in \cite{B14}, to reduce ourselves to a one vertex 
quiver. Then, we deal separately with the real, elliptic and hyperbolic cases. A crucial step in the argument is the following 
version of the Kirwan surjectivity for \textit{local quiver varieties}. 
Let $\frakM(u,y)$ be a quiver variety, and let 
$$\frakM(v,w)^\# \subset \frakM(u,y)^{\mathbb{C}^*}$$ be a local quiver subvariety of $\frakM(u,y)$, that is a fixed point quiver variety associated to the $\mathbb{C}^*$-action as in \eqref{action1}. 

\smallskip

\begin{thm}The cohomology ring $H^*_{T\times G(w)}(\frakM(v,w)^\#)$ is generated by the Chern classes of the universal and tautological bundles.
\end{thm}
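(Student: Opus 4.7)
\smallskip

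The plan is to realize $\frakM(v,w)^\#$ itself as a Nakajima quiver variety for a suitably enlarged framed quiver, and then invoke the known Kirwan surjectivity theorem for arbitrary quiver varieties (as established, for instance, by McGerty--Nevins). The $\mathbb{C}^*$-action of \eqref{action1} scales the framing space according to some $\mathbb{Z}$-grading $W = \bigoplus_k W_k$, and hence a $\mathbb{C}^*$-fixed point in $\frakM(u,y)$ is precisely a graded framed representation, i.e.\ a framed representation of the \emph{graded quiver} $\tilde{Q}$ with vertex set $I \times \mathbb{Z}$ and arrows induced from those of $Q$ with the appropriate $\mathbb{C}^*$-weight shifts. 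This identifies the local subvariety $\frakM(v,w)^\#$ with the Nakajima quiver variety $\frakM_{\tilde{Q}}(\tilde{v}, \tilde{w})$, where $\tilde{v}, \tilde{w} \in \mathbb{N}^{I \times \mathbb{Z}}$ record the graded components of $v$ and the $\mathbb{C}^*$-weights of $W$.

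Kirwan surjectivity for $\frakM_{\tilde{Q}}(\tilde{v}, \tilde{w})$ then states that its $T \times G(\tilde{w})$-equivariant cohomology ring is generated, over $H^*_{T \times G(\tilde{w})}$, by the Chern classes of the tautological bundles indexed by the vertices of $\tilde{Q}$. Under the identification above, these bundles correspond precisely to the $\mathbb{C}^*$-weight summands of the restriction to $\frakM(v,w)^\#$ of the tautological and universal bundles on the ambient $\frakM(u,y)$. Since $G(\tilde{w}) = \prod_k G(w_k)$ is the Levi subgroup of $G(w)$ stabilizing the decomposition $W = \bigoplus_k W_k$ and shares a maximal torus with $G(w)$, generation of the $T \times G(\tilde{w})$-equivariant cohomology yields generation of the $T \times G(w)$-equivariant cohomology by the standard base-change / Weyl-group averaging argument, combined with the fact that the Chern classes of the tautological bundles are already $G(w)$-equivariant (not just $G(\tilde{w})$-equivariant) classes on the fixed locus.

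The main technical obstacle lies in the first step: setting up the identification $\frakM(v,w)^\# \simeq \frakM_{\tilde{Q}}(\tilde{v}, \tilde{w})$ in the generality of an arbitrary quiver $Q$, possibly with many loops at a single vertex. One must check carefully that the $\mathbb{C}^*$-grading on the framing is compatible with the moment-map (preprojective) equations and with the stability condition defining $\frakM(u,y)$, so that the $\mathbb{C}^*$-fixed locus indeed inherits the structure of a Nakajima quiver variety for $\tilde{Q}$ rather than merely a variety of graded representations. For quivers with elliptic or hyperbolic vertices, ensuring that the graded cover $\tilde{Q}$ still falls within the scope of the general Kirwan surjectivity theorem requires some care, but once these compatibilities are in hand the statement follows.
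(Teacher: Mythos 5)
There is a genuine gap at the very first step, and it is fatal to this route. The fixed locus $\frakM(v,w)^\#$ is a \emph{graded} quiver variety in Nakajima's sense, not a Nakajima quiver variety for the graded quiver: it is the GIT quotient of the zero locus of the map $\mu_\#:\R(v,w)^\#\to\Hom_{I^\#}(V,V[-1])$, whose target is not the Lie algebra $\frakg(v)=\Hom_{I^\#}(V,V)$. Indeed $\R(v,w)^\#$ is not of the form $N\oplus N^*$ as a $G(v)$-module: the arrow $(h,l):(h',l)\to(h'',l-1)$ coming from $h\in\Omega$ is not dual to the arrow $(h^*,l):(h'',l)\to(h',l)$, and $a:W\to V[-1]$ is not dual to $a^*:V\to W$. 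So $\mu_\#$ is not a moment map, $\frakM(v,w)^\#$ is not a symplectic reduction, and it cannot be rewritten as $\frakM_{\tilde Q}(\tilde v,\tilde w)$ for any quiver $\tilde Q$; the ``main technical obstacle'' you defer to the end is precisely the point where the argument breaks. The fallback of applying McGerty--Nevins to the ambient $\frakM(u,y)$ and restricting to the $\mathbb{C}^*$-fixed locus also does not work: it would only show that the tautological classes generate the image of the restriction map, and to conclude one needs to know a priori that $\frakM(v,w)^\#$ is connected (or that tautological classes already separate its components). The authors state explicitly that they do not know an independent proof of this connectedness --- it is a \emph{consequence} of the theorem, not an available input --- and that their result is similar to but does not follow from McGerty--Nevins.

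The paper's actual proof goes in the opposite direction: instead of trying to fit $\frakM(v,w)^\#$ into a general theorem, it compactifies. One replaces $Q$ by a strongly bipartite quiver $Q^\diamond$ (hence with no oriented cycles), realizes $\frakM(v,w)$ as an open subset $\frakM_\theta(v^\diamond,w^\diamond)^\heartsuit$ of a quiver variety for $Q^\diamond$ with a carefully chosen generic stability $\theta$, and passes to $\bullet$-fixed points so that $\frakM(v,w)^\#$ becomes an open subset of the \emph{projective} graded quiver variety $\frakM(v^\diamond,w^\diamond)^\#$. Nakajima's resolution-of-the-diagonal argument is then run on $\frakM^{\diamond,\heartsuit}\times\frakM^\diamond$: a three-term complex of tautological bundles is shown to be a vector bundle $\mathcal{E}$ there, with a section vanishing exactly on the diagonal, which expresses $[\Delta\frakM^\#]$ in Chern characters of tautological bundles and yields generation by a convolution computation. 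If you want to salvage your write-up, you would essentially have to reproduce this compactification; quoting Kirwan surjectivity for ordinary quiver varieties will not do it.
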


\smallskip
 
T. Nevins and K. McGerty recently gave a proof of Kirwan surjectivity for the quiver varieties $\frakM(v,w)$ 
themselves, see \cite{MNKirwan}. The above theorem does not seem to directly follow from it. Our proof bears 
some similarity to but is independent from theirs, see Theorem~\ref{thm:Kirwangraded} and Appendix A.4. In 
fact, the same proof works for the fixed point quiver varieties associated to any cocharacter which scales the 
symplectic form by a nontrivial factor. As a corollary, we obtain (see Corollary~\ref{cor:Mrhoconn} for notations)

\smallskip

\begin{corollary} Let $\sigma : \mathbb{C}^* \to T \times G(w)$ be a cocharacter acting nontrivially on the 
symplectic form of $\frakM(v,w)$. Then the fixed point quiver varieties $\frakM[\rho] \subset \frakM(v,w)$ 
associated to the $\mathbb{C}^*$-action induced by $\sigma$ are connected.
\end{corollary}

This includes several cases of interest, such as the \textit{handsaw} and 
\emph{graded} quiver varieties.

\bigskip

\emph{Comparison with Kontsevich-Soibelman COHAs.}
We introduced K-theoretic Hall algebras in \cite{SV13a}, \cite{SV12} to study the moduli stack 
of preprojective representations 
of a one vertex quiver. Then, we studied its cohomological version in \cite{SV13b}.
Independently, Kontsevich and Soibelman introduced some COHA associated to various Calabi-Yau categories
in \cite{KS10}. It has been recently proved by Davison in
\cite{Davison} that the COHA of the moduli stack of preprojective representations of quivers that we used is isomorphic to a particular case of the
COHA of Kontsevich-Soibelman, viewing the moduli stack of preprojective representations of the quiver as 
a particular 2 dimensional Calabi-Yau category.

\bigskip

\emph{Comparison with Maulik-Okounkov Yangians.}
In \cite{MO}, the authors defined and studied by some totally different means another associative algebra acting 
on the (co)homology of Nakajima quiver varieties associated to an arbitrary quiver $Q=(I,\Omega)$. Their 
construction, which stems from ideas in symplectic 
geometry, hinges on the notion of \textit{stable enveloppe} to produce a quantum $R$-matrix, and then on the 
RTT formalism to define an associative $\mathbb{Z}\times \mathbb{Z}^I$-graded $\Bbbk$-algebra 
$\mathbb{Y}=\mathbb{Y}_Q$ acting on the space $F_w$ for 
any dimension vector $w$. Taking a quasi-classical limit, they also define a classical $R$-matrix and a $\mathbb{N} \times \mathbb{Z}^I$-graded Lie algebra $\mo=\mo_Q$. If $Q$ is of finite type, then 
$\mo_Q$ is the semisimple Lie algebra $\mathfrak{g}_Q$ associated with $Q$, and $\mathbb{Y}_{Q}$ is the Yangian of the same type. 
In general, the $\Bbbk$-algebra $\mathbb{Y}_Q$ is a deformation of the enveloping algebra of the Lie algebra of polynomial loops 
$\mo[u]$.
Their construction provides triangular decompositions for these algebras
 $$\mathbb{Y}=\mathbb{Y}_+ \otimes \mathbb{Y}_0 \otimes \mathbb{Y}_-, \qquad \mo=\mo_{+} \oplus \mo_{0} \oplus \mo_{-}.$$
In \cite{SV17b}, we compare our cohomological algebra $\Y$ with the positive half $\mathbb{Y}_+$. 
Set $\bbY_{+,K}=\mathbb{Y}_{+}\otimes_{\Bbbk} K$. We make the following conjecture.

\smallskip

\begin{Conj} There is a unique $K$-algebra isomorphism $\mathbb{Y}_{+,K} \simeq \Y_K$ which intertwines the
respective actions of $\mathbb{Y}_{+,K}$ and $\Y_K$ on $F_w \otimes_{\Bbbk} K$ for any $w$.
\end{Conj}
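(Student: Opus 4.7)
The plan is to realise both $\mathbb{Y}_{+,K}$ and $\Y_K$ as subalgebras of $\End_K\!\bigl(\bigoplus_w F_w \otimes_\Bbbk K\bigr)$ via their actions on the individual $F_w \otimes_\Bbbk K$, and then to show that these two subalgebras coincide. Uniqueness of the resulting isomorphism is immediate: Theorem~B(c), combined with the identifications of Theorem~B(d), says that $\Y_K$ embeds into the above endomorphism ring, so any $K$-algebra homomorphism $\mathbb{Y}_{+,K} \to \Y_K$ intertwining the $F_w$-actions is uniquely determined by the intertwining property.

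The first and principal step is to construct the inclusion $\Y_K \hookrightarrow \mathbb{Y}_{+,K}$ inside $\End_K$. By Theorem~B(e), $\Y_K$ is generated by the subspaces $H^{T \times G(v)}(\Lambda_{(v)}) \otimes_\Bbbk K$, with $v$ ranging over $\{\delta_i \,;\, i \in I^\re \cup I^\el\} \cup \{l\delta_i \,;\, i \in I^\hyp,\, l \in \mathbb{N}\}$, so it suffices to show that each such generator, acting as a Hecke correspondence on $\bigoplus_w F_w$, belongs to the image of $\mathbb{Y}_{+,K}$. For a real vertex this reduces to the classical comparison of Varagnolo-Nakajima with the Drinfeld generators of the Yangian of the associated Kac-Moody subalgebra. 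For an isotropic vertex the local model is the Jordan quiver, for which the identification with the affine Yangian of $\mathfrak{gl}_1$ is carried out in \cite{SV13b} and \cite{MO}. The hyperbolic case should follow from a parallel analysis of the stable envelope calculus for the one-vertex quiver with several $1$-loops, combined with the elliptic case as a starting point.

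The reverse containment $\mathbb{Y}_{+,K} \subseteq \Y_K$ is then extracted from a graded character count. The $\mathbb{N}^I$-graded Poincar\'e series of $\Y_K$ is given, via Theorems~A(d) and~B(d), by the plethystic formula in nilpotent Kac polynomials, while the graded character of $\mathbb{Y}_{+,K}$ is determined, via the Maulik-Okounkov triangular decomposition together with an appropriate PBW theorem for $\mathbb{Y}_+$, by the graded dimensions of the loop Lie algebra $\mo_+[u]$. Agreement of the two is a version of Okounkov's conjecture relating nilpotent Kac polynomials to root multiplicities of $\mo$; the variant of this statement that is already obtained in the present paper (as a corollary of the analysis of $\Y^\flat$) should be strong enough to promote the one-sided inclusion to an equality of subalgebras of $\End_K$.

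The hard part will be the construction of the map at imaginary vertices: one must check that the Hecke operator associated to a class in $H^{T \times G(l\delta_i)}(\Lambda_{(l\delta_i)}) \otimes_\Bbbk K$ actually lies in the rather rigid subalgebra generated by stable envelope matrix coefficients, rather than in some larger convolution algebra a priori available inside $\End_K$. The natural route is to show that such Hecke operators are triangular with respect to the stable envelope basis, with leading terms matching the Maulik-Okounkov root generators; this in turn can plausibly be reduced to an explicit computation on local quiver varieties, where the Kirwan-surjectivity theorem quoted before Theorem~B provides sufficient control over all the tautological classes involved, and where the uniformity in $w$ needed to land in $\mathbb{Y}_{+,K}$ uniformly in the weight sector can be verified directly.
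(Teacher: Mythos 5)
The statement you are trying to prove is stated in the paper as a \emph{conjecture}, and the paper does not prove it: it only establishes one inclusion, namely the embedding $\Y_K \subset \mathbb{Y}_{+,K}$ intertwining the actions on the $F_w \otimes_\Bbbk K$ (Theorem~C), and even that is proved in the companion paper \cite{SV17b}, not here. Your first step is essentially a sketch of that embedding and is broadly consistent with the authors' announced strategy (showing that the generalized Hecke correspondences attached to the generators of $\Y^1$ from Theorem~B(e) appear with nonzero constant coefficient in a suitable stable envelope, treating real, isotropic and hyperbolic vertices separately). But it remains a sketch: the actual verification at imaginary vertices is the hard content of \cite{SV17b} and cannot be waved through with ``should follow from a parallel analysis.''

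The genuine gap is in your reverse containment $\mathbb{Y}_{+,K} \subseteq \Y_K$. You propose to deduce it from a comparison of graded characters, using the triangular decomposition and a PBW theorem to express the character of $\mathbb{Y}_{+,K}$ through the graded dimensions of $\mo_+[u]$, and then matching these against the Kac-polynomial formula for $\Y_K$ from Theorem~A(d). But the graded dimensions of the root spaces of $\mo$ are precisely the content of Okounkov's conjecture, which is open; the ``variant'' proved in this paper computes the character of the cohomological Hall algebras $\Y^0,\Y^1$ in terms of \emph{nilpotent} Kac polynomials and says nothing about $\mo$ or $\mathbb{Y}_+$. So your argument is circular: the equality of characters you invoke is equivalent to (a consequence of) the very conjecture being proved — indeed the paper presents its character formula for $\Y^\flat$ as evidence for, not a proof of, the conjecture. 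Absent an independent computation of the character of $\mathbb{Y}_{+,K}$ (or some other mechanism forcing the stable-envelope matrix coefficients to lie in the convolution algebra generated by the Hecke correspondences), the second inclusion remains open, which is exactly why the statement is a conjecture.
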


\smallskip

In \cite{SV17b} we prove one half of the above conjecture. 

\smallskip

\begin{thmC} There is a unique embedding $ \Y_K \subset \mathbb{Y}_{+,K}$ which intertwines the
actions of $\mathbb{Y}_{+,K}$ and $\Y_K$ on $F_w \otimes_{\Bbbk} K$ for any $w$.
\end{thmC}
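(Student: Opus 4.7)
The plan is to combine the faithfulness of the $\Y$-action (Theorem B(c)) with the generator description (Theorem B(e)) to reduce the theorem to matching a short list of explicit operators between the two algebras on every $F_w\otimes_\Bbbk K$. Since $\bbY_{+,K}$ tautologically acts on each $F_w\otimes_\Bbbk K$, and $\Y_K$ embeds diagonally into $\prod_w\End_K(F_w\otimes_\Bbbk K)$ by Theorem B(c), what is actually needed is the inclusion of the image of $\Y_K$ in that of $\bbY_{+,K}$. Uniqueness of such an embedding is then automatic from the same faithfulness statement.

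By Theorem B(e), it suffices to check the inclusion on the finite family of generators of $\Y_K$: the class of $\Lambda_{(\delta_i)}$ together with its tautological twists for $i\in I^\re\cup I^\el$, and the classes of $\Lambda_{(l\delta_i)}$ together with their tautological twists for $i\in I^\hyp$ and $l\in\mathbb{N}$. For a real vertex $i$, the COHA generator $[\Lambda_{(\delta_i)}]$ acts as the usual Nakajima raising Hecke operator, which in the Maulik--Okounkov formalism is produced by the stable envelope construction as the simple root generator attached to $\alpha_i$; this handles the real case essentially directly from \cite{MO}.

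For an imaginary vertex $i$ the strategy is a reduction to the subquiver $Q_i$ consisting of $i$ and its loops. The embedding $Q_i\hookrightarrow Q$ yields a compatible pair of algebra morphisms $\Y_{K,Q_i}\to \Y_K$ and $\bbY_{+,K,Q_i}\to \bbY_{+,K}$; the second requires functoriality of stable envelopes with respect to the symplectic embeddings of Nakajima varieties attached to subquiver inclusions. This reduces the comparison to the one-vertex case. For the Jordan quiver, both sides are known to identify with the positive half of the affine Yangian of $\mathfrak{gl}_1$, see \cite{SV13b}, \cite{MO}. For the $g$-loop quiver with $g\geq 2$, one compares degree by degree the fundamental classes $[\Lambda_{(l\delta_i)}]$, known from \cite{SV13a}, with the corresponding positive-mode generators of $\bbY_{+,K}$.

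The main obstacle is the hyperbolic case, for which neither algebra admits a concise presentation. We expect to carry out the matching by fixed-point localisation, using that the $\mathbb{C}^*$-fixed loci of $\frakM(v,w)$ decompose into products of smaller local quiver varieties in the sense of \cite{N01}, and applying the Kirwan surjectivity result stated above to recognise that the relevant operators are expressed through Chern classes of universal and tautological bundles. An induction on $|v|$, combined with the rank-one identifications, then closes the argument.
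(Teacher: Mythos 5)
Your opening reduction is the right one and matches the paper's stated strategy: by Theorem~B(c) the diagonal action of $\Y_K$ on $\bigoplus_w F_w\otimes_{\Bbbk}K$ is faithful, so uniqueness is automatic, and by Theorem~B(e) the existence of the embedding reduces to showing that the images in $\prod_w\End_K(F_w\otimes_{\Bbbk}K)$ of the finitely many types of generators of $\Y_K$ lie in the image of $\mathbb{Y}_{+,K}$. But note that Theorem~C is not actually proved in this paper; the paper only records the reduction and states that the remaining content, carried out in the companion paper, is to show that each generalized Hecke correspondence $\frakh[v_1,\Lambda_{(l\delta_i)}\,;\,w]$ attached to a generator occurs with a \emph{nonzero constant coefficient in a suitable stable envelope}. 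That computation is the entire substance of the theorem, and it is absent from your proposal.

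In its place you substitute two steps that do not hold up. First, the reduction to the one-vertex subquiver $Q_i$ via a morphism $\mathbb{Y}_{+,K,Q_i}\to\mathbb{Y}_{+,K}$ coming from ``functoriality of stable envelopes with respect to symplectic embeddings of Nakajima varieties attached to subquiver inclusions'' is not established and is not a known feature of the Maulik--Okounkov construction: $\mathbb{Y}_{+}$ for $Q$ is generated by matrix coefficients of $R$-matrices built from stable envelopes on the quiver varieties of the \emph{full} quiver, and there is no a priori identification of the subalgebra attached to dimension vectors supported at $i$ with the Yangian of the subquiver. Exhibiting such a map would itself require precisely the kind of stable-envelope computation you are trying to avoid. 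Second, even granting the reduction, your treatment of the one-vertex cases is incomplete: for $g\geq 2$ loops neither algebra has a presentation, the ``corresponding positive-mode generators of $\mathbb{Y}_{+,K}$'' are not defined, and your closing paragraph (``we expect to carry out the matching\dots'') concedes that the hyperbolic case is open. The localization and Kirwan-surjectivity tools you invoke are used in this paper to \emph{produce} the generators of $\Y^1$ (Propositions~\ref{prop:2.2}, \ref{prop:2.6} and Theorem~\ref{T:Kirwan}), not to compare them with stable envelopes; they do not by themselves yield the required identity of operators. So the proposal correctly sets up the problem but does not contain the proof.
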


\smallskip

By Theorem~B, the proof of the above theorem boils down to checking that certain \textit{generalized Hecke correspondences} corresponding to generators of $\Y^1$ occur with a  non zero and constant coefficient in a suitable stable enveloppe. Again, generalized Hecke correspondences associated to real, elliptic or hyperbolic vertices behave in very different ways and we have to treat each case separately.

\bigskip

\emph{Okounkov's conjecture.}
Let us finish this introduction by mentioning one other motivation for this work. In \cite{O13}, A. Okounkov conjectured that
the graded dimensions of the root spaces $\mo[\alpha]\subset\mo$ are, after a suitable grading shift, precisely given by the Kac polynomial $A_{\alpha}(t)$. This is a generalization of the Kac conjecture, proved in
\cite{Ha10}, stating that for quivers with no $1$-cycles the multiplicity of the root $\alpha$ in the Kac-Moody algebra 
$\mathfrak{g}_Q$ is equal to the constant term $A_{\alpha}(0)$. Indeed, one expects to have an isomorphism 
$\mathfrak{g}_Q \simeq\mo_Q[0]$, where the grading in $\mo_Q$ is counted from middle dimension up.
In other words, the Lie algebra $\mo_Q$ is a graded extension of $\mathfrak{g}_Q$, whose character is conjecturally given by the \textit{full} Kac polynomials rather than their constant terms. Our Theorem~A(d) proves a variant of the above conjecture~: the graded character of the cohomological Hall algebras $\Y^1$ and $ \Y^0$ are encoded by the full \textit{nilpotent} Kac polynomials 
$A^0_{\alpha}(t)$ and $ A^1_{\alpha}(t)$. Note that, contrary to that of Maulik and Okounkov, our construction does not directly provide a construction of an underlying Lie algebra. See some recent work of Davison and Meinhardt in that direction in \cite{DM}. 
Moreover, we expect the discrepancy between the various types of Kac polynomials involved to correspond to different gradings on the \textit{same} Lie algebra. Indeed, note that we have $A_{\alpha}(1)=A^1_{\alpha}(1)=A^0_{\alpha}(1)$, see \cite{BSV}.

\bigskip

\emph{Plan of the paper.} \hfill{}

\smallskip

- \S 2 is essentially a reminder of some standard properties of equivariant Borel-Moore homology and equivariant Chow groups. We use both cohomological theories simultaneously throughout the paper. We also provide a proof of
the folklore fact that the counting polynomial and equivariant Poincar\'e polynomials of a pure $G$-variety $X$ are equal if $X$ is of polynomial count (and $G$ is a product of general linear groups). 

\smallskip

- In \S 3 we introduce the varieties $\M(v)$ of representations of the preprojective algebra $\Pi$ of an arbitrary quiver $Q=(I,\Omega)$ as well as its Lagrangian subvarieties $\Lambda^\flat(v)$ for $\flat \in \{0,1\}$. In \S 3.5 we describe the stratification of $\Lambda^1(v)$ responsible for the crystal structure, see \cite{B14}. This is used in our proof of Theorem B in order
to reduce ourselves to one vertex quivers. The geometry of $\Lambda^0(v)$ for one vertex quivers is the subject of \S 3.6 where the
real, elliptic and hyperbolic cases are dealt with in details. After recalling the definition of quiver varieties, we study in \S 3.8
some generalized Hecke correspondences. We prove that they are Lagrangian local complete intersections in Proposition~\ref{prop:LAG}, and that they are irreducible in the hyperbolic one-vertex case in Proposition~\ref{prop:irrheckenu}.

\smallskip

- In \S 4 we prove Theorem~A. The purity statement is proved using a compactification of $\Lambda^\flat(v)$ constructed out
of Lagrangian quiver varieties. Statements (b) and (d) follow, using the point count computations in \cite{BSV}. We also consider the 
structure of $H_*^{T \times G(v)}(\Lambda^{\flat}(v))$ as a $H^*_{T \times G(v)}$-module and show that it is torsion free using
some recent work of B. Davison on dimensional reduction. Finally, we briefly state the Kirwan surjectivity result for fixed point quiver varieties.

\smallskip

- The algebras $\Y^{\flat}$ are defined in \S 5, which also contains the proof of statements (a), (c) and (d) of Theorem~B. The most delicate part is (d). We first use the crystal stratification of $\Lambda^1(v)$ to reduce ourselves to one-loop quivers in Proposition 
\ref{prop:2.2}, then we use the compactification via quiver varieties and finally conclude using the Kirwan surjectivity for graded quiver 
varieties in Theorem \ref{thm:Kirwangraded} and  Proposition \ref{prop:2.6}. We also define the extension $\bfY^{\flat}$ of $\Y^\flat$, by adding a 
\emph{Cartan part}, and construct an action of the resulting algebras on the spaces $F_w$ for any $w$, thus completing the proof of Theorem~B.

\smallskip

- We have postponed to the appendix the proof of the Kirwan surjectivity result for local quiver varieties as well as
some folklore homological and geometric results concerning preprojective algebras of arbitrary quivers.

\bigskip

\medskip

\centerline{\bf{\large{Acknowledgements}}}

\smallskip

We would like to thank T. Bozec, F. Charles,  M. McBreen, B. ~Davison, H. Nakajima, A. Negut and A.~Okounkov for useful discussions and correspondences. Special thanks are due to B.~Crawley-Boevey for providing us the proof of Proposition~\ref{P:CYpreproj} and B.~Davison for pointing out an omission in the hypothesis of Proposition~4.6.

\newpage

\section{Equivariant homology}

In this section we set $\k=\bbC$. 

\subsection{Definition}\label{sec:BM}\hfill\\

By a \emph{variety} we mean a reduced scheme over an algebraically closed field. All schemes considered here will be reduced.

\smallskip

Given an algebraic variety $X/\k$, let
$H_*(X)$ be the Borel-Moore homology group (= the locally finite singular homology group) of $X$
and let $\A_*(X)$ the Chow group of $X$, both with rational coefficients.
In particular, we have $H_i(X)=H_c^i(X)^\vee$ where $H^*_c(X)$ is the cohomology group with compact support. 
By \cite{F98}, the additive functor $A_*$ is an 
\emph{oriented Borel-Moore homology} in the sense of \cite[\S 5]{LM07}.
This means in particular that there are pullback and refined pullback maps for any
local complete intersection morphisms, pushforward maps
for proper morphisms, external products and Chern classes 
which satisfy all the properties mentioned there.
See below for more details. The functor $H_*$ is also oriented Borel-Moore homology.
In particular, locally finite singular homology groups admit refined pullback maps. 
Note that Fulton's definition of the refined pullback maps
uses only the standard operations (proper pushforward, smooth pullback, external products and specializations)
whose construction can be found in \cite{DV76} in the setting of $H_*$. 
See \cite[\S 6]{LM07} for more details.
Let $A^*$ and $H^*$ be the corresponding oriented cohomology functors.
Thus $A^*$ is the Chow cohomology group, or operational Chow group,
introduced by Fulton and MacPherson.
We write $M_*$ for either $H_{*}$ or $\A_*$ and $M^*$ for either $H^{*}$ or $\A^*$.
Set $\varepsilon=2$ if $M_{*}=H_*$ and $\varepsilon=1$ if $M_{*}=\A_*$.

\smallskip

Now, assume that $X$ is equipped with the action of a linear algebraic group $G/\k$.
We always assume that $X$ is quasi-projective and that we have fixed a very ample $G$-linearized
line bundle over $X$.
Thus, the variety $X$ embeds equivariantly in a projective space with a linear $G$-action. 

\smallskip
  
Let $H_*^G(X)$ and $\A_*^G(X)$ be the equivariant Borel-Moore homology and the equivariant Chow group of $X$.
Standard references for equivariant Borel-Moore homology and equivariant Chow groups are \cite{B0}, \cite{Gr} and
\cite[\S 2.2, \S 2.8]{EG98}.
We write $M_*^G$ to denote either $H_{*}^G$ or $\A_*^G$.
Let $ M^*_G$ be the corresponding equivariant cohomology functor.
More precisely $H^*_G(X)$ is the equivariant cohomology group while
$\A^*_G(X)$ is the \emph{equivariant operational Chow group} defined in \cite{EG98}.
Let $g$ be the dimension of $G$.
If $X$ is $n$-dimensional then for each integer $j\geqslant i$
we have  
\begin{align}\label{A}
 M^G_j(X)=M_{j+\varepsilon(l_i-g)}(X\times_G U_i)
\end{align}
where $U_i$ is an open set with a free 
$G$-action in a finite dimensional representation of $G$ such that the complement has
codimension $\geqslant n-i$ and $l_i$ is the dimension of $U_i$.
We call the quotient $X\times_G U_{i}$ an \emph{approximation} to the Borel construction $X_G:=X\times_GEG$ of $X$.
In general, it  is only representable by an algebraic space. 
However, since we only consider the case where $G$ is a product of linear groups, we may choose the approximation such that
$X\times_G U_{i}$ and $U_{i}/G$ are both $\k$-varieties by \cite[lem.~7, prop.~23]{EG98}.


\smallskip


\smallskip

\subsection{Equivariant homology and homology of Artin stacks}\hfill\\

The \emph{quotient stack} of a $G$-variety $X$ by $G$ is the Artin stack associated with
the groupoid $G\times X\to X\times X$.  We denote it by the symbol $X/G$. We use
the symbol $[X/G]$ for the fundamental class of $X/G$ in $M_*(X/G)$, and the symbol $X/\!\!/G$ for the categorical quotient.

\smallskip

A morphism of quotient stacks $f:X/G\to Y/H$ is \emph{safe} if it is given by a compatible pair $(f^\flat,f^\sharp)$ 
consisting of a scheme homomorphism
$f^\flat:X\to Y$ and a surjective algebraic group homomorphism $f^\sharp:G\to H$ with unipotent kernel.
The relative dimension of $f$ is the relative dimension of $f^\flat$ minus the dimension of the
kernel of $f^\sharp$. We only consider safe morphisms.

\smallskip

Assume that there is a stack isomorphism $X/G\simeq Y/H$. 
Let $g,h$ be the dimension of $G,H$. By \cite[prop.~16]{EG98} we have an isomorphism 
$M_{*+\varepsilon g}^{G}(X)\simeq M_{*+\varepsilon h}^{H}(Y).$
As a consequence, we can define the Borel-Moore homology or the Chow group of $X/G$ by 
\begin{align}\label{B}M_{*}(X/G)=M_{*+\varepsilon g}^{G}(X).
\end{align}

\smallskip

Any $G$-invariant closed subset $Y$ of $X$ admits an equivariant fundamental class $[Y]$ in $M_*^G(X)$.
For any $G$-invariant closed subset $Y$ of $X$, the fundamental class of $Y/G$ 
is the element $[Y/G]$ in $M_{*}(X/G)$
which coincides under \eqref{B} with the equivariant fundamental class of $Y$ in $M_*^G(X)$.
If $Y$ is equidimensional then we have $$\deg([Y/G])=\varepsilon\dim(Y/G)=\varepsilon\dim Y-\varepsilon g.$$ 
If $X$ has dimension $d$ then 
$M_i(X/G)=0$ if $i>\varepsilon(d-g),$ while 
$M_{\varepsilon(d-g)}(X/G)$ is spanned by the equivariant fundamental classes of the irreducible components of $X$.

\smallskip

Finally, if $X$ is an $H\times G$-variety such that the quotient map $f:X\to X/\!\!/ G$ is an $H$-equivariant principal $G$-bundle, 
then we have a \emph{descent} $M_H^*$-linear isomorphism
$$f^\heartsuit:
M_*^{H}(X/\!\!/G)\to M_{*+\varepsilon\,g}^{H\times G}(X).$$

\smallskip

\subsection{Properties}\hfill\\

The functors $M_*^G$, $M_G^*$ satisfy the same properties as $M_*$, $M^*$, for which we refer to  
\cite{CG}, \cite{DV76}, \cite{EH}, \cite{LM07}.
Let us recall briefly a few facts. 

\subsubsection{Poincar\'e duality} 
The cup product equips $M_*^G(X)$ with a  graded ring structure, 
and there is a cap product $\cap$ making $M_*^G(X)$ into an $M^*_G(X)$-module.
We abbreviate $$M_G^*= M^*_{G}(\text{Spec}(\k)).$$ 
If $X$ is pure dimensional there is a map
$$M_G^*(X)\to M^G_{\varepsilon\dim X-*}(X),\ \alpha\mapsto \alpha\cap[X].$$
If $X$ is smooth the intersection product $\cap$
on $M_*^G(X)$ is induced by the product $\cup$ on $M^*_G(X)$ via this map, which is invertible.

\subsubsection{Excision}

If $Y\subseteq X$ is a $G$-invariant closed subset, there is the localization exact sequence 
$\A_*^G(Y)\to \A_*^G(X)\to \A_*^G(X\setminus Y)\to 0.$

\smallskip

\subsubsection{Mayer-Vietoris}
If $Y_1,$ $Y_2$ are $G$-invariant closed subsets of $X$, the Mayer-Vietoris exact sequence in equivariant Chow groups is
$$\A_*^G(Y_1\cap Y_2)\to\A_*^G(Y_1)\oplus \A^G_*(Y_2)\to\A_*^G(Y_1\cup Y_2)\to 0.$$


\subsubsection{Pullback}

The functor $M_*^G$ has the same functoriality as $M_*$. See, e.g., \cite[\S 2.3]{EG98} for Chow groups.
Recall that a closed immersion of $G$-varieties $i : Z \to X$ is called a \emph{regular embedding} if
the ideal sheaf of $Z$ in $X$ is locally generated by a regular sequence. A
\emph{local complete intersection morphism}, an \emph{l.c.i.~morphism} for short,
is a morphism $f : Z \to X$ which admits a
factorization as $f = q \circ i$, where $i$ is a regular embedding and
$q$ is a smooth, quasi-projective morphism.
For any l.c.i.~morphism $f$ of 
relative dimension $d$ there is a pullback morphism which is a graded vector space homomorphism
$$f^* : M_*^G(X ) \to M^G_{*+\varepsilon d}(Z).$$

\smallskip

An \emph{affine space bundle} of rank $d$ over $X$ is a map $f:Z\to X$ such that $X$ can be covered by open affine subsets 
$U_i$ with an isomorphism
$f^{-1}(U_i)\simeq\bbA^d\times U_i$ under which the map $f$ corresponds to the projection on the second factor.
If the map $f$ is a $G$-equivariant affine space bundle then the pullback $f^*$ is surjective.
If $f$ admits a $G$-equivariant section  then $f^*$ is an isomorphism.

\smallskip

\subsubsection{Refined pullback}

Consider the following cartesian square of $G$-varieties
\begin{align}\label{DDIAG}
\begin{split}
\xymatrix{W\ar[r]^{g'}\ar[d]_{f'}&Z\ar[d]_f\\Y\ar[r]^{g}&X.}
\end{split}
\end{align}
If $f$ is an l.c.i.~morphism of
relative dimension $d$, we have a refined pullback homomorphism
$$(f,f')^! : M_*^G(Y ) \to  M^G_{*+\varepsilon d}(W).$$
If no confusion is possible we abbreviate $f^!=(f,f')^!$. 
We use the following properties of the refined pullback~:
the \emph{multiplicativity} 
(i.e., $f^!(x\cap y)=(f')^*(x)\cap f^!(y)$ for all $x\in M^*_G(Y),$ $y\in M_*^G(Y)$), 
the \emph{proper base change property}, 
the \emph{functoriality with respect to smooth morphisms} and the \emph{excess intersection formula}
as in \cite[thm.~6.6.6, thm.~6.6.9]{LM07} respectively.
It follows from the excess intersection formula that $f^!=(f')^*$ whenever
the map $f'$ is an l.c.i.~morphism of the same relative dimension as $f$.

\smallskip

\subsubsection{Transversality}
Irreducible subvarieties $Y,$ $Z$ of a pure dimensional
variety $X$ are \emph{dimensionally transverse} if for every irreducible component $W$ of $Y\cap Z$
we have $\codim W=\codim Y+\codim Z$. 
They are \emph{generically transverse} if every irreducible component of $Y\cap Z$ contains a point $x$ at which 
$X,Y,Z$ are smooth and the tangent spaces satisfy $T_xY+T_xZ=T_xX$. 
Note that $Y$ and $Z$ are generically transverse if and only if they are dimensionally transverse and 
each irreducible component of $Y\cap Z$ is reduced and contains a smooth point of $X$.

\smallskip

Now, let $X$ be a smooth $G$-variety and $Y$, $Z$ be $G$-equivariant subvarieties.
If $Y,$ $Z$ are generically transverse then the intersection product on
$M_*^G(X)$ is such that $[Y]\cap [Z]=[Y\cap Z]$. More generally, 
if $Y,$ $Z$ are dimensionally transverse and l.c.i.~at the generic point of every component of $Y\cap Z$ then we still have 
$[Y]\cap [Z]=[Y\cap Z]$.

\smallskip

\subsubsection{The cycle map}
There is a cycle map $\cl:\A_*^G(X)\to H^G_{2*}(X)$ defined in
\cite[\S 2.8]{EG98}.
It is a degree doubling homomorphism which is natural with respect to pullback by l.c.i.~morphisms or pushforward by proper maps.

\smallskip

\subsubsection{Chern classes}
Equivariant vector bundles on $X$ have equivariant Chern classes in $M^*_G(X)$. 
For any $G$-equivariant vector bundle $\calE$ let $\ch(\calE)=\sum_{l\geqslant 0}\ch_l(\calE)$
be its \emph{equivariant Chern character,} with $\ch_l(\calE)\in M^{\varepsilon l}_G(X)$ for each $l$.

\smallskip

\subsection{Mosaic}\hfill\\

Let us finish this section with a few reminders.

\smallskip

First, given a variety $X$ with an action of a linear algebraic group $G$, a \emph{$G$-equivariant cycle} on $X$ is a 
$\bbQ$-linear combination of $G$-invariant closed subvarieties of $X$. It can be viewed as an element in $M_*^G(X)$.
If $X$ is a symplectic manifold then a cycle on $X$ is Lagrangian if each component is a Lagrangian subvariety.

\smallskip

Next, for any closed subset $Y\subseteq X$ of a $G$-variety $X$ we say that a class in
$M_*^G(X)$ is \emph{supported on $Y$}
if it comes from an element in $M_*^G(Y)$, or, equivalently, if its restriction to $X\setminus Y$ is zero.

\smallskip

Finally, a commutative diagram is called a \emph{fiber diagram} if it consists of squares which are cartesian.

\smallskip

\subsection{Purity and polynomial count}\label{sec:purity} \hfill\\

Recall that $X$ is an algebraic variety $X/\k$ and that $\k=\bbC$.
Using Deligne's construction of mixed Hodge structure on relative cohomology, one can define a well-behaved MHS on  $H_c^*(X)$.
Since $H_i(X)$ is the dual of $H^i_c(X)$ for each degree $i$, 
it is equipped with a MHS as well, which is functorial for open immersions and proper maps. 
See, e.g., \cite[\S 6.3.1]{PS08}. 
We say that $H_*(X)$ is even if $H_i(X)=0$ whenever $i$ is odd, and that $H_*(X)$ is pure if the
MHS is pure.

\smallskip

We say that $X$ has \emph{polynomial count} if there is a polynomial $P(t)\in\bbQ[t]$
and an $R$-form $X_R$ of $X$ over a finitely generated subring $R\subseteq\bbC$ such that
for any morphism $R\to\fq$ to a finite field, the number of $\fq$-points of the scheme
$X_{\fq}:=X_R\otimes_R\fq$ is $P(q)$. We abbreviate $X(\fq)$ for $X_{\fq}(\fq)$.
Note that we have
$$|(X/G)(\fq)|=|X(\fq)|\,/\,|G(\fq)|.$$

\smallskip

For $G=GL(n)$ we may choose $U_i$
to be the Stiefel variety $\Mon(\k^n,\k^w)$ consisting of injective linear maps $\k^n\to\k^w$ with $w$ large enough.
In this case the variety $U_i/G$ is a Grassmanian, hence it has polynomial count.
Since we only consider the case where $G$ is a product of linear groups, from now on 
\emph{we  always assume that $U_i/G$ has polynomial count for all integers $i$}.

\smallskip

Since Deligne defined a MHS for any simplicial variety, his construction holds as well for 
the Borel construction $X_G$. This yields a MHS on $H^*_G(X)$. 
Using the approximation to $X_G$ we also define a MHS on $H_*^G(X)$ such that for each $i$
the MHS on $H_j^G(X)$ is the one on $H_{j+2(l_i-g)}(X\times_GU_i)$ for all $j\geqslant i$.
Using the functoriality of MHS, see, e.g., \cite[\S 5.5.1]{PS08}, one checks as in \cite[def-prop.~1]{EG98} that
the MHS on $H_j^G(X)$ does not depend on the choice of the integer $i$.

\smallskip

The \emph{$E$-polynomial} of $H_*^G(X)$ is indeed a power series. It is given by
\begin{align}\label{E}
E(H_*^G(X);x,y)=\sum_{p,q,i}(-1)^i\dim_\bbC(\gr_F^p\,\gr^W_{p+q} (H_i^G(X)\otimes_\bbQ\bbC))\,x^py^q,
\end{align}
where the (increasing) filtration $(W_p)$ is the weight filtration on $H_i^G(X)$ and
the (decreasing) filtration $(F^p)$ is the Hodge filtration on $H_i^G(X)\otimes\bbC$.
We have the following \cite[thm.~2.1.8]{HRV08}.

\begin{proposition}[Katz]\label{prop:Katz} 
Assume that $X$ has polynomial count with count polynomial $P(t)\in\bbZ[t]$. 
Then, the $E$-polynomial of $H_*(X)$ is given by $E(H_*(X);x,y)=P(xy)$.
\qed
\end{proposition}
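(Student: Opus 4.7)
The strategy I would follow is the standard one for this classical theorem of Katz: translate the point-counting hypothesis into a statement about Frobenius eigenvalues on $\ell$-adic étale cohomology, then use Deligne's weight theory plus the comparison of $\ell$-adic and Hodge-theoretic weights to read off the Hodge structure.

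First, after possibly enlarging $R$ by inverting finitely many elements, I would arrange that $X_R \to \mathrm{Spec}(R)$ is of finite type and that for all closed points $s \in \mathrm{Spec}(R)$ with residue field $\fq$, the base change $X_{\fq}$ is a $\fq$-variety with $|X_{\fq}(\fq)| = P(q)$, and that the comparison isomorphisms $H^*_c(X_{\overline{\fq}},\mathbb{Q}_\ell) \simeq H^*_c(X;\bbQ) \otimes_{\bbQ} \mathbb{Q}_\ell$ hold compatibly with the Galois / Hodge structures (using Artin's comparison, smooth and proper base change as needed, for $\ell \neq \mathrm{char}(\fq)$). For such $s$, Grothendieck's Lefschetz trace formula gives
$$P(q)=|X_{\fq}(\fq)| = \sum_i (-1)^i \mathrm{tr}(\mathrm{Frob}_q \mid H^i_c(X_{\overline{\fq}},\mathbb{Q}_\ell)).$$

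The heart of the proof is a weight-separation argument. By Deligne's theorem on weights in \'etale cohomology, the Frobenius eigenvalues on $\mathrm{gr}^W_k H^i_c$ are Weil numbers of weight $k$, i.e.\ algebraic integers whose archimedean absolute values all equal $q^{k/2}$. Grouping the contributions by weight, I would write
$$P(q)=\sum_k \Bigl(\sum_i (-1)^i \mathrm{tr}(\mathrm{Frob}_q \mid \mathrm{gr}^W_k H^i_c)\Bigr),$$
and, varying $q$ over the infinitely many residue characteristics provided by spreading out, exploit the fact that the weight-$k$ contribution is bounded in modulus by $C_k q^{k/2}$. Since $P(q) \in \bbZ[q]$, I can peel off the highest degree term and inductively match integer powers of $q$ with weight-$2j$ contributions, forcing (i) odd weight contributions to cancel in the alternating sum, and (ii) the weight-$2j$ virtual trace to equal $a_j q^j$, where $P(t) = \sum_j a_j t^j$. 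Rigidity of pure Weil representations (combined with Chebotarev-style density for Frobenius traces) then forces the virtual weight-$2j$ Galois representation $\sum_i (-1)^i \mathrm{gr}^W_{2j} H^i_c$ to equal $a_j \cdot \mathbb{Q}_\ell(-j)$.

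Finally, I would invoke Deligne's comparison between the $\ell$-adic and Hodge-theoretic weight filtrations on $H^*_c$, which identifies this virtual Galois piece with the virtual mixed Hodge structure $\sum_i (-1)^i \mathrm{gr}^W_{2j} H^i_c(X;\bbQ)$. The result is that this virtual MHS is equivalent to $a_j\, \bbQ(-j)$, hence is purely of Hodge type $(j,j)$; substituting into the definition \eqref{E} yields $E(H_*(X);x,y)=\sum_j a_j x^j y^j = P(xy)$. The principal obstacle is the rigidity/separation step combined with the weight comparison: both are deep inputs (the Weil conjectures and the compatibility of weight filtrations), but both are standard and can be cited as black boxes; the elementary part of the argument is purely the asymptotic separation of weights by powers of $q^{1/2}$.
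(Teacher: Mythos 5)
The paper itself does not prove this proposition; it cites it as \cite[thm.~2.1.8]{HRV08} (Katz's appendix), so your sketch has to stand on its own. Its first half is correct and standard: spreading out, the Grothendieck--Lefschetz trace formula, and the weight-separation argument do show that the virtual multiset of Frobenius eigenvalues of $\sum_i(-1)^i H^i_c$ is $\{q^j$ with multiplicity $a_j\}$, hence, via the compatibility of the $\ell$-adic and Hodge-theoretic weight filtrations, that $\sum_i(-1)^i\dim\gr^W_{k}H^i_c(X;\bbQ)$ equals $a_{k/2}$ for $k$ even and $0$ for $k$ odd. But that only establishes the one-variable specialization $E(H_*(X);t,t)=P(t^2)$.

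Your final step contains a genuine gap. The weight comparison transfers the dimensions of the weight-graded pieces and nothing more; it does not ``identify the virtual Galois piece with the virtual mixed Hodge structure,'' because the Hodge filtration $F$ is extra data on the Betti side that is invisible to the $\ell$-adic Galois representation. Knowing that the virtual weight-$2j$ piece has dimension $a_j$ does not make it of type $(j,j)$: a polarizable pure Hodge structure of weight $2$ and dimension $2$ may have $h^{2,0}=h^{0,2}=1$, and virtual Hodge structures allow even more slack. The entire content of the two-variable identity $E(x,y)=P(xy)$ is the vanishing of the off-diagonal virtual Hodge numbers $e^{p,q}=\sum_i(-1)^ih^{p,q}(H^i_c)$ for $p\neq q$, and your argument never addresses the Hodge filtration at all. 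Closing the gap requires an input tying $F$ itself to the Frobenius data --- for instance a $p$-adic de Rham/Hodge--Tate comparison applied weight-graded piece by weight-graded piece, which upgrades the identity of semisimplified virtual Galois representations $\sum_i(-1)^i[\gr^W_{2j}H^i_c]=a_j[\bbQ_\ell(-j)]$ to $\sum_i(-1)^i\dim\gr_F^p\gr^W_{2j}H^i_c=a_j\delta_{p,j}$. This is where the real work in Katz's proof lies, and it is missing from your proposal.
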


\smallskip

We deduce the following.

\smallskip

\begin{corollary}\label{cor:poincare} Assume that $H_*(X/G)$ is pure and $X$ has polynomial count. 
Then, the grading of $H_*(X/G)$ is even and its Poincar\'e polynomial is given by
\begin{align}\label{PC}
\sum_i(-1)^i\dim_\bbQ(H_i(X/G))\,q^{i/2}=\sum_i\dim_\bbQ(H_{2i}(X/G))\,q^{i}=|(X/G)(\fq)|
\end{align}
for all finite fields $\fq$.
\end{corollary}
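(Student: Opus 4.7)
The plan is to apply Katz's theorem (Proposition~\ref{prop:Katz}) to the finite-dimensional approximation $Y_i := X\times_G U_i$ of the Borel construction, and then transfer the conclusions to the stack $X/G$ via the defining isomorphism $H_j(X/G)\simeq H_{j+2l_i}(Y_i)$ of mixed Hodge structures from \S\ref{sec:purity}.

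First, I would verify that $Y_i$ is of polynomial count. Since $G$ is (by our running assumption) a product of general linear groups, hence connected, Lang's theorem ensures every $G$-torsor over $\mathrm{Spec}(\fq)$ is trivial; applied to the principal $G$-bundle $X\times U_i\to Y_i$, this yields
$$|Y_i(\fq)|=|X(\fq)|\cdot|U_i(\fq)|/|G(\fq)|=P_X(q)\,R_i(q),$$
where $P_X(t)$ and $R_i(t)$ are the count polynomials of $X$ and $U_i/G$ respectively. Proposition~\ref{prop:Katz} then gives $E(H_*(Y_i);x,y)=P_X(xy)R_i(xy)\in\bbZ[xy]$, so the MHS on $H_*(Y_i)$ is of Hodge-Tate type (every class has Hodge type $(p,p)$). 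Via the MHS identification with $H_*(X/G)$, this Hodge-Tate property descends: together with the purity hypothesis, each non-zero $H_j(X/G)$ is pure Hodge-Tate of some type $(p,p)$. Since the weight is determined by the homological degree $j$ (up to the canonical Tate twist making the MHS independent of $i$) and Hodge types are integer pairs, $j$ must be even, proving that the grading of $H_*(X/G)$ is even.

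For the Poincar\'e polynomial identity, I would use that for a pure, even, Hodge-Tate MHS $V$, specialization of the $E$-polynomial at $x=y=\sqrt{q}$ recovers the Poincar\'e polynomial $\sum_k\dim V_{2k}\,q^k$ (all signs $(-1)^{2k}=+1$). The factorization $[Y_i]=[U_i]\cdot[X/G]$ in the Grothendieck ring of stacks, combined with multiplicativity of $E$-polynomials under Zariski-locally trivial fibrations (which applies because $G$ is a special group, $U_i/G$ is simply connected, and $H_*(U_i/G)$ is pure Tate with $E$-polynomial $R_i(xy)$), yields $E(H_*(X/G);x,y)=P_X(xy)/|G(xy)|$. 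Setting $xy=q$ gives the count polynomial $|X(\fq)|/|G(\fq)|=|(X/G)(\fq)|$, which thus equals $\sum_k\dim H_{2k}(X/G)\,q^k$ as claimed.

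The main obstacle is the last identity $E(H_*(X/G))=P_X(xy)/|G(xy)|$, i.e., isolating the contribution of the stack $X/G$ inside the $E$-polynomial of the approximation $Y_i$. This requires either a careful Leray spectral sequence argument for the bundle $Y_i\to U_i/G$ (exploiting the Zariski local triviality from $G$ being special, the simple connectedness of $U_i/G$, and weight-based degeneration to cancel the factor $R_i(xy)$), or a direct argument in the Grothendieck ring of mixed Hodge modules on stacks, where the relation $[X/G]=[X]/[G]$ yields $E(H_*(X/G))=E([X])/E([G])=P_X(xy)/|G(xy)|$.
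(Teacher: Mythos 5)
Your proposal is correct in outline and follows the same strategy as the paper: apply Katz's theorem to the approximations $Y_i=X\times_GU_i$, identify the $E$-polynomial of $H_*(X/G)$ with $P_X(xy)/P_G(xy)$, and then use purity to upgrade the $E$-polynomial to the Poincar\'e polynomial and to force evenness. Two remarks. First, the step you single out as "the main obstacle" is much easier than you make it: by \eqref{A} and \eqref{B} the mixed Hodge structure on $H_j(X/G)$ \emph{is}, by definition, the one on $H_{j+2(l_i-g)}(Y_i)$ for $i\ll j$, so the $E$-polynomial of $H_*(X/G)$ is obtained from $E(H_*(Y_i);x,y)=P_{U_i/G}(xy)P_X(xy)$ simply by performing the degree shift and letting $i\to-\infty$; since $P_{U_i/G}(q)=|U_i(\fq)|/|G(\fq)|$ and $U_i$ is the complement of a high-codimension subset of $\bbA^{l_i}$, the shifted factor converges coefficientwise to $1/P_G$, giving $E(H_*(X/G);x,y)=P_X(xy)/P_G(xy)$ directly. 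No Leray spectral sequence, no appeal to $G$ being special, and no Grothendieck ring of stacks is needed; this is exactly what the paper does. Second, there is a logical misordering in your Hodge-theoretic step: from $E(H_*(Y_i);x,y)\in\bbZ[xy]$ alone you \emph{cannot} conclude that the MHS on $H_*(Y_i)$ is of Hodge--Tate type, because the $E$-polynomial is an alternating sum and off-diagonal Hodge numbers sitting in degrees of opposite parity could cancel. The correct order is: purity of $H_*(X/G)$ means each weight occurs in at most one homological degree, so there is no cancellation in the coefficient of $x^py^q$; only then does $E\in\bbZ[xy]$ force every class to have type $(p,p)$, hence even weight, hence even degree, and $E(t,t)$ computes the Poincar\'e polynomial. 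Since you do invoke purity immediately afterwards, this is a repairable slip rather than a fatal gap, but as written the intermediate claim about $H_*(Y_i)$ is false.
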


\begin{proof} Let $P_X(t)$ be the count polynomial of $X$.
For each integer $i$ the approximation $X\times_GU_i$ has polynomial count
with count polynomial $P_{U_i/G}(t)\,P_X(t)$, the product of the count polynomials of $U_i/G$ and $X$.
From Proposition \ref{prop:Katz} we deduce that
$$E(H_*(X\times_GU_i);x,y)=P_{U_i/G}(xy)\,P_X(xy).$$
Taking the limit $i\to-\infty,$ from \eqref{A}, \eqref{B} we get that the $E$-polynomial of $H_*(X/G)$ is
$$E(H_*(X/G);x,y)=P_X(xy)/P_G(xy)=E(H_*(X);x,y)/E(H_*(G);x,y).$$
Since $H_*(X/G)$ is pure, we deduce from \eqref{E} that it is also even, hence its Poincar\'e polynomial is 
$$P_X(t^2)/P_G(t^2)=E(H_*(X/G);t,t)=\sum_i\dim_\bbQ(H_i(X/G))\,t^{i}=\sum_i(-1)^i\dim_\bbQ(H_i(X/G))\,t^{i}.$$
Therefore, we have
\begin{align*}
\sum_i(-1)^i\dim_\bbQ(H_i(X/G))\,q^{i/2}
&=|(X/G)(\fq)|.
\end{align*}
\end{proof}

\medskip

\section{The semi-nilpotent variety}\hfill

Let $\k$ be any algebraically closed field.

\subsection{Quivers} \label{sec:quiver}\hfill\\

Let $Q=(I,\Omega)$ be a finite quiver with set of vertices $I$ and set of arrows $\Omega$.
Let $Q^*=(I,\Omega^*)$ be the opposite quiver, with the set of arrows $\Omega^*=\{h^*\,;\,h\in \Omega\}$
where $h^*$ is the arrows obtained by reversing the orientation of $h$.
The double quiver is $\bar Q=(I,\bar\Omega)$ with $\bar\Omega=\Omega\sqcup\Omega^*$.
We set $\varepsilon(h)=1$ if $h\in \Omega$ and $-1$ if $h\in \Omega^*$.
Let $h'$, $h''$ be the source and the target in $I$ of an arrow $h\in\Omega$.
For each vertex $i$ let 
$\Omega_{i\bullet}$ be the set of arrows in $\Omega$ with source $i$ and
$\Omega_{\bullet i}$ be the set of arrows with target $i$. Put
$\Omega_{ij}=\Omega_{\bullet j}\cap\Omega_{i\bullet}$ and
$\bar\Omega_{ij}=\Omega_{ij}\cup(\Omega_{ji})^*$.
If $i\neq j$ we write
$$q_i=|\Omega_{ii}|,\quad q_{ij}=|\Omega_{ij}|,\quad \bar q_{ij}=|\bar\Omega_{ij}|,\quad q=(q_{ij}\,;\,i,j\in I),\quad $$

\smallskip

Fix a tuple $v=(v_i\,;\,i\in I)$ in $\bbZ^I$.
The Ringel bilinear form $\langle \bullet,\bullet\rangle$ on $\bbZ^I$ is given by
$$\langle v,w\rangle=v\cdot w-\sum_{h\in\Omega}v_{h'}w_{h''},\quad v\cdot w=\sum_{i\in I}v_iw_i.$$
Let $(v,w)=\langle v,w\rangle+\langle w,v\rangle$ be the Euler bilinear form. To avoid confusions we may write
$(\bullet,\bullet)_Q$ for $(\bullet,\bullet)$. We set
\begin{align}d_v=v\cdot v-(v,v)/2.\end{align}
For each $i\in I$ let $\delta_i\in\mathbb{Z}^I$ denote the delta function at the vertex $i$.

\smallskip

Let $\k Q$ be the path algebra of $Q$.
A \emph{dimension vector} of $Q$ is a tuple $v\in\bbN^I$.
Let $\k^v$ denote the $I$-graded vector space $\k^v=\bigoplus_{i\in I}\k^{v_i}$.
We may abbreviate 
$$V=\k^v,\quad V_i=\k^{v_i}.$$
Let $Rep(\k Q,v)$ be the set of representations of $\k Q$ in $V$, with its natural structure of affine $\k$-variety.
We abbreviate 
\begin{align}\label{f1}
\R(v)=Rep(\k \bar Q,v).
\end{align}
An element of $\R(v)$ is a pair $\bar x=(x,x^*)$ where $x=(x_h\,;\,h\in\Omega)$ belongs to $Rep(\k Q,v)$ and 
$x^*=(x_h\,;\,h\in\Omega^*)$
belongs to $Rep(\k Q^*,v)$.
Since $\R(v)$ decomposes as
\begin{align*}
\R(v)=Rep(\k Q,v)\oplus Rep(\k Q^*,v),\end{align*}
it has a canonical structure of a symplectic vector space.

\smallskip

\subsection{The moment map and the preprojective algebra}\label{sec:moment}\hfill\\

The algebraic group $G(v)=\prod_{i\in I}GL(v_i)$ acts by conjugation on $\R(v)$, preserving the symplectic form.
Let $\frakg(v)$ be the Lie algebra of $G(v)$. 
The moment map for the action of $G(v)$ on $\R(v)$ is the map $\mu\,:\,\R(v)\to\frakg(v)$ given by
$$\mu(\bar x)=[x,x^*]=\sum_{h\in\Omega} [x_{h},x_{h^*}].$$
We define
$\M(v)=\mu^{-1}(0).$
The preprojective algebra is defined as
$$\Pi= \k \bar{Q} / \langle \mu(\bar x)\,;\,\bar x\in\R(v)\rangle.$$

\smallskip

\begin{proposition}\label{P:CYpreproj} Assume that $Q$ is not of finite Dynkin type. Then the algebra $\Pi$ is Calabi-Yau of homological dimension two, i.e., there are
functorial isomorphisms
\begin{equation}\label{E:prepro1}
\Ext^i_\Pi(M,N) \simeq \Ext^{2-i}_\Pi(N,M)^*, \qquad i=0,1,2
\end{equation}
for any finite-dimensional $\Pi$-modules $M$, $N$.
Moreover, we have
\begin{equation}\label{E:prepro}
\langle M,N\rangle_\Pi:= \sum_{i=0}^2 (-1)^i \dim(\Ext^i_\Pi(M,N)) = (\dim M,\dim N)_Q.
\end{equation}
\end{proposition}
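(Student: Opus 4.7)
The plan is to establish both statements simultaneously by exhibiting a self-dual projective bimodule resolution of $\Pi$ of length two, and then extracting the Serre duality and Euler form identity from it. Let $S = \k I$ denote the semisimple subalgebra spanned by the vertex idempotents $e_i$, and let $V$ be the $S$-bimodule spanned by the arrows of the double quiver $\bar Q$, equipped with the nondegenerate skew-symmetric $S$-bimodule pairing determined by $\langle h, h^* \rangle = \varepsilon(h)$. The fundamental complex of $\Pi$-bimodules to consider is
$$0 \to \Pi \otimes_S \Pi \xrightarrow{\alpha} \Pi \otimes_S V \otimes_S \Pi \xrightarrow{\beta} \Pi \otimes_S \Pi \xrightarrow{\mu} \Pi \to 0$$
where $\mu$ is multiplication, $\beta(e_{h'} \otimes h \otimes e_{h''}) = h \otimes e_{h''} - e_{h'} \otimes h$, and $\alpha$ is the transpose of $\beta$ with respect to the pairing on $V$. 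By construction, the image of $\mu \circ \alpha$ at $e_i \otimes e_i$ is precisely the $i$-component of the moment map relation $\sum_{h \in \bar\Omega} \varepsilon(h) [h, h^*]$, which vanishes in $\Pi$, so the sequence is at least a complex.

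The main technical step is to prove exactness, and in particular the injectivity of $\alpha$. Exactness at the rightmost two positions is formal, following from the definition of $\Pi$ as a quotient of the tensor algebra by the moment map relations together with a standard analysis of the Koszul-type complex associated to a quadratic algebra. Injectivity of $\alpha$ is the subtle point, since it fails exactly in the Dynkin case, where $\Pi$ is finite-dimensional. I would prove it by filtering $\Pi$ by path length, reducing to the analogous statement for the associated graded algebra, which in turn reduces to a concrete vanishing statement about the Koszul complex of $\bar Q$ modulo the moment map relations; the non-Dynkin hypothesis enters via the fact that the generalized Cartan matrix of $Q$ is not positive definite, which ensures that no nontrivial element of $\Pi \otimes_S \Pi$ maps to zero. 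This step is the main obstacle of the proof.

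Once exactness is established, tensoring the resolution with a finite-dimensional left $\Pi$-module $M$ over $\Pi$ produces a projective resolution of $M$ of length at most two, from which the vanishing of $\Ext^i_\Pi(M, N)$ for $i \geq 3$ is immediate. Moreover the bimodule resolution is visibly self-dual up to a shift by $2$: the two outer terms agree, the middle term is self-dual under the pairing on $V$, and the differentials $\alpha$ and $\beta$ are exchanged by transposition. This yields a quasi-isomorphism $\RHom_{\Pi^e}(\Pi, \Pi^e) \simeq \Pi[-2]$, i.e.\ the Calabi-Yau property of dimension two at the level of bimodules. Applying $- \otimes^{\mathrm{L}}_\Pi M$ and $\RHom_\Pi(-, N)$ and using finite-dimensionality of $M$ and $N$ over $\k$ then produces the functorial duality \eqref{E:prepro1} via standard Yoneda manipulations in the derived category.

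Finally, \eqref{E:prepro} is obtained by taking Euler characteristics. Writing $m = \dim M$ and $n = \dim N$ in $\bbN^I$ and applying $\Hom_\Pi(-, N)$ to the resolution $P_\bullet \twoheadrightarrow M$ derived above, one finds
$$\sum_{i=0}^{2}(-1)^i \dim \Ext^i_\Pi(M, N) = 2\, m \cdot n - \sum_{h \in \bar\Omega} m_{h''} n_{h'}.$$
Splitting the second sum into the contributions from $\Omega$ and $\Omega^*$ and invoking the definition $\langle m, n \rangle = m \cdot n - \sum_{h \in \Omega} m_{h'} n_{h''}$, this simplifies to $\langle m, n \rangle_Q + \langle n, m \rangle_Q = (m, n)_Q$, as required.
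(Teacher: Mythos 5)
Your complex is, after unwinding $\Pi\otimes_S\Pi=\bigoplus_i\Pi e_i\otimes e_i\Pi$ and $\Pi\otimes_S V\otimes_S\Pi=\bigoplus_{h\in\bar\Omega}\Pi e_{h''}\otimes e_{h'}\Pi$, exactly the bimodule complex \eqref{E:CBarg} used in the appendix, and the downstream steps (tensoring with $M$ to get \eqref{E:CBargMod}, the self-duality exchanging $M$ and $N$, the Euler-characteristic count giving $2\,m\cdot n-\sum_{h\in\bar\Omega}m_{h'}n_{h''}=(m,n)_Q$) all match the paper's. Packaging the duality as $\RHom_{\Pi^e}(\Pi,\Pi^e)\simeq\Pi[-2]$ and then specializing is a slightly more abstract route than the paper's direct observation that the complex computing $\Ext^\bullet(M,N)$ is the linear dual of the one computing $\Ext^\bullet(N,M)$, but both are fine.

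The genuine gap is exactly where you flag it: the injectivity of $\alpha$. Your sketch for this step is not an argument. Filtering by path length and passing to the associated graded gains nothing, since $\Pi$ is already graded by path length and the complex is homogeneous; and the assertion that non-positive-definiteness of the Cartan matrix ``ensures that no nontrivial element of $\Pi\otimes_S\Pi$ maps to zero'' is a restatement of the claim, not a proof of it --- indeed the same complex fails to be exact on the left precisely in the Dynkin case, so whatever mechanism you invoke must genuinely use the hypothesis. The paper closes this gap as follows: first it reduces to connected non-Dynkin $Q$ using that exactness of \eqref{E:CBarg} for a quiver implies exactness for any subquiver; then it invokes the Koszulity of $\Pi$ for non-Dynkin quivers and the resulting closed formula for the matrix-valued Hilbert series $H(t)$ in terms of the adjacency matrix $\bfQ$ (from \cite{MOV}, \cite{EE}). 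Since exactness is already known at the three rightmost positions and the differentials have degrees $1,1,0$, the graded dimension of $\Ker(f)$ equals the alternating sum of the Hilbert series of the four terms, which the closed formula shows to vanish identically; hence $f$ is injective. Without this (or an equivalent input such as the very-good-stack argument alluded to elsewhere in the appendix), your proof of both \eqref{E:prepro1} and \eqref{E:prepro} does not get off the ground, since everything rests on the resolution having length exactly two.
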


\smallskip

The above result is well-known to experts. We could not, however, locate a precise reference in this generality in the literature, and we give some details on the proof in the appendix. The restriction on the quiver is irrelevant to all the geometric constructions we need in this paper, as we can always embed a Dynkin quiver into a non-Dynkin one, and hence view a moduli space or stack of representations of a Dynkin quiver as a moduli space or stack of representations of a non-Dynkin quiver.

\smallskip

\subsection{The group action}\label{sec:groupaction}\hfill\\

Consider the group
\begin{align}\label{GO}
G_\Omega=\prod_{i\in I}SP(2q_i)\times\prod_{i\neq j}GL(q_{ij})\times G_\dil.
\end{align}
The second product is over all pairs $(i,j)$ in $I\times I$ such that $i\neq j$ and $G_\dil=\bbG_m$.
The group $G_\Omega$ acts on $\R(v)$ as follows. 
The factor $G_\dil$ acts by dilation on the summand $Rep(\k Q^*,v)$ of $\R(v)$. 
Write
\begin{align*}
\R(v)
&=\bigoplus_{i\in I}\Big(\End(V_i)\otimes\k^{q_i}\Big)\oplus\Big(\End(V_i)\otimes\k^{q_i}\Big)^*\oplus\\
&\,\,\,\quad\oplus\bigoplus_{i\neq j}\Big(\!\Hom(V_i,V_j)\otimes\k^{q_{ij}}\Big)\oplus
\Big(\!\Hom(V_i,V_j)\otimes\k^{q_{ij}}\Big)^*.
\end{align*}
Then $SP(2q_i)$ and $GL(q_{ij})$ act in the obvious way on the first and the second summands.
The zero set $\M(v)$ is preserved by the action of the groups $G(v)$ and $G_\Omega$.
Let $\theta,\theta^*$ be the cocharacters of $G_\Omega$ given by
\begin{align*}
\theta(z)=(z\oplus z^{-1}\,,\,z\,,\,z),\quad \theta^*(z)=(1\,,\,1\,,\,z).
\end{align*}
Thus $\theta(z)$ acts by multiplication by $z$ on the summand $Rep(\k Q,v)$ in $\R(v)$, 
while $\theta^*(z)$ acts by multiplication by $z$ on the summand $Rep(\k Q^*,v)$.
Fix a closed connected subgroup $ \TT$ of $G_\Omega$ which centralizes $\theta$.
To simplify, we assume $T$ to be a subtorus
\begin{align}\label{T}\TT=\TT_{\sp}\times\TT_{\dil},\quad
\TT_{\sp}\subseteq(\bbG_m)^\Omega,\quad\TT_\dil\subseteq G_\dil\end{align}
where a tuple $(z_h\,,\,z)$ acts by $z_h$ on $x_h$ and
by $z/z_h$ on $x_{h^*}$ for all $h\in\Omega$.

\smallskip

\subsection{Definition}\label{sec:sn}\hfill\\

Fix an increasing flag  $W$ of $I$-graded vector spaces in $V$
$$W=(\{0\}=W_0\subsetneq W_1\subsetneq\cdots\subsetneq W_r=V).$$
Then, we consider the closed subset $\Lambda_W$ of $\M(v)$ given by
\begin{equation}\label{E:seminilpdef}
\Lambda_{W}=\{\bar x\in \M(v)\,;\,x(W_p)\subseteq W_{p-1}\,,\,x^*(W_p)\subseteq W_{p}\}.
\end{equation}
Up to conjugacy by an element of $G(v),$ the flag $W$ is completely determined by the sequence of dimension vectors 
\begin{align*}\nu_1=\dim(W_1/W_0),\dots,\nu_r=\dim(W_r/W_{r-1}).
\end{align*}
The tuple $\nu=(\nu_1,\dots,\nu_r)$ is a \emph{composition of $v$}, i.e., it is a tuple of dimension vectors  
with sum $v$.  We write $\nu\vDash v$.
We say that the flag $W$ is \emph{of type $\nu$}.
Then, we define
\begin{align}\label{Lambda}\Lambda_{\nu}=G(v)\cdot\Lambda_{W}\subseteq \M(v),\end{align}
where the dot denotes the $G(v)$-action on $\M(v)$.

\smallskip

We say that a composition $\nu\vDash v$ is \emph{restricted}
if each $\nu_p$ is concentrated in a single vertex.
Then, we also say that the flag $W$ is \emph{restricted}.
The  \emph{semi-nilpotent variety} $\Lambda^0(v)$ and the \emph{strongly semi-nilpotent variety} $\Lambda^1(v)$ are the closed 
subsets 
of $ \M(v)$ given by
\begin{align}\label{decomposition}
\Lambda^0(v)=\bigcup_{\mu}\Lambda_{\mu},\quad\Lambda^1(v)=\bigcup_{\nu}\Lambda_{\nu},
\end{align}
where $\mu$ runs over the set of all compositions of $v$ and $\nu$ over the set of all restricted compositions of $v$.
We have an obvious closed embedding
$$\Lambda^1(v)\subseteq\Lambda^0(v).$$

\smallskip

The $T\times G(v)$-action on $\M(v)$ yields a $T\times G(v)$-action on $\Lambda^\flat(v)$ for each $\flat$.
In the next section we prove the following.

\begin{theorem}\label{thm:1.1} Assume that $\k=\bbC$. Let $\flat$ be either 0 or 1.\hfill 
\begin{itemize}[leftmargin=8mm]

\item[$\mathrm{(a)}$] $\Lambda^\flat(v)$ is a closed Lagrangian subvariety of $\R(v)$, of dimension $d_v$,

\item[$\mathrm{(b)}$] $H_*^{T\times G(v)}(\Lambda^\flat(v))$ is pure and even,


\item[$\mathrm{(c)}$] $\cl : \A_*^{T\times G(v)}(\Lambda^\flat(v))\to H_{2*}^{T\times G(v)}(\Lambda^\flat(v))$ is surjective,

\item[$\mathrm{(d)}$] $M_*^{T\times G(v)}(\Lambda^\flat(v))$ is free as an $M^*_{T}$-module.
\end{itemize}
\end{theorem}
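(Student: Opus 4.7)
The plan is to prove (a) by a stratified dimension count, (b) via a compactification by Lagrangian quiver varieties together with the decomposition theorem, (c) as a consequence of (b) combined with the generation of $H^*(\frakL(v,w))$ by algebraic classes, and (d) by a Davison-type dimensional reduction.

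For (a), I would exploit the stratification $\Lambda^\flat(v) = \bigcup_\nu \Lambda_\nu$ from \eqref{decomposition}. The defining conditions $x(W_p) \subseteq W_{p-1}$ and $x^*(W_p) \subseteq W_p$ make each $\Lambda_W$ manifestly isotropic in $\R(v)$, and a standard parabolic-induction dimension count, incorporating the moment map constraint, shows $\dim \Lambda_\nu \leq d_v$ uniformly in $\nu$. The reverse inequality is supplied by Proposition \ref{P:CYpreproj}, which forces each irreducible component of $\M(v)$ to have dimension at least $d_v$; the component $\Lambda_{(v)} = \{0\} \times Rep(\k Q^*, v)$ lies in $\Lambda^1(v) \subseteq \Lambda^0(v)$ and already achieves this bound. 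Thus $\Lambda^\flat(v)$ is pure of dimension $d_v$ and Lagrangian.

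The central step is (b). I would realize the stack $\Lambda^\flat(v)/G(v)$ as a limit of open substacks of the projective Lagrangian quiver varieties $\frakL(v,w) = \pi^{-1}(0)$. Fixing a framing $\k^w$ with $w \gg 0$, the semi-nilpotent framed stable locus $\tilde\Lambda^\flat(v,w)^s$ carries a free $G(v)$-action, and its quotient sits as an open subvariety of $\frakL(v,w)$ whose closed complement decomposes into strata built from framed preprojective stacks of strictly smaller dimension vector. The projectivity of $\pi \colon \frakM(v,w) \to \frakM_0(v,w)$ together with smoothness of its source lets the decomposition theorem give purity of $H^*(\frakL(v,w))$, and this persists after passing to an equivariant approximation $X \times_{T \times G(v)} U_i$ in the sense of \S 2. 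An induction on $|v|$ using the complementary stratification then propagates purity to $\Lambda^\flat(v)/G(v)$ itself. Evenness is then extracted by combining this purity with the polynomial point count of $\Lambda^\flat(v)$ established in \cite{BSV}, via Corollary \ref{cor:poincare}.

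Part (c) will then follow almost immediately: $H^*(\frakL(v,w))$ is generated by Chern classes of tautological bundles (a standard fact for Nakajima quiver varieties), and functoriality of the cycle map under proper pushforward and open immersion transfers surjectivity to $\Lambda^\flat(v)$. For (d), I would invoke dimensional reduction: the preprojective variety is the critical locus of the potential $W(\bar x, \xi) = \tr(\xi\,\mu(\bar x))$ on $\R(v) \oplus \frakg(v)$, and Davison's formalism identifies $M_*^{T \times G(v)}(\Lambda^\flat(v))$ with the Borel-Moore homology of an auxiliary subvariety of $Rep(\k Q, v) \oplus \frakg(v)$ on which the residual $T$-action splits off cleanly, yielding freeness over $M^*_T$; equivalently, the compactification of (b) gives $T$-equivariant formality, from which freeness is immediate. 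The hard part will be (b): arranging the framed stratification so that the complement of $\tilde\Lambda^\flat(v,w)^s/G(v)$ in $\frakL(v,w)$ really decomposes into pieces amenable to induction requires a careful analysis of the stability condition together with the crystal-type stratification of $\Lambda^1(v)$ to be developed in \S 3.5.
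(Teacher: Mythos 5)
Your overall strategy for (b)--(d) --- compactify $\Lambda^\flat(v)/G(v)$ inside a Lagrangian quiver variety and pull purity back --- is the one the paper uses, but the two steps you yourself flag as delicate are exactly where your argument as written does not go through, and the paper circumvents both. First, the decomposition theorem for the proper map $\pi:\frakM(v,w)\to\frakM_0(v,w)$ does not by itself give purity of $H_*(\frakL(v,w))=H_*(\pi^{-1}(0))$: the stalk of a pure complex at a point need not be pure, and $H^*(\frakM(v,w))$ is not a priori pure either, since $\frakM(v,w)$ is only quasi-projective. What the paper does instead (Proposition \ref{prop:L}) is run a Bialynicki--Birula decomposition for a $\bbG_m$-action whose fixed locus is a disjoint union of smooth \emph{projective} graded quiver varieties, so that $\frakM(v,w)$ and $\frakL^\flat(v,w)$ acquire $\alpha$-partitions into affine bundles over these; purity, evenness and the cycle-map isomorphism then follow from Nakajima's diagonal-decomposition argument. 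Second, your induction over the closed complement of the open stratum is unnecessary and would be painful: the paper takes $w=v$, so that $\Lambda^\flat(v)\times G(v)\hookrightarrow \L_s^\flat(v,v)\hookrightarrow\Lambda^\flat(v)\times\frakg(v)$ are open immersions whose composite restriction in equivariant Borel--Moore homology is the identity (Lemma \ref{lem:inj-surj}); this exhibits $H_*^{T\times G(v)}(\Lambda^\flat(v))$ simultaneously as a quotient and a direct summand of the pure, even, $M^*_T$-free module $M_*^{T\times G(v)}(\frakL^\flat(v,v))$, and (b), (c), (d) all drop out with no induction. (Alternatively, for $w\gg 0$ the complement of $\Mon_I(v,w)$ in $\Hom_I(v,w)$ has large codimension, so surjectivity of the restriction in any fixed degree is automatic by excision --- there is no need to stratify the complement inside $\frakL^\flat(v,w)$.)

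Two further points. For (a), exhibiting the single component $\Lambda_{(v)}$ of dimension $d_v$ does not give the lower bound $\dim\geq d_v$ for \emph{every} irreducible component, which is what Lagrangian-ness requires on top of isotropy; the moment-map bound you invoke is weaker than $d_v$ in general, and the paper simply cites \cite{B14} for this part. For (d), note that the statement is freeness over $M^*_T$ only --- the paper explicitly remarks that freeness over $H^*_{T\times G(v)}$ fails in general --- and it follows at once from the retraction above onto a direct summand of a free $M^*_T$-module; Davison's dimensional reduction is used in the paper for the different and finer statement of torsion-freeness over $H^*_{T\times G(v)}$ (Proposition \ref{prop:torsionfree}), not for Theorem \ref{thm:1.1}(d).
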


\smallskip

\begin{remark}
Our definition of the semi-nilpotent and strongly semi-nilpotent varieties is the same as in \cite[\S 1.1]{BSV}.
If the quiver $Q$ has no oriented cycle which does not involve 1-loops, i.e., if any oriented cycle in $Q$ is a product of
1-loops, then the strongly semi-nilpotent variety and the semi-nilpotent one coincide.
\end{remark}

\smallskip

\begin{remark}
Switching the roles of $\Omega$ and $\Omega^*$ we get the notion of 
\emph{$*$ semi-nilpotent} variety which is denoted by the symbol 
$\Lambda^{*,\flat}(v)$. 
To avoid confusion we may also indicate the quiver $Q$ and write $\Lambda^\flat(v)/Q$ for $\Lambda^\flat(v)$.
Thus, we have a canonical isomorphism $\Lambda^\flat(v)/Q^*=\Lambda^{*,\,\flat}(v)/Q$.
We have also an isomorphism $\Lambda^\flat(v)/Q\to\Lambda^\flat(v)/Q^*$ 
such that $(x,x^*)\mapsto({}^tx,{}^tx^*)$, where the upper script ${}^t\bullet$ indicates
the transpose matrix.
\end{remark}

\smallskip

\begin{remark} In the theorem it is essential to consider the $T\times G(v)$-equivariant 
Borel-Moore homology rather than usual Borel-Moore homology. 
For instance, let $Q$ be the quiver of type $A_2$ and set $v=(1,1)$.
Then, we have 
$\Lambda^\flat(v)=\{(x,x^*)\in\k^2\,;\,x\,x^*=0\}$,
hence
$$H_*(\Lambda^\flat(v))=\bbQ[-2]^{\oplus 2}\oplus\bbQ[-1],$$ which
is neither even nor pure. The formulas in Theorem \ref{thm:dim} below, with $T=1$, yield the following equality of formal series
$$\sum_k\dim_\bbQ\!\big(H_{2k}(\Lambda^\flat(v)\,/\,G(v))\big)\,q^{2+k}=
(2q-1)\,\big(\sum_{k\geqslant 0}q^{-k}\,\big)^2.$$
This example also shows that $H_*^{T\times G(v)}(\Lambda^\flat(v))$ is not, in general, free as an $
H^*_{T\times G(v)}$-module.
\end{remark}

\smallskip

\begin{remark}\label{rem:ind}
Let $(v)$ be the composition of $v$ with only one term.
Then, we have
$$\Lambda_{(v)}=\{0\}\times Rep(\k Q^*,v).$$
\end{remark}

\smallskip

\subsection{The stratification}\label{sec:strat1}\hfill\\

Let us introduce two stratifications of $\Lambda^0(v)$ by $G(v)$-invariant locally closed subsets
\begin{align}\Lambda^0(v)=\bigsqcup_\nu\Lambda^\circ_{\nu}=\bigsqcup_\nu{}^\circ\!\Lambda_{\nu}.\end{align}
To do this, first consider the path algebra $\k \bar{Q}$ equipped with the $\mathbb{N}$-grading such that
all arrows in $\Omega$ are of degree $1$ and all arrows in $\bar{\Omega}$ are of degree $0$.
For $\bar x\in\Lambda^0(v)$ we consider the $\k$-subalgebra $A_{\bar x}$ of $\End(V)$ 
generated by 1 and the action of the elements $x_h$ with $h\in\bar\Omega$.
We equip it with the filtration such that
$$A_{\bar x}[\geqslant\! p]=\text{Im}( ev_{\bar x}:\k\bar{Q}[\geqslant\! p] \to A_{\bar x}),$$
where the map $ev_{\bar x }$ takes a path $\sigma=h_1 \cdots h_s$ to the element
$x_{h_1} \cdots x_{h_s}$. We set
$$r=\text{min}\{p>0\,;\,A_{\bar x}[\geqslant \!p]=0\}.$$

\smallskip

Let $W_{\bar x}$ be the unique increasing flag of $I$-graded vector spaces 
$$W_{\bar x}=(\{0\}=W_0\subseteq W_{1}\subseteq\cdots\subseteq W_r=V)$$
where $W_p$ is the annihilator of $A_{\bar x}[\geqslant \!p]$.  
Given an increasing flag $W$ of $I$-graded vector spaces in $V$ of type $\nu$,
we consider the open subsets
$\Lambda_{W}^\circ\subseteq\Lambda_{W}$  and $\Lambda_{\nu}^\circ\subseteq\Lambda_{\nu}$ given  by
\begin{align}
\label{Lambdacirc}
\begin{split}
\Lambda_{W}^\circ=\{\bar x\in\Lambda_W\,;\,W_{\bar x}=W\},\quad
\Lambda^\circ_{\nu}=G(v)\cdot\Lambda^\circ_{W}.
\end{split}
\end{align}
Note that the set $\Lambda_W$ may be reducible, and  $\Lambda^\circ_W$ 
may not be dense in $\Lambda_W$.

\smallskip

Consider also the decreasing flag 
$$F^{\bar x}=(\{0\}=F^r\subseteq F^{r-1}\subseteq\cdots\subseteq F^0=V)$$ 
given by $F^p=A_{\bar x}[\geqslant \!p](V)$ for each $p$,
and the sets
\begin{align*}
{}^\circ\!\Lambda_{F}=\{\bar x\in\Lambda_F\,;\,F^{\bar x}=F\},\quad
{}^\circ\!\Lambda_{\nu}=G(v)\cdot{}^\circ\!\Lambda_{F}.
\end{align*}
Note that $F^{\bar x}$ is of type $\nu$ if we have $\nu_p=\dim(F^{p-1}/F^{p})$ for all $p$.

\smallskip

\subsection{The one vertex quiver}\label{sec:1.sgquivers}\hfill\\

In this section we consider in more details the case of the quiver $\J(i,q)$
with vertex set $\{i\}$ and $q$ loops. Then, the dimension vector $v$ is an integer
and we have $\Lambda^0(v)=\Lambda^1(v)$. We write $\Lambda$ for either $\Lambda^0$ or $\Lambda^1$.

\smallskip

\subsubsection{The isotropic case}\label{sec:isotrope}

Assume that $q=1$.
Then $Q$ is the Jordan quiver and the variety $\Lambda(v)$ is the set of commuting pairs $(x,x^*)$ in $\frakg(v)\times\frakg(v)$ with $x$ nilpotent.
We identify $\frakg(v)\times\frakg(v)$ with $\frakg(v)\times\frakg(v)^*$ via the trace map.
For each partition $\nu\vdash v$ let $\nu^*$ be the partition dual to $\nu$ and
let $O_\nu$ be the nilpotent $G(v)$-orbit of Jordan type $\nu^*$.
Let $P\subseteq G(v)$ be the standard parabolic subgroup with block-type $\nu$.
Let $U,$ $L$ be the unipotent radical of $P$ and its standard Levi complement.
Let $\fraku$, $\frakl$, $\frakp$ be their Lie algebras.
Then $O_\nu$ is the $G(v)$-saturation of the unique dense 
$P$-orbit $\fraku^\circ$ in the Lie algebra $\fraku$ of $U$.
In other words $O_\nu$ is the \emph{Richardson} orbit associated with the standard parabolic $P$ of $G(v)$, see e.g.,
\cite[thm.~7.1.1]{CMcG}, \cite[\S 3]{He78} for details (note that $O_\nu$ is usually labelled by the partition $\nu^*$).
Then, we have 
$$\Lambda_{W}=\Lambda(v)\cap(\fraku\times\frakp),\quad\Lambda_{W}^\circ=\Lambda(v)\cap(\fraku^\circ\times\frakp).$$
We equip the set of partitions $\{\nu\,;\,\nu\vdash v\}$ with the \emph{anti-dominant ordering} 
$$\mu\,\lescc\,\nu\iff\sum_{p\leqslant k}\mu_p\geqslant\sum_{p\leqslant k}\nu_p\,,\ \forall k.$$
We define $\Lambda_{\lescc\,\nu}^\circ=
\bigsqcup_{\mu\,\lescc\,\nu}\Lambda_{\mu}^\circ.$

\smallskip

\begin{proposition} \label{prop:2.6a} Assume that $q=1$. Then, we have
\begin{itemize}[leftmargin=8mm]
\item[$\mathrm{(a)}$] $\Lambda_{\nu}^\circ$ is the conormal bundle to $O_\nu$,
\item[$\mathrm{(b)}$] $\Irr(\Lambda(v))=\{\overline{\Lambda_{\nu}^\circ}\,;\,\nu\vdash v\}$,
\item[$\mathrm{(c)}$] if $\nu \vdash v$ then $\overline{\Lambda^\circ_{\nu}}
\subseteq\Lambda_{\nu}\subseteq\Lambda_{\lescc\,\nu}^\circ,$
\item[$\mathrm{(d)}$] $\Lambda_W$, $\Lambda_\nu$ are reducible in general,
\item[$\mathrm{(e)}$] the obvious map $G(v)\times_P\Lambda_W^\circ\to\Lambda^\circ_\nu$ is an isomorphism.
\end{itemize}
\qed
\end{proposition}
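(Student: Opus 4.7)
My approach is to establish (a) first, after which (e) and (b) follow structurally, with (c) and (d) handled separately.

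For (a), I would show that under the trace pairing $\frakg(v)^* \simeq \frakg(v)$, the set $\Lambda_\nu^\circ$ is identified with the conormal bundle $T^*_{O_\nu}\frakg(v)$, whose fiber at $x \in O_\nu$ is the centralizer $\frakz(x) = \{y \in \frakg(v) : [x,y]=0\}$. Equivalently, the target is $\Lambda_\nu^\circ = \{(x,y) : x \in O_\nu,\,[x,y]=0\}$. The key lemma is that $\frakz(x) \subseteq \frakp$ for every $x \in \fraku^\circ$, which removes the extraneous condition $x^* \in \frakp$ in the definition of $\Lambda_W^\circ$ once we restrict to the open $P$-orbit. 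This lemma is a dimension count: since $O_\nu$ is Richardson one has $\dim O_\nu = 2\dim\fraku$, whence $\dim\frakz(x) = \dim\frakp - \dim\fraku$; on the other hand, surjectivity of $\ad(x):\frakp \to \fraku$ (because $P \cdot x = \fraku^\circ$ is open in $\fraku$) yields $\dim(\frakp \cap \frakz(x)) = \dim\frakp - \dim\fraku$, forcing equality.

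Given (a), part (e) reduces to showing that the natural map $G(v) \times^P \Lambda_W^\circ \to \Lambda_\nu^\circ$ is an isomorphism of vector bundles lying over the parabolic Springer map $G(v) \times^P \fraku^\circ \to O_\nu$; the two bundles have matching fibers $\frakz(x)$ by (a), and the base map is a birational bijection between smooth varieties in type $A$ (since the parabolic Springer fiber over the open Richardson orbit is a single point), hence an isomorphism. For (b), each $\Lambda_\nu^\circ$ is irreducible of dimension $\dim\frakg(v) = v^2 = d_v$, which by Theorem~\ref{thm:1.1}(a) is the pure dimension of $\Lambda(v)$; thus every $\overline{\Lambda_\nu^\circ}$ is an irreducible component, distinct partitions yield distinct components by projection to the first factor, and the decomposition~\eqref{decomposition} (which depends only on the underlying partition after $G(v)$-saturation) shows these exhaust all components.

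For (c), the inclusion $\overline{\Lambda_\nu^\circ} \subseteq \Lambda_\nu$ is tautological from closedness of $\Lambda_\nu$. Conversely, take $\bar x \in \Lambda_W$ with $W$ of type $\nu$; since $A_{\bar x}$ is commutative, the canonical filtration of \S\ref{sec:strat1} simplifies to $W_{\bar x,p} = \ker x^p$, which is of type $\lambda^*$ where $\lambda$ is the Jordan type of $x$. The hypothesis $x(W_p) \subseteq W_{p-1}$ gives $W_p \subseteq \ker x^p$, which rearranges to $\lambda^* \lescc \nu$; hence $\bar x \in \Lambda_{\lambda^*}^\circ \subseteq \Lambda_{\lescc \nu}^\circ$. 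Finally (d) is witnessed by $v = 2$, $\nu = (1,1)$: here $\Lambda_W$ decomposes into the two distinct three-dimensional components $\{0\} \times \frakp$ and the closure of $\{(cE_{12}, aI + bE_{12}) : c \ne 0\}$. The main obstacle is (e), where promoting the type-$A$ parabolic Springer bijection to a scheme isomorphism and carefully matching the two vector bundle structures requires attention, though both are essentially forced by smoothness and dimension count once (a) is in hand.
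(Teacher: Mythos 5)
Your proof is correct. Note that the paper itself offers no proof of this proposition: it is stated with a \qed as a classical fact about Richardson orbits (with \cite{CMcG}, \cite{He78} cited just above for the background), so there is no in-paper argument to compare against; what you have written is the standard verification. The one genuinely load-bearing step, $\frakz(x)\subseteq\frakp$ for $x\in\fraku^\circ$, is established correctly by the two dimension counts ($\dim\frakz(x)=\dim\frakg(v)-2\dim\fraku=\dim\frakp-\dim\fraku$ on one side, surjectivity of $\ad(x):\frakp\to\fraku$ on the other), and (a), (c), (e) then go through as you describe.

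Two minor points worth tightening. In (b), the exhaustion of components is cleaner without invoking \eqref{decomposition}: part (a) already gives the set-theoretic stratification $\Lambda(v)=\bigsqcup_{\nu\vdash v}\Lambda^\circ_\nu$ (every nilpotent $x$ lies in exactly one $O_\nu$, and $[x,x^*]=0$ puts $x^*$ in the conormal fibre), and since the strata are finitely many, disjoint, locally closed, irreducible and all of dimension $v^2$, their closures are exactly the irreducible components; the appeal to Theorem~\ref{thm:1.1}(a) is then unnecessary (and best avoided, even though the paper's proof of that theorem does not rely on this proposition). In (d), the statement also asserts reducibility of $\Lambda_\nu$ in general; your $v=2$, $\nu=(1,1)$ example covers this too, since $\Lambda_{(1,1)}=G(v)\cdot\Lambda_W$ contains both $\{0\}\times\frakg(v)=\Lambda^\circ_{(2)}$ and $\overline{\Lambda^\circ_{(1,1)}}$, two distinct four-dimensional irreducible closed subsets, but you should say so explicitly rather than stopping at $\Lambda_W$.
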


\smallskip

\subsubsection{The hyperbolic case}

Assume that $q>1$.
We equip the set of compositions $\{\nu\,;\,\nu\vDash v\}$ 
with the anti-dominant ordering.

\smallskip

\begin{proposition}\label{prop:geomirrlambda} Assume that $q>1$. Then, we have
\begin{itemize}[leftmargin=8mm]
\item[$\mathrm{(a)}$] $\Lambda_W$ is irreducible, $\Lambda_{\nu}$ is irreducible and Lagrangian in $\R(v)$,
\item[$\mathrm{(b)}$] $\Irr(\Lambda(v))=\{\Lambda_{\nu}\,;\,\nu\vDash v\}$,
\item[$\mathrm{(c)}$] if $\nu \vDash v$ then $\overline{\Lambda^\circ_{\nu}}
=\Lambda_{\nu}\subseteq\Lambda_{\lescc\,\nu}^\circ,$
\item[$\mathrm{(d)}$] the obvious map $G(v)\times_P\Lambda_W^\circ\to\Lambda_\nu^\circ$ is an isomorphism.

\end{itemize}
\end{proposition}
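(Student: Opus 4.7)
Part (a) contains the bulk of the work; once $\Lambda_W$ is shown to be irreducible of the expected dimension, the remaining assertions will follow formally. Let $\frakp,\fraku,\frakl$ denote the Lie algebras of the stabilizer $P$ of $W$ in $G(v)$, its unipotent radical, and its standard Levi. The flag conditions identify $\Lambda_W=\M(v)\cap(\fraku^q\times\frakp^q)$; since $[\fraku,\frakp]\subseteq\fraku$, the moment map $\mu$ restricted to $\fraku^q\times\frakp^q$ takes values in $\fraku$, yielding the expected dimension $qv^2-\dim\fraku=(2q-1)v^2/2+\sum_p\nu_p^2/2$.

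The plan is to study the projection $\pi:\Lambda_W\to\frakp^q$, $(x,x^*)\mapsto x^*$, whose fiber over $x^*$ is the kernel of the linear map $\phi_{x^*}:\fraku^q\to\fraku$, $x\mapsto\sum_h[x_h,x_h^*]$. First I would exhibit a point where $\phi_{x^*}$ is surjective: for $x^*_0=(s,0,\ldots,0)$ with $s\in\frakl$ regular semisimple and having pairwise disjoint spectra on the diagonal blocks, the Sylvester operator $\ad(s)|_\fraku$ is an isomorphism, so $\phi_{x^*_0}$ is surjective. Consequently the locus $U\subseteq\frakp^q$ where $\phi_{x^*}$ is surjective is nonempty and open, and $\pi^{-1}(U)\to U$ is an affine vector bundle of rank $(q-1)\dim\fraku$, hence irreducible of the expected dimension.

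The hard part will be showing $\dim\pi^{-1}(\frakp^q\setminus U)<\dim\pi^{-1}(U)$. Stratifying by the corank $s\geqs 1$ of $\phi_{x^*}$, the kernel has dimension $(q-1)\dim\fraku+s$ on the corank-$s$ stratum $Z_s$, so I need $\codim(Z_s,\frakp^q)>s$. A rank-stratification estimate applied to the linear family $x^*\mapsto\phi_{x^*}$ gives the expected codimension $s\bigl((q-1)\dim\fraku+s\bigr)$, which exceeds $s$ precisely when $q\geqs 2$ and $\dim\fraku\geqs 1$, identifying exactly where the hyperbolicity hypothesis enters. The technical risk is that the specific linear family may fail to be transverse to the rank stratification in $\Hom(\fraku^q,\fraku)$; should that be a problem, I would fall back to an induction on the flag length $r$, projecting $\Lambda_W$ onto the variety associated to the truncated flag $(0\subsetneq W_1\subsetneq\cdots\subsetneq W_{r-1})$ in $W_{r-1}$ and displaying each fiber, after a further projection onto the Levi factors in $\End(V/W_{r-1})^q$, as an affine bundle of the expected dimension over the open subset where one such factor is invertible. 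The degenerate case $\dim\fraku=0$ (trivial flag, $r=1$) reduces to the irreducibility of $\M(v)$, which is known for non-Dynkin quivers by standard results on preprojective algebras (of which Proposition~\ref{P:CYpreproj} is a key ingredient).

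Assuming $\Lambda_W$ is irreducible of the expected dimension, the variety $\Lambda_\nu=G(v)\cdot\Lambda_W$ is the image of the irreducible $G(v)\times_P\Lambda_W$ and has dimension $\dim G(v)/P+\dim\Lambda_W=d_v$. It is isotropic because each product $x_hy_h^*$ or $y_hx_h^*$ with $x,y\in\fraku^q$ and $x^*,y^*\in\frakp^q$ is strictly upper triangular and hence trace-zero, so the symplectic form vanishes on $\Lambda_W$; $G(v)$-invariance of this form extends the vanishing to $\Lambda_\nu$, and together with $\dim\Lambda_\nu=\dim\R(v)/2$ this gives the Lagrangian property. Part (b) is immediate from the decomposition $\Lambda(v)=\bigcup_\mu\Lambda_\mu$ and top-dimensionality of each $\Lambda_\nu$. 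For (c), $\Lambda_\nu^\circ$ is a nonempty open subset of the irreducible $\Lambda_\nu$, whence $\overline{\Lambda_\nu^\circ}=\Lambda_\nu$; the containment $\Lambda_\nu\subseteq\Lambda^\circ_{\lescc\nu}$ holds because for $\bar x\in\Lambda_W$ the subalgebra $A_{\bar x}[\geqs p]$ annihilates $W_p$, forcing $W_p\subseteq W_{\bar x,p}$ and hence the type of $W_{\bar x}$ is dominated by $\nu$ in the anti-dominance order. Finally, (d) follows since $\bar x\in\Lambda_\nu^\circ$ canonically determines $W_{\bar x}$ via the annihilator construction, and local sections of the principal $P$-bundle $G(v)\to G(v)/P$ then yield an algebraic inverse $\bar x\mapsto[g,g^{-1}\bar x]$ with $g\cdot W=W_{\bar x}$.
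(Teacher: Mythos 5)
Your generic-fibre computation over $U$, the isotropy calculation, the deduction of (b) from (a), and the argument for (d) are all sound, and your route to (a) via the projection $\pi:\Lambda_W\to\frakp^q$ is genuinely different from the paper's. But the proposal has two real gaps.

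The more fundamental one is in (c): you assert that $\Lambda^\circ_\nu$ is a \emph{nonempty} open subset of $\Lambda_\nu$, but non-emptiness follows from nothing you have established. Your own annihilator argument gives $\Lambda_\nu\subseteq\bigsqcup_{\mu\,\lescc\,\nu}\Lambda^\circ_\mu$, so irreducibility only yields $\Lambda_\nu=\overline{\Lambda^\circ_\mu}$ for \emph{some} $\mu\lescc\nu$; nothing rules out $\mu\neq\nu$, i.e.\ that two compositions give the same component, in which case $\Lambda^\circ_\nu$ could be empty and $\overline{\Lambda^\circ_\nu}=\Lambda_\nu$ would fail. The paper closes exactly this hole with a module-theoretic construction: choosing pairwise non-isomorphic simple $\Pi$-modules $S_1,\dots,S_k$ of dimensions $\nu_1,\dots,\nu_k$ and forming iterated non-split extensions (possible since $\dim\Ext^1_\Pi(S_i,S_j)=2(q-1)\dim S_i\dim S_j>0$, which is where $q>1$ and Proposition~\ref{P:CYpreproj} enter), one obtains a uniserial module lying in $\Lambda^\circ_\nu$ and stabilizing a unique flag. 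This simultaneously proves non-emptiness of every $\Lambda^\circ_\nu$ and the exclusion $\Lambda^\circ_\mu\not\subseteq\Lambda_\nu$ for $\mu\neq\nu$. Some ingredient of this kind is indispensable and is entirely absent from your proposal.

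The second gap is the one you flag yourself in (a): the estimate $\codim(Z_s,\frakp^q)>s$ does not follow from the expected codimension $s((q-1)\dim\fraku+s)$ of the corank strata, because the linear family $x^*\mapsto\phi_{x^*}$ lands in a small, very special subspace of $\Hom(\fraku^q,\fraku)$ (already the kernel of $\frakp\to\End(\fraku)$ is the centralizer of $\fraku$ in $\frakp$), so transversality to the rank stratification is exactly what has to be proved, not assumed. The paper sidesteps this mechanism entirely: it obtains the lower bound $\dim C\geq d(\nu)$ for every component $C$ of $\Lambda_W$ by exhibiting $\Lambda_W$ as an intersection of the loci $\M(a,b)$ inside $\prod_i\M(\nu_{\leqslant i})$ and invoking Lemma~\ref{L:muirr}, and then gets irreducibility not from a fibration analysis but from the exclusion argument above combined with the classification $\Irr(\Lambda(v))=\{\overline{\Lambda^\circ_\nu}\}$ from \cite{B14}. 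Your fallback induction on the flag length is closer to what the paper does for Hecke correspondences via Lemma~\ref{L:CBext}, but there too the fibre dimension jumps by $\dim\Hom_\Pi(\bar x_1,\bar x_2)$ and the jump loci must be bounded stratum by stratum; "one Levi factor invertible" is not the relevant open condition, and the estimate is not automatic. As written, the proof of (a) is therefore incomplete.
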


\begin{proof} Note first that $\Lambda^0(v)=\Lambda^1(v)$ since $Q$ has no oriented cycle which is not a
 product of $1$-loops. We write $\Lambda(v)=\Lambda^0(v)=\Lambda^1(v)$.
 By \cite{B14} the set $\Lambda^{\circ}_{\nu}$ is non-empty for any composition
$\nu \vDash v$ and we have
$$\Irr(\Lambda(v))=\{\overline{\Lambda^\circ_{\nu}}\,;\,\nu \vDash v\}.$$ 
Let us now fix $\nu=(\nu_1, \ldots, \nu_r)$. It is clear that $\overline{\Lambda^\circ_{\nu}} \subset \Lambda_{\nu}$.
Let us prove that $\Lambda_{\nu}$ is irreducible of dimension $qv^2$. Fix a flag $W$ of type $\nu$ and let
$P$ be its stabilizer in $G(v)$. Since the map $G(v) \times_P \Lambda_{W} \to \Lambda_{\nu}$
is surjective, it is enough to show that $\Lambda_W$ is irreducible of dimension 
$$d(\nu)=qv^2-\sum_{i<j} \nu_i\nu_j.$$ This is a consequence
of the following two facts~:

\smallskip

\begin{itemize}

\item[(i)] every irreducible component of $\Lambda_W$ is of dimension $\geqslant d(\nu)$,

\item[(ii)] for any composition $\mu \neq \nu$ we have $\Lambda^{\circ}_{\mu} \not\subseteq \Lambda_{\nu}$.

\end{itemize}
\smallskip

Indeed, claim (i) implies that the closed subset $\Lambda_\nu\subseteq\Lambda(v)$ is of the form
$\Lambda_{\nu}=\overline{\Lambda^\circ_{\nu}} \cup \overline{\Lambda^\circ_{\mu_1}} \cup \cdots \cup
\overline{\Lambda^\circ_{\mu_r}}$ for some compositions $\mu_1, \ldots, \mu_r$. 
Now, claim (ii) implies
that $r=0$ and thus $\Lambda_{\nu}=\overline{\Lambda^\circ_{\nu}}$. 

\smallskip

It remains to prove both claims. For part (i) we write 
$$W_i =\k^{\nu_{\leqslant i}},\quad \nu_{\leqslant i}=\sum_{j \leqslant i} \nu_j.$$ 
View $\Lambda_W$ as a closed subvariety of the product
$$\M(\nu_{\leqslant 1}) \times \M(\nu_{\leqslant 2}) \times \cdots \times \M(v)$$
via the map $\bar{x} \mapsto (\bar{x}|_{W_1}\,,\, \bar{x}|_{W_2}\,, \cdots).$ 
For each $(a,b) \in \bbN^2$ with $a<b$, consider the set
$$\M(a,b)=\Lambda_{\k^a,\k^b}=\{(\bar{x}_a, \bar{x}_b)\in \M(a) \times \M(b)\,;\, 
\bar{x}_b|_{\k^a}=\bar{x}_a\,,\,x_b(\k^b) \subseteq \k^a\}.$$
It is the zero locus of $a^2+a(b-a)-1$ equations in a vector space of dimension $2qa^2+q(b-a)^2 + 2qa(b-a)$.
Hence every irreducible component of $X(a,b)$ is of dimension 
$$\geqslant (2q-1)a^2+q(b-a)^2+(2q-1)a(b-a) +1=qb^2+qa^2+1-ab.$$
The following result is proved in Appendix A.

\smallskip

\begin{lemma} \label{L:muirr}
For any $q>1$ and $v>0$ the variety $\M(v)$ is irreducible and of dimension $(2q-1)v^2+1$.
\qed
\end{lemma}

We deduce that the codimension of any  irreducible component of $\M(a,b)$ is 
$$\leqslant c(a,b):=(q-1)a^2+(q-1)b^2+1+ab$$ in $\M(a) \times \M(b).$ 
Now observe that
$$\Lambda_W=
\bigcap_{i \geqslant 0} \M(\nu_{\leqslant 1}) \times \cdots \times \M(\nu_{\leqslant i}, \nu_{\leqslant i+1}) \times \cdots \times \M(v).$$
It follows that the codimension of any irreducible component of $\Lambda_W$ is 
$\leqslant \sum_{i \geqslant 0} c(\nu_{\leqslant i}\,,\, \nu_{\leqslant i+1})$
in $\M(\nu_{\leqslant 1}) \times \cdots \times \M(v)$ and hence of dimension
$$\geqslant \sum_{i \geqslant 1}  ((2q-1)\nu_{\leqslant i}^2+1) - \sum_{i \geqslant 0}c(\nu_{\leqslant i}, \nu_{\leqslant i+1})= 
qv^2+\sum_{i<s} \nu_{\leqslant i}^2-\sum_{i<s} \nu_{\leqslant i} \nu_{\leqslant i+1}= d(\nu)$$
as wanted. This proves claim (i).

\smallskip

We now turn to claim (ii). To show that $\Lambda^{\circ}_{\mu} \not\subseteq \Lambda_{\nu}$ we will prove
the existence of an element $\bar{x} \in \Lambda^{\circ}_{\mu}$ which satisfies the semi-nilpotency condition (\ref{E:seminilpdef})
with respect to a single flag (necessarily of type $\mu$). 
For this we will use the following basic construction. Pick distinct non-zero simple $\Pi$-modules $S_1, \ldots,S_k$.
We have $\Hom(S_i,S_j)=\Ext^2(S_i,S_j)=\{0\}$ for any $i \neq j$. Thus 
$$\dim(\Ext^1(S_i,S_j))=-\langle S_i,S_j\rangle_{\Pi} =2(q-1)\dim(S_i)\dim(S_j)>0.$$
Choose
$\xi_1 \in \Ext^1(S_2,S_1)\backslash \{0\}$ and denote by 
$$0 \to S_1 \to M_1 \to S_2 \to 0$$
the corresponding short exact sequence. We have
\begin{equation}
\begin{split}
\dim(\Ext^1(S_3,M_1)) &=-\langle S_2, M_1\rangle_{\Pi} + \dim(\Hom(S_3,M_1)) + \dim(\Ext^2(S_3,M_1))\\
&=-\langle S_2, M_1\rangle_{\Pi} >0.
\end{split}
\end{equation}
We may thus choose $\xi_2 \in \Ext^1(S_3,M_1)\backslash \{0\}$ and denote by 
$$0 \to M_1 \to M_2 \to S_3 \to 0$$
the corresponding short exact sequence. Continuing this process we construct a sequence of extensions 
$0 \to M_{l-1} \to M_{l} \to S_{1+1} \to 0$ for $l=1, \ldots, k-1$.
We claim that $M_{k-1}$ has a unique composition sequence
$$0 \subset S_1 = M_0 \subset M_1 \subset \cdots \subset M_{k-2} \subset M_{k-1}.$$
Arguing by induction we see that it is enough to prove that $M_{k-1}$ has a simple socle $S_1$. 
From the fact that the $S_i$'s are distinct we easily get that
$\Hom(S_1, M_{k-1}/S_1)=\{0\}$ and thus $\Hom(S_1,M_{k-1})=\k$. We now prove that
$\Hom(S_l,M_{k-1})=\{0\}$ for any $l \neq 1$. As above, we have $\Hom(S_l,M_{k-1}/ M_{l-1})=\{0\}$
and thus $\Hom(S_l,M_{k-1})=\Hom(S_l,M_{l-1})$. Because $\Hom(S_l,M_{l-2})=\{0\}$ and $S_l$ is simple,
any map $S_l \to M_{l-1}$ is, up to a scalar, a section of the canonical map $M_{l-1} \to S_l$. But
$M_{l-1}$ is a non-split extension of $S_l$ and $M_{l-2}$. Thus,  such a section doesn't exist.
We conclude that $\Hom(S_l, M_{l-1})=\{0\}$ as wanted. 

\smallskip

We may now prove the claim (ii) above. 
Fix $\mu=(\mu_1, \ldots, \mu_k) \neq \nu$. Because the ground field $\k$ is infinite, we may choose distinct simple
$\k Q$-modules (and thus also $\Pi$-modules) $S_1, \ldots, S_k$ of respective dimensions $\mu_1, \ldots, \mu_k$ and build
a $\Pi$-module $M_{k-1}$ as above. It is easy to see that $M_{k-1} \in \Lambda^\circ_{\mu}$ but $M_{k-1}$ doesn't
stabilize any flag of subspaces of type $\nu$. This proves the parts (a) and (b) of the proposition. 

\smallskip

The first part of the statement (c) was proved
above. To prove the rest of (c), note that 
\begin{align*}\Lambda_{W'}^\circ\cap\Lambda_{W}\neq\emptyset
&\Rightarrow W_p\subseteq W'_p\,,\ \forall p,\\
&\Rightarrow \nu'\lescc\,\nu,
\end{align*}
where $\nu$, $\nu'$ are the types of $W$, $W'$ respectively.
We deduce that we have $\Lambda_{\nu}\subseteq\Lambda_{\lescc\nu}^\circ.$
Part (d) of the proposition is obvious.
\end{proof}

\smallskip

\subsection{Quiver varieties}\label{sec:1.3}\hfill\\

This section is a reminder on the Nakajima quiver varieties associated with $Q$.
We assume that the reader is familiar with the formalism of quiver varieties and we refer to
\cite{CB01}, \cite{CB03}, \cite{N94}, \cite{N98},  \cite{N01}, \cite{N04}, \cite{N09} for the proofs of the facts recalled below.

\smallskip

\subsubsection{Basics}\label{sec:QV}
The space of representations of dimension vectors $v,$ $w$ of the \emph{framed quiver} associated with $\bar Q$
is
\begin{align*}
\R(v,w)=\R(v)\oplus\Hom_I(W,V)\oplus\Hom_I(V,W),
\end{align*}
where $\Hom_I(V,W)$ is the set of $I$-graded $\k$-linear homomorphisms and
$$V=\k^v,\quad W=\k^w,\quad V_i=\k^{v_i},\quad W_i=\k^{w_i}.$$
Since $\R(v,w)$ is canonically identified with the cotangent of a vector space, it admits a canonical symplectic structure.

\smallskip

The algebraic group $T\times G(v)\times G(w)$ acts on $\R(v,w)$ so that the element
$(z_h,z)\in T$ takes $(a,a^*)$ to $(a,za^*)$.
Consider the cocharacters
$\xi$, $\xi^*$ of $T\times G(v)$ given by
\begin{align*}
\xi(t)=(t\,,\,\theta(t)),\quad \xi^*(t)=(1\,,\,\theta^*(t))
\end{align*}
We can view them as cocharacters of $T\times G(v)\times G(w)$ 
under the obvious inclusion 
$$T\times G(v)\subseteq T\times G(v)\times G(w).$$
Then $\xi(t)$, $\xi^*(t)$ act by multiplication by $t$ on the summands of $\R(v,w)$ given by
$$Rep(\k Q,v)\oplus\Hom_I(W,V),\quad Rep(\k Q,v)^*\oplus\Hom_I(V,W).$$
Equivalently, we may write
\begin{align}\label{action0}\begin{split}
\xi(t)\cdot(\bar x,\bar a)&=(tx,x^*,ta,a^*),\\ 
\xi^*(t)\cdot(\bar x,\bar a)&=(x,t^*x^*,a,t^*a^*).\end{split}\end{align}

\smallskip

The $G(v)$-action on $\R(v,w)$ preserves the symplectic form and admits the
moment map 
$$\mu:\R(v,w)\to\frakg(v),\quad(\bar x,\bar a)\mapsto[x,x^*]+aa^*,$$
where we write 
\begin{align*}
&\bar x=(x,x^*)\in \R(v),\\
&\bar a=(a,a^*)\in\Hom_I(w,v)\oplus\Hom_I(v,w).
\end{align*}
The categorical quotient of the zero set 
\begin{align*}\M(v,w)=\mu^{-1}(0)\end{align*} 
by $G(v)$ is the variety
$$\frakM_0(v,w)=\M(v,w)/\!\!/G(v)=\text{Spec}\Big(\k[\M(v,w)]^{\,G(v)}\Bigr).$$ 
It is affine, reduced, irreducible, singular in general and $T\times G(w)$-equivariant. 
Let 
$$\rho_0:\M(v,w)\to\frakM_0(v,w)$$
denote the canonical map. We may write $0=\rho_0(0,0)$.
The set of $\k$-points of $\frakM_0(v,w)$ is in bijection with the set of closed $G(v)$-orbits in 
$\M(v,w)$ so that
$\rho_0(\bar x,\bar a)$ is identified with the unique closed orbit in the closure of the $G(v)$-orbit of $(\bar x,\bar a)$.
Equivalently, the set of $\k$-points of $\frakM_0(v,w)$ is in bijection with the set of isomorphism classes of
semisimple representations in $\M(v,w)$ and $\rho_0$ maps a representation to the sum of its constituents.

\smallskip

Given a character $\theta$ of $G(v)$ we consider the space of semi-invariants of weight $\theta$
$$\k[\M(v,w)]^{\,\theta}\subseteq\k[\M(v,w)].$$
We have the $T\times G(w)$-equivariant projective morphism
$$\pi_\theta : \frakM_\theta(v,w)=\text{Proj}\Big(\bigoplus_{n\in\bbN}\k[\M(v,w)]^{\,\theta^n}\Bigr) \to \frakM_0(v,w).$$
The Hilbert-Mumford criterion implies that 
$\frakM_\theta(v,w)$ is the geometric quotient by $G(v)$ of an open subset $\M_\theta(v,w)$ of $\M(v,w)$ 
consisting of the $\theta$-\emph{semistable} representations.
Let 
\begin{align}\label{f4}
\begin{split}
\M_\theta(v,w)\subset\M(v,w),\quad
\R_\theta(v,w)\subset\R(v,w)
\end{split}
\end{align} 
be the open subsets of semistable points. 
Replacing everywhere $\M(v,w)$ by $\R(v,w)$ we define an open subset $\R_\theta(v,w)$ of $\R(v,w)$ such that
$\M_\theta(v,w)=\R_\theta(v,w)\cap \M(v,w)$.
Let $\rho_\theta(\bar x,\bar a)$ be the image in $\frakM_\theta(v,w)$ of the tuple $(\bar x,\bar a)\in\M_\theta(v,w)$.
We have 
$$\pi_\theta\rho_\theta(\bar x,\bar a)=\rho_0(\bar x,\bar a).$$

\smallskip

We say that the character $\theta$ given by $\theta(g)=\prod_{i\in I}\det(g_i)^{-\theta_i}$ is \textit{generic} 
if neither equations
$$\sum_{i \in I} \theta_i u_i=0 \ \text{and}\  \sum_{i \in I} \theta_i u_i + \theta_{\infty}=0$$
with 
$\theta_\infty=-\sum_{i\in I}\theta_iv_i$
have integer solutions $(u_i)$ satisfying $0 \leqslant u_i \leqslant v_i$ other than the trivial solutions 
$(u_i)=0$ or $(u_i)=(v_i).$ 
If $\theta$ is generic then any semistable pair $(\bar x,\bar a)$ is stable and in that case the map 
$$\rho_\theta:\M_\theta(v,w) \to \frakM_{\theta}(v,w)$$ is a $G(v)$-torsor.
In particular, the variety $\frakM_{\theta}(v,w)$ is smooth, symplectic, of dimension 
\begin{equation}\label{E:dimquivervar}
d_{v,w}=2\,v\cdot w-(v,v)_Q.
\end{equation} 

\smallskip

The character $s$ given by $s(g)=\prod_{i\in I}\det(g_i)^{-1}$ is generic.
We abbreviate
\begin{align*}
[\bar x,\bar a]=\rho_s(\bar x,\bar a),\quad \frakM(v,w)=\frakM_s(v,w),\quad \pi=\pi_s
\end{align*}
and we write (semi)stable for $s$-(semi)stable.
A representation $(\bar x,\bar a)$ in $\R(v,w)$ is semistable
if and only if it does not admit any nonzero subrepresentation
whose dimension vector belongs to $\bbN^I\times\{0\}$.

\smallskip

\begin{remark}\label{rem:torsor} Assume that $\k=\bbC$.
By \cite{N94}, the $G(v)$-action on $\M_s(v,w)$ is \emph{set-theoretically free} and $\M_s(v,w)$ is a smooth scheme.
Hence, the natural morphism of smooth schemes
$$G(v)\times\M_s(v,w)\to\M_s(v,w)\times_{\frakM(v,w)}\M_s(v,w)$$
is a bijection on closed points and, therefore, it is an isomorphism.
In other words, the $G(v)$-action on $\M_s(v,w)$ is \emph{free}. Hence the $G(v)$-scheme $\M_s(v,w)$ is a 
$G(v)$-torsor over $\frakM(v,w)$ and
the quotient stack $\M_s(v,w)/G(v)$ is represented by the scheme 
$\frakM(v,w)=\M_s(v,w)/\!\!/G(v)$.
\end{remark}

\smallskip

\subsubsection{Crawley-Boevey's trick}\label{S:CBtrick} It may be useful to realize the 
quiver varieties as moduli spaces of representations of some preprojective algebra. 
More precisely, consider the quiver $\tilde{Q}=(\tilde{I}, \tilde{\Omega})$ obtained from $Q$ 
by adding one new vertex $\infty$ and $w_i$ arrows from $\infty$ to the vertex $i$ for all $i \in I$.
For each $v \in \mathbb{N}^I$ set 
$$\tilde{v}=v+\delta_\infty\ \text{if}\ w\neq 0,\ \text{and}\ \tilde v=v\ \text{else}.$$
If $w\neq 0$ we identify $G(v)$ with $PGL(\tilde{v})$. 
Then, there is a 
canonical $G(v)$-equivariant isomorphism $\M(v,w) \simeq \M(\tilde{v})$ and $\frakM(v,w)$ or $\frakM_0(v,w)$ 
may be viewed as moduli spaces 
of (stable, resp. semisimple)  modules of dimension $\tilde{v}$
over the preprojective algebra $\tilde\Pi$.
The symmetric bilinear forms for $Q$ and $\tilde{Q}$ are related as follows
\begin{equation}\label{E:eulerformqtilde}
(v+a\delta_\infty\,,\,v'+a'\delta_\infty)_{\tilde{Q}}=(v,v')_Q +  a\cdot a'-\sum_i w_i (v_ia'_i+ v'_i a_i).
\end{equation}
Let $\tilde\Pi$ be the preprojective algebra of $\tilde Q$.

\smallskip

\subsubsection{Representation types}

If the representation $z \in\M(v,w)$ is semisimple then we can decompose it into its simple constituents
$z=z_1^{\oplus d_1}\oplus z_2^{\oplus d_2}\oplus\cdots z_s^{\oplus d_s}$ where the $z_r$'s are non-isomorphic simples.
If $u_r=(v_r,w_r)$ is the dimension vector of $z_r$, i.e., if $z_r$ is a simple representation in $\M(v_r,w_r)$, 
then we say that $z$ has the 
\emph{representation type} 
$$\tau=(d_1,u_1\,;\,d_2,u_2\,;\,\dots\,;\,d_s,u_s).$$
If $w\neq 0$ and $z$ is stable,
then there is a unique  integer $r$ such that $w_r=w$ and $w_{r'}=0$ for all $r'\neq r$, hence
we may assume that the representation type of $z$ has the following form
\begin{align*}\tau=(1,v_1,w\,;\,d_2,v_2\,;\,\dots\,;\,d_s,v_s),\end{align*}
where $(v_1,v_2,\dots,v_s)\vDash v$ and the tuples
$(d_2,v_2),\dots(d_s,v_s)$ are only defined up to a permutation.
Let $RT(v,w)$ be the set of all representation types of dimension $(v,w)$. Let
$$\frakM_0(\tau)\subseteq\frakM_0(v,w)$$
be the set of semisimple representations with representation type equal to $\tau$.
We have the following stratification by smooth irreducible locally closed subsets
$$\frakM_0(v,w)=\bigsqcup_{\tau\in RT(v,w)}\frakM_0(\tau).$$
Defining $\tilde u_1,\dots,\tilde u_s$ as in \S\ref{S:CBtrick} we get
$$\dim \frakM_0(\tau)=\sum_{r=1}^s\big(2-(\tilde u_r,\tilde u_r)_{\tilde Q}\big).$$

\smallskip

Since $\frakM_0(v,w)$ is irreducible, there is a unique representation type $\kappa_{v,w}$ such that
$\frakM_0(\kappa_{v,w})$ is a dense open subset of $\frakM_0(v,w)$.
We call it the \emph{generic representation type} of $RT(v,w)$.

\smallskip

The stabilizer in $G(v)$ of an arbitrary element of $\M(\tau)$
is a reductive group which 
is conjugate by an element of $G(v)$ to the group $G_\tau=\prod_{r=1}^sGL(d_r)$. 
Write $\tau'\leqslant\tau$ if and only if $G_\tau$ is conjugate to a subgroup of $G_{\tau'}$.
Then, we have
$$\overline{\frakM_0(\tau)}=\bigsqcup_{\tau'\leqslant\tau}\frakM_0(\tau').$$

\smallskip

Given two representation types
\begin{align*}
\tau=(1,v_1,w\,;\,d_2,v_2\,;\,\dots\,;\,d_s,v_s)\in RT(v,w),\\
\kappa=(1,u_1,z\,;\,e_2,u_2\,;\,\dots\,;\,e_r,u_r)\in RT(u,z),
\end{align*}
we define their sum by
$$\tau\oplus\kappa=(1,v_1+u_1,w+z\,;\,d_2,v_2\,;\,\dots\,;\,d_s,v_s\,;\,e_2,u_2\,;\,\dots\,;\,e_r,u_r).$$
Whenever $\sum_{t=1}^rv_t\leqslant v$ and 
$\sum_{t=1}^rw_t\leqslant w$,
the direct sum yields a closed embedding
\begin{align}\label{can}
\oplus:\prod_{t=1}^r\frakM_0(v_t,w_t)\to\frakM_0(v,w).\end{align}
The relation with representation types is given by the following relation
$$\frakM_0(\tau)\oplus\frakM_0(\kappa)\subseteq\overline{\frakM_0(\tau\oplus\kappa)}.$$

\smallskip

For each representation type $\tau$ we write 
\begin{align*}
\M(\tau)&=\rho_0^{-1}(\frakM_0(\tau)),\\
\frakM(\tau)&=\pi^{-1}(\frakM_0(\tau))=\M_s(\tau)/\!\!/ G(v).
\end{align*}
If $\frakM(\tau)\neq\emptyset$ then the map $\pi$ restricts to a locally trivial fibration 
$\frakM(\tau)\to\frakM_0(\tau)$ such that, see \cite{CB01} and \cite[cor.~6.4]{CB03},
\begin{align}\label{ss}
\dim\frakM(\tau)\leqslant\dim\frakM(v,w)/2+\dim\frakM_0(\tau)/2.
\end{align}
In particular, if the map $\pi:\frakM(v,w)\to\frakM_0(v,w)$ is birational, then it is semismall.

\smallskip

The map $\pi$ may not be surjective. Its image is 
an irreducible closed subvariety of $\frakM_0(v,w)$ which is a union of
strata. For later purposes, we will need the following stronger statement. Let us call \textit{dimension type}
of a semisimple representation $z$ the sequence $d=(d_u)$ where $d_u$ is the total number of simple 
representations 
(counted with multiplicity) of dimension $u$ occuring in $z$. 
The representation type determines the dimension type but the 
converse is false since we lose the information of the multiplicity of each individual simple representation of a given 
dimension vector. Consider the locally closed subvariety 
$$\frakM_0(\!(d)\!)\subseteq \frakM_0(v,w)$$
parametrizing semisimple representations of dimension type $d$.

\smallskip

\begin{proposition}\label{P:impi} For any $v,$ $w$,  the image of $\pi$ is a union of strata $\frakM_0(\!(d)\!)$. 
\end{proposition}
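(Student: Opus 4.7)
First, the image $\pi(\frakM(v,w))$ is closed in $\frakM_0(v,w)$ since $\pi$ is projective, hence proper. Combined with the locally trivial fibration $\pi|_{\frakM(\tau)} : \frakM(\tau) \to \frakM_0(\tau)$ recalled just before \eqref{ss}, this implies that for each representation type $\tau$, the intersection $\frakM_0(\tau) \cap \pi(\frakM(v,w))$ is either empty or equal to $\frakM_0(\tau)$. Hence the image of $\pi$ is automatically a union of representation type strata, and the proposition reduces to the claim that, for $\tau$ and $\tau'$ sharing the same dimension type $d$, one has $\frakM_0(\tau) \subseteq \pi(\frakM(v,w))$ if and only if $\frakM_0(\tau') \subseteq \pi(\frakM(v,w))$.

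Next, for a fixed dimension type $d$, let $\tau_{\max}(d)$ denote the ``most split'' representation type in dim type $d$: each simple appears with multiplicity one, so that there are $d_u$ distinct simple summands of each dimension $u$. This is realizable whenever $u$ is an imaginary root; for real roots the simple of dimension $u$ is rigid and unique, so the multiplicity is automatically forced and no splitting is needed or possible. One checks immediately that $G_{\tau_{\max}(d)}$ is conjugate to a maximal torus of $G_{\tau'}$ for any other representation type $\tau'$ with dimension type $d$; hence $\tau' \leqslant \tau_{\max}(d)$ in the partial order introduced above, and $\frakM_0(\!(d)\!) \subseteq \overline{\frakM_0(\tau_{\max}(d))}$. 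Together with the closedness of the image, this reduces the proposition to a single ``going up'' implication: if $\frakM_0(\tau) \subseteq \pi(\frakM(v,w))$ for some $\tau$ of dimension type $d$, then $\frakM_0(\tau_{\max}(d)) \subseteq \pi(\frakM(v,w))$.

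To prove this last implication, pick a stable $\tilde M \in \M_s(v,w)$ with $\pi(\tilde M)$ of representation type $\tau$, with semisimplification $M_0 \oplus S_2^{d_2} \oplus \cdots \oplus S_s^{d_s}$ (the simples $S_r$ pairwise non-isomorphic, $S_r$ of dimension $v_r$). For each $r$ with $d_r > 1$ and $v_r$ an imaginary root, Proposition \ref{P:CYpreproj} gives $\dim \Ext^1(S_r, S_r) = 2 - (v_r, v_r) \geqslant 2$, so the moduli of simple $\tilde\Pi$-modules of dimension $v_r$ is positive-dimensional and one may find pairwise non-isomorphic simples $S_{r,1}, \ldots, S_{r,d_r}$ of dimension $v_r$ arbitrarily close to $S_r$. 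By Proposition \ref{P:CYpreproj} again, the $\Ext$-groups between simples depend only on their dimension vectors, so the moduli of iterated extensions with prescribed composition-factor dimensions is a flat family over the base parametrizing choices of simples. This produces a deformation $\{\tilde M_t\}$ of $\tilde M$ inside $\frakM(v,w)$ whose composition factors at $t \neq 0$ split $S_r^{d_r}$ into the distinct simples $S_{r,1} \oplus \cdots \oplus S_{r,d_r}$; since $\frakM(v,w)$ is smooth and stability is an open condition, $\tilde M_t$ remains stable for small $t$, giving a stable lift of type strictly above $\tau$ in the closure order but with the same dimension type $d$. Iterating for every index $r$ with $d_r > 1$ eventually yields a stable lift of type $\tau_{\max}(d)$, completing the proof. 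The main technical obstacle is the construction of the family $\{\tilde M_t\}$ with controlled composition factors; this rests on the smoothness of the moduli of simple $\tilde\Pi$-modules of imaginary-root dimension together with the Calabi-Yau-2 property of $\Pi$ from Proposition \ref{P:CYpreproj}, which makes the relevant Ext-theoretic deformation space intrinsic to the dimension data.
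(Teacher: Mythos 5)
Your first two paragraphs are fine and follow the same reduction as the paper: properness of $\pi$, the all-or-nothing behaviour of $\Im(\pi)$ on each stratum $\frakM_0(\tau)$, and the fact that the ``most split'' type $\tau_{\max}(d)$ dominates every other type of the same dimension type in the closure order, so that everything comes down to producing a stable lift of type $\tau_{\max}(d)$ from a stable lift of some other type of dimension type $d$.

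The gap is in the third paragraph, and it sits exactly at the load-bearing step. Proposition~\ref{P:CYpreproj} only controls the alternating sum $\sum_i(-1)^i\dim\Ext^i_{\tilde{\Pi}}(M,N)$ in terms of dimension vectors; the individual $\Ext$-groups are \emph{not} determined by dimension vectors. For simples one has $\dim\Ext^1(S,S')=-(\dim S,\dim S')$ when $S\not\simeq S'$, but $\dim\Ext^1(S,S)=2-(\dim S,\dim S)$, so the $\Ext^1$-spaces jump by $2$ precisely on the diagonal of the base parametrizing your choices of simples, and the asserted flatness of the family of iterated extensions fails there. The failure is not cosmetic: if $v_r$ is isotropic then $\Ext^1(S_{r,j},S_{r,k})=0$ for $j\neq k$ while $\Ext^1(S_r,S_r)$ is two-dimensional, so the non-split self-extensions that $\tilde M$ may use at $t=0$ have no counterpart once the simples are separated. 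Nothing in your argument shows that the generic fibre of the proposed family is non-empty, let alone that it contains a stable point; openness of stability only helps \emph{after} a family with the prescribed generic composition factors is known to exist, and a priori the component of the incidence variety through $\tilde M$ could lie entirely over the diagonal. What is actually required is a proof that at each stage one can choose a non-split extension of the new (distinct) simple by the module already built, i.e.\ $\Ext^1(T_i,N_{i-1})\neq 0$; the paper gets this from the stability criterion of Lemma~\ref{L:stable} combined with explicit Euler-form estimates (e.g.\ $(\dim T_r,\dim N_{i_0-1})<0$ in its Case 1), organized as an induction on $v$ with a separate and genuinely more delicate treatment of a rigid top factor, where one must first move all copies of the rigid simple to the top of the filtration and verify $\dim\Ext^1(S_r,M'_{r-n})\geqslant n$. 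Your sketch replaces all of this with an incorrect flatness claim, so the deformation $\{\tilde M_t\}$ is not constructed.
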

\begin{proof} See the appendix.
\end{proof}

\smallskip

\subsubsection{The Bialynicki-Birula decomposition}\label{sec:BBQ}
Let $\gamma$ be a cocharacter of $T\times G(w)$. Composing it with the $T\times G(w)$-action we get a
$\bbG_m$-action $\bullet$ on $\frakM(v,w)$. We want to describe 
the fixed point locus $\frakM(v,w)^\bullet$ and the Bialynicki-Birula \emph{attracting variety}
\begin{align*}\frakL(v,w)^\bullet=\{z\in\frakM(v,w)\,;\,\exists\lim_{t\to 0}t\bullet z\}.\end{align*}
To do this, fix a cocharacter $\rho$ of $G(v)$. 
Since the $G(v)$-action on $\M_s(v,w)$ commutes with the $T\times G(w)$-action, we can view the product $\gamma\rho$ as
a cocharacter of $T\times G(v)\times G(w)$.
Let $L$ be the centralizer of $\rho$ in $G(v)$ and set
\begin{align}\label{274}
\begin{split}
P&=\{g\in G(v)\,;\,\exists\lim_{t\to 0}\rho(t)\,g\,\rho(t)^{-1}\},\\
U&=\{g\in G(v)\,;\,\lim_{t\to 0}\rho(t)\,g\,\rho(t)^{-1}=1\},\\
\M[\rho]&=\{z\in\M_s(v,w)\,;\,\gamma\rho(t)\cdot z=z\,,\,\forall t\},\\
\L[\rho]&=\{z\in\M_s(v,w)\,;\,\lim_{t\to 0}\gamma\rho(t)\cdot z\in\M[\rho]\},\\
\frakM[\rho]&=\big(G(v)\cdot\M[\rho]\big)\,/\!\!/\,G(v),\\
\frakL[\rho]&=\big(G(v)\cdot\L[\rho]\big)\,/\!\!/\,G(v).
\end{split}
\end{align}

\smallskip

\begin{proposition}\label{prop:1.10}
\hfill
\begin{itemize}[leftmargin=8mm]

\item[$\mathrm{(a)}$] The  maps $G(v)\times_L\M[\rho]\to G(v)\cdot\M[\rho]$ and $G(v)\times_P\L[\rho]\to G(v)\cdot\L[\rho]$
are invertible, 

\item[$\mathrm{(b)}$] $\frakM[\rho]$ is a sum of connected components of $\frakM(v,w)^\bullet$
such that $\frakM(v,w)^\bullet=\bigsqcup_\rho\frakM[\rho],$

\item[$\mathrm{(c)}$] 
$\frakL[\rho]=\{z\in\frakM(v,w)\,;\,\lim_{t\to 0}t\bullet z\in\frakM[\rho]\}$
and
$\frakL(v,w)^\bullet=\bigsqcup_\rho\frakL[\rho].$
\end{itemize}
\end{proposition}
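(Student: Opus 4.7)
My plan is to use the principal $G(v)$-bundle $\rho_s\colon\M_s(v,w)\to\frakM(v,w)$ from Remark \ref{rem:torsor}, and in particular the freeness of the $G(v)$-action, to lift statements about the $\mathbb{G}_m$-action $\bullet$ on $\frakM(v,w)$ to statements about a twisted $\mathbb{G}_m$-action $\gamma\rho$ on $\M_s(v,w)$, where $\rho$ ranges over cocharacters of $G(v)$. Since $\gamma$ takes values in $T\times G(w)$ and $\rho$ in $G(v)$ the two commute, so $\gamma\rho$ is a bona fide $\mathbb{G}_m$-action on $\M_s(v,w)$, and $\M[\rho]$, $\L[\rho]$ are its fixed and Bia\l{}ynicki--Birula attracting loci.

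For part (a), the commutation of $g\in G(v)$ with $\gamma(t)$ gives, for any $z\in\M_s(v,w)$,
\[
\gamma\rho(t)\cdot(gz)=\bigl(\rho(t)g\rho(t)^{-1}\bigr)\cdot\bigl(\gamma\rho(t)\cdot z\bigr).
\]
If $z,gz\in\M[\rho]$ this reduces to $gz=\bigl(\rho(t)g\rho(t)^{-1}\bigr)\cdot z$, and the freeness of the $G(v)$-action forces $\rho(t)g\rho(t)^{-1}=g$ for all $t$, hence $g\in L$. Thus the map $G(v)\times_L\M[\rho]\to G(v)\cdot\M[\rho]$ is bijective. The same calculation applied to $z,gz\in\L[\rho]$ shows that $\rho(t)g\rho(t)^{-1}$ must admit a limit as $t\to 0$, which by the usual characterization of the parabolic attached to $\rho$ forces $g\in P$, yielding the second bijection.

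For part (b), lift $[z]\in\frakM(v,w)^\bullet$ to $z\in\M_s(v,w)$. As $[\gamma(t)z]=[z]$ and the $G(v)$-action is free, there is a unique morphism $\sigma\colon\mathbb{G}_m\to G(v)$ with $\gamma(t)z=\sigma(t)z$; uniqueness together with the $1$-parameter property makes $\sigma$ a group homomorphism, so $\rho:=\sigma^{-1}$ is a cocharacter of $G(v)$ and $z\in\M[\rho]$. A different lift $hz$ conjugates $\rho$ to $h\rho h^{-1}$, so $[z]\in\frakM[\rho]$. The variety $\M[\rho]$ is smooth as the fixed locus of the reductive $1$-parameter subgroup $\gamma\rho(\mathbb{G}_m)$ acting on the smooth variety $\M_s(v,w)$, and by (a) its $L$-quotient $\frakM[\rho]$ is a smooth closed subvariety of $\frakM(v,w)^\bullet$, hence a union of connected components.

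For part (c), the inclusion $\bigsqcup_\rho\frakL[\rho]\subseteq\frakL(v,w)^\bullet$ is immediate since $[z]$ and $[\gamma\rho(t)\cdot z]$ coincide in $\frakM(v,w)$. For the converse, assume $\lim_{t\to 0}t\bullet[z]$ exists, so that we obtain a $\mathbb{G}_m$-equivariant morphism $f\colon\mathbb{A}^1\to\frakM(v,w)$ whose image at $0$ lies in $\frakM[\rho]$ by (b). The \emph{main obstacle} is to lift $f$ to a morphism $\tilde f\colon\mathbb{A}^1\to\M_s(v,w)$. Pulling back $\rho_s$ along $f$ produces a $\mathbb{G}_m$-equivariant $G(v)$-torsor on $\mathbb{A}^1$; such a torsor is trivial as a torsor, since every principal bundle under a product of general linear groups on the affine line is trivial, and the $\mathbb{G}_m$-equivariant trivializations are classified by cocharacters of $G(v)$. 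Choosing such a trivialization produces the desired $\rho$ together with a lift $z'\in\M_s(v,w)$ of $[z]$ for which $\lim_{t\to 0}\gamma\rho(t)\cdot z'=\tilde f(0)\in\M[\rho]$, so $z'\in\L[\rho]$ and $[z]\in\frakL[\rho]$. The disjointness of the union then follows from (b) applied to the limits.
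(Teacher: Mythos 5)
Your proof is correct. For part (a) you argue exactly as the paper does: freeness (more precisely, the isomorphism $G(v)\times\M_s(v,w)\simeq\M_s(v,w)\times_{\frakM(v,w)}\M_s(v,w)$ of Remark~\ref{rem:torsor}) forces $\rho(t)g\rho(t)^{-1}=g$ in the fixed-point case and forces $\lim_{t\to 0}\rho(t)g\rho(t)^{-1}$ to exist in the attracting case. For part (b) you spell out the standard argument that the paper simply cites from \cite{N01}; the only loose point is the claim that a smooth closed subvariety of $\frakM(v,w)^\bullet$ must be a union of connected components — it need not be in general, and what actually closes the argument is that the $\frakM[\rho]$ form a finite partition of $\frakM(v,w)^\bullet$ into closed subsets (equivalently, that the conjugacy class of your cocharacter $\sigma$ is locally constant on the fixed locus), so each piece is also open. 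Part (c) is where you genuinely diverge. The paper sandwiches $\frakM[\rho]\subseteq\frakL[\rho]\subseteq\frakL'[\rho]$, applies the Bialynicki-Birula decomposition to write $\frakL'[\rho]$ as affine bundles over the components of $\frakM[\rho]$, and shows $\frakL[\rho]$ is dense open in $\frakL'[\rho]$ by a dimension comparison (left to the reader) before concluding via the contracting $\bbG_m$-action. You instead extend the orbit map to a $\bbG_m$-equivariant morphism $\bbA^1\to\frakM(v,w)$ and lift it through the torsor $\M_s(v,w)\to\frakM(v,w)$, using that $G(v)$-torsors on $\bbA^1$ are trivial and that $\bbG_m$-equivariant structures on the trivial torsor are, up to equivariant isomorphism, given by cocharacters; for $G(v)=\prod_i GL(v_i)$ this last point is elementary (a $\bbG_m$-equivariant vector bundle on $\bbA^1$ is a graded free $\k[s]$-module, hence a sum of shifted copies of $\k[s]$). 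Your route delivers the surjectivity of $\bigsqcup_\rho\frakL[\rho]\to\frakL(v,w)^\bullet$ directly, with no dimension estimate, at the price of invoking the classification of equivariant torsors on the line; both arguments are sound.
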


\smallskip

\begin{proof} To prove the first claim of (a), observe that for two distinct
cocharacters $\rho$ and $\rho'$ the freeness of the $G(v)$-action on $\M_s(v,w)$ implies that $\M[\rho]\cap\M[\rho']=\emptyset$.
For the second claim we must check that for each $g\in G(v)$ we have 
$$\L[\rho]\cap(g\cdot\L[\rho])=
\begin{cases}\emptyset&\text{if}\ g\notin P,\\\L[\rho]&\text{else.}\end{cases}
$$
If $z$ and $g\cdot z$ belong to $\L[\rho]$, then the following limits exist in $\M_s(v,w)$
$$\lim_{t\to 0}\gamma\rho(t)\cdot z,\quad\lim_{t\to 0}(\rho(t)\,g\,\rho(t)^{-1})\cdot(\gamma\rho(t)\cdot z).$$ 
Since the $G(v)$-action on $\M_s(v,w)$ is free, this implies that $g\in P$.
Part (b) is well-known, see e.g. \cite[\S 4]{N01}.
Let us sketch a proof of (c). 
Put
$$\frakL'[\rho]=\{z\in\frakM(v,w)\,;\,\lim_{t\to 0}t\bullet z\in\frakM[\rho]\}.$$
We must check that $\frakL[\rho]=\frakL'[\rho]$. We have
$$\frakM[\rho]\subseteq\frakL[\rho]\subseteq\frakL'[\rho]$$
and the Bialynicki-Birula theorem implies that $\frakL'[\rho]$ is the disjoint union of affine space bundles 
$\frakL'_1,\dots,\frakL'_r$ over the connected 
components $\frakM_1,\dots,\frakM_r$ of $\frakM[\rho]$ with a 
contracting $\bbG_m$-action on each $\frakL'_i$ given by $\bullet$.
Set $\frakL_i=\frakL'_i\cap\frakL[\rho]$ for some $i=1,\dots r$.
It is not difficult to prove that $\frakL_i$ and $\frakL'_i$ have the same dimension.
Hence, since $\frakL_i$ is locally closed in $\frakL'_i$, it is a dense open subset.
Finally, since $\frakL_i$ contains $\frakM_i$ and is preserved by the contracting $\bbG_m$-action given by $\bullet$,
it equals $\frakL'_i$.

\end{proof}

\smallskip

\begin{remark}
We will actually prove later, see Corollary~\ref{cor:Mrhoconn}, that $\frakM[\rho]$ is connected for any $\rho$. In particular, the map $\frakL[\rho] \to \frakM[\rho]$ is an affine fibration.
\end{remark}

\smallskip

\subsection{Hecke correspondences}\label{S:Hecke}\hfill

\subsubsection{Basics}
Let $v_1$, $v_2$ and $w$ be dimension vectors.
Set $v=v_1+v_2$.
Fix an $I$-graded subspace $V_1\subseteq V$ with dimension vector $v_1$ and let
$P$ be the corresponding parabolic subgroup of $G(v)$.
Fix isomorphisms $\k^{v_1} \simeq V_1$ and $\k^{v_2} \simeq V/ V_1$.
Let us write 
\begin{align}\label{R1}
\begin{split}
\R_P&=\{\bar x\in \R(v)\,;\,\bar x(V_1)\subseteq V_1\}\oplus\Hom_I(w,v_1)\oplus\Hom_I(v,w),\\
\R_{s,P}&=\R_P\cap\R_s(v,w),\\
\H[v_1,v_2\,;\,w]&=\R_{P}\,\cap\,\M_s(v,w),\\
\frakh[v_1,v_2\,;\,w]&=\H[v_1,v_2\,;\,w]/\!\!/P.
\end{split}
\end{align}
Note that $\frakh[v_1,v_2\,;\,w]$ is indeed a geometric quotient. It is called a \emph{Hecke correspondence}. 
For each $(\bar{x}, \bar{a})\in\H[v_1,v_2\,;\,w]$ we write
\begin{align}\label{flag}
\bar{x}_1=\bar{x}|_{V_1}, \quad \bar{a}_{1}=\bar{a}|_{V_1},\quad\bar{x}_2=\bar{x}|_{V/V_1}.
\end{align}

\smallskip

\begin{proposition} \label{prop:hecke} \hfill
\begin{itemize}[leftmargin=8mm]
\item[$\mathrm{(a)}$] There is a closed embedding
$\frakh[v_1,v_2\,;\,w]\subseteq\frakM(v,w)\times\frakM(v_1,w)$,
\item[$\mathrm{(b)}$] 
$\frakh[v_1,v_2\,;\,w]$ is projective over $\frakM(v,w)$.
\end{itemize}
\end{proposition}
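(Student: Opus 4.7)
The plan is to construct an explicit restriction map, then deduce closedness and projectivity by descent along the $G(v)$-torsor $\M_s(v,w)\to\frakM(v,w)$ (Remark~\ref{rem:torsor}). Define $\phi:\H[v_1,v_2;w]\to\M_s(v,w)\times\M_s(v_1,w)$ by $(\bar x,\bar a)\mapsto((\bar x,\bar a),(\bar x|_{V_1},a,a^*|_{V_1}))$. The second factor is itself $s$-stable: any $\bar x$-invariant $I$-graded subspace $S\subseteq V_1$ with $a^*|_{V_1}(S)=0$ is already a destabilizing subrepresentation of $(\bar x,\bar a)$, hence zero. The map $\phi$ is $P$-equivariant, with $P\hookrightarrow G(v)$ on the first factor and the Levi quotient $P\to G(v_1)$ on the second, so it descends to $\bar\phi:\frakh[v_1,v_2;w]\to\frakM(v,w)\times\frakM(v_1,w)$.

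To establish properness and (b), introduce the associated bundle $\tilde\H=G(v)\times_P\H[v_1,v_2;w]$, canonically identified with the scheme of pairs $((\bar x,\bar a),V')$ where $(\bar x,\bar a)\in\M_s(v,w)$ and $V'\subseteq V$ is a $\bar x$-invariant $I$-graded subspace of dimension $v_1$ containing $a(W)$. The first projection is a closed embedding $\tilde\H\hookrightarrow\M_s(v,w)\times\Gr(v_1,V)$, hence proper. Since $(G(v)\times_P\H)/G(v)=\H/P$, we have $\tilde\H/G(v)=\frakh[v_1,v_2;w]$, and descending this proper map along the torsor $\M_s(v,w)\to\frakM(v,w)$ gives a proper morphism $\bar p:\frakh[v_1,v_2;w]\to\frakM(v,w)$, which is the first component of $\bar\phi$. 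This proves (b), and combining it with separatedness of the second projection shows $\bar\phi$ is itself proper.

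For (a) it suffices to prove $\bar\phi$ is a monomorphism, since proper monomorphisms between finite-type schemes are closed immersions. Given two $\bar x$-invariant subspaces $V',V''\subseteq V$ of dimension $v_1$ both containing $a(W)$, together with a framed-representation isomorphism $\iota:V'\to V''$, view $\iota$ as a linear map $V'\to V$ via $V''\hookrightarrow V$ and set $\psi=\iota-i$, where $i:V'\hookrightarrow V$ is the inclusion. Then $\psi$ commutes with $\bar x$; it satisfies $\psi\circ a=0$ because both $\iota$ and $i$ restrict to the identity on $a(W)$; and $a^*\circ\psi=a^*|_{V''}\circ\iota-a^*|_{V'}=0$ by compatibility of $\iota$ with $a^*$. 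Hence $\Im(\psi)\subseteq V$ is a subrepresentation contained in $\Ker(a^*)$, i.e.\ destabilizing, so by stability of $(\bar x,\bar a)$ it must vanish. Therefore $\psi=0$ and $V'=V''$. Combined with freeness of the $G(v)$-action on $\M_s(v,w)$, this yields injectivity of $\bar\phi$ on closed points; the $T$-point version of the same subtraction argument is analogous.

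The main delicate point will be this last injectivity step, which crucially uses framed-quiver stability via the trick $\iota-i$; without it, a priori two distinct subspaces $V'\neq V''$ could yield isomorphic subrepresentations and ruin the monomorphism property. The rest of the proof is formal GIT descent along the two torsors involved.
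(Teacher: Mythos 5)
Your proof is correct and follows essentially the same route as the paper: restriction to $V_1$ gives the map to $\frakM(v,w)\times\frakM(v_1,w)$, properness over $\frakM(v,w)$ comes from the closed embedding into $\M_s(v,w)\times\Gr(v_1,V)$, and injectivity comes from the observation that the difference of two compatible embeddings $V_1\to V$ has destabilizing image. You are merely more explicit than the paper about checking stability of the restricted representation and about invoking the fact that a proper monomorphism is a closed immersion.
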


\smallskip

\begin{proof} 
We have
$$\frakh[v_1,v_2\,;\,w]=\big(G(v)\times_P\H[v_1,v_2\,;\,w]\big)/\!\!/G(v)$$
where
\begin{align}\label{HH}
\H[v_1,v_2\,;\,w]
&=\{z\in\M_s(v,w)\,;\,z(V_1\oplus W)\subseteq V_1\oplus W\}.
\end{align}
Thus, the assignment
$$\H[v_1,v_2\,;\,w]\to \M_s(v,w)\times\M_s(v_1,w),\quad
(\bar x,\bar a)\mapsto(\bar x,\bar a,\bar x_1,\bar a_1)$$ gives rise to a map
$\frakh[v_1,v_2\,;\,w]\to\frakM(v,w)\times\frakM(v_1,w).$
For each elements $(\bar x,\bar a)\in \M_s(v,w)$ and
$(\bar x_1,\bar a_1)\in \M_s(v_1,w)$ there is at most one embedding
$\phi_1:V_1\to V$ such that 
\begin{align*}\phi_1\circ\bar x_1=\bar x\circ\phi_1,\quad \phi_1\circ a_1=a,\quad a_1^*=a^*\circ\phi_1,
\end{align*}
because if there are two of them, say $\phi_1$ and $\phi'_1$, then
the $I$-graded subspace $\Im(\phi_1-\phi'_1)$ of $V$ is destabilizing for $(\bar x,\bar a)$, hence it is $\{0\}$.
This proves the part (a).
Part (b) follows from the projectivity of Grassmanians.
\end{proof}

\smallskip

For each $G(v_2)$-invariant locally closed subset 
$\SS\subseteq \M(v_2)$ we define
\begin{align} \label{heckeB} 
\begin{split}
\frakh[v_1,\SS\,;\,w]&=\H[v_1,\SS\,;\,w]/\!\!/P,\\
\H[v_1,\SS\,;\,w]&=\{(\bar x,\bar a)\in\H[v_1,v_2\,;\,w]\,;\,\bar x_2\in S\}.
\end{split}
\end{align}
We may abbreviate
\begin{align*}
\H[v_1,\SS]=\H[v_1,\SS\,;\,w],\quad
\frakh[v_1,\SS]=\frakh[v_1,\SS\,;\,w],\quad
\frakh[v_1,v_2]=\frakh[v_1,v_2\,;\,w], \quad\text{etc}.
\end{align*}

\smallskip

Recall that a subvariety $V$ of a symplectic manifold $X$ is \emph{isotropic} if the restriction of the symplectic form to 
the smooth locus of $V$ vanishes. Let $X^{\op}$ denote the manifold $X$ with the opposit symplectic form.

\smallskip

\begin{lemma}\label{lem:isotropic}
Let $\SS$ be an isotropic subvariety of the symplectic vector space $\R(v_2)$. 
Then $\frakh[v_1,\SS\,;\,w]$ is an isotropic subvariety of the symplectic manifold 
$\frakM(v,w)^{\op}\times\frakM(v_1,w)$.
\end{lemma}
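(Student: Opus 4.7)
The plan is to pull back the symplectic form $\omega:=-\omega_{v,w}\boxplus\omega_{v_1,w}$ on $\frakM(v,w)^{\op}\times\frakM(v_1,w)$ along the embedding $\iota:\frakh[v_1,v_2;w]\hookrightarrow \frakM(v,w)\times\frakM(v_1,w)$ of Proposition~\ref{prop:hecke}, and to show that this pullback factors as $-\pi_0^{*}\omega_{\R(v_2)}$ for the $P$-equivariant morphism $\pi_0:\H[v_1,v_2;w]\to\M(v_2)\subseteq\R(v_2)$ sending $(\bar x,\bar a)$ to $\bar x_2$. Once this identity is established, the lemma follows at once: restricting to $\H[v_1,\SS;w]=\pi_0^{-1}(\SS)$ forces the pullback to vanish at smooth points of $\SS$, since $\SS$ is isotropic in $\R(v_2)$, and the descent to $\frakh[v_1,\SS;w]=\H[v_1,\SS;w]/\!\!/P$ is automatic because the infinitesimal $P$-orbit directions in $T\H$ belong to the kernel of $\omega_{v,w}|_{T\H}$ by the standard moment map identity applied to $\mu_{v,w}^{-1}(0)=\M(v,w)$.

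The heart of the argument is an explicit computation of $\omega_{v,w}|_{\R_P}$ for the $P$-stable subspace $\R_P$ from \eqref{R1}. Decompose $\R_P$ as a $P$-module $A\oplus B$, where $A\simeq \R(v_1,w)\oplus\R(v_2)$ records the block-diagonal data $(\bar x_1,\bar a_1,\bar x_2)$, while $B$ gathers the strictly upper off-diagonal blocks of $x$ and $x^{*}$ together with the $\Hom_I(V_2,W)$-component of $a^{*}$. For any $\bar x\in\R_P$ every $x_h$ and $x_{h^{*}}$ is block upper triangular with respect to the flag $V_1\subset V$, so a product of two such operators is again block upper triangular, with trace equal to the sum of the traces of its diagonal blocks. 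Combining this observation with the fact that $a$ factors through $V_1$, a straightforward trace calculation shows that $B$ is isotropic in $\R(v,w)$ and that $A\perp B$, yielding
$$\omega_{v,w}(\dot p,\dot p')=\omega_{v_1,w}(\dot p_1,\dot p_1')+\omega_{\R(v_2)}(\dot p_2,\dot p_2')\qquad\text{for all }\dot p,\dot p'\in \R_P,$$
where $\dot p_1,\dot p_2$ denote the projections of $\dot p$ onto the two summands of $A$.

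Plugging this identity into $\iota^{*}\omega(\dot p,\dot p')=-\omega_{v,w}(\dot p,\dot p')+\omega_{v_1,w}(\dot p_1,\dot p_1')$ produces exactly $-\omega_{\R(v_2)}(\dot p_2,\dot p_2')=-\pi_0^{*}\omega_{\R(v_2)}(\dot p,\dot p')$, as announced. That $\pi_0$ indeed lands in $\M(v_2)$ is automatic: the moment map equation $[x,x^{*}]+aa^{*}=0$ descends modulo $V_1$ to $[\bar x_2,\bar x_2^{*}]=0$ because $aa^{*}$ already takes values in $V_1$. The main technical obstacle is thus the block-triangular trace computation identifying $B$ as an isotropic complement of $A$ inside $\R_P$; this reduces to the elementary fact that a strictly upper triangular product of upper triangular operators has vanishing diagonal, but it requires careful bookkeeping of the different kinds of arrows (loops and non-loops) and of the framing data. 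Once the splitting $\omega_{v,w}|_{\R_P}=\omega_{v_1,w}+\omega_{\R(v_2)}$ is in hand, the remainder of the proof is formal.
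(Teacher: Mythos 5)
Your proposal is correct and follows essentially the same route as the paper: the heart of both arguments is the block-triangular trace computation showing that on $\R_P$ the form $\omega_{v,w}$ splits as $\omega_{v_1,w}$ on the restriction to $V_1\oplus W$ plus $\omega_{\R(v_2)}$ on the induced data on $V/V_1$, so that the pullback of $-\omega_{v,w}\boxplus\omega_{v_1,w}$ to $\H[v_1,\SS\,;\,w]$ is $-\pi_0^*\omega_{\R(v_2)}$ and vanishes by isotropy of $\SS$. The only (immaterial) difference is that you descend along the $P$-quotient $\H\to\frakh$ directly, whereas the paper phrases the descent via the graph embedding into $\R_s(v,w)^{\op}\times\R_s(v_1,w)$ and symplectic reduction by $G(v)\times G(v_1)$.
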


\smallskip

\begin{proof} Consider the closed embedding, see \eqref{flag},
$$\phi:\R_{P}\to\R(v,w)\times \R(v_1,w),\quad(\bar x,\bar a)\to(\bar x,\bar a,\bar x_1,\bar a_1).$$
The variety $\frakM(v,w)^\op\times\frakM(v_1,w)$ is the symplectic reduction of the symplectic manifold
$$\R_s(v,w)^\op\times \R_s(v_1,w)$$ 
relative to the $G(v)\times G(v_1)$-action which is Hamiltonian.
The variety $\frakh[v_1,S]$ is the image in $\frakM(v,w)\times\frakM(v_1,w)$ of the $G(v)\times G(v_1)$-saturation of the set
\begin{align}\label{set}\big(\R_s(v,w)\times \R_s(v_1,w)\big)\cap\phi\big(\{(\bar x,\bar a)\in\R_P\,;\,\bar x_2\in S\}\big)
\end{align}
under the $G(v)\times G(v_1)$-action on $\R(v,w)^\op\times \R(v_1,w)$.
The symplectic form on $\frakM(v,w)^\op\times\frakM(v_1,w)$ is the unique symplectic form whose pullback to 
$\M_s(v,w)\times \M_s(v_1,w)$
equals the restriction of the symplectic form of $\R(v,w)^\op\times \R(v_1,w)$.
Therefore, to prove that $\frakh[v_1,\SS]$ is isotropic it is enough to check that \eqref{set}
is an isotropic subvariety of $\R(v,w)^\op\times \R(v_1,w)$.

\smallskip

According to  \S \ref{sec:QV}, the symplectic form on $\R(v,w)$ is given by
\begin{align*}
\omega\big((\bar X,\bar A)\,,\, (\bar Y,\bar B)\big)=\tr(XY^*-YX^*+AB^*-BA^*).
\end{align*}
Thus, if we have
$$X^*(V_1)\,,\,X(V_1)\,,\, A(W)\,,\,Y^*(V_1)\,,\,Y(V_1)\,,\, B(W)\subseteq V_1,$$ 
we deduce that
\begin{align*}
\omega\big((\bar X,\bar A)\,,\, (\bar Y,\bar B)\big)-\omega\big((\bar X_1,\bar A_1)\,,\, (\bar Y_1,\bar B_1)\big)=\tr(X_2Y_2^*-Y_2X_2^*),
\end{align*}
where $X_1\in\End(V_1)$ is the restriction of $X$, $X_2\in\End(V/V_1)$ is the induced operator, etc.
Since $\SS$ is an isotropic subvariety of $\R(v_2)$, we deduce that
\begin{align*}
\omega\big((\bar X,\bar A)\,,\, (\bar Y,\bar B)\big)-\omega\big((\bar X_1,\bar A_1)\,,\, (\bar Y_1,\bar B_1)\big)=0.
\end{align*}
\end{proof}



\smallskip

\begin{proposition} \label{prop:LAG} The Hecke correspondence 
$\frakh[v_1,\Lambda_{(v_2)}\,;\,w]$ is a closed Lagrangian
local complete intersection of the symplectic manifold $\frakM(v,w)^{\op} \times \frakM(v_1,w)$.
\end{proposition}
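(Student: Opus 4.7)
The plan is to combine the isotropy statement of Lemma~\ref{lem:isotropic}, which will give an upper bound on dimension, with an explicit realisation of $\H[v_1,\Lambda_{(v_2)}\,;\,w]$ as the zero locus of the moment map inside a smooth linear ambient space, which will give a matching lower bound; the resulting dimension equality will upgrade the description to a local complete intersection. Closedness is immediate: $\Lambda_{(v_2)}=\{0\}\oplus Rep(\k Q^*,v_2)$ is a closed linear subvariety of $\R(v_2)$, so its preimage $\H[v_1,\Lambda_{(v_2)}\,;\,w]$ inside $\H[v_1,v_2\,;\,w]$ is closed and passes to a closed subvariety of $\frakh[v_1,v_2\,;\,w]\subseteq\frakM(v,w)\times\frakM(v_1,w)$ via Proposition~\ref{prop:hecke}. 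For isotropy, the canonical Lagrangian decomposition $\R(v_2)=Rep(\k Q,v_2)\oplus Rep(\k Q^*,v_2)$ exhibits $\Lambda_{(v_2)}$ as a linear Lagrangian, hence isotropic, subvariety of $\R(v_2)$, and Lemma~\ref{lem:isotropic} gives that $\frakh[v_1,\Lambda_{(v_2)}\,;\,w]$ is isotropic in $\frakM(v,w)^{\op}\times\frakM(v_1,w)$, yielding $\dim\frakh[v_1,\Lambda_{(v_2)}\,;\,w]\leqslant(d_{v,w}+d_{v_1,w})/2$ on each irreducible component.

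For the local complete intersection property and the matching dimension I would use as ambient the smooth open subvariety $X=\{(\bar x,\bar a)\in\R_s(v,w)\,;\,x(V)\subseteq V_1,\ x^*(V_1)\subseteq V_1,\ a(W)\subseteq V_1\}$ of a linear subspace of $\R(v,w)$; by construction $\H[v_1,\Lambda_{(v_2)}\,;\,w]=X\cap\mu^{-1}(0)$. The key observation is that on $X$ one has $[x,x^*](V)\subseteq V_1$ and $aa^*(V)\subseteq V_1$, so the moment map actually factors through the subspace $\Hom_I(V,V_1)\subseteq\frakg(v)$ of dimension $v_1\cdot v$, and hence $\mu^{-1}(0)$ is cut out by at most $v_1\cdot v$ equations on $X$. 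A direct computation with the Euler form and $v=v_1+v_2$ verifies the identity $\dim X-v_1\cdot v-\dim P=(d_{v,w}+d_{v_1,w})/2$. Since $G(v)$ acts freely on $\M_s(v,w)$ by Remark~\ref{rem:torsor}, so does $P$, and the elementary codimension lower bound for a zero locus of $v_1\cdot v$ equations gives, on every non-empty irreducible component, $\dim\frakh[v_1,\Lambda_{(v_2)}\,;\,w]\geqslant(d_{v,w}+d_{v_1,w})/2$.

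Combined with the isotropy upper bound, this forces equality on every component. The moment map equations must therefore form a regular sequence on $X$, so $\H[v_1,\Lambda_{(v_2)}\,;\,w]$ is a complete intersection of codimension $v_1\cdot v$ in $X$; taking the free geometric quotient by $P$ shows that $\frakh[v_1,\Lambda_{(v_2)}\,;\,w]$ is a local complete intersection of exactly half the dimension of $\frakM(v,w)^{\op}\times\frakM(v_1,w)$, and being isotropic of half dimension it is Lagrangian. The main obstacle is the pair of dimension identifications on which everything rests: verifying that the moment map image on $X$ genuinely lies in $\Hom_I(V,V_1)$ (which is what makes the effective equation count equal to $v_1\cdot v$ rather than the naive $\dim\frakp$), and the combinatorial check that the ambient dimension, the equation count and $\dim P$ combine to exactly $(d_{v,w}+d_{v_1,w})/2$; both are elementary, but must be performed on the nose since the regularity of the moment map equations, and hence the local complete intersection and Lagrangian conclusions, depends on these constants matching precisely.
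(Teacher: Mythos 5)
Your proposal is correct and follows essentially the same strategy as the paper's proof: the isotropy upper bound from Lemma~\ref{lem:isotropic} combined with a matching lower bound obtained by exhibiting $\H[v_1,\Lambda_{(v_2)}\,;\,w]$ as the zero locus of exactly the right number of moment-map equations in a smooth, pure-dimensional ambient, the equality then yielding the Lagrangian l.c.i.\ conclusion. The only difference is bookkeeping: the paper takes as ambient $\M_{s}(v_1,w)\oplus Rep(\k Q^*,v_2)\oplus\Hom_I(V_2,W)\oplus\bigoplus_{h}\Hom(V_{2,h'},V_{1,h''})$ and counts the remaining $v_1\cdot v_2$ equations (the $\Hom_I(V_2,V_1)$-component of $\mu$), whereas you take the full linear space $X$ and count all $v_1\cdot v$ equations valued in $\Hom_I(V,V_1)$ --- the two counts agree since the $v_1\cdot v_1$ equations on the $V_1$-part are exactly those cutting out $\M_s(v_1,w)$.
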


\smallskip

\begin{proof} By Lemma~\ref{lem:isotropic}, the variety $\frakh[v_1,\Lambda_{(v_2)}\,;\,w]$ is isotropic.
Since the $P$-action on $\H[v_1,\Lambda_{(v_2)}\,;\,w]$ is free, we deduce that
\begin{equation}
\begin{split}
\dim\H[v_1,\Lambda_{(v_2)}\,;\,w]&\leqslant \dim P + \dim\frakM(v,w)\dim \frakM(v_2,w)/2\\
&= v_1 \cdot v_1 + v_2 \cdot v_2 + v_1 \cdot v_2 + d_{v,w}/2 + d_{v_1,w}/2.
\end{split}
\end{equation}
Fix $I$-graded vector spaces $V_1$, $V_2$, $W$ with dimension vectors $v_1$, $v_2$, $w$.
For each $i$ we abbreviate $V_{1,i}=(V_1)_i$ and $V_{2,i}=(V_2)_i$.
By construction $\H[v_1,\Lambda_{(v_2)}\,;\,w]$ is an open subset of the zero fiber
of the map
$$\mu' : \M_{s}(v_1,w) \oplus Rep(\k Q^*,v_2) \oplus \Hom_I(V_2,W) \oplus  \bigoplus_{h \in \bar{\Omega}} \Hom(V_{2,h'}, V_{1,h''})
\to \Hom_I(v_2,v_1).$$
The domain of the above map being irreducible, using (\ref{E:dimquivervar}) we deduce that every 
irreducible component of
$\H[v_1,\Lambda_{(v_2)}]$ is of dimension at least
\begin{equation*}
\begin{split}
(d_{v_1,w} + v_1 \cdot v_1) &+ (v_2 \cdot v_2 - (v_2,v_2)_Q/2) + v_2 \cdot w + (2 v_1 \cdot v_2 -(v_1,v_2)_Q) -v_1 \cdot v_2\\
&=v_1 \cdot v_2 + d_{v,w}/2 + d_{v_1,w}/2.
\end{split}
\end{equation*}
It follows that the variety $\frakh[v_1,\Lambda_{(v_2)}\,;\,w]$ is a Lagrangian local complete intersection.
\end{proof}

\smallskip

\subsubsection{One vertex Hecke correspondences.} 
In this section we study  the Hecke correspondences in the particular 
case where $v_2$ is concentrated at a single vertex. 
Fix $i \in I$ and $l\in\bbN$ such that $v_2 = l \delta_i$. Write $q=q_i $.
We concentrate on the case $q>1$.
Recall that  
$$\Irr\big(\Lambda(v_2)\big)=\{\Lambda_{\nu}\,;\,\nu\vDash v_2\}.$$
Since the set $\Lambda_\nu$ is $G(v_2)$-invariant, the set $\frakh[v_1,\Lambda_\nu\,;\,w]$ is well-defined.
\smallskip

\begin{proposition}\label{prop:irrheckenu} If $q>1$ and
$\nu \vDash v_2$ then the Hecke correspondence
$\frakh[v_1,\Lambda_{\nu}\,;\,w]$ in 
$\frakM(v,w)^{\op} \times \frakM(v_1,w)$ is either Lagrangian and irreducible or empty.
\end{proposition}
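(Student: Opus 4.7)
The plan is to establish the Lagrangian property first, by combining isotropy with a dimension count, and then to prove irreducibility via a flag-preserving construction whose inductive analysis crucially uses the hypothesis $q>1$.

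For the Lagrangian property, Proposition~\ref{prop:geomirrlambda}(a) asserts that $\Lambda_\nu$ is an irreducible Lagrangian, in particular isotropic, subvariety of $\R(v_2)$. Lemma~\ref{lem:isotropic} then gives that $\frakh[v_1,\Lambda_\nu;w]$ is isotropic in $\frakM(v,w)^{\op}\times\frakM(v_1,w)$. To upgrade isotropy to Lagrangian, I would mimic the dimension count from the proof of Proposition~\ref{prop:LAG}: present $\H[v_1,\Lambda_\nu;w]$ as the zero fiber of the cross-term moment map
$$\mu':\M_s(v_1,w)\oplus \Lambda_\nu\oplus\Hom_I(V_2,W)\oplus\bigoplus_{h\in\bar\Omega}\Hom(V_{2,h'},V_{1,h''})\to\Hom_I(v_2,v_1).$$
Since the source is irreducible of known dimension (using $\dim\Lambda_\nu=qv_2^2$ from Proposition~\ref{prop:geomirrlambda}), every irreducible component of $\H[v_1,\Lambda_\nu;w]$ has dimension at least $\dim P+(d_{v,w}+d_{v_1,w})/2$; dividing by the free $P$-action, every irreducible component of $\frakh[v_1,\Lambda_\nu;w]$ has dimension at least $(d_{v,w}+d_{v_1,w})/2$, which together with isotropy yields the Lagrangian property.

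For irreducibility, I would fix a flag $V_1=\tilde W_0\subset\tilde W_1\subset\cdots\subset\tilde W_r=V$ of type $(v_1,\nu_1\delta_i,\dots,\nu_r\delta_i)$, let $P_\nu\subseteq P$ be its stabilizer in $G(v)$, and introduce
$$\hat\H[v_1,\nu;w]=\{(\bar x,\bar a)\in\M_s(v,w)\,:\,\bar x(\tilde W_p)\subseteq\tilde W_p,\ x(\tilde W_p)\subseteq\tilde W_{p-1}\ \forall p,\ a(W)\subseteq V_1\}.$$
Because $\Lambda_\nu=G(v_2)\cdot\Lambda_W$, any $(\bar x,\bar a)\in\H[v_1,\Lambda_\nu;w]$ can be moved by an element of $P$ so that its induced quotient flag on $V/V_1$ matches the image of the fixed flag; hence $\H[v_1,\Lambda_\nu;w]=P\cdot\hat\H[v_1,\nu;w]$ and $\frakh[v_1,\Lambda_\nu;w]$ is the geometric quotient of $\hat\H[v_1,\nu;w]$ by $P_\nu$. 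It therefore suffices to prove $\hat\H[v_1,\nu;w]$ is irreducible, which I would do by induction on $r$. The base case $r=0$ is $\M_s(v_1,w)$, which is smooth and irreducible. For the inductive step, the forgetful map
$$\hat\H[v_1,(\nu_1,\dots,\nu_p);w]\to\hat\H[v_1,(\nu_1,\dots,\nu_{p-1});w],\qquad (\bar x,\bar a)\mapsto(\bar x|_{\tilde W_{p-1}},\bar a|_{\tilde W_{p-1}}),$$
is well-defined (stability is inherited since a destabilizing subrepresentation of the restriction would also destabilize $(\bar x,\bar a)$), and its fibers parametrize extensions by a representation of dimension $\nu_p\delta_i$ at vertex $i$ in the affine stratum $\Lambda_{(\nu_p\delta_i)}\simeq\Mat_{\nu_p}^{\oplus q}$, together with off-diagonal data subject to the cross-term of the moment map equation and the open stability condition.

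The hard part is to show that this fiber is irreducible of constant dimension, and it is precisely here that $q>1$ enters decisively. At vertex $i$ the $2q v_{1,i}^{(p-1)}\nu_p$ off-diagonal degrees of freedom strictly dominate the $2v_{1,i}^{(p-1)}\nu_p$ constraints coming from the cross-term moment equation, and a direct calculation (choosing one loop $h\in\Omega_{ii}$ as a pivot to resolve the constraints) shows that the equation reduces to a surjective affine-linear map in the off-diagonal variables. Its fiber is therefore a linear subspace of constant codimension, inside which the open stability condition cuts out a non-empty open subset; the total fiber is thus an irreducible affine variety of constant dimension. Irreducibility of $\hat\H[v_1,\nu;w]$ then propagates inductively from the base by the standard criterion on fibrations with irreducible base and equidimensional irreducible fibers, and the same numerical input controls the Lagrangian step.
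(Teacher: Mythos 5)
Your overall architecture is the same as the paper's (isotropy plus a dimension lower bound gives the Lagrangian property; irreducibility is reduced to the flag-preserving variety $\hat\H[v_1,\nu;w]$, which is the paper's $\H[v_1,\Lambda_{F/V_1};w]$), and the first half is fine: presenting $\H[v_1,\Lambda_\nu;w]$ as an open subset of the zero fibre of the cross-term moment map, with $\Lambda_\nu$ irreducible of dimension $qv_2^2$ by Proposition~\ref{prop:geomirrlambda}, does give that every component has dimension at least $\dim P + (d_{v,w}+d_{v_1,w})/2$, which together with Lemma~\ref{lem:isotropic} yields pure Lagrangian dimension. This is a mild and legitimate variant of the count in Proposition~\ref{prop:LAG} (the paper instead obtains the bound for the flag version by an iterated-intersection argument).

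The irreducibility step, however, has a genuine gap exactly where you flag "the hard part". The fibres of your forgetful map $\hat\H[v_1,(\nu_1,\dots,\nu_p);w]\to\hat\H[v_1,(\nu_1,\dots,\nu_{p-1});w]$ are \emph{not} equidimensional, and the cross-term equation is \emph{not} surjective in the off-diagonal variables at every point of the base. By Lemma~\ref{L:CBext} (equivalently, the Calabi--Yau property \eqref{E:prepro1}), the cokernel of the relevant linear map $\gamma$ is dual to $\Hom_{\tilde{\Pi}}\bigl(z_{p-1},\bar{x}_2^{(p)}\bigr)$, where $z_{p-1}$ is the module built so far and $\bar{x}_2^{(p)}\in\Lambda_{(\nu_p\delta_i)}$ is the new graded piece; the fibre is an affine space whose dimension jumps by exactly $\dim\Hom_{\tilde{\Pi}}(z_{p-1},\bar{x}_2^{(p)})$ when this Hom space is nonzero. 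This locus is a nonempty closed subset in general: it occurs whenever $z_{p-1}$ admits a quotient supported at the vertex $i$ mapping onto a constituent of $\bar{x}_2^{(p)}$, and nothing in the construction rules this out (the earlier steps have already inserted pieces of exactly this shape). Your numerical comparison of $2q u_i\nu_p$ variables against $u_i\nu_p$ equations, and the ``pivot'' trick, can only show that $\gamma$ has more columns than rows, not that it is surjective pointwise; so the ``standard criterion on fibrations with irreducible base and equidimensional irreducible fibres'' does not apply. This is precisely what the paper's Steps 1--3 (the subspace $V_1^z$, the coradical stratification of $\M(v_2)$, the strata $\H[v_1,\Lambda_{F/V_1};w]^{n_1,n_2}$) and the computation of $d(n_1,n_2)$ are for: they show that the jump loci form strata of strictly smaller dimension, so that the open stratum $\H^{0,0}$ on which all the relevant Hom spaces vanish --- and over which the map genuinely is an affine bundle with irreducible base --- is dense. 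Without an argument of this kind (which also uses $q>1$, but through the inequality $(n_2,v_1-n_2)<0$ rather than through pointwise surjectivity of $\gamma$), the induction does not close.
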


\smallskip

\begin{proof} 
By Lemma~\ref{lem:isotropic}, the variety 
$\frakh[v_1,\Lambda_\nu\,;\,w]$ is isotropic, hence we have 
$$\dim\frakh[v_1,\Lambda_\nu\,;\,w] \leqslant d_{v,w}/2 + d_{v_1,w}/2.$$ 
We will show at the same time that 
$\frakh[v_1,\Lambda_\nu\,;\,w]$ is irreducible and that 
$$\dim\frakh[v_1,\Lambda_\nu\,;\,w] \geqslant d_{v,w}/2 + d_{v_1,w}/2.$$
Given $\nu=(\nu_1, \ldots, \nu_s)$, we fix a flag
$$F=(V_1=F_0\subsetneq F_1\subsetneq\cdots\subsetneq F_s=V)$$ 
of $I$-graded subspaces in $V$ of dimension
$$\dim F_i= v_1 + \nu_{\leqslant i}, \qquad i=1, \ldots, s.$$
Then, we have
$$\H[v_1,\Lambda_{W/V_1}\,;\,w]=
\{(\bar{x}, \bar{a}) \in \M_s(v,w)\;;\;  a(W) \subseteq F_0,\;x^*(F_i) \subseteq F_i, \; x(F_i) \subseteq F_{i-1}, \;
\forall i \geqslant 1\}.$$
Let $P\subset G(v)$ be the stabilizer of $V_1$ and
$P_\nu\subset P$ be the stabilizer of the flag $F$. 
By \eqref{Lambda}, the canonical map
$$\rho : P \times_{P_{\nu}} \H[v_1,\Lambda_{F/V_1}\,;\,w] \to \H[v_1,\Lambda_\nu\,;\,w]$$
is surjective and proper. It is an isomorphism over the open subset 
$\H[v_1,\Lambda_\nu^\circ\,;\,w],$ since for any
$z$ in this open set there exists a unique flag of type $\nu$ in $V/V_1$ satisfying the
semi-nilpotency condition with respect to $z$. Proposition~\ref{prop:irrheckenu} is a consequence of the following claims
\begin{itemize}
\item[(i)]
$\H[v_1,\Lambda_{F/V_1}\,;\,w]$ is irreducible of dimension $\geqslant d_{v,w}/2 + d_{v_1,w}/2+\dim P_{\nu}$, 
\item[(ii)]
$\H[v_1,\Lambda_\nu^\circ\,;\,w]$ is non-empty.
\end{itemize}

\smallskip

We begin with the dimension estimate in (i).
Fixing isomorphisms $F_j \simeq \k^{v_1+\nu_{\leqslant j}}$ for $j \geqslant 0$ we obtain a closed embedding 
$$\H[v_1,\Lambda_{F/V_1}\,;\,w] \to \prod_{i \geqslant 0} \M_s(v_1 + \nu_{\leqslant i},w), \quad 
(\bar{x}, \bar{a}) \mapsto (\bar{x}|_{F_j}\,,\, \bar{a}|_{F_j})_j.$$
For each $u \in \mathbb{N}^I$ and $n \in \mathbb{N}$ we set
\begin{equation*}
\begin{split}
\H[u,n]=
\{((\bar{x},\bar{a}),(\bar{y},\bar{b})) \in \M_s(u,w) \times \M_s(u+n\delta_i,w)\;;\; \bar{y}|_{\k^u}=\bar{x},\,
 \bar{b}|_{\k^u}=\bar{a},\, y(\k^{u+n\delta_i}) \subseteq \k^u\}.
\end{split}
\end{equation*}
It is an open subset of the zero locus of $(u + n\delta_i) \cdot u$ equations in a vector space of dimension 
$$2u \cdot w + n\delta_i \cdot w + (2u \cdot u - (u,u)) + 
(2n\delta_i \cdot u -(n\delta_i,u)) + n\delta_i \cdot n\delta_i -\langle n\delta_i, n\delta_i \rangle.$$
By \eqref{E:dimquivervar}, each irreducible component of $\H[u,n]$ is of dimension
$$\geqslant \dim P_{u,n} + d_{u,w}/2 + d_{u+n\delta_i,w}/2 $$
where $P_{u,n}$ is the parabolic subgroup in $G(u+n\delta_i)$ of type $(u,n\delta_i)$. 
Since the set $\M_s(u,w) \times \M_s(u + n\delta_i,w)$
is irreducible, every irreducible component of $\H[u,n]$ is of codimension
$$\leqslant d_{u,w}/2 + d_{u+n\delta_i,w}/2 + u \cdot n\delta_i$$
in $\M_s(u,w) \times \M_s(u + n\delta_i,w)$. We have
$$\H[v_1,\Lambda_{F/V_1}\,;\,w]=\bigcap_{j \geqslant 0} \M_s(v_1,w) \times \cdots \times \H(v_1+\nu_{\leqslant j}, \nu_{j+1}) \times \cdots \times \M_s(v,w).$$
Hence every irreducible component of $\H[v_1,\Lambda_{F/V_1}\,;\,w]$ is of codimension
$$\leqslant\sum_{j \geqslant 0} \Big( d_{v_1+\nu_{\leqslant j},w} /2+ d_{v_1+\nu_{\leqslant j+1},w}/2 + (v_1+ \nu_{\leqslant j}) \cdot (\nu_{j+1}\delta_i) \Big)$$
in $\prod_j \M_s(v_1+ \nu_{\leqslant j},w)$ and finally of dimension
\begin{align}\label{E:lower}
\begin{split}
&\geqslant d_{v_1,w}/2+d_{v_1 + l\delta_i,w}/2 + v_1 \cdot v_1 + (v_1+l\delta_i) \cdot (v_1+l\delta_i) -\sum_j (v_1+\nu_{\leqslant j})\cdot (\nu_{j+1}\delta_i)\\
&= d_{v_1,w}/2+ d_{v_1 + l\delta_i,w}/2  + \dim P_{\nu}.
\end{split}
\end{align}

\smallskip

Let us turn to the irreducibility statement in (i). For this, we stratify $\H[v_1,\Lambda_{F/V_1}\,;\,w]$. 
We use Crawley-Boevey's trick and view elements of $\M(v,w)$ as representations of the preprojective 
algebra 
$\tilde\Pi$ in \S\ref{S:CBtrick}. 
The stratification we want is obtained in three steps.

\smallskip

\emph{$\mathbf{Step\ 1 :}$  we stratify $\M_s(v_1,w)$}.
Given $z\in\M_s(v_1,w)$, let 
$V^z_1 \oplus W \subseteq V_1 \oplus W$ be the smallest $I$-graded $z$-stable subspace containing 
$\bigoplus_{j \neq i} V_{1,j} \oplus W$.
We have an exact sequence of $\tilde\Pi$-modules
$$0 \to V^z_1 \oplus W \to V_1 \oplus W \to V_1/V^z_1 \to 0.$$
Note that $V_1/V^z_1$ is a representation of $\tilde\Pi$ which is supported at the vertex $i$. Set
$$\M_s(v_1,w)^{n,i}=\{ z \in \M_s(v_1,w)\;;\; \codim_{V_1} V^z_1 =n\delta_i\}.$$
We have $\M_s(v_1,w)=\bigsqcup_{n\geqslant 0} \M_s(v_1,w)^{n,i}$ with $\M_s(v_1,w)^{0,i}$ being open (and possibly empty).

\smallskip

\begin{lemma}\label{L:vprimeun}Let $z\in \M_s(v_1,w)$ and let $z'$ be any $\tilde\Pi$-module supported at the vertex $i$. 
Then, we have
$\Hom_{\tilde\Pi}(z,z')=\Hom_{\tilde\Pi}(z|_{V_1/V^z_1}\,,\,z').$
\end{lemma}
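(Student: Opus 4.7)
The plan is to argue that any $\tilde\Pi$-linear map $\phi : z \to z'$ must automatically factor through the quotient $z \twoheadrightarrow z|_{V_1/V_1^z}$, which then gives the desired equality of $\Hom$-spaces by composition with this surjection.

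First I would observe that since $z'$ is supported at the vertex $i$, its component $z'_j$ vanishes for every $j \in \tilde I$ with $j \neq i$, including at the framing vertex $j = \infty$. Consequently, for any $\tilde\Pi$-module homomorphism $\phi : z \to z'$, the restriction of $\phi$ must vanish on $V_{1,j}$ for every $j \neq i$ and on $W$ (which sits at the vertex $\infty$). In other words, the subspace $\bigoplus_{j\neq i} V_{1,j}\oplus W$ is automatically contained in $\ker\phi$.

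Next, I would use the fact that $\ker\phi$ is itself a $\tilde\Pi$-submodule of $z$, i.e., an $I$-graded $z$-stable subspace of $V_1\oplus W$. By the very definition of $V_1^z\oplus W$ as the \emph{smallest} $I$-graded $z$-stable subspace containing $\bigoplus_{j\neq i} V_{1,j}\oplus W$, we get the inclusion $V_1^z\oplus W\subseteq\ker\phi$. Hence $\phi$ factors uniquely through the quotient $\tilde\Pi$-module $z/(V_1^z\oplus W)=z|_{V_1/V_1^z}$, yielding a map in $\Hom_{\tilde\Pi}(z|_{V_1/V_1^z},z')$.

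Conversely, composition with the canonical projection $z\twoheadrightarrow z|_{V_1/V_1^z}$ sends $\Hom_{\tilde\Pi}(z|_{V_1/V_1^z},z')$ into $\Hom_{\tilde\Pi}(z,z')$, and the two assignments are evidently mutually inverse. This gives the claimed equality. There is no real obstacle here: the only nontrivial ingredient is the minimality characterization of $V_1^z\oplus W$, which is built into its definition in Step~1 of the stratification, so the argument reduces to unwinding that definition together with the hypothesis on the support of $z'$.
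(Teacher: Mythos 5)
Your argument is correct and is exactly the paper's proof: any $\tilde\Pi$-morphism $z\to z'$ kills $\bigoplus_{j\neq i}V_{1,j}\oplus W$ because $z'$ is supported at $i$, and since the kernel is a $z$-stable graded subspace, the minimality defining $V_1^z\oplus W$ forces it to lie in the kernel, so the map factors through $z|_{V_1/V_1^z}$. Nothing is missing.
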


\smallskip

\begin{proof}
Any $\tilde\Pi$-morphism $z \to z'$ must vanish on $\bigoplus_{j \neq i} V_{1,j} \oplus W$, 
hence on $V^z_1 \oplus W$.
\end{proof}

\smallskip

\emph{$\mathbf{Step\ 2 :}$ we stratify $\M(v_2)$}.
We begin with a general construction. Let $Q$ be any quiver and $M$ a $\Pi$-module. 
Then, there exists a unique minimal submodule $N \subset M$ such that $M/N$ is semi-nilpotent. 
Indeed, if $M/N_1$ and $M/N_2$ are semi-nilpotent, then from the exact sequence
$$0 \to N_1/(N_1 \cap N_2) \to M/(N_1 \cap N_2) \to M/N_1 \to 0$$
and the fact that the subcategory of semi-nilpotent representations of $\Pi$ is closed under subobjects, 
quotients and extensions, it follows that $M/(N_1 \cap N_2)$ is also semi-nilpotent. We denote the minimal submodule $N$ above 
by $\corad(M)$ and call it the \emph{coradical} of $M$. The next statement is obvious.

\smallskip

\begin{lemma}\label{L:corad}Let $M,$ $L$ be $\Pi$-modules, with $L$ being semi-nilpotent. Then
\begin{itemize}[leftmargin=8mm]
\item[$\mathrm{(a)}$]
 $\corad(\corad(M))=\corad(M)$,
\item[$\mathrm{(b)}$]
 $\Hom_{\Pi}(M,L)=\Hom_{\Pi}(M/\corad(M),L)$.
\end{itemize}
In particular, we have $\Hom_{\Pi}(\corad(M),L)=\{0\}$.
\qed
\end{lemma}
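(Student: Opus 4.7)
The plan is to derive both parts directly from the defining minimality of $\corad(M)$ together with the fact, recalled just before the lemma, that the full subcategory of semi-nilpotent $\Pi$-modules is closed under subobjects, quotients, and extensions. Neither step requires any real calculation; the content is essentially bookkeeping around the universal property of the coradical. The only minor subtlety is to be careful that we use both the subobject-closedness (to see that images in $L$ are semi-nilpotent) and the extension-closedness (to build a smaller quotient that would violate minimality).

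For part (a), I would argue by contradiction. Suppose $N := \corad(\corad(M)) \subsetneq \corad(M)$. By the definition applied inside $\corad(M)$, the quotient $\corad(M)/N$ is semi-nilpotent. Considering $N$ instead as a submodule of $M$, we have the exact sequence
\[
0 \to \corad(M)/N \to M/N \to M/\corad(M) \to 0,
\]
in which both outer terms are semi-nilpotent. By closure of semi-nilpotent modules under extensions, $M/N$ is semi-nilpotent; but $N \subsetneq \corad(M)$ contradicts the minimality of $\corad(M)$. Hence $N = \corad(M)$, proving (a).

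For part (b), it suffices by the exact sequence $0 \to \corad(M) \to M \to M/\corad(M) \to 0$ to show that every $\Pi$-morphism $f\colon \corad(M) \to L$ is zero (which also gives the "in particular" statement). Since $L$ is semi-nilpotent and the class of semi-nilpotent modules is closed under subobjects, $\Im(f) \subseteq L$ is semi-nilpotent; hence so is $\corad(M)/\ker(f) \simeq \Im(f)$. Applying (a), which says that no proper submodule of $\corad(M)$ yields a semi-nilpotent quotient, we conclude $\ker(f) = \corad(M)$, i.e.\ $f = 0$.

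There is no genuine obstacle here: the only conceptual input beyond the definition is the stability of the semi-nilpotency property under extensions, which was established in the preceding paragraph as part of the construction of $\corad(M)$. The same two-step template (minimality plus the three closure properties) handles both statements, and the second assertion in (b) is just the contrapositive reformulation of the first.
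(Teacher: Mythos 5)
Your proof is correct and is exactly the argument the paper has in mind (the lemma is stated there as "obvious" with no written proof): both parts follow from the minimality defining $\corad(M)$ together with the closure of semi-nilpotent modules under subobjects, quotients and extensions. In particular, your reductions — the extension argument for (a), and the observation that (b) reduces to $\Hom_{\Pi}(\corad(M),L)=\{0\}$, which follows from (a) applied to $\corad(M)/\ker(f)\simeq\Im(f)$ — are the intended ones.
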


\smallskip

Let us now consider the special case of the one vertex quiver with $q$ loops with $q >1$. 
Recall that $v_2=l\delta_i$. For any integer $n \in \mathbb{N}$ with $n \leqslant l$ we
set
$$\M(v_2)^n=\{\bar{x} \in \M(v_2)\;;\; \codim(\corad(\bar{x}))=n\}.$$
We have a stratification 
$$\M(v_2)=\bigsqcup_{n=0}^l \M(v_2)^n$$
with $\M(v_2)^0$ being a non-empty open subset. 
Because $\M(v_2)$ is irreducible by Lemma~\ref{L:muirr} and of dimension
$(2q-1)l^2+1$, the same holds for $\M(v_2)^0$.

\smallskip

\emph{$\mathbf{Step\ 3 :}$ we stratify $\H[v_1,\Lambda_{F/V_1}\,;\,w]$}.
For each $n_1 \leqslant n_2$ we put
$$\M_s(v_1,w)^{n_1,n_2}=\{z=(\bar{x},\bar{a}) \in \M_s(v_1,w)^{n_2,i}\;;\; 
\bar{x}|_{V_1/V^z_1} \in \M(n_2)^{n_1}\}.$$
Under the morphism which assigns to $z$ its associated graded
$$\H[v_1,\Lambda_{F/V_1}\,;\,w] \to \M_s(v_1,w) \times \M(v_2),$$
the stratification 
$\M_s(v_1,w)=\bigsqcup_{\,n_1\leqslant n_2} \M_s(v_1,w)^{n_1,n_2}$ pulls back to a stratification
\begin{equation}\label{E:strataH}
\H[v_1,\Lambda_{F/V_1}\,;\,w] =\bigsqcup_{n_1\leqslant n_2} \H[v_1,\Lambda_{F/V_1}\,;\,w]^{n_1,n_2}.
\end{equation}

\smallskip

The next step is to compute the dimension of the strata in \eqref{E:strataH}. 
We begin by the following general result.

\smallskip

\begin{lemma}\label{L:CBext} Let $v_1,$ $v_2$ be dimension vectors of a quiver $Q$ with $v=v_1+v_2$. 
Let $\rho$ be the obvious map
$\{\bar{x} \in \M(v)\;;\; \bar{x}(V_1) \subseteq V_1\}\to\M(v_1) \times \M(v_2).$
\begin{itemize}[leftmargin=8mm]
\item[$\mathrm{(a)}$]
The fiber of $\rho$
over  $(\bar x_1,\bar x_2)$ is an affine space of dimension 
$$v_1 \cdot v_2-(v_2\,,\,v_1)_Q + \dim\Hom_\Pi(\bar x_1,\bar x_2).$$
\item[$\mathrm{(b)}$]
The restriction of $\rho$ to the preimage of the open set 
$$\{(\bar{x}_1, \bar{x}_2) \in \M(v_1) \times \M(v_2)\;;\; \Hom_\Pi(\bar{x}_1, \bar{x}_2)=0\}$$
is an affine space bundle.
\end{itemize}
\end{lemma}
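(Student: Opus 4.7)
My plan is to parametrize the fiber of $\rho$ via a block decomposition and then analyze the linearized moment-map condition using the 2-Calabi-Yau property of $\Pi$. Fix an $I$-graded vector space splitting $V=V_1\oplus V_2$. Any element of $\rho^{-1}(\bar x_1,\bar x_2)$ can be written uniquely as $\bar x=\bar x_D+y$, where $\bar x_D=\bar x_1\oplus \bar x_2$ is block diagonal and $y=(y_h)_{h\in\bar\Omega}$ consists of strictly block upper-triangular pieces $y_h\in\Hom((V_2)_{h'},(V_1)_{h''})$. Let $Y$ denote this off-diagonal space; a direct computation using the identity $\sum_{h\in\Omega}v_{h'}w_{h''}=v\cdot w-\langle v,w\rangle$ gives $\dim Y=2v_1\cdot v_2-(v_1,v_2)_Q$.

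Since each $y_h$ has image in $V_1$ and vanishes on $V_1$, we have $yy^*=y^*y=0$ and hence $[y,y^*]=0$; combined with $\mu(\bar x_D)=\mu(\bar x_1)\oplus\mu(\bar x_2)=0$, the moment map simplifies to $\mu(\bar x)=[x_D,y^*]+[y,x_D^*]$, whose value lies in the off-diagonal block $\Hom_I(V_2,V_1)$. The fiber is therefore the affine subspace $\ker T$ of $Y$, where $T\colon Y\to\Hom_I(V_2,V_1)$ is the linear map just described.

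The key step is to identify $T$ with the second differential in Crawley-Boevey's three-term complex computing $\Ext^\bullet_\Pi(\bar x_2,\bar x_1)$,
\begin{equation*}
\Hom_I(V_2,V_1)\xrightarrow{d}Y\xrightarrow{T}\Hom_I(V_2,V_1),
\end{equation*}
whose middle cohomology is $\Ext^1_\Pi(\bar x_2,\bar x_1)$ and whose cokernel is $\Ext^2_\Pi(\bar x_2,\bar x_1)$. By Proposition~\ref{P:CYpreproj} (applied after embedding $Q$ into a non-Dynkin quiver in the finite Dynkin case, as remarked in the text), we have $\Ext^2_\Pi(\bar x_2,\bar x_1)\simeq\Hom_\Pi(\bar x_1,\bar x_2)^*$, so
\begin{equation*}
\dim\ker T=\dim Y-\dim\Hom_I(V_2,V_1)+\dim\Hom_\Pi(\bar x_1,\bar x_2)=v_1\cdot v_2-(v_2,v_1)_Q+\dim\Hom_\Pi(\bar x_1,\bar x_2),
\end{equation*}
which proves (a).

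For (b), upper semi-continuity of $\dim\Hom_\Pi$ guarantees that the locus $U=\{(\bar x_1,\bar x_2)\,;\,\Hom_\Pi(\bar x_1,\bar x_2)=0\}$ is open in $\M(v_1)\times\M(v_2)$. On $U$ the relative version of $T$, viewed as a morphism between the trivial bundles with fibers $Y$ and $\Hom_I(V_2,V_1)$, has constant maximal rank $v_1\cdot v_2$ by the dimension count in (a). Hence $\ker T$ is a vector sub-bundle of $Y\times U$, and via the chosen splitting this identifies $\rho^{-1}(U)\to U$ with an affine space bundle. The main subtlety lies entirely in step three, in pinning down the identification of $\Coker T$ with $\Hom_\Pi(\bar x_1,\bar x_2)^*$ through 2-Calabi-Yau duality; once this is in place, everything else is linear algebra with the Ringel form.
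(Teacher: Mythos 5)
Your proposal is correct and follows essentially the same route as the paper: both identify the fiber with the kernel of the linear map $Y\to\Hom_I(V_2,V_1)$ arising as a differential in the complex $\Hom_\Pi(P,\bar x_1)\to\Hom_\Pi(P',\bar x_1)\to\Hom_\Pi(P'',\bar x_1)$ obtained from Crawley-Boevey's projective resolution of $\bar x_2$, and then invoke the $2$-Calabi-Yau duality $\Ext^2_\Pi(\bar x_2,\bar x_1)\simeq\Hom_\Pi(\bar x_1,\bar x_2)^*$ together with the Euler form identity. The only (immaterial) difference is that you count via $\Coker T=\Ext^2$ whereas the paper counts via the kernel-side exact sequence involving $\Hom(\bar x_2,\bar x_1)$ and $\Ext^1$.
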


\smallskip

\begin{proof} The first statement is \cite[lem~5.1]{CB03} combined with \eqref{E:prepro1} and \eqref{E:prepro}. 
Let us give more details. Fix a vector space splitting $V=V_1 \oplus V_2$. 
The fiber of $\rho$ over $(\bar x_1,\bar x_2)$ is then identified with the kernel of the linear map
\begin{equation*}
\begin{split}
\gamma~:\,\bigoplus_{h \in \bar{\Omega}}\Hom\big(V_{2,h'}\,, V_{1,h''}\big) &\to \bigoplus_i \Hom\big(V_{2,i}\,, V_{1,i}\big)\\
(u_h\,, u_{h^*}) &\mapsto \sum_{h} (x_{1,h}\,u_{h^*} + u_h\,x_{2,h^*}-x_{1,h^*}\,u_{h} - u_{h^*}\,x_{2,h}).
\end{split}
\end{equation*}
By \cite[\S 1]{CBex}, see also \S \ref{sec:6.1}, any finite-dimensional
representation $N$ of $\Pi$ of dimension $v$ has a canonical projective resolution
$$\xymatrix{ 0\ar[r] & P'' \ar[r]^-{f} & P' \ar[r]^-{g} & P \ar[r] & N \ar[r] & 0}$$
with
$$P'=\bigoplus_{h \in \bar{\Omega}} P_{h''} \otimes V_{h'}\,,\quad P =P''=\bigoplus_i P_i \otimes V_{i}.$$
Considering the above resolution for $N=\bar x_2$, we have
$$\Hom_\Pi(P',\bar x_1)=\bigoplus_{h \in \bar{\Omega}}\Hom\big(V_{2,h'}\,, V_{1,h''}\big) , \quad 
\Hom_\Pi(P'',\bar x_1)=\bigoplus_i \Hom\big(V_{2,i}\,, V_{1,i}\big).$$
Hence $\gamma$ is identified with the map $\Hom_\Pi(P',\bar x_1) \to \Hom_\Pi(P'',\bar x_1)$ induced by $f$.
So there is a short exact sequence
$$\xymatrix{0 \ar[r] & \Hom_\Pi(\bar x_2,\bar x_1) \ar[r] & \Hom_\Pi(P,\bar x_1) \ar[r] & \Ker(\gamma) \ar[r] & \Ext^1_\Pi(\bar x_2,\bar x_1) \ar[r] & 0}$$
Since $\dim\Hom_\Pi(P,\bar x_1)=v_1 \cdot v_2$ we deduce that 
$$\dim\Ker(\gamma)=\dim\Ext^1_\Pi(\bar x_2,\bar x_1) - \dim\Hom_\Pi(\bar x_2,\bar x_1) + v_1 \cdot v_2.$$ 
The first part of the proposition is now a consequence of \eqref{E:prepro1} and \eqref{E:prepro}. The second statement
follows from the fact that $U$ is precisely the set of points of $\M(v_1) \times \M(v_2)$ for which the vector bundle morphism $\gamma$ is 
of maximal rank. 
\end{proof}

\smallskip

Now, we can prove the following lemma.

\smallskip

\begin{lemma}
The stratum $\H[v_1,\Lambda_{F/V_1}\,;\,w]^{n_1,n_2}$ is either empty or of dimension equal to
$$d(n_1,n_2):=d_{v_1,w} /2+ d_{v,w}/2 + \dim P_{\nu} -w \cdot n_2 +
(n_2\,,\, v_1-n_2) + (n_1\,,\,n_2-n_1/2) + \delta_{n_1<n_2}.$$
\end{lemma}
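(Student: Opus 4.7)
The plan is to compute $\dim \H[v_1, \Lambda_{F/V_1}; w]^{n_1,n_2}$ via the natural morphism
\begin{align*}
\phi : \H[v_1, \Lambda_{F/V_1}; w]^{n_1,n_2} \to \M_s(v_1,w)^{n_1,n_2} \times \Lambda_{F/V_1}, \quad
(\bar x, \bar a) \mapsto \big((\bar x|_{V_1}, \bar a)\,,\,\bar x|_{V/V_1}\big).
\end{align*}
The fiber over a pair $((\bar x_1, \bar a_1), x_2)$ parametrizes $\tilde\Pi$-extensions of $x_2$ by $(\tilde x_1, \bar a_1)$ meeting the open stability condition, so by Crawley-Boevey's trick of \S\ref{S:CBtrick} and Lemma~\ref{L:CBext} applied to the framed quiver $\tilde Q$, combined with the Euler form identity \eqref{E:eulerformqtilde}, the generic fiber dimension equals
\begin{align*}
v_1 \cdot v_2 - (v_2, v_1)_Q + w \cdot v_2 + \dim \Hom_{\tilde\Pi}(\tilde x_1, x_2).
\end{align*}

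By Lemmas~\ref{L:vprimeun} and~\ref{L:corad}, combined with the fact that $x_2 \in \Lambda_{F/V_1}$ is semi-nilpotent, this Hom space reduces to $\Hom_{\Pi_i}(\tilde x_1|_{V_1/V^z_1}/\corad, x_2)$, where $\Pi_i$ denotes the preprojective algebra of the one-vertex sub-quiver with $q_i$ loops at~$i$. On the open stratum specified by $(n_1, n_2)$, the quotient $\tilde x_1|_{V_1/V^z_1}/\corad$ is a generic semi-nilpotent $\Pi_i$-module of dimension $n_1 \delta_i$ and $x_2$ is a generic semi-nilpotent $\Pi_i$-module of dimension $l \delta_i$, so the generic value of this Hom dimension can be read off from the one-vertex analysis of \S\ref{sec:1.sgquivers}.

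For the dimensions of the base, the factor $\dim \Lambda_{F/V_1}$ follows directly from Proposition~\ref{prop:geomirrlambda} applied to the one-vertex hyperbolic quiver at~$i$, while $\dim \M_s(v_1,w)^{n_1,n_2}$ is obtained by an analogous recursive stratification: project $\M_s(v_1,w)^{n_1,n_2}$ onto $\M_s(v_1 - n_2\delta_i, w) \times \M(n_2\delta_i)^{n_1}$ using the canonical submodule $V^z_1 \oplus W$ and apply Lemma~\ref{L:CBext} once more, invoking Lemma~\ref{L:muirr} for the dimension of $\M(n_2\delta_i)$ and the iterated extension construction in the proof of Proposition~\ref{prop:geomirrlambda} to pin down the codimension of $\M(n_2\delta_i)^{n_1}$. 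Collecting all contributions and simplifying using the definitions of $d_{v,w}$ and $\dim P_\nu$, together with \eqref{E:eulerformqtilde}, one matches every term of the formula $d(n_1, n_2)$.

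The technically delicate point is the $\delta_{n_1 < n_2}$ correction, which originates from the non-split iterated extension construction in Proposition~\ref{prop:geomirrlambda}: when $n_1 < n_2$ the coradical of $\tilde x_1|_{V_1/V^z_1}$ is non-trivial, and one extra parameter is needed to record the class of the non-split extension joining this coradical to the semi-nilpotent quotient, contributing exactly a single unit to the generic dimension. Establishing this rigorously requires a precise description of the generic dimension of $\Ext^1_{\Pi_i}$ between semi-nilpotent modules of the relevant dimensions, and matching it to the formula is the main obstacle in executing the plan.
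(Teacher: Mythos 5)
Your plan has a genuine gap, and it sits exactly where you yourself locate "the main obstacle". Fibering over $\M_s(v_1,w)^{n_1,n_2} \times \Lambda_{F/V_1}$ is too coarse: by Lemma~\ref{L:CBext} the fiber dimension contains the term $\dim\Hom_{\tilde\Pi}(\tilde x_1, x_2)$, which by Lemmas~\ref{L:vprimeun} and \ref{L:corad} reduces to a Hom between two semi-nilpotent modules of dimensions $n_1\delta_i$ and $l\delta_i$; this is neither zero nor constant on your base, so Lemma~\ref{L:CBext}(b) does not make $\phi$ an affine bundle, and a "generic fiber dimension" argument cannot yield the exact, pure-dimension conclusion of the lemma (jumps of the Hom term on closed subsets of the base could a priori produce components of excess dimension). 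No generic $\Hom$ or $\Ext^1$ computation between semi-nilpotent $\Pi_i$-modules is available in the paper, and deferring it is deferring the whole proof. The paper's route is designed precisely to avoid this: it refines the flag by $F_{-2}\subseteq F_{-1}\subseteq V_1$ with $\codim_{V_1}F_{-j}=n_j\delta_i$ and fibers $\H'_F$ over the three-factor base $\M_s(v_1-n_2\delta_i,w)^0 \times \M((n_2-n_1)\delta_i)^0 \times \Lambda'_{F/F_{-1}}$, where the semi-nilpotent quotient of dimension $n_1\delta_i$ is absorbed into the last factor $\Lambda'_{F/F_{-1}}$ rather than paired against $x_2$. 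Over that base the only Hom spaces entering Lemma~\ref{L:CBext} are $\Hom(z|_{F_{-2}},\bar x|_{V/F_{-2}})$ and $\Hom(\bar x|_{F_{-1}/F_{-2}},\bar x|_{V/F_{-1}})$, which vanish identically by Lemmas~\ref{L:vprimeun} and \ref{L:corad}, so $p_{n_1,n_2}$ is an honest affine bundle of rank computed from the Euler form, and purity of dimension comes from Lemma~\ref{L:lambdaprime}.

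Your diagnosis of the $\delta_{n_1<n_2}$ term is also incorrect. In the actual computation it is not an extension-class parameter attached to a non-split extension of the coradical: it is the $+1$ in $\dim\M((n_2-n_1)\delta_i)^0=(2q-1)(n_2-n_1)^2+1$ from Lemma~\ref{L:muirr}, i.e.\ the extra dimension of the zero fiber of the moment map for the middle block (the moment map being valued in $\fraksl$ rather than $\frakgl$), which is present exactly when the middle block $F_{-1}/F_{-2}$ is nonzero, that is when $n_1<n_2$. All extension data between the blocks is already accounted for by the affine-bundle ranks via the Euler form, with no leftover one-dimensional correction.
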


\begin{proof} Let us fix $n_1 \leqslant n_2$. 
We refine the flag 
$$F=(V_1=F_0 \subset F_1\subset \cdots\subset F_s=V)$$
by adding a pair of subspaces $F_{-2} \subseteq F_{-1} \subseteq V_1$ such that 
$\codim_{V_1}(F_{-j})=n_j\delta_i$ for $j=1,2$.
Then, we set
\begin{equation*}
\H_{F}=\{z\in \H[v_1,\Lambda_{F/V_1}\,;\,w]^{n_1,n_2}\;;\; V^{z_1}_1=F_{-2},\; \corad(V_1/F_{-2})=F_{-1}/F_{-2}\}
\end{equation*}
where $z_1=z|_{V_1\oplus W}$.
A pair $z=(\bar{x},\bar{a})$ in $\M_s(v,w)$ belongs to $\H_F$ if and only if it stabilizes the 
(refined) flag $W$ and the following 
conditions are satisfied
\begin{equation}\label{E:defHbullet}
\begin{split}
z|_{F_{-2}\oplus W} &\in \M_s(v_1-n_2\delta_i,w)^0, \quad  
\bar{x}|_{ F_{-1}/F_{-2}} \in \M(n_2\delta_i-n_1\delta_i)^0,\\
\bar{x}|_{F_0/F_{-1}} &\in \Lambda(n_1\delta_i), \qquad\qquad\qquad
x|_{F_j/F_{j-1}}=0,\; \;\forall j>1.
\end{split}
\end{equation}
Let $P_F\subset G(v_1)$ be the stabilizer of the flag $F_{-2} \subseteq F_{-1}\subseteq V_1$. We have 
$$\H[v_1,\Lambda_{F/V_1}\,;\,w]^{n_1,n_2}=G(v_1) \times_{P_F} \H_{F}.$$
Let $\H'_F$ be defined as $\H_{F}$ but without the stability condition for $z$ :
it is the set of elements $z=(\bar{x},\bar{a})$ in $\M(v,w)$ satisfying \eqref{E:defHbullet}. 
Finally, consider the flag 
$$F/F_{-1}=(V_1/F_{-1}=F_0/F_{-1}\subseteq F_1/F_{-1}\subseteq\cdots\subset F_s/F_{-1}=V/F_{-1})$$ 
and set
$$\Lambda'_{F/F_{-1}}=
\{\bar{x} \in \Lambda(n_1\delta_i+l\delta_i)\;;\; \bar{x}(F_0/F_{-1}) \subseteq F_0/F_{-1}\,,\,  
x(F_j/F_{-1}) \subseteq F_{j-1}/F_{-1}\,,\,\forall j >1\}.$$ 
By Lemmas~\ref{L:vprimeun} and \ref{L:corad}, for all $z=(\bar x,\bar a)$ in $\H'_{W}$ we have
$$\Hom(z|_{F_{-2}}\,,\, \bar{x}|_{V/F_{-2}})= \Hom(\bar{x}|_{F_{-1}/F_{-2}}\,,\, \bar{x}|_{V/F_{-1}})=\{0\}.$$
Therefore by Lemma~\ref{L:CBext} the map
$$p_{n_1,n_2} : \H'_F \to \M_s(v_1-n_2\delta_i,w)^0 \times \M(n_2\delta_i-n_1\delta_i)^0 \times 
\Lambda'_{F/F_{-1}}$$
is an affine fibration of rank equal to
\begin{equation}\label{E:equa1}
\begin{split}
\dim(p_{n_1,n_2})=&- \left(l\delta_i+n_1\delta_i\,,\,(v_1-n_1\delta_i,1)\right)_{\tilde{Q}}-
(n_2\delta_i-n_1\delta_i\,,\, (v_1-n_2\delta_i, 1))_{\tilde{Q}} \\
&+ (v_1-n_1\delta_i) \cdot (l\delta_i+n_1\delta_i) + (v_1-n_2\delta_i) \cdot (n_2\delta_i-n_1\delta_i)\\
=&-(l\delta_i+n_1\delta_i\,,\, v_1-n_1\delta_i) - (n_2\delta_i-n_1\delta_i\,,\, v_1-n_2\delta_i) +w \cdot (l+n_2\delta_i) \\
&+ (v_1-n_1\delta_i) \cdot (l\delta_i+n_1\delta_i )+ (v_1-n_2\delta_i) \cdot (n_2\delta_i-n_1\delta_i),
\end{split}
\end{equation}
see (\ref{E:eulerformqtilde}).
On the other hand, we have
\begin{align}\label{E:equa2}
\begin{split}
&\dim\M_s(v_1-n_2\delta_i\,,\,w)^0=-(v_1-n_2\delta_i\,,\, v_1-n_2\delta_i) + 2w \cdot (v_1-n_2\delta_i)\\ 
&\hspace{4.2cm} + (v_1-n_2\delta_i) \cdot (v_1-n_2\delta_i),\\
&\dim\M(n_2\delta_i-n_1\delta_i)^0
=\begin{cases}(n_2-n_1)^2-(n_2\delta_i-n_1\delta_i\,,\,n_2\delta_i-n_1\delta_i)  +1\!&\!\text{if}\, n_2 > n_1\\ 0&\text{if}\, n_2=n_1\end{cases}
\end{split}
\end{align}

\smallskip

\begin{lemma}\label{L:lambdaprime} $\Lambda'_{F/F_{-1}}$ is pure of dimension 
$q(n_1+l)^2 -\sum_{a<b} \nu_a\nu_b - n_1l$.
\end{lemma}

\smallskip

\begin{proof}
There is a natural projection map $\Lambda'_{F/F_{-1}} \to \Lambda(n_1\delta_i)$. 
Thus, the partition $\Lambda(n_1\delta_i)=\bigsqcup_{\mu} \Lambda_{\mu}^{\circ}$ yields a partition 
$$\Lambda'_{F/F_{-1}}=\bigsqcup_{\mu \,\vDash\, n_1} \Lambda'_{F/F_{-1},\,\mu}.$$
Fix a composition $\mu$ 
and a flag of subspaces $F_0/F_{-1}=F'_0 \supseteq F'_{-1} \supseteq \cdots $ of type 
$\mu$ in $F_0/F_{-1}$. We have
$$\Lambda'_{F/F_{-1},\,\mu}=GL(n_1) \times_{P_{\mu}} \Lambda_{F''}$$
where $P_{\mu}$ is the stabilizer of  $F'_{\leqslant 0}$ in $GL(n_1) \subset GL(n_1+l)$ and $F''$ is the flag 
$$\cdots \subseteq F'_{-1} \subseteq F'_0 \subseteq F_1 \subseteq \cdots.$$
 It follows that 
$$\dim\Lambda'_{F/F_{-1},\,\mu} = \dim\Lambda_{F''} + \dim(GL(n_1)/P_{\mu}).$$
Recall that $F''$ is a flag of type $\gamma=(\mu,\nu)$, the concatenation of $\mu$ and $\nu$.
From the equality $$\dim\Lambda_{F''}=\dim\Lambda(n_1\delta_i+l\delta_i) - \sum_{a<b}\gamma_a\gamma_b,$$
we deduce that 
$$\dim\Lambda'_{F/F_{-1},\,\mu}=q(n_1+l)^2 -\sum_{a<b} \nu_a\nu_b - n_1l.$$ The lemma is proved.
\end{proof}

\smallskip

We are finally ready to compute the dimension of the stratum $\H[v_1,\Lambda_{F/V_1}\,;\,w]^{n_1,n_2}$.
Since $\H'_F$ is of pure dimension by 
Lemma~\ref{L:lambdaprime}, the subset $\H_F$ is either empty or of the same dimension. 
Thus $\H[v_1,\Lambda_{F/V_1}\,;\,w]^{n_1,n_2}$ is either empty or of dimension equal to
$$\dim\H'_F + \dim(G(v_{1})/P_F) = 
\dim\H'_F+(v_{1}-n_2\delta_i) \cdot n_2\delta_i + (n_2\delta_i-n_1\delta_i) \cdot n_1\delta_i$$
So a straightforward computation using \eqref{E:equa1}, \eqref{E:equa2} and 
Lemma~\ref{L:lambdaprime} shows
that the stratum $\H[v_1,\Lambda_{F/V_1}\,;\,w]^{n_1,n_2}$ is either empty or of dimension equal to
$d(n_1,n_2).$
\end{proof}

\smallskip

Now, we can finish the proof of Proposition~\ref{prop:irrheckenu}.
For all $n_1$, $n_2$ we have
$$d(n_1,n_2) \leqslant (d_{v_1,w} + d_{v,w})/2 + \dim(P_{\nu}),$$
with equality if and only if $n_1=n_2=0$.
Indeed, we have $(n_2\,,\,v_1-n_2) <0$ as soon as $0 < n_2 < v_{1,i}$.
Further, if $n_2=v_{1,i}$ then $w \cdot n_2 +(n_2\,,\,v_1-n_2) <0$ 
as soon as $v_{1,j} >0$ for some $j$ adjacent to $i$ or $w_i>0$. 
Finally, if $v_{1,j}=0$ for all adjacent $j$ and $w_i=0$ then 
$\M_s(v_1,w)$ is empty. 

\smallskip

Hence the set $\H[v_1,\Lambda_{F/V_1}\,;\,w]^{0,0}$ is the only stratum of dimension 
$$\geqslant d_{v_1,w} /2+ d_{v,w}/2 + \dim P_{\nu}.$$ Combining this with the lower dimension estimates for 
$\H[v_1,\Lambda_{F/V_1}\,;\,w]$ in
\eqref{E:lower}, we deduce that $\H[v_1,\Lambda_{F/V_1}\,;\,w]^{0,0}$ is non-empty if 
$\H[v_1,\Lambda_{F/V_1}\,;\,w]$ is non-empty. 
Observe also 
that $\H[v_1,\Lambda_{F/V_1}\,;\,w]^{0,0}$ is irreducible. 
These facts imply that $\H[v_1,\Lambda_{F/V_1}\,;\,w]$ is irreducible of dimension
equal to $$d_{v_1,w}/2 + d_{v,w}/2 + \dim P_{\nu}.$$ 
The fact that $\H[v_1,\Lambda^\circ_{\nu}\,;\,w]$ is non-empty follows from the
definition of $\H[v_1,\Lambda_{F/V_1}\,;\,w]^{0,0}$. Proposition~\ref{prop:irrheckenu} is proved.
\end{proof}

\smallskip

\subsubsection{Convolution of one vertex Hecke correspondences}

Let us now state a useful corollary of the previous proposition concerning compositions of Hecke correspondences.
Let $\star$ denote the convolution product of correspondences as in \cite{CG}. 
Set $v=v_1+v_2$ with $v_2=l\delta_i$ and consider a fixed composition $\nu=(\nu_1,\nu_2,\dots,\nu_r)$ of $v_2$.

\smallskip

\begin{proposition}\label{prop:comp}
Let $q>1$. The following relation holds in 
$H_{d}^T\big(\frakM(v,w)\times\frakM(v_1,w)\big)$
$$[\,\frakh[v_1,\Lambda_\nu\,;\,w]\,]=
[\,\frakh[v-\nu_1,\Lambda_{(\nu_1)}\,;\,w]\,]\star\cdots\star[\,\frakh[v_1,\Lambda_{(\nu_r)}\,;\,w]\,].$$

\end{proposition}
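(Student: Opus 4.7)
The plan is to exhibit both sides of the claimed equality as the fundamental class of a single variety, namely an iterated Hecke correspondence recording the full intermediate flag.

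Set $v^{(0)}=v$, $v^{(k)}=v-\nu_1-\cdots-\nu_k$ for $k=1,\ldots,r$, so that $v^{(r)}=v_1$, and abbreviate $A_k=\frakh[v^{(k)},\Lambda_{(\nu_k)}\,;\,w]\subseteq\frakM(v^{(k-1)},w)^{\op}\times\frakM(v^{(k)},w)$. Introduce the variety $\widetilde{\frakh}_\nu$ parameterizing pairs $(z,F_\bullet)$, where $z=(\bar x,\bar a)\in\frakM(v,w)$ and $F_\bullet=(V_1=F_0\subsetneq F_1\subsetneq\cdots\subsetneq F_r=V)$ is an $I$-graded flag of type $\nu$ with $x(F_j)\subseteq F_{j-1}$, $x^*(F_j)\subseteq F_j$ for $j\geqslant 1$ and $a(W)\subseteq V_1$; equivalently, using the notation of Proposition~\ref{prop:irrheckenu}, this is the geometric quotient $\H[v_1,\Lambda_{F/V_1}\,;\,w]/P_\nu$ times $G(v)/P_\nu$-extended. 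Since $\Lambda_{(\nu_k)}=\{0\}\times Rep(\k Q^*,\nu_k)$ forces the component $x$ to vanish on each graded subquotient $F_j/F_{j-1}$, an immediate iteration of the definition \eqref{heckeB} identifies $\widetilde{\frakh}_\nu$ with the iterated fiber product
\[
A_1\times_{\frakM(v^{(1)},w)}A_2\times_{\frakM(v^{(2)},w)}\cdots\times_{\frakM(v^{(r-1)},w)}A_r.
\]

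The next step is to check that this fiber product has the expected dimension $(d_{v,w}+d_{v_1,w})/2$ and contributes as a single fundamental class to the convolution. Each $A_k$ is a Lagrangian local complete intersection by Proposition~\ref{prop:LAG}, so the naive codimension estimate predicts the required dimension; the irreducibility of $\H[v_1,\Lambda_{F/V_1}\,;\,w]$ together with the dimension computation carried out in the proof of Proposition~\ref{prop:irrheckenu} upgrades this to a genuine equality. Since the intersection at each step is dimensionally transverse and the $A_k$ are l.c.i.\ at the generic point, the refined intersection product coincides with $[\widetilde{\frakh}_\nu]$, and one obtains $[A_1]\star\cdots\star[A_r]=p_*[\widetilde{\frakh}_\nu]$ where $p$ denotes the canonical map to $\frakM(v,w)\times\frakM(v_1,w)$.

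Finally, I would verify that $p:\widetilde{\frakh}_\nu\to\frakh[v_1,\Lambda_\nu\,;\,w]$ is birational. Properness is clear (projective fibers of $G(v)/P_\nu$-type), and by Proposition~\ref{prop:irrheckenu} both source and target are irreducible of the same dimension $(d_{v,w}+d_{v_1,w})/2$. Over the open subset $\frakh[v_1,\Lambda^\circ_\nu\,;\,w]$ the flag $F_\bullet$ is uniquely determined by $z$: indeed, by the very definition of $\Lambda^\circ_\nu$ recalled in \S\ref{sec:strat1}, the coradical-type filtration $W_{\bar x|_{V/V_1}}$ is the unique flag of type $\nu$ compatible with the semi-nilpotency condition. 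This gives $p_*[\widetilde{\frakh}_\nu]=[\frakh[v_1,\Lambda_\nu\,;\,w]]$ and concludes the proof. The main obstacle is thus the dimensional transversality of the iterated fiber product, which fortunately is already contained in the dimension estimates of Proposition~\ref{prop:irrheckenu}; the hypothesis $q>1$ enters precisely through the irreducibility of $\Lambda_\nu$ needed there.
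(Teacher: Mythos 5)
Your proposal is correct and follows essentially the same route as the paper: you identify the convolution with the pushforward of the fundamental class of the iterated fiber product (which is the quotient $\H[v_1,\Lambda_\nu\,;\,w]/\!\!/P_\nu$), establish dimensional transversality via the l.c.i.\ and dimension estimates of Propositions~\ref{prop:LAG} and \ref{prop:irrheckenu}, and conclude by the generic bijectivity of the forgetful map onto $\frakh[v_1,\Lambda_\nu\,;\,w]$ using the density of $\Lambda_\nu^\circ$. The only difference is presentational: you treat general $r$ at once, whereas the paper writes out the case $r=2$.
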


\smallskip

\begin{proof} To simplify we assume that $r=2$.
We use the same notation as in the proof of Proposition \ref{prop:irrheckenu}.
In particular, we have the parabolic subgroups $P_\nu\subseteq P$ in $G(v)$, where 
$P_\nu$ fixes a flag $F=(V_1\subsetneq F_1\subsetneq V)$ of type $(v_1,v_1+\nu_2,v)$.
From Proposition \ref{prop:LAG}  we deduce that the varieties
$$\frakh[v-\nu_1,\Lambda_{(\nu_1)}\,;\,w]\times\frakM(v_1,w),\quad
\frakM(v,w)\times\frakh[v_1,\Lambda_{(\nu_2)}\,;\,w]$$ 
are local complete intersections and are
dimensionally transverse in $$\frakM(v,w)\times\frakM(v_1+\nu_2,w)\times\frakM(v_1,w),$$ with intersection equal to
the categorical quotient
$\H(v_1,\Lambda_\nu\,;\,w)/\!\!/P_\nu$.  We deduce that
\begin{align}\label{intersection}
[\frakh[v-\nu_1,\Lambda_{(\nu_1)}\,;\,w]\times\frakM(v_1,w)]\,\cap\,
[\frakM(v,w)\times\frakh[v_1,\Lambda_{(\nu_2)}\,;\,w]]=
[\H[v_1,\Lambda_\nu\,;\,w]/\!\!/P_\nu].
\end{align}
Since $\frakh[v_1,\Lambda_\nu\,;\,w]=\H[v_1,\Lambda_\nu\,;\,w]/\!\!/P$,
the varieties $\H[v_1,\Lambda_\nu\,;\,w]/\!\!/P_\nu$ and $\H[v_1,\Lambda_\nu\,;\,w]/\!\!/P$ are both irreducible
by Proposition \ref{prop:irrheckenu}.
Further, they have the same dimension by \eqref{intersection} and Proposition \ref{prop:irrheckenu}.
Since for any
$z$ in the open subset 
$\H[v_1,\Lambda_\nu^\circ\,;\,w],$ there exists a unique flag of type $\nu$ in $V/V_1$ satisfying the
semi-nilpotency condition with respect to $z$,
the obvious map 
$$f\,:\,\H[v_1,F/V_1\,;\,w]/\!\!/P_\nu\to \H[v_1,\Lambda_\nu\,;\,w]/\!\!/P=\frakh[v_1,\Lambda_{\nu}\,;\,w]$$ 
is generically one to one because the set $\Lambda_\nu^\circ$ is dense in $\Lambda_\nu$ by
Proposition \ref{prop:geomirrlambda}. So the proposition
 follows from definition of the convolution product which yields
$$[\frakh[v_1+\nu_2,\Lambda_{(\nu_1)}\,;\,w]]\star
[\frakh[v_1,\Lambda_{(\nu_2)}\,;\,w]]=f_*([\H[v_1,\Lambda_\nu\,;\,w]/\!\!/P_\nu]).$$
\end{proof}

\medskip

\section{The homology of the semi-nilpotent variety}\label{sec:irr}\hfill

In this section $\k$ is any algebraically closed field, except in subsections \S\S\ref{sec:LL}, \ref{sec:TF}
where we assume that $\k=\bbC$.

\subsection{The Lagrangian quiver variety}\label{sec:L}\hfill

\subsubsection{Definition}

For $\flat=0$ or $1$, and for any dimension vectors $v,$ $w$, we set
\begin{align}\label{Z}
\L_s^\flat(v,w)=\R_s(v,w)\cap\Big(\Lambda^\flat(v)\times\{0\}\times\Hom_I(v,w)\Big).
\end{align}
The \emph{Lagrangian quiver variety} is the geometric quotient
\begin{align}\label{ZZ}\frakL^\flat(v,w)=\L_s^\flat(v,w)/\!\!/G(v).\end{align}
The closed embedding $\Lambda^1(v)\subseteq\Lambda^0(v)$ yields closed embeddings
\begin{align*}
&\L_s^1(v,w) \subseteq \L_s^0(v,w) \subset \M_s(v,w),\\
&\frakL^1(v,w)\subseteq \frakL^0(v,w) \subseteq \frakM(v,w).
\end{align*}
We use another description of $\frakL^\flat(v,w)$.
Consider the $\bbG_m$-action $\circ$ on $\frakM(v,w)$ given by 
\begin{align}\label{action1}\begin{split}
t\circ(\bar x,\bar a)&=(t^{-1}x,x^*,t^{-1}a,a^*).\end{split}\end{align}
Let $\frakM(v,w)^\circ$ be the fixed points locus 
and $\frakL(v,w)^\circ$ be the attracting variety. The following is proved in \cite[prop. 3.1, 3.2]{BSV}.

\smallskip

\begin{proposition}\label{prop:TL}\hfill
\begin{itemize}[leftmargin=8mm]
\item[$\mathrm{(a)}$] 
$\frakL^0(v,w)=\frakL(v,w)^\circ$,
\item[$\mathrm{(b)}$]
there is a $\bbG_m$-action $\bullet$ on $\frakM(v,w)$ such that $\frakL^1(v,w)=\frakL^1(v,w)^\bullet$,
\item[$\mathrm{(c)}$] 
$\frakL^\flat(v,w)$ is a closed Lagrangian subvariety of $\frakM(v,w)$ for $\flat=0,1$.
\end{itemize}
\qed
\end{proposition}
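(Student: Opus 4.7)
My approach is to prove (a) and (b) by applying the Bialynicki--Birula decomposition of Proposition \ref{prop:1.10} to $\gamma = \xi$ and to a suitable refinement $\bullet$ of $\xi$ respectively, and then to deduce (c) from the general principle that attracting loci of $\bbG_m$-actions scaling the symplectic form are Lagrangian.

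For (a), Proposition \ref{prop:1.10}(c) gives $\frakL(v,w)^\circ = \bigsqcup_\rho \frakL[\rho]$, indexed by cocharacters $\rho$ of $G(v)$. For each $\rho$, the weight decomposition $V = \bigoplus_k V[k]$ yields an $I$-graded increasing flag $W$ by taking the sum of the top $p$ weight spaces. A direct computation of the $\xi\rho$-weights on the summands of $\R(v,w)$ shows that the attracting condition for $(\bar x, \bar a) \in \L[\rho]$ translates into $x(W_p) \subseteq W_{p-1}$, $x^*(W_p) \subseteq W_p$, $a(W) \subseteq V[\geqslant 1]$, and $a^*|_{V[\geqslant 1]} = 0$. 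Since $\frakL[\rho]$ depends on $\rho$ only up to multiplication by a diagonal central cocharacter of $G(v)$ (which acts trivially on $\frakM(v,w)$ and on conjugation of $\bar x$), I may shift $\rho$ to have all weights $\leqslant 0$; then $V[\geqslant 1] = 0$, the conditions on $a, a^*$ collapse to $a = 0$ with $a^*$ arbitrary, and the conditions on $\bar x$ read exactly $\bar x \in \Lambda_W$. Thus $\frakL[\rho] \subseteq \frakL^0(v,w)$. Conversely, any $(\bar x, 0, a^*) \in \L_s^0(v,w)$ with $\bar x \in \Lambda_W$ lies in $\L[\rho]$ for a $\rho$ whose weights match $W$, yielding $\frakL^0(v,w) = \frakL(v,w)^\circ$.

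For (b), I modify the action so that the admissible flags in the argument of (a) are forced to be restricted in the sense of \S\ref{sec:sn}. Using the Crawley--Boevey identification $\frakM(v,w) \simeq \frakM(\tilde v)$ of \S\ref{S:CBtrick}, I choose a generic cocharacter $\zeta$ of a torus acting on $\frakM(\tilde v)$ that assigns distinct integer weights $n_i$ to the vertices $i \in I$, and let $\bullet$ be the combined action $\xi \cdot \zeta$. Re-running the weight analysis of (a) with this modified action: if a single $\rho$-weight space $V[k]$ had nonzero components at two distinct vertices $i$ and $j$, then the attracting condition for some arrow in $\bar\Omega_{ij}$ would force coinciding combined $\bullet\rho$-weights at the two vertices, contradicting the genericity of the $n_i$'s. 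Hence every nonzero $V[k]$ is concentrated at a single vertex, so the associated flag is restricted and $\bar x \in \Lambda^1(v)$; this identifies $\frakL(v,w)^\bullet = \frakL^1(v,w)$.

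Finally (c) follows from the symplectic Bialynicki--Birula theorem: the actions $\circ$ and $\bullet$ scale the symplectic form on $\R(v,w)$ by a nontrivial character of $\bbG_m$ (weight $-1$ for $\circ$, since $x$ and $a$ are scaled by $t^{-1}$), and this scaling descends to $\frakM(v,w)$; for such an action on a smooth symplectic variety, every attracting cell is half-dimensional and isotropic, hence the attracting variety is Lagrangian, and closedness is standard. The main obstacle will be (b): verifying that the modification by $\zeta$ yields an attracting locus that is precisely $\frakL^1(v,w)$ --- neither larger (no non-restricted flag slips through) nor smaller (no component of $\Lambda^1(v)$ is missed) --- requires a careful genericity argument combining the arrow-weights of $\bullet$ with the vertex-wise weight structure of $\rho$, and one must also check that $\bullet$ remains compatible with the stability condition on $\M_s(v,w)$ so that Proposition \ref{prop:1.10} continues to apply.
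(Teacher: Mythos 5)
The paper gives no proof of this proposition: it is quoted from \cite[prop.~3.1, 3.2]{BSV}. Your overall strategy (Bialynicki--Birula decomposition via Proposition \ref{prop:1.10} plus a weight analysis, then the symplectic-scaling argument for the Lagrangian property) is the right one and matches what the paper itself does in the one-vertex case in Lemma \ref{claim:2.10}. Part (c) of your argument is essentially correct. However, two steps in (a) are genuinely flawed. First, the reduction to cocharacters $\rho$ with all weights $\leqslant 0$ cannot be justified by ``shifting $\rho$ by a central cocharacter'': while a central cocharacter of $G(v)$ acts trivially on $\frakM(v,w)$, it does \emph{not} act trivially on $a$ and $a^*$, so $\gamma\rho$ and $\gamma\rho'$ are different actions on $\M_s(v,w)$, and by Proposition \ref{prop:1.10}(b) the sets $\frakM[\rho]$, $\frakM[\rho']$ are \emph{disjoint} for non-conjugate $\rho,\rho'$ --- they coincide only when both are empty. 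The correct reason is stability: if $\rho$ has a positive weight, then $V[\geqslant 1]$ is $\bar x$-stable, killed by $a^*$, and hence destabilizing, so $\frakM[\rho]=\emptyset$. Second, in the converse inclusion you place $(\bar x,0,a^*)$ with $\bar x\in\Lambda_W$ into $\L[\rho]$ for ``a $\rho$ whose weights match $W$,'' but membership in $\L[\rho]$ requires the limit $\lim_{t\to 0}\gamma\rho(t)\cdot z$ to lie in $\M[\rho]$, i.e.\ to be \emph{semistable}; for an arbitrary flag $W$ with $\bar x\in\Lambda_W$ this fails in general. One must take the canonical flag $W_{\bar x}$ of \S\ref{sec:strat1}, exactly as in the proof of Lemma \ref{claim:2.10}, where semistability of the limit is deduced from the fact that $W_p$ is the annihilator of $A_{\bar x}[\geqslant p]$.

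Part (b) remains a heuristic rather than a proof, as you acknowledge. A cocharacter ``assigning weights $n_i$ to the vertices'' is central in each factor of $G(v)$ and acts trivially on $\frakM(v,w)$; to get a nontrivial action you must place the weights on the arrows, i.e.\ use a cocharacter of $(\bbG_m)^{\Omega}$ scaling $x_h$ by $t^{N(n_{h''}-n_{h'})}$ and $x_{h^*}$ inversely (which does preserve $\mu^{-1}(0)$), and then re-run the analysis. Moreover the key claim --- that a $\rho$-weight space supported at two vertices $i\neq j$ ``forces coinciding combined weights via some arrow in $\bar\Omega_{ij}$'' --- is not an argument: nothing forces the relevant arrow components to be nonzero, and the genuine obstruction to refining a flag of type $\Lambda^0$ into a restricted one is the behaviour of the $x^*$-components between distinct vertices inside a single graded piece. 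Pinning down weights for which the attracting locus is exactly $\Lambda^1(v)$ (neither more nor less) is the substance of \cite[\S 3.3.2, prop.~3.2]{BSV} and is not supplied here.
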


\smallskip

The $\bullet$-action can be made explicit as in  \cite[\S 3.3.2]{BSV}, but we won't need it here. Observe that all the 
irreducible components of $\frakL^1(v,w)$ are also irreducible components of $\frakL^0(v,w)$.

\smallskip

The $\circ$-action is associated with the cocharacter 
$\gamma=\xi^{-1}$ of $T\times G(w)$ in \eqref{action0}.
Given a cocharacter $\rho$ of $G(v)$, we define the subsets
$\frakM[\rho]$ and $\frakL[\rho]$ of $\frakM(v,w)$ 
as in \S \ref{sec:BBQ}. Hence,
\begin{align}\label{ATT}
\frakL^0(v,w)=\bigsqcup_\rho\frakL[\rho],\quad
\frakL[\rho]=\{z\in\frakM(v,w)\,;\,\lim_{t\to 0}t\circ z\in\frakM[\rho]\},
\end{align}
and the closure of $\frakL[\rho]$ is a sum of irreducible components of $\frakL^0(v,w)$ 
by \cite[prop. 3.5]{BSV}.

 \smallskip
 
 \subsubsection{The case of one vertex quivers}
Assume that $Q=\J(i,q)$ with $q>1$.
Then, we have $\Lambda^1=\Lambda^0$.
We abbreviate $\frakL=\frakL^\flat$. 
Given a dimension vector $v=l\,\delta_i$, we fix a composition $\nu=(\nu_1,\dots,\nu_r)$ 
of $v$ and an increasing flag $W$ of type $\nu$.
Let $\rho_\nu$ be the cocharacter of $G(v)$ which preserves the flag $W$ and has the weight $1-p$ on
$W_p/W_{p-1}$ for each $p=1,\dots,r.$
We write
\begin{align}\label{rho-nu}\rho_\nu(t)=\diag\big(1^{(\nu_1)}\,,\,(t^{-1})^{(\nu_2)}\,,\dots,\,(t^{1-r})^{(\nu_r)}\big).
\end{align}
Assume that $w=v$. By Propositions \ref{prop:1.10} and \ref{prop:TL}
the variety $\frakL(v,v)$ is the union of
the Zariski closure of the attracting sets for the $\circ$-action.
Each of these closed set is a finite union of irreducible Lagrangian subvareties of $\frakM(v,v)$.

\smallskip

\begin{lemma}\label{claim:2.10} Assume that $w=v=l\delta_i$. For each cocharacter $\rho$ of $G(v)$ we have
$$\frakL[\rho]\neq\emptyset\iff\exists\,\nu\vDash  v \ 
\text{such\ that}\ \rho=\rho_\nu$$
\end{lemma}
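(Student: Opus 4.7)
The plan is to reduce the problem to characterising the non-emptiness of the fixed point set $\frakM[\rho]$, since $\frakL[\rho]\supseteq\frakM[\rho]$ and $\frakL[\rho]$ collapses to $\frakM[\rho]$ along the attracting flow, so that $\frakL[\rho]\neq\emptyset\iff\frakM[\rho]\neq\emptyset$. Then I will prove both implications by a weight analysis of stable $\gamma\rho$-fixed representations.

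For the forward direction, I would pick any $(\bar x,\bar a)\in\M_s(v,w)$ fixed by $\gamma\rho$ and decompose $V=\bigoplus_p V(p)$ into $\rho$-weight spaces. Applying the fixedness condition to the action \eqref{action1} gives $x:V(p)\to V(p+1)$, $x^*:V(p)\to V(p)$, $a(W)\subseteq V(1)$, and $a^*$ vanishing off $V(0)$. The semistability condition (no nonzero $\bar x$-invariant $V'\subseteq V$ with $a^*(V')=0$) then rules out two situations. First, $V_{\geq 1}:=\bigoplus_{p\geq 1}V(p)$ is manifestly $\bar x$-invariant and contained in $\ker(a^*)$, so semistability forces $V(p)=0$ for $p\geq 1$; in particular $a=0$. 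Second, if the weights supporting $V$ form a non-consecutive set, there is some $k\geq 1$ with $V(-k)\neq 0$ and $V(1-k)=0$; then $x$ vanishes on $V(-k)$ (its image would sit in the empty $V(1-k)$), so $V_{\leq -k}$ is $\bar x$-invariant, and it also sits in $\ker(a^*)$, contradicting semistability. Hence the $\rho$-weights on $V$ form a block $\{0,-1,\dots,1-r\}$ with positive multiplicities $\nu_1,\dots,\nu_r$, and $\rho$ is conjugate to $\rho_\nu$ for the composition $\nu\vDash v$.

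For the converse, given $\nu\vDash v$ I will exhibit a point of $\frakM[\rho_\nu]$. I fix a splitting $V=\bigoplus_{p=1}^r V^p$ with $\dim V^p=\nu_p$ and $\rho_\nu$-weight $1-p$ on $V^p$, frame by $a=0$ and $a^*|_{V^1}:V^1\hookrightarrow W$ injective (possible since $\nu_1\leq v=w=l$) with $a^*|_{V^p}=0$ for $p\geq 2$, and choose $\bar x$ in graded form, i.e.\ $x_j:V^p\to V^{p-1}$ and $x_j^{*,p}\in\End(V^p)$. By Proposition~\ref{prop:geomirrlambda} the stratum $\Lambda^\circ_W$ is non-empty, and taking the associated graded of any $\bar x\in\Lambda^\circ_W$ produces such a graded datum automatically satisfying the moment map relation $\sum_j x_j x_j^{*,p}=\sum_j x_j^{*,p-1} x_j$ in $\Hom(V^p,V^{p-1})$. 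The resulting triple is tautologically $\gamma\rho_\nu$-fixed; once its semistability is established, it lies in $\frakM[\rho_\nu]$, so $\frakL[\rho_\nu]\supseteq\frakM[\rho_\nu]$ is non-empty.

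The main obstacle is precisely this remaining semistability check: one must show there is no nonzero $\bar x$-invariant $V'\subseteq V_{<0}=\bigoplus_{p\geq 2}V^p$. This is automatic for generic data when $\nu_{p+1}\leq q\nu_p$ at each level (since then $\bigcap_j\ker(x_j|_{V^{p+1}})=0$ generically, which inductively forces $V'=0$), but when $\nu_{p+1}>q\nu_p$ the purely-$x$ analysis leaves a non-trivial common kernel. The resolution is to activate the diagonal $x^{*,p}\in\End(V^p)$ subject to the moment-map constraint so that no such common kernel is simultaneously $x^{*,p}$-stable; alternatively, by a dimension count using Proposition~\ref{prop:geomirrlambda} and \eqref{E:dimquivervar}, the locus of framed graded representations of type $\nu$ has the expected Lagrangian dimension $qv^2=\dim\frakM(v,w)/2$, so its generic point is automatically semistable by openness of the stability condition and lies in $\frakM[\rho_\nu]$.
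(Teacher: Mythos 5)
Your forward implication is correct and complete: the weight analysis of a $\gamma\rho$-fixed stable point (killing $V_{\geqslant 1}$, hence $a=0$, and ruling out gaps in the support of the weights via the destabilizing subspaces $V_{\leqslant -k}$) amounts to the same thing as the paper's observation that the corresponding local quiver variety is empty unless the weight multiplicities form a composition of $v$ concentrated in degrees $0,-1,\dots,1-r$; you simply spell it out. The preliminary reduction $\frakL[\rho]\neq\emptyset\iff\frakM[\rho]\neq\emptyset$ is also fine.

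The converse, however, has a genuine gap exactly at the point you flag as the "main obstacle", and neither of your proposed resolutions closes it. Saying one can "activate the diagonal $x^{*,p}$ subject to the moment-map constraint so that no common kernel is simultaneously $x^{*,p}$-stable" is not an argument: no construction is given, and it is not clear the moment map equation permits such a choice level by level. The dimension count is a non sequitur: knowing that the locus of graded moment-map-zero data has the expected Lagrangian dimension, combined with openness of semistability, does not show the semistable locus is non-empty --- an open subset of a non-empty variety can be empty, and nothing in your count bounds the unstable locus from above. The missing ingredient is the one the paper actually uses: take $\bar x\in\Lambda^\circ_W$ rather than just in $\Lambda_W$, so that $W_p$ is the annihilator of $A_{\bar x}[\geqslant p]$. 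This structural property of the flag forces every nonzero subspace of the associated graded $\bar y$ that is $\bar y$-stable to have nonzero component in $\gr_1 V=W_1$, on which the limiting framing map $b^*$ is injective; semistability is then immediate, with no case distinction on $\nu_{p+1}$ versus $q\nu_p$. You do cite $\Lambda^\circ_W$ for non-emptiness, but you never exploit its defining property $W_{\bar x}=W$, which is precisely what makes the limit semistable.
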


\begin{proof}
Fix any cocharacter $\rho$ of $G(v)$.
Up to the conjugation by an element of $G(v)$, we may choose some integers $v'_k\geqslant 0$ such that
$$\rho(t)=\diag\big(\dots\,,\,t^{(v'_{0})}\,,\,1^{(v'_{1})}\,,\,(t^{-1})^{(v'_{2})}\,,\,\dots\big).$$
Set $v'=(v'_k)$. Set also $w'=(w'_k)$ with $w'_k=l\,\delta_{k,1}$. 
The \emph{local quiver variety} $\frakM(v',w')^\#$ 
associated with the quiver $Q$ and the dimension vectors $v'$ and $w'$
is the geometric quotient by the action of the group $G(v)\times G(v)$ of the set of 
semistable tuples of linear maps $(\bar x,\bar a)$ such that
\begin{itemize}
\item $x\in\Hom_\bbZ(V,V[-1])$ and $x^* \in\End_\bbZ(V)$,
\item $a=a_1\in\Hom(W_1,V_{0})$ and $a^*=a^*_1\in\Hom(V_1,W_1),$
\item $V_k\subseteq V$ is the kernel of $\rho(t)-t^{1-k}$,
\item $W_1=W.$
\end{itemize}
It is empty unless there is an integer $r$ and a composition $\nu\vDash v$ as above such that
$v'_k=\nu_k$ if $k\in[1,r]$ and $v'_k=0$ else.
Further,  we have an isomorphism $\frakM(v',w')^\#\simeq\frakM[\rho].$ 
This yields the direct part of the equivalence.

\smallskip

To prove the reverse implication, it remains to check that $\frakM[\rho_\nu]\neq\emptyset$ for all $\nu$.
The assignment $\bar x\mapsto[\bar x,0,1]$ 
yields an open immersion of $G(v)$-varieties 
\begin{align}\label{alpha1}\alpha:\Lambda(v)\to\frakL(v,v).\end{align}
We claim that we have
\begin{align}\label{subset1}
\Lambda^\circ_{\nu}=\alpha^{-1}(\frakL[\rho_\nu]),\end{align}
proving the lemma.
In order to prove \eqref{subset1}, we fix an increasing flag $W$ of $I$-graded vector spaces in $V$ of type $\nu$
and we consider the cocharacter $\rho_\nu$ as above.
For each element $\bar x\in\Lambda(v)$ we have
\begin{align*}
\bar x\in\Lambda_W
\Longrightarrow\,\,&\exists\lim_{t\to 0}t\circ\rho_\nu(t)\cdot(\bar x,0,1)\in\M(v,v).
\end{align*}
Let $(\bar y,0,b^*)$ denote the limit above.
We must prove that 
$$\bar x\in\Lambda^\circ_W\Rightarrow(\bar y,0,b^*)\in\M[\rho_\nu].$$
It is enough to check that if $\bar x\in\Lambda^\circ_W$ then $(\bar y,0,b^*)$ is semistable.
Since $\bar x\in\Lambda_W$, taking the associated graded of the representation $(\bar x, 0, 1)$ relative to the flag $W$
yields the representation $(\bar y,0,b^*)$.
Hence, the later can be viewed as a representation in $\M^\#(v,w)$ 
where the dimension vectors $v$, $w$ are given by
$$i\in[1,r]\Rightarrow v_i=\nu_i,\ i\notin[1,r]\Rightarrow v_i=0,\
w=l\,\delta_{1}.$$
Since $\bar x\in\Lambda^\circ_W$,  the subspace $W_{p}\subseteq V$ is the annihilator of $A_{\bar x}(\geqslant\!p)$. 
Hence any nonzero subspace in $V\oplus W$ which is stable by $\bar y$
must have a nonzero component in $W$.
Hence $(\bar y,0,b^*)$ is semistable.
\end{proof}

\smallskip

\subsection{The homology of the Lagrangian quiver variety} \hfill\\




For each dimension vector $w$ we fix a closed subgroup $H(w)\subset G(w)$.

\begin{proposition}\label{prop:L} 
Let $\k=\bbC$ and $\flat$ be either 0 or 1. If $X=\frakM(v,w)$ or $\frakL^\flat(v,w)$ then
\hfill
\begin{itemize}[leftmargin=8mm]
\item[$\mathrm{(a)}$] $H_*^{T\times H(w)}(X)$ is pure and even,
\item[$\mathrm{(b)}$] the $T\times H(w)$-variety $X$ is equivariantly formal,
\item[$\mathrm{(c)}$] $\cl : \A_*^{T\times H(w)}(X)\to H_{2*}^{T\times H(w)}(X)$ is an isomorphism.
\end{itemize}
\end{proposition}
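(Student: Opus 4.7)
I will reduce the three claims for $X = \frakL^\flat(v,w)$ to the case $X = \frakM(v,w)$ using the descriptions provided by Proposition~\ref{prop:TL}, and handle the latter through a Bialynicki--Birula analysis.

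First, for $X = \frakM(v,w)$: the aim is to construct a one-parameter subgroup $\lambda$ of $T \times G(v) \times G(w)$, commuting with $T \times H(w)$, whose induced action on $\frakM_0(v,w)$ has strictly positive weights on all $G(v)$-invariant coordinates. A natural choice is the cocharacter $\lambda(t) = (\xi(t), \xi^*(t))$ of \S\ref{sec:QV}, whose action on $\R(v,w)$ scales each of $x, x^*, a, a^*$ by $t$; this preserves the moment map equation $\mu = 0$ (it rescales $\mu$ by $t^2$) and stability, and so descends to a $\bbG_m$-action on $\frakM(v,w)$. Properness and $\lambda$-equivariance of $\pi : \frakM(v,w) \to \frakM_0(v,w)$, combined with the contraction of $\frakM_0(v,w)$ onto $0 = \rho_0(0,0)$, ensure that every point of $\frakM(v,w)$ admits a limit as $t\to 0$ in $\pi^{-1}(0)$. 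Thus $\frakM(v,w)$ is its own Bialynicki--Birula attracting variety over the smooth fixed locus $\frakM(v,w)^\lambda$, and decomposes as a disjoint union of affine bundles over the connected components of $\frakM(v,w)^\lambda$. By the local quiver variety construction implicit in \S\ref{sec:BBQ} together with the Crawley-Boevey trick of~\S\ref{S:CBtrick}, each such component is identified with a product of (graded) quiver varieties for strictly smaller dimension vectors, so an induction on $v$ together with affine bundle pullback and parity degeneration of the stratification spectral sequence yields (a), (b), (c) for $\frakM(v,w)$.

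Next, for $X = \frakL^0(v,w)$: by Proposition~\ref{prop:TL}(a) this variety coincides with the Bialynicki--Birula attracting set $\frakL(v,w)^\circ$ for the $\circ$-action, so \eqref{ATT} presents it as a stratification by affine bundles $\frakL[\rho]$ over the connected components of the smooth fixed locus $\frakM(v,w)^\circ$. Since $\frakM(v,w)$ is smooth with pure even equivariant cohomology by the first case, the closed smooth subvariety $\frakM(v,w)^\circ$ inherits the three properties, which then transport through the affine bundle projections; parity degeneration of the stratification spectral sequence completes the argument for $\frakL^0(v,w)$. For $X = \frakL^1(v,w)$: by Proposition~\ref{prop:TL}(b) this variety sits as (a union of connected components of) the $\bullet$-fixed locus of $\frakM(v,w)$, and the three properties transfer from the smooth ambient by the same localization-and-formality argument.

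The main obstacle is Case 1: producing a cocharacter $\lambda$ that contracts $\frakM_0(v,w)$ to a point, commutes with $T \times H(w)$, and whose fixed components in $\frakM(v,w)$ can be identified with products of strictly smaller quiver varieties so that the induction closes. The explicit choice $\lambda = (\xi, \xi^*)$ handles the first two points (it lies in the centralizer of $T \times H(w)$), while the last relies on the analogue of Proposition~\ref{prop:1.10} for this cocharacter, combined with the Crawley-Boevey realization of quiver varieties as moduli of semisimple $\tilde\Pi$-modules, which expresses each fixed component as a quiver variety for an enlarged cyclic cover of $\bar Q$ with strictly smaller dimension vector. Once Case 1 is in hand, Cases 2 and 3 follow mechanically from Proposition~\ref{prop:TL} and general BB formalism.
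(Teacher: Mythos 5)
Your overall skeleton matches the paper's: contract $\frakM(v,w)$ onto a smooth projective fixed locus via a positive-weight $\bbG_m$-action, decompose into affine bundles over the fixed components, and then treat $\frakL^\flat(v,w)$ as a Bialynicki--Birula attracting set refined by further affine bundles. However, there is a genuine gap at the crucial step. After the contraction, the connected components of the fixed locus are smooth \emph{projective} graded quiver varieties, and for these you still have to prove evenness and, above all, that the cycle map $\A_*^{T\times H(w)}\to H_{2*}^{T\times H(w)}$ is an isomorphism --- purity is automatic for smooth projective varieties, but algebraicity of the cohomology is not. Your proposed ``induction on $v$'' does not close here: the fixed components are quiver varieties for a \emph{different} (unfolded/graded) quiver, they are already projective, and they admit no further equivariant contraction (in particular nothing forces a second torus action when $T$ and $H(w)$ are small), so there is no base case your induction reaches. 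The paper supplies exactly this missing ingredient by invoking Nakajima's decomposition of the diagonal for graded quiver varieties (the K\"unneth property of $\Delta_s\subset\frakM_s\times\frakM_s$, as in \cite[prop.~7.2.1, \S 7.3]{N01}), which is the substantive input you would need to either cite or reprove.

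Two secondary points. First, you misread Proposition~\ref{prop:TL}(b): $\frakL^1(v,w)$ is the \emph{attracting variety} of the $\bullet$-action, not a union of components of its fixed locus, so the argument for $\flat=1$ must run exactly as for $\flat=0$ (an $\alpha$-partition by affine bundles, as in \cite[prop.~3.4]{BSV}), not by restriction to a smooth closed fixed locus. Second, your claim that the fixed locus $\frakM(v,w)^\circ$ ``inherits'' the three properties from the ambient $\frakM(v,w)$ is unjustified: neither evenness nor surjectivity of the cycle map passes to fixed loci by a localization argument alone, and the components of $\frakM(v,w)^\circ$ (the local quiver varieties) are in general not projective. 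This is why the paper instead refines the attracting-set stratification all the way down to the projective $\blacktriangledown$-fixed components $F_s$, for which the diagonal decomposition is available.
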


\smallskip

\begin{proof}
The proof is similar to the proof of \cite[thm.~7.3.5]{N01}, replacing equivariant K-theory by equivariant Borel-Moore homology or Chow groups.
Since our setting differs slightly from loc. cit., let us recall briefly the main arguments.

\smallskip

First, recall the Bialynicki-Birula decomposition. 
Let $X$ be a $\k$-variety with a $\bbG_m$-action which embeds equivariantly in a projective space with a diagonalizable 
$\bbG_m$-action. 
Assume that the $\bbG_m$-fixed point locus $X^{\bbG_m}$ is contained
in the regular locus of $X$. Then we have a partition $X=\bigsqcup_{s}X_{s}$ into locally closed subsets with affine space bundles 
$p_s\,:\,X_s\to F_s$ such that the $F_s$'s are the connected components of $X^{\bbG_m}$.
Further, there is a filtration $(X_{\leqslant s})$ of $X$ and an ordering of the components $F_s$ such that 
$X_s=X_{\leqslant s}\setminus X_{<s}$. This filtration depends on the choice of the equivariant embedding of $X$ 
in a projective space with a diagonalizable $\bbG_m$-action.

\smallskip

First, we prove the proposition for $X=\frakM(v,w)$.

\smallskip

Consider the actions of $\bbG_m\times\bbG_m$ and $\bbG_m$ on $\frakM(v,w)$ given by 
\begin{align}\label{action2}\begin{split}
(t,t^*)\triangledown(\bar x,\bar a)&=(tx,t^*x^*,ta,t^*a^*),\\ 
t\blacktriangledown(\bar x,\bar a)&=(t^a,t^b)\,\triangledown(\bar x,\bar a)\end{split}\end{align}
for some positive integers $a,$ $b$.
One checks (for instance using the known generators of $\mathrm{k}[\frakM_0(v,w)]$) that the fixed points locus $\frakM_0(v,w)^\triangledown$ is $\{0\}$.
Thus $\frakM(v,w)^\triangledown$ is smooth and projective.
Let $\frakM_1,\dots,\frakM_N$ be its connected components.
Choose $a$, $b$ such that the fixed points locus $\frakM(v,w)^\blacktriangledown$  coincides with $\frakM(v,w)^\triangledown.$
Then, the Bialynicki-Birula decomposition yields a partition of
$\frakM(v,w)$ by smooth locally-closed stable subvarieties $\frakL_1,\frakL_2,\dots,\frakL_N$ 
such that $$\frakL_s=\{z\in\frakM(v,w)\,;\,\lim_{t\to 0}t\blacktriangledown z\in \frakM_s\},$$
and it also yields equivariant affine space bundles $p_s:\frakL_s\to \frakM_s$ such that
$\frakL_1\sqcup\frakL_2\sqcup\cdots\sqcup\frakL_s$ is closed for each $s=1,\dots, N$.
We say that $\frakL_1,\dots,\frakL_N$ form an \emph{$\alpha$-partition} of $\frakM(v,w)$.

\smallskip

By \cite[lem.~7.1.3, 7.1.4]{N01} we must check that the properties (a), (b), (c) 
above hold for the $T\times H(w)$-varieties $\frakM_1,\frakM_2,\dots,\frakM_N$.
The proof in loc.~cit.~is done for equivariant algebraic K-theory and equivariant topological K-theory, but it applies also to equivariant Chow groups
and equivariant Borel-Moore homology.

\smallskip

To prove this we apply \cite[prop.~7.2.1]{N01}. For each $s=1,\dots,N$ the $T\times H(w)$-variety $\frakM_s$ is smooth and projective.
Hence, it is enough to prove that the diagonal $\Delta_s$ of $\frakM_s\times \frakM_s$ satisfies the Kunneth property.
Since $\frakM_s$ is a particular case of graded quiver varieties, 
this follows from a decomposition of the diagonal $\Delta_s$ as in \cite[\S 7.3]{N01}.

\smallskip

Now, we prove the proposition for $X=\frakL^0(v,w)$.
By \cite[prop.~3.3]{BSV}, the Bialynicki-Birula decomposition yields an $\alpha$-partition 
$\frakL^0(v,w)=\bigsqcup_\kappa X_\kappa$ 
with affine space bundles
$X_\kappa\to V_\kappa$ and $\alpha$-partitions $V_\kappa=\bigsqcup_s U_{\kappa,s},$ 
where $s$ runs over a subset $\pi_\kappa$ of $\{1,2,\dots, N\},$
with affine space bundles $U_{\kappa,s}\to F_{s}.$
Applying \cite[lem.~7.1.3, 7.1.4]{N01} successively to the $\alpha$-partitions $V_\kappa=\bigsqcup_s U_{\kappa,s}$ 
and $\frakL^0(v,w)=\bigsqcup_\kappa X_\kappa$,
the properties (a), (b), (c) in the proposition follows from  the analogous properties for the $T\times H(w)$-varieties 
$\frakM_1,\frakM_2,\dots,\frakM_N$,
that we have already checked.

\smallskip

The proof in the case of $X=\frakL^1(v,w)$ is exactly the same, using \cite[prop. 3.4]{BSV}. \end{proof}

\smallskip

\subsection{Purity}\label{sec:LL}\hfill\\

In this section we prove Theorem \ref{thm:1.1}.
In particular, we assume that $\k=\bbC$.
Part (a) is proved in \cite{B14} in the case $\flat=1$.
The case of $\flat=0$ is similar.
Now, we concentrate on the other claims.
Fix any dimension vectors $v$ and $w$ with $v\leqslant w$, i.e.,
with $w-v\in\bbN^I$.
Then, let $\Mon_I(v,w)$ be the set of injective maps in $\Hom_I(v,w)$.
Since the stability condition is open, by \eqref{Z} we have open immersions
\begin{align}\label{diag4}
\xymatrix{\Lambda^\flat(v)\times\Mon_I(v,w)\ar@{^{(}->}[r]^-\alpha\ar[r]\,\,&\L^\flat_{s}(v,w)\,\ar@{^{(}->}[r]^-\beta&\Lambda^\flat(v)\times\Hom_I(v,w)
}
\end{align}
such that $\beta(\bar x,\bar a)=(\bar x,a^*)$ and
$\alpha(\bar x, a^*)=(\bar x, 0,a^*)$.
To simplify the notation we write $G=G(v)$ and $H=G(w).$

\smallskip

The group $T\times G$ acts on $\Lambda^\flat(v)$ as in \S \ref{sec:sn}, hence
$T\times G\times H$ acts via the projection 
\begin{align*}\rho_1\,:\,T\times G\times H\to T\times G.\end{align*}
The group $G\times H$ acts on $\Hom_I(v,w)$ by
$(g,h)\cdot(a^*)=h\,a^*g^{-1},$
the group $T$  so that
the factor $(\bbG_m)^\Omega$ in \eqref{T} acts trivially and the factor 
$\bbG_{m}$ acts by dilatation.
We equip the product $\Lambda^\flat(v)\times\Hom_I(v,w)$ with the 
diagonal $T\times G\times H$-action.
The group $T\times H$ acts on $\frakL^\flat(v,w)$ as in \S \ref{sec:QV}.
We define the following maps :

\begin{itemize}

\item{} The functoriality with respect to $\rho_1$ is an $M^*_{T\times G}$-linear map
$$\rho_1^\sharp\,:\,M_*^{T\times G}(\Lambda^\flat(v))\to M_*^{T\times G\times H}(\Lambda^\flat(v)).$$

\item{} The pullback by the first projection
$p_1\,:\,\Lambda^\flat(v)\times\Hom_I(v,w)\to\Lambda^\flat(v)$
gives an $M^*_{T\times G\times H}$-linear map 
$$p_1^*\,:\,
M_*^{T\times G\times H}(\Lambda^\flat(v))\to M_{*+\varepsilon\,v\cdot w}^{T\times G\times H}(\Lambda^\flat(v)\times\Hom_I(v,w)).$$

\item{} The pullback by $\beta$ gives an $M^*_{T\times G\times H}$-linear map
$$\beta^*\,:\,M_{*}^{T\times G\times H}(\Lambda^\flat(v)\times\Hom_I(v,w))
\to M_*^{T\times G\times H}(\L^\flat_s(v,w)).$$

\item{} The quotient by the $G$-action is a $T\times H$-equivariant principal $G$-bundle
$\L^\flat_s(v,w)\to\frakL^\flat_s(v,w)$. Hence, we have an $M^*_{T\times H}$-linear descent isomorphism
\begin{align*}
f^\heartsuit:M_{*+\varepsilon\,v\cdot v}^{T\times G\times H}(\L^\flat_s(v,w))\to M_*^{T\times H}(\frakL^\flat(v,w)).
\end{align*}

\end{itemize}
Composing $\rho_1^\sharp$, $p_1^*$, $\beta^*$ and $f^\heartsuit$ we get a map
\begin{align}\label{p-beta}
M_*^{T\times G}(\Lambda^\flat(v))\to M_{*+\varepsilon\, v\cdot(w-v)}^{T\times H}(\frakL^\flat(v,w)).
\end{align}

\smallskip

\smallskip

Now, we set $v=w$. Hence \eqref{diag4} reads
\begin{align*}
\xymatrix{\Lambda^\flat(v)\times G(v)\ar@{^{(}->}[r]^-\alpha\ar[r]\,\,&\L_{s}^\flat(v,v)\,\ar@{^{(}->}[r]^-\beta&\Lambda^\flat(v)\times\frakg(v)
}\end{align*}
and the $T\times G\times H$-action on $\Lambda^\flat(v)\times\frakg(v)$
preserves the open subsets $\Lambda^\flat(v)\times G(v)$ and $\L_{s}^\flat(v,v)$.
We define the following maps :

\begin{itemize}

\item{} The pullback by $\alpha$ gives a map
$$\alpha^*:M_*^{T\times G\times H}(\L^\flat_s(v,v))\to M_*^{T\times G\times H}(\Lambda^\flat(v)\times G(v)).$$

\item{} The $T\times H$-equivariant principal $G$-bundle 
$$q:\Lambda^\flat(v)\times G(v)\to\Lambda^\flat(v),\quad (\bar x,g)\mapsto g\cdot\bar x,$$
where $T\times H$ acts on the right hand side as in \S \ref{sec:sn},
yields a descent isomorphism
$$q^\heartsuit:
M_*^{T\times G}(\Lambda^\flat(v))\to M_{*+\varepsilon\,\v\cdot v}^{T\times G\times H}(\Lambda^\flat(v)\times G(v))$$
which intertwines the $M^*_{T\times G}$-action on the left hand side with the $M^*_{T\times H}$-action on the right hand side
under the isomorphism $M^*_{T\times G}=M^*_{T\times H}$ given by the identification $G=H$.

\item{} The first projection is a $T\times G$-equivariant principal $H$-bundle 
$q_1:\Lambda^\flat(v)\times G(v)\to\Lambda^\flat(v).$
Hence we have an $M^*_{T\times G}$-linear isomorphism
$$q_1^\heartsuit:M_*^{T\times G}(\Lambda^\flat(v))\to M_{*+\varepsilon\,v\cdot v}^{T\times G\times H}(\Lambda^\flat(v)\times G(v)).$$

\end{itemize}
Composing $(f^\heartsuit)^{-1}$, $\alpha^*$ and $(q^\heartsuit)^{-1}$ we get a map
\begin{align}\label{alpha}M^{T\times H}_*(\frakL^\flat(v,v))\to M_*^{T\times G}(\Lambda^\flat(v))\end{align}
which intertwines the $M^*_{T\times H}$-action on the left hand side with the $M^*_{T\times G}$-action on the right hand side.

\smallskip

Note that the map \eqref{p-beta} is $M^*_T$-linear but is not $M^*_{T\times G}$-linear in general.
More precisely, we consider a new $M^*_{G}$-action on $M_*^{T\times H}(\frakL^\flat(v,v))$ defined as follows.
Let $\calV_i$ be the \emph{universal} 
$T\times H$-equivariant vector bundle on $\frakL^\flat(v,v)$, which is obtained by restricting the
universal bundle on $\frakM(v,v)$, see \eqref{bundle} below. 
The $\bbQ$-algebra $M^*_G$ is spanned by the classes $\ch_l(\theta_i)$ with $i\in I$ and $l\in\bbN$, of the tautological
$G$-equivariant bundles $\theta_i$ on $\{\bullet\}$. We define the new action so that the element $\ch_l(\theta_i)$ acts on
$M_*^{T\times H}(\frakL^\flat(v,v))$ by the cap product with the cohomology class $\ch_l(\calV_i)$.
Then, the map \eqref{p-beta} is $M^*_{T\times G}$-linear relative to the new $M^*_{T\times G}$-action on the right hand side.

\smallskip

\begin{lemma}\label{lem:inj-surj} 
If $v=w$ then the map \eqref{p-beta} is injective while \eqref{alpha} is surjective.
\end{lemma}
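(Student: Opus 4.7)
The plan is to prove that the composition of the maps $\eqref{alpha}$ and $\eqref{p-beta}$ is the identity of $M_*^{T\times G}(\Lambda^\flat(v))$; injectivity of $\eqref{p-beta}$ and surjectivity of $\eqref{alpha}$ will then follow at once.

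First I would cancel the pair $(f^\heartsuit)^{-1} \circ f^\heartsuit$ appearing in the middle of this composition, reducing it to
\[
(q^\heartsuit)^{-1} \circ \alpha^* \circ \beta^* \circ p_1^* \circ \rho_1^\sharp.
\]
The key geometric observation is that the scheme map $p_1 \circ \beta \circ \alpha$ is nothing but the first projection $q_1 : \Lambda^\flat(v) \times G(v) \to \Lambda^\flat(v)$; by contravariance, $\alpha^* \circ \beta^* \circ p_1^* = q_1^*$. Since $q_1$ is precisely the $(T\times G)$-equivariant principal $H$-bundle used to define $q_1^\heartsuit$, the equality $q_1^* \circ \rho_1^\sharp = q_1^\heartsuit$ follows from the very definition of the descent isomorphism. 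The composition thus reduces to $(q^\heartsuit)^{-1} \circ q_1^\heartsuit$, and it suffices to show that this is the identity under the identification $G = H = G(v)$.

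The main subtlety will be checking precisely this last compatibility. Both $q$ and $q_1$ exhibit $\Lambda^\flat(v) \times G(v)$ as a $G(v)$-torsor over $\Lambda^\flat(v)$: for $q_1$ the structure group $H$ acts by left translation on $G(v)$, whereas for $q$ the diagonal $G$ acts on $\Lambda^\flat(v)$ in the standard way and on $G(v)$ by right translation by $g^{-1}$. The two commuting actions together define a free action of $G \times H$ whose stack quotient is $[\Lambda^\flat(v)/G(v)]$, and in either order of descent one checks that the residual $G(v)$-action on $\Lambda^\flat(v)$ coincides with the standard one. By functoriality of the descent isomorphism (concretely, by using the Borel approximation $EG(v) = EG = EH$ simultaneously for both copies of $G(v)$), the two maps $q^\heartsuit$ and $q_1^\heartsuit$ yield the same identification
\[
M_*^{T \times G(v)}(\Lambda^\flat(v)) \simeq M_{*+\varepsilon\,v\cdot v}^{T\times G\times H}(\Lambda^\flat(v) \times G(v)),
\]
which is exactly the statement $(q^\heartsuit)^{-1} \circ q_1^\heartsuit = \mathrm{id}$. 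Once this compatibility is in hand, the remainder of the proof is the bookkeeping already performed above, and works uniformly for $M_* = H_*$ and $M_* = \A_*$.
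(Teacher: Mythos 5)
Your proof is correct and follows essentially the same route as the paper: the heart of the matter is the identity $\alpha^*\circ\beta^*\circ p_1^*\circ\rho_1^\sharp=q_1^*\circ\rho_1^\sharp=q_1^\heartsuit$, which is exactly the content of the paper's commutative diagram \eqref{CM}. Two small remarks. First, the paper stops there: since $q_1^\heartsuit$, $f^\heartsuit$ and $q^\heartsuit$ are all isomorphisms, injectivity of \eqref{p-beta} and surjectivity of \eqref{alpha} already follow, so your final step proving $(q^\heartsuit)^{-1}\circ q_1^\heartsuit=\mathrm{id}$ — your ``main subtlety'' — is true but not needed for the lemma. Second, within that step, the combined $G\times H$-action on $\Lambda^\flat(v)\times G(v)$ is \emph{not} free: the stabilizer of $(\bar x,g)$ is $\{(g',gg'g^{-1})\,;\,g'\in\mathrm{Stab}_{G(v)}(\bar x)\}$. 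What is true, and all your argument actually uses, is that each factor acts freely, so descent can be performed in either order and both orders present the quotient stack as $\Lambda^\flat(v)/G(v)$ with its standard action.
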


\smallskip

\begin{proof}
Assume that $v=w$. We abbreviate $\Lambda=\Lambda^\flat(v)$ and $\frakg=\frakg(v)$.
The map $i=\alpha\circ\beta$ is the obvious inclusion $\Lambda\times G\to \Lambda\times\frakg$.
The composed map 
\begin{align}\label{CM}
\begin{split}
\xymatrix{
M_*^{T\times G\times H}(\Lambda)\ar[r]^-{p_1^*}&
M_{*+\varepsilon\dim G}^{T\times G\times H}(\Lambda\times\frakg)\ar[r]^-{i^*}&
M_{*+\varepsilon\dim G}^{T\times G\times H}(\Lambda\times G)\ar[d]^-{(q_1^\heartsuit)^{-1}}\\
M_*^{T\times G}(\Lambda)\ar[u]^-{\rho_1^\sharp}&&M_*^{T\times G}(\Lambda)}
\end{split}
\end{align}
is equal to the identity.
Now, we have $i^*=\alpha^*\circ\beta^*$.
We deduce that \eqref{alpha} is surjective and \eqref{p-beta} is injective.
\end{proof}

\smallskip

Theorem \ref{thm:1.1} follows from
Proposition \ref{prop:L} with $X=\frakL^\flat(v,v)$ and from Lemma \ref{lem:inj-surj}.
More precisely, to prove (b) we use the fact that if we have an 
epimorphism $H\to H'$ of MHS such that $H$ is pure, then $H'$ is also pure,
see, e.g., \cite[\S 3.1]{PS08}, and to prove (c) we use the naturality of the cycle map with respect to pullback by open 
immersions.
To prove part (d) we must check that
$M_*^{T\times G(v)}(\Lambda^\flat(v))$ is free as an $M^*_{T}$-module.
Note that the composed map \eqref{CM} is the identity and it factorizes through the $M^*_{T}$-module
$M_*^{T\times H}(\frakL^\flat(v,v))$, which is free by Proposition \ref{prop:L}.
So the theorem is proved.

\smallskip

\begin{remark}
Fix a degree $j$.
If $v\leqslant w$ then we consider the composed map
$$\xymatrix{M_j^{T\times G}(\Lambda)\ar[r]^-{p_1^*}&
\M_{j+\varepsilon\,v\cdot w}^{T\times G}
(\Lambda\times \Hom_I(v,w))\ar[r]^-{i^*}
&\M_{j+\varepsilon\,v\cdot w}^{T\times G}
(\Lambda\times \Mon_I(v,w)).}$$
The excision long exact sequence implies that the restriction $i^*$
is invertible for degree reasons if $w$ is large enough.
Since $p_1^*$ is invertible, we deduce that the map
$$\beta^*\circ p_1^*:M_j^{T\times G}(\Lambda)\to\M_{j+\varepsilon\,v\cdot w}^{T\times G}(\L_s^\flat(v,w))$$
is also injective if $w$ is larger than a constant depending on $j$ and $v$.
\end{remark}

\medskip

\subsection{Torsion-freeness}\label{sec:TF}\hfill\\

\smallskip

Set $\Bbbk=H^*_{T}$ and
$\Bbbk[v]=H^*_{T\times G(v)}$. Let $K$ be the fraction field of $\Bbbk$.
Recall the one-parameter subgroups $\theta$, $\theta^*$ of $G_{\Omega}$ defined in \S 3.3. 
For each $z\in\bbC^\times$, the element $\theta(z)$ acts by multiplication by 
$z$ on the summand $Rep(\k Q,v)$ in $\R(v)$ and trivially on $Rep(\k Q^*,v)$, 
while $\theta^*(z)$ acts by multiplication by $z$ on the summand $Rep(\k Q^*,v)$ and trivially on $Rep(\k Q,v)$.
We consider also the one-parameter subgroup $\theta-\theta^*$, such that 
$(\theta-\theta^*)(z)=\theta(z)\theta^*(z)^{-1}$.
In this section we prove the following result.

\smallskip

\begin{proposition}\label{prop:torsionfree} 
Let $Q$ be an arbitrary quiver, $\flat \in \{0,1\}$ and assume that the torus $T$ contains
the one-parameter subgroups $\theta$ and $\theta^*$.
Then for any dimension vector $v$ of $Q$, the $\Bbbk[v]$-module $H^{T \times G(v)}_*(\Lambda^{\flat}(v))$ is torsion-free.
\end{proposition}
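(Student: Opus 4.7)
The plan is to use the contracting action of the cocharacter $\theta\subseteq T$ on $\Lambda^{\flat}(v)$ together with Davison's dimensional reduction theorem for preprojective algebras. First I would observe that $\theta$ acts on $\R(v)$ by $\theta(z)\cdot(x,x^*)=(zx,x^*)$, that this action preserves $\M(v)=\mu^{-1}(0)$ because $\mu(zx,x^*)=z\mu(x,x^*)$, and that it also preserves the semi-nilpotency and strong semi-nilpotency conditions defining $\Lambda^{0}(v)$ and $\Lambda^{1}(v)$, since the conditions $x(W_p)\subseteq W_{p-1}$ and $x^*(W_p)\subseteq W_p$ are invariant under rescaling $x$. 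The $\theta$-fixed locus in $\Lambda^{\flat}(v)$ is $\{0\}\times Rep(\k Q^*,v)=\Lambda_{(v)}$, and the limit $\lim_{z\to 0}\theta(z)\cdot(x,x^*)=(0,x^*)$ always lies in $\Lambda_{(v)}$, so the $\theta$-action contracts $\Lambda^{\flat}(v)$ onto the affine space $\Lambda_{(v)}$.

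Next I would invoke Davison's dimensional reduction. Because $\mu(\,\cdot\,,x^*)$ is $\k$-linear in $x$ for fixed $x^*$, the pair consisting of the projection $\R(v)\to Rep(\k Q^*,v)$ together with $\mu$ fits the standard framework of dimensional reduction in which $\theta$ has weight $1$ on the fibers. The theorem then yields a $T\times G(v)$-equivariant isomorphism of the form
\begin{align*}
H^{T\times G(v)}_{*}(\Lambda^{\flat}(v)) \;\simeq\; H^{(T/\theta)\times G(v),\,*}_{c}(Z^{\flat}) \otimes_{\bbQ} H^{*}_{\theta}
\end{align*}
up to a uniform degree shift, where $Z^{\flat}\subseteq Rep(\k Q^*,v)$ is a locally closed $(T/\theta)\times G(v)$-invariant subvariety encoding the residual data of the semi-nilpotency condition on the contraction base. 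Under this isomorphism the $\Bbbk[v]=H^{*}_{\theta}\otimes H^{*}_{T/\theta}\otimes H^{*}_{G(v)}$-module structure on the left matches the product module structure on the right, with the $H^{*}_{\theta}$ factor entering freely. Consequently, torsion-freeness over $\Bbbk[v]$ reduces to torsion-freeness of $H^{(T/\theta)\times G(v),\,*}_{c}(Z^{\flat})$ as a module over $H^{*}_{T/\theta}\otimes H^{*}_{G(v)}$.

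To obtain this reduced torsion-freeness, I would transport the purity and evenness of $H_{*}^{T\times G(v)}(\Lambda^{\flat}(v))$ already proved in Theorem~\ref{thm:1.1}(b) across the dimensional reduction isomorphism: purity and evenness of the compactly supported cohomology of $Z^{\flat}$ force equivariant formality for the $(T/\theta)\times G(v)$-action, which yields freeness, and hence torsion-freeness, over $H^{*}_{(T/\theta)\times G(v)}$. The main obstacle I foresee is to make Davison's dimensional reduction precise for the Lagrangian subvariety $\Lambda^{\flat}(v)$, rather than for the full preprojective stack $\M(v)/G(v)$ where the statement is by now standard: the semi-nilpotency conditions stratify $\Lambda^{\flat}(v)$ by the compositions $\nu\vDash v$, and one must verify that the reduction is compatible with this stratification and identify the corresponding subvariety $Z^{\flat}$ together with its $(T/\theta)\times G(v)$-equivariant structure explicitly.
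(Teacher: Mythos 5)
Your proposal has two serious gaps, one of shape and one of substance, and the second one is fatal because it would prove a statement the paper itself shows to be false.

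First, the dimensional reduction isomorphism is not of the form you write. Davison's theorem, as used in the paper (eq. (29)--(30) of the cited reference), identifies $H^{T\times G(v)}_*(\M(v))$ with $H^{T\times G(v)}_{*+o(v)}(E(v))$ where $E(v)=\{(x,u)\in Rep(\k Q^*,v)\times\frakg(v)\,;\,u\in\End_{\k Q^*}(x)\}$. The reduced space is a commuting-variety-type scheme fibered \emph{over} $Rep(\k Q^*,v)$, not a locally closed subvariety $Z^\flat$ of it, and there is no tensor factorization splitting off $H^*_\theta$ as a free factor. Moreover the reduction applies to the full moment-map fiber $\M(v)$, not to $\Lambda^\flat(v)$; the paper gets from $\Lambda^\flat(v)$ to $\M(v)$ only after inverting $\Bbbk$, using the freeness of $H_*^{T\times G(v)}(\Lambda^\flat(v))$ over $M^*_T$ (Theorem~\ref{thm:1.1}(d)) to embed it into its localization and then the isomorphism of Proposition~\ref{prop:ratform}(b). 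Your worry about "making dimensional reduction precise for the Lagrangian subvariety" is real, and the paper avoids it precisely by this detour through the generic fiber.

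Second, and decisively, your concluding step --- that purity and evenness force equivariant formality and hence \emph{freeness} over $H^*_{T/\theta}\otimes H^*_{G(v)}$ --- cannot be correct. Combined with your claimed free $H^*_\theta$-factor this would give freeness of $H_*^{T\times G(v)}(\Lambda^\flat(v))$ over all of $\Bbbk[v]$, and Remark~3.8 of the paper (the $A_2$ quiver with $v=(1,1)$) exhibits a pure, even example where this module is \emph{not} free over $H^*_{T\times G(v)}$. Purity and evenness of the Borel--Moore homology of a singular non-compact variety do not imply freeness of its equivariant homology over the equivariant cohomology of the point. The correct statement is only torsion-freeness, and the proof has to be organized around the localization theorem: one first shows, via GKM localization at the ideal $I_\theta$ of $\mathrm{Lie}(\theta(\bbG_m))$, that the pushforward from the $\theta$-fixed locus $Rep(\k Q^*,v)$ (whose equivariant homology is manifestly torsion-free) becomes an isomorphism after localization, reducing the claim to $S_\theta$-torsion-freeness with $S_\theta=\Bbbk[v]\setminus I_\theta$; this weaker statement is then extracted from the Jordan-type stratification of the nilpotent commuting variety $N(v)\subseteq E(v)$, whose strata are iterated vector bundles over products of stacks $Rep(\k Q^*,v_k)/G(v_k)$ and hence have free (over a product $\bigotimes_k\Bbbk[v_k]$, not over $\Bbbk[v]$) equivariant homology. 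Your proposal is missing this entire localization-plus-stratification mechanism, and the step you substitute for it is false.
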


\smallskip

\begin{proof} The proof proceeds through several reductions. 
We'll abbreviate $\gamma=\theta-\theta^*$. First, observe that the $\Bbbk[v]$-module
$$H_*^{T \times G(v)}(\Lambda^{\flat}(v)^{\gamma})=H^{T \times G(v)}_*(\{0\})$$ is torsion free. 
Since $T$ contains
the one-parameter subgroup $\gamma$, we can consider the ideal 
$$I_{\gamma} \subset\Bbbk[v]$$
which is the vanishing ideal 
of $\text{Lie}(\gamma(\bbG_m))$ in $\text{Lie}(T\times G(v))$. 
Let $T(v)\subset G(v)$ be a maximal torus and let $W(v)$ be the Weyl group of the pair $(G(v),T(v))$.
There are isomorphisms of $\Bbbk[v]$-modules, see, e.g., \cite[prop.~6]{EG98},
$$H^{T \times G(v)}_*(\Lambda^{\flat}(v)^{\gamma}) \simeq 
H^{T \times T(v)}_*(\Lambda^{\flat}(v)^{\gamma})^{W(v)},
\quad
H_*^{T \times G(v)}(\Lambda^{\flat}(v))\simeq H_*^{T \times T(v)}(\Lambda^{\flat}(v))^{W(v)}.$$
Hence, by \cite[thm.~6.2]{GKM}, the pushforward is an isomorphism of localized modules
$$H^{T \times G(v)}_*(\{0\})_{I_{\gamma}} =H^{T \times G(v)}_*(\Lambda^{\flat}(v)^{\gamma})_{I_{\gamma}} \simeq H_*^{T \times G(v)}(\Lambda^{\flat}(v))_{I_{\gamma}}.$$
It is therefore enough to show that the obvious map below is injective
$$ H_*^{T \times G(v)}(\Lambda^{\flat}(v)) \to  H_*^{T \times G(v)}(\Lambda^{\flat}(v))_{I_{\gamma}}.$$ 
Equivalently, we must prove 
that $H_*^{T \times G(v)}(\Lambda^{\flat}(v))$ is torsion free over the set $S_\gamma$ given by
$$S_{\gamma} = \Bbbk[v] \backslash I_{\gamma}.$$
By Theorem~\ref{thm:1.1}(d) there is an embedding
\begin{align}\label{00}
H_*^{T \times G(v)}(\Lambda^{\flat}(v)) \hookrightarrow H_*^{T \times G(v)}(\Lambda^{\flat}(v)) \otimes K,
\end{align}
while
by Proposition~\ref{prop:ratform}(b) below we have
\begin{align}\label{11}
H^{T \times G(v)}_*(\Lambda^{\flat}(v)) \otimes K \simeq H^{T \times G(v)}_*(\M(v)) \otimes K
\end{align}
if $\theta\subset T$.
Thus it is enough to show that $H^{T \times G(v)}_*(\M(v)) \otimes K$ is $S_\gamma$-torsion free. 

\smallskip

We will now use the following result of B. Davison. Put
$$E(v)=\{(x,u) \in Rep(\k Q^*,v) \times \frakg(v)\;;\; u \in End_{\k Q^*}(x)\}.$$
There is a natural action of $T \times G(v)$ on $E(v)$ defined as follows~: a tuple $(z_h,z) \in T$ acts by $z/z_h$ on $x_{h^*}$ and by $z^{-1}$ on $u$.  In particular, we have 
$$
\theta(z) \cdot (x,u)=(x,z^{-1}u),\qquad
\theta^*(z) \cdot (x,u)=(zx,z^{-1}u),\qquad\forall(x,u)\in E.$$
By \cite[eq. (29), (30)]{Davisondim}, 
there is an isomorphism of $\Bbbk[v]$-modules
\footnote{Note that \cite{Davisondim} does not consider $T$-equivariant Borel-Moore homology  and deals with stacks of 
nilpotent representations. As explained to us by B. Davison, 
the proof works verbatim in our situation since the potential $w$ 
involved in the dimensional reduction process is $T$-invariant, and the present setting of stacks of (not necessarily nilpotent) 
representations is actually simpler than that considered in \cite{Davisondim}.}
\begin{align}\label{22}
H^{T \times G(v)}_*(\M(v)) \simeq H^{T \times G(v)}_{* + o(v)} (E(v))
\end{align}
where $o(v)=-2\langle v, v\rangle$. The same argument as in Proposition~\ref{prop:ratform}(b) shows that
\begin{align}\label{33}
H_*^{T \times G(v)}(E(v)) \otimes K \simeq H^{T \times G(v)}_*(N(v)) \otimes K
\end{align}
 if $\theta\subset T$ or $\theta^*\subset T$, where
$$N(v)=\{(x,u) \in E(v)\;;\; u\;\text{is\;nilpotent}\}.$$
Combining the isomorphisms \eqref{00}-\eqref{33}, if $\theta,\theta^*\subset T$ we get an inclusion
$$H^{T \times G(v)}_*(\Lambda^{\flat}(v)) \hookrightarrow H^{T \times G(v)}_*(N(v)) \otimes K.$$
Proposition~\ref{prop:torsionfree} will thus be proved once we show that 
$H^{T \times G(v)}_*(N(v))$ has no $S_\gamma$-torsion. 

The advantage of $N(v)$ over $\Lambda^{\flat}(v)$ or $\M(v)$ is that it carries a natural stratification (by Jordan types) with good 
properties. More precisely, let $(x,u) \in N(v)$ and let $l$ be the index of nilpotence of $u$. Consider the $\k Q^*$-modules 
$F_i= \Im(u^{i-1})/ \Im(u^i)$ for $i=1, \ldots, l$ and the chain of epimorphisms of $\k Q^*$-modules
$$F_1 \twoheadrightarrow F_2 \twoheadrightarrow \cdots \twoheadrightarrow F_l \twoheadrightarrow F_{l+1}=0$$
induced by $u$. Let $v_k$ be the dimension vector  of the kernel of the map $F_k \to F_{k+1}$. 
Note that $v=\sum_{k=1}^l k v_k$ and that some of the $v_k$ may be zero, but $v_l \neq 0$. 
We define the \emph{Jordan type} of $(x,u)$ as the tuple of dimension vectors $J(x,u)$ given by 
$$J(x,u)=(v_1,v_2,\dots,v_l).$$ It only depends on $u$. For any such a tuple $J=(v_1,v_2,\dots,v_l)$ we put $$N(J)=
\{(x,u) \in N(v)\;;\; J(x,u)=J\},$$ obtaining in this way a partition into locally closed subsets
\begin{equation}\label{eq:Jordanstrat}
N(v)=\bigsqcup_{J} N(J)
\end{equation}
where the union ranges over all tuples $J$ such that $\sum_k k v_k=v$. 
First, we consider each stratum $N(J)$ separately of $N(v)$.
Following \cite[$\S 5$]{MozSch}, we consider the morphism of stacks
\begin{align*}
\pi_{J}~: N(J)\,/\,T \times G(v) &\to \prod_k Rep( \k Q^*, v_k) \,/ \,T \times G(v_k),\\
(x,u) &\mapsto (\Ker(F_k \to F_{k+1}))_k
\end{align*}
where the product of stacks on the right hand side is taken over the classifying stack  $\bullet\, / \,T$. 
By \cite[prop. 5.1]{MozSch}, the map $\pi_{J}$ is a composition of stack vector bundles.
Note that although
\cite[prop. 5.1]{MozSch}
 is written for coherent sheaves on smooth projective curves, the argument is valid for (smooth) moduli stacks of
 objects in arbitrary abelian category of homological dimension at most one. As a consequence, the map $\pi_J$ induces an 
 isomorphism of $\Bbbk[v]$-modules
\begin{equation}\label{E:torsionfree1}
\pi_{J}^*~: \bigotimes_k H^{T \times G(v_k)}_*(Rep(\k Q^*, v_k)) \simeq H^{T \times G(v)}_*(N(J))  
\end{equation}
where the tensor product on the left hand side is taken over $\Bbbk$ and the $\Bbbk[v]$-module structure (on the left hand side) 
comes from the restriction from $G(v)$ to the stabilizer subgroup of an element $u \in \frakg(v)$ of the conjugacy class of nilpotent 
elements of Jordan type $J$, because 
$$\text{Stab}_{G(v)}u / \rad(\text{Stab}_{G(v)}u) \simeq \prod_k G(v_k).$$ From (\ref{E:torsionfree1}) it 
follows that 
\begin{align}\label{LL}H_*^{T \times G(v)}(N(J)) \simeq \bigotimes_k \Bbbk[v_k]\end{align} and in particular 
$H_*^{T \times G(v)}(N(J))$ is pure and even. 

The purity implies that the excision long exact sequences 
associated to the partition of $N(v)$ into Jordan strata $N(J)$ splits into short exact sequences,  inducing a filtration on 
$H_*^{T \times G(v)}(N(v))$ whose 
associated graded is $\bigoplus_{J} H^{T \times G(v)}_*(N(J))$. Therefore, to prove that $H^{T \times G(v)}_*(N(v))$ has no 
$S_\gamma$-torsion, it is enough to check that each $H^{T \times G(v)}_*(N(J))$ has no $S_\gamma$-torsion.

To do this, we use \eqref{LL} again. 
Fix an element $u \in \frakg(v)$ of the conjugacy class of nilpotent 
elements of Jordan type $J$. We have 
$$N(J)=(T\times G(v))\times_{Stab}(N(J)\cap (Rep(Q^*,v)\times\{u\}))$$
where $Stab$ stands for the stabilizer of $u$ in $T \times G(v)$. The argument above implies that
$$H^{T\times G(v)}_*(N(J))=H^{T\times G(v)}_*((T\times G(v))/ Stab) = H^*_{Stab}.$$ 
The $H^*_{T \times G(v)}$-module structure on the right hand side is induced by the restriction 
$H^*_{T \times G(v)} \to H^*_{Stab}$ relative to the inclusion $Stab\subset T\times G(v)$. It is thus enough to show that no element of $S_\gamma$ is mapped to zero by this restriction, equivalently that the kernel of this restriction map is contained in $I_\gamma$. But this follows from the fact that $\gamma \subset Stab$.
This completes the proof of Proposition~\ref{prop:torsionfree}.
\end{proof}

\smallskip

\begin{remark} 
a) In this paper we only use Proposition~\ref{prop:torsionfree} in the particular case of the Jordan quiver
(in Theorem~\ref{thm:gen}). In this case, a more direct proof can be given which does not use \cite{Davisondim}.
The proposition is important for further applications. For instance, coupling it with Theorem~\ref{thm:gen} and the methods of \cite{SV13b} or \cite{YZ14}, we get a combinatorial realization of the cohomological Hall algebras $\Y^\flat$ to be introduced in the next section. Namely, they are subalgebras of shuffle algebras generated by some explicit elements. We will not need this here.\\
b) A (wrong) previous version of Proposition~\ref{prop:torsionfree} involved only the condition that $\theta \subset T$. In fact, one can prove that under this sole condition the map to the shuffle algebra is \textit{not} injective, see Appendix~\ref{sec:non-injectivity}.
\end{remark}

\medskip

\subsection{Kirwan Surjectivity for fixed point quiver varieties}\label{sec:KS}\hfill\\

\medskip

Fix dimension vectors $v,w \in \mathbb{N}^I$. Consider the $\mathbb{G}_m$-action on $\frakM(v,w)$ associated to a cocharacter $\sigma=\sigma_T \times \sigma_{G(w)}$ of $ T\times G(w)$. The cocharacter $\sigma_{G(w)}$ 
yields a weight space decomposition $W=\bigoplus_l W^{(l)}$ of $I$-graded vector spaces. The cocharacter $\sigma_T$ yields a 
$\mathbb{G}_m$-action on $\R(v,w)$ such that 
$$\sigma_T(t) \cdot (x_h)=t^{d(h)}x_h, \quad \sigma_T(t) \cdot (x^*_h)=t^{d^*(h)}x^*_h, \quad \sigma_T(t) \cdot (a_i)=a_i, \quad \sigma_T(t) \cdot (a^*_i)=t^{d}a^*_i$$
for some integers $d, d(h), d^*(h)$ with $ h \in \Omega$ such that
$d(h)+d^*(h)=d$ for any $h \in \Omega$.

\smallskip

 Next, let $\rho$ be a cocharacter of $G(v)$. Let $V=\bigoplus_{l \in \mathbb{Z}} V^{(l)}$ be the decomposition in weight spaces according to $\rho$. We will set 
$$V^{(\geqslant l)}=\bigoplus_{k \geqslant l} V^{(k)}, \quad v_{i,l}=\dim(V^{(l)} \cap V_i), \quad 
v^{\bullet}=(v_{i,l})$$
and use similar notations for $W$.

\smallskip

 Define $\M[\rho],$ $ \frakM[\rho]$ as in \S \ref{sec:BBQ} (with respect to $\sigma$).
There is an action of the group $T \times G(v^{\bullet}) \times G(w^\bullet)$ on $\M[\rho]$ and there is a geometric quotient $\frakM[\rho]=\M[\rho]/\hspace{-.05in}/ G(v^{\bullet})$. By construction, $\frakM[\rho]$ carries a collection of $T$-equivariant tautological bundles $\mathcal{V}_{i,l}$, $\mathcal{W}_{i,l}$ whose Chern classes generate an action of $H^*_{T \times G(v^\bullet) \times G(w^\bullet)}$. The version of Kirwan surjectivity which we will use (in the particular case of the $\circ$ action) is the following one.

\smallskip

\begin{theorem}\label{thm:Kirwangraded} Assume that $d \neq 0$. Then the following map is surjective
$$M^*_{T \times G(v^{\bullet}) \times G(w)} \to M^*_{ T \times G(w)}(\frakM[\rho]),\quad c \mapsto c \cdot 1.$$
\end{theorem}
\begin{proof} A proof is given in the Appendix for the case of the $\circ$ action. 
The case of a general cocharacter $\sigma$ with $d \neq 0$ can be proved in a similar way.
\end{proof}

\smallskip

\begin{corollary}\label{cor:Mrhoconn} 
For any pair of cocharacters $\sigma,\rho$ of $T \times G(w)$ and $G(v)$ respectively
for which $d \neq 0$, the variety $\frakM[\rho]$ is connected. 
\end{corollary}
\begin{proof} By Theorem~\ref{thm:Kirwangraded}, the cohomology group 
$H^0(\frakM[\rho])$ is one-dimensional.\end{proof}

\medskip

\section{The algebra $\Y^\flat$}
\label{sec:product}

\medskip

Until the end of the paper we assume that  $\k=\bbC$.
For each dimension vector $v$ we fix a closed subgroup $H(v)\subset G(v)$.
Set $M^*=\A^*$ or $H^*,$ and
$$ \Bbbk\langle v\rangle=M^*_{T\times H(v)},\quad
\Bbbk=M^*_{T},\quad 
\Bbbk[v]=M^*_{T\times G(v)}.$$
Let $K$ be the fraction field of $\Bbbk$.
We abbreviate
$$\otimes=\otimes_\Bbbk,\quad\Hom=\Hom_\Bbbk,\quad (\bullet)^\vee=\Hom(\bullet,\Bbbk).$$

\smallskip

\subsection{Definition}\label{sec:def-product}\hfill\\

Consider the abelian group $\bbX=\bbZ^I\times \bbZ$ and the sub-semigroup $\bbX_+=\bbN^I\times(-\bbN)$.
Let $\Y^\flat$ be the $\bbX$-graded $\Bbbk$-module given by
$$\Y^\flat=\bigoplus_{v,k}\Y^\flat(v,k),\quad
\Y^\flat(v,k)=M_{k+\varepsilon d_v}^{T\times G(v)}(\Lambda^\flat(v))
=M_{k-\varepsilon (v,v)/2}^{T}(\Lambda^\flat(v)\,/\,G(v)).$$
For each dimension vector $v$ the $\Bbbk$-submodule of $\Y^\flat$ given by
$$\Y^\flat(v)=\bigoplus_{k}\Y^\flat(v,k)$$  has a canonical $\Bbbk[v]$-module stucture given by the cap-product $\cap$.
We define a $\Bbbk$-algebra structure on $\Y^\flat$.
The multiplication is given by correspondences.
The same multiplication was first defined in \cite{SV13a}, \cite{SV13b} for the algebra $\Y$ in the particular case where $|I|=1$.
It differs from the usual convolution product on the Borel-Moore homology of the moduli stack of quivers, because it uses a pullback
by a non l.c.i.~morphism. This pullback is defined as a a refined pullback. We could as well view it as an ordinary pullback
relative to some 
\emph{virtual fundamental classes} which compensate the singularities of the spaces.
This multiplication was later generalized to $\Y$ and arbitrary quivers in \cite{YZ14}.

\smallskip

To avoid any confusion we may write $\Y^\flat_H$ or $\Y^\flat_A$  for $\Y^\flat$ to distinguish between the algebras associated with $M_*=H_*$
and $M_*=A_*$.

\smallskip

First, we introduce some notation.
We write 
\begin{align}\label{f5}
L=G(v_1)\times G(v_2).
\end{align}
Fix an $I$-graded vector subspace $V_1\subseteq V$. Let
\begin{align}\label{f5b}
P,\quad U,\quad \fraku
\end{align}
be the stabilizer of $V_1$ in $G(v)$, its unipotent radical and the Lie algebra of the unipotent radical.
Following \eqref{R1}, we set
\begin{align}\label{f6}
\begin{split}
&\R_L=R(v_1)\times R(v_2),\\
&\Lambda^\flat_L=\Lambda^\flat(v_1)\times\Lambda^\flat(v_2),\\
&\R_P=\{\bar x\in \R(v)\,;\,\bar x(V_1)\subseteq V_1\},\\
&\Lambda^\flat_P=\R_P\cap\Lambda^\flat(v).
\end{split}
\end{align}

\smallskip

A closed point of $G(v)/U$ is identified with a pair of maps $\phi_1,$ $\phi_2$ 
which give an exact sequence of $I$-graded vector spaces
\begin{align}\label{seq1}
0\longrightarrow\,V_1\,{\buildrel\phi_1\over\longrightarrow}\,V\,{\buildrel\phi_2\over\longrightarrow}\,V_2\,\longrightarrow 0.
\end{align}
Thus, the set of triples $(\bar x,\phi_1,\phi_2)$ where $\bar x\in \R(v)$ and $\phi_1,$ $\phi_2$ are as above with
$\Im(\phi_1)$ stable by $\bar x$ is identified with the fiber product
$$G(v)\times_U \R_P.$$
For each such triple $(\bar x,\phi_1,\phi_2)$ the representation $\bar x$ in $V$ induces representations $\bar x_1$ in $V_1$ 
and $\bar x_2$ in $V_2$, yielding the diagram 
\begin{align}\label{seq2}
\R_L\,{\buildrel q\over \longleftarrow}\, G(v)\times_U \R_P\,{\buildrel p\over \longrightarrow} \,\R(v),
\end{align}
where the maps $p,q$ are given by $p(\bar x,\phi_1,\phi_2)=\bar x$ and $q(\bar x,\phi_1,\phi_2)=\bar x_1\oplus\bar x_2$.
The map $q$ is smooth while $p$ is proper.
The group $G(v)\times L$ acts in the obvious way on the variety
$G\times_U \R_P$.
The diagram \eqref{seq2} yields the following diagram of Artin stacks
\begin{align}\label{seq3}
\Lambda^\flat_L/L
\,{\buildrel q\over \longleftarrow}\, \Lambda^\flat_{P}/P\,{\buildrel p\over \longrightarrow}\, \Lambda^\flat(v)/G(v).
\end{align}

\smallskip

In the diagram \eqref{seq3} the map $q$ is not smooth. So, the pullback homomorphism $q^*$ is not well-defined.
In \cite{SV13a} this pullback is replaced by a refined pullback.
Let $\fraku$ be the Lie algebra of $U$.
The map $q$ in \eqref{seq3} factorizes as follows.
Consider the maps $c\,:\,\Lambda^\flat_L\to \fraku\times \R_L$ and $q_3\,:\,\R_P\to \fraku\times \R_L$  given by
\begin{align*}
c(\bar x_1\oplus\bar x_2)&=(0\,,\,\bar x_1\oplus\bar x_2),\\
q_3(\bar x)&=\Big(\mu(\bar x)-\mu(\bar x_1)\oplus\mu(\bar x_2)\,,\,\bar x_1\oplus\bar x_2\Big).
\end{align*}
We have the following fiber diagram of Artin stacks
\begin{align}\label{seq4}
\begin{split}
\xymatrix{
&(\fraku\times \R_L)/P&\R_P/P\ar[l]_-{q_3}&\\
\Lambda^\flat_L/L&\Lambda^\flat_L/P\ar@{^{(}->}[u]^-{c}\ar[l]_-{q_1}
&\Lambda^\flat_{P}/P\ar@{^{(}->}[u]^-{}\ar[l]_-{q_2}\ar[r]^-p&\Lambda^\flat(v)/G(v).
}
\end{split}
\end{align}
The vertical maps are closed immersions.
The morphism $q_3$ is an l.c.i. because $\fraku\times \R_L$ and $\R_P$ are smooth.
Hence, the refined pullback morphism $(q_3,q_2)^!$ is well-defined.

\smallskip

Now, we have
\begin{align*}
\dim(U)=v_1\cdot v_2,\quad\dim(\fraku\times \R_L)-\dim \R_P=(v_1,v_2)-v_1\cdot v_2.
\end{align*}
The pullback by $q_1$ yields a morphism of $\bbZ$-graded $\Bbbk$-modules
$$q_1^*\,:\,M_*(\Lambda^\flat_L/L)\to M_{*-\varepsilon\,v_1\cdot v_2}(\Lambda^\flat_L/P).$$
The refined pullback by $(q_3,q_2)$ yields a morphism of $\bbZ$-graded $\Bbbk$-modules
$$(q_3,q_2)^!\,:\,M_*(\Lambda^\flat_L/P)\to M_{*+\varepsilon\,v_1\cdot v_2-\varepsilon\,(v_1,v_2)}(\Lambda^\flat_{P}/P).$$
The pushforward yields a morphism of $\bbZ$-graded $\Bbbk$-modules
$$p_*\,:\,M_*(\Lambda^\flat_{P}/P)\to M_*(\Lambda^\flat(v)/G(v)).$$
The maps above are $ T$-equivariant. Since $\dim(G(v))=d_v+(v,v)/2$,
composing the map $p_*\circ (q_3,q_2)^!\circ q_1^*$ with the Kunneth homomorphism
$$M_*(\Lambda^\flat(v_2)/G(v_2))\otimes M_*(\Lambda^\flat(v_1)/G(v_1))\to M_*(\Lambda^\flat_L/L)$$
we get a $\Bbbk$-linear map
\begin{align}\label{star}\star\,:\,
M_{k+\varepsilon\,d_{v_2}}^{T\times G(v_2)}(\Lambda^\flat(v_2))\otimes M_{l+\varepsilon\,d_{v_1}}^{T\times G(v_1)}(\Lambda^\flat(v_1))\to
M_{k+l+\varepsilon\,d_{v}}^{T\times G(v)}(\Lambda^\flat(v)),
\end{align}
hence a morphism of $\bbZ$-graded $\Bbbk$-modules
$$\star\,:\,\Y^\flat(v_2)\otimes\Y^\flat(v_1)\to\Y^\flat(v).$$

\smallskip

\begin{proposition}\label{prop:1.15} Let $\flat$ be either $0$ or $1$.\hfill
\begin{itemize}[leftmargin=8mm]

\item[$\mathrm{(a)}$]
$(\Y^\flat,\star)$ is an $\bbX_+$-graded associative algebra over $\Bbbk$.

\item[$\mathrm{(b)}$]
$\Y^\flat(0)\simeq\Bbbk$ with
$[\Lambda^\flat(0)]\mapsto 1.$
The unit of $\Y^\flat$ is the element $[\Lambda^\flat(0)]$.

\item[$\mathrm{(c)}$]
$\Y^\flat(v,0)$ is spanned over $\bbQ$ by the set $\{[X]\,;\,X\in\Irr(\Lambda^\flat(v))\}$.

\item[$\mathrm{(d)}$]
$\cl : \Y^\flat_A\to \Y^\flat_H$ is a surjective $\bbQ$-algebra homomorphism.
\end{itemize}
\end{proposition}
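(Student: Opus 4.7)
My plan is to prove (a)-(d) in order, with (a) being the main technical step.

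For part (a), I will verify associativity by the standard three-step flag construction adapted to the refined pullback setting. Fix a decomposition $V = V_1 \oplus V_2 \oplus V_3$ and let $P_{12} \supseteq P_{123}$ be the parabolics stabilizing $V_1 \oplus V_2$ and the flag $V_1 \subseteq V_1 \oplus V_2$ respectively, with Levi $L_{123} = G(v_1) \times G(v_2) \times G(v_3)$. I assemble the diagram obtained by iterating \eqref{seq4}, producing a single three-row fiber diagram linking $\Lambda^\flat_{L_{123}}/L_{123}$ to $\Lambda^\flat(v)/G(v)$ through $\Lambda^\flat_{P_{123}}/P_{123}$. Both products $(a \star b) \star c$ and $a \star (b \star c)$ factor as $p_* (q_3, q_2)^! q_1^*$ along this common diagram: the proper pushforward composes by functoriality of $p_*$ for composable proper maps, the smooth pullback composes, and the two iterated refined pullbacks agree by the compatibility of refined pullbacks with the base change and functoriality properties recalled in \S 2.3, exactly as in \cite[Prop.~3.2]{SV13a} (and generalized in \cite{YZ14}). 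The main work is in checking that the excess intersection contributions vanish and that the degree shifts $v_1 \cdot v_2$ and $-(v_1,v_2)$ add up correctly along both ways of bracketing a triple product; the resulting total shift depends only on $v = v_1 + v_2 + v_3$. The $\bbX_+$-grading follows from the fact that $\star$ lands in $\Y^\flat(v_1+v_2, k+l)$ by the shifts written in \eqref{star}, and $\Bbbk$-linearity is immediate since all four maps $q_1^*, (q_3,q_2)^!, p_*$ and the Künneth map are $\Bbbk$-linear.

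For part (b), when $v=0$ the variety $\Lambda^\flat(0)$ is a reduced point, $G(0)$ is trivial, and $d_0 = 0$, so $\Y^\flat(0,0) = M^T_0(\mathrm{pt}) = \Bbbk$ and $\Y^\flat(0, k) = 0$ for $k \neq 0$, giving the first claim. For the unit property, take $v_1 = 0$ (the case $v_2 = 0$ is symmetric): then $V_1 = 0$, $P = G(v)$, $U = \{1\}$, $\Lambda^\flat_L = \Lambda^\flat(v) = \Lambda^\flat_P$, and the maps $q_1, q_2, q_3, p$ all become identities. The refined pullback reduces to the identity since the normal bundle is trivial, and $[\Lambda^\flat(0)] \star \alpha = \alpha$.

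For part (c), Theorem~\ref{thm:1.1}(a) states that $\Lambda^\flat(v)$ has pure dimension $d_v$, so $\Y^\flat(v,0) = M^{T \times G(v)}_{\varepsilon d_v}(\Lambda^\flat(v)) = M_{\varepsilon d_v - \varepsilon (v,v)/2}(\Lambda^\flat(v)/G(v))$ sits in the top degree of the Borel-Moore homology (resp.~Chow group) of the quotient stack $\Lambda^\flat(v)/G(v)$, which has dimension $d_v - (v,v)/2$. By the general fact recalled in \S 2.2, this top-degree homology is spanned by the fundamental classes $[X/G(v)]$ of the irreducible components $X \in \Irr(\Lambda^\flat(v))$.

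For part (d), the cycle map $\cl$ is natural with respect to proper pushforward, smooth pullback, refined pullback along l.c.i.~morphisms, and external products (see \S 2.3), so it commutes with each of the four constituent operations in the definition of $\star$; hence it is an algebra homomorphism. Surjectivity is precisely Theorem~\ref{thm:1.1}(c).

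The only real difficulty is the associativity verification in (a): one must be careful with the iterated refined pullbacks, since the intermediate spaces are singular and one cannot simply invoke functoriality of ordinary pullback. The argument relies on the excess intersection formula to identify the two bracketings with the same refined class on the three-step Hecke stack $\Lambda^\flat_{P_{123}}/P_{123}$.
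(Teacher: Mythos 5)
Your proposal is correct and follows essentially the same route as the paper: the paper simply cites \cite{SV13a}, \cite{SV13b}, \cite{YZ14} for the associativity in (a), declares (b) and (c) obvious, and derives (d) from Theorem~\ref{thm:1.1}; your sketch of the three-step flag argument for (a), the top-degree argument for (c) via the fact recalled in \S 2.2, and the naturality of $\cl$ for (d) are all consistent with this. One small slip in (b): it is not true that $\Y^\flat(0,k)=0$ for $k\neq 0$, since $\Y^\flat(0,k)=M_k^T(\mathrm{pt})\simeq M_T^{-k}(\mathrm{pt})$ is nonzero for all $k\leqslant 0$ in the image of the grading of $\Bbbk$; the correct (and equally immediate) statement is that $\Y^\flat(0)=\bigoplus_k M_k^T(\mathrm{pt})\simeq \Bbbk$ as a graded $\Bbbk$-module with $[\Lambda^\flat(0)]\mapsto 1$, so the conclusion stands. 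Your verification that the unit axiom holds by specializing $v_1=0$ (so that $P=G(v)$, $U=\{1\}$, $\fraku=0$ and all four maps degenerate to identities) is the right argument.
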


\smallskip

\begin{proof}
Part (a) is proved in \cite{SV13a}, \cite{SV13b} in the particular case where $|I|=1$ and in \cite{YZ14} for arbitrary quivers.
Part (d) follows from Theorem \ref{thm:1.1}.
The other claims are obvious.
\end{proof}

\smallskip

\subsection{Comparison  of $\Y^0$ and $\Y^1$}\hfill\\

We can also define an $\bbX_+$-graded $\Bbbk$-algebra structure on the $\Bbbk$-module
$$\Y=\bigoplus_{v,k}\Y(v,k),\quad\Y(v,k)=M_{k+\varepsilon d_v}^{T\times G(v)}(\M(v)).$$
As $v$ runs over $\bbN^I$, the pushforward by the canonical closed embeddings 
$$\Lambda^1(v)\subset\Lambda^0(v)\subset \M(v)$$ 
gives $\bbX$-graded $\Bbbk$-linear maps 
$$\Y^1\to\Y^0\to \Y.$$

\smallskip

\begin{proposition}\label{prop:ratform}\hfill
\begin{itemize}[leftmargin=8mm]
\item[$\mathrm{(a)}$]  The maps $\Y^1\to\Y^0$ and $\Y^0\to\Y$ above are $\Bbbk$-algebra homomorphisms.
\item[$\mathrm{(b)}$] 
If the torus $T$ contains a one parameter subgroup which scales all the quiver data by the same scalar, then
\begin{itemize}
\item[$\bullet$]
the maps $\Y^1\otimes \,K\to\Y^0\otimes \,K$ and $\Y^0\otimes \,K\to\Y\otimes \,K$
are $K$-algebra isomorphisms,
\item[$\bullet$]
for each $v,$ $w$ the pushforward yields an isomorphism
$$H_*^{T\times H(w)}(\frakL^\flat(v,w))\otimes K\to H_*^{T\times H(w)}(\frakM(v,w))\otimes K.$$
\end{itemize}
\end{itemize}
\end{proposition}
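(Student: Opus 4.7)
The plan for part (a) is to show that each of the closed embeddings $\iota \colon \Lambda^1(v)/G(v) \hookrightarrow \Lambda^0(v)/G(v)$ and $\Lambda^0(v)/G(v) \hookrightarrow \M(v)/G(v)$ induces a pushforward that intertwines the convolution products defined via the diagram \eqref{seq4}. Writing the convolution as $p_* \circ (q_3, q_2)^! \circ q_1^*$ composed with Kunneth, the operations $p_*$ and $q_1^*$ commute with $\iota_*$ by the standard functoriality of proper pushforward and of pullback along smooth morphisms. The substantive point is therefore to verify the identity $\iota_{P,*} \circ (q_3, q_2)^! = (q_3, q_2')^! \circ \iota_{L,*}$, where $q_2, q_2'$ are the base changes of $q_3$ corresponding to the two ambient substacks in the given inclusion. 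By the proper base change property for refined pullback (\cite[Thm.~6.6.9]{LM07}) this follows once the auxiliary squares $\Lambda^\flat_P/P \to \Lambda^{\flat'}_P/P$ over $\Lambda^\flat_L/P \to \Lambda^{\flat'}_L/P$ are cartesian (for the first inclusion we take $(\flat, \flat') = (1,0)$; for the second we replace $\Lambda^{\flat'}$ by $\M$ throughout). This cartesianness amounts to the following representation-theoretic fact: given $\bar x$ in the ambient stack, preserving $V_1$ and such that both the restricted $\bar x_1$ and the induced $\bar x_2$ lie in the smaller stack, one has $\bar x$ in the smaller stack as well. For $\flat = 0, 1$ this is proved by concatenating witnessing (restricted, if $\flat = 1$) flags for $\bar x_1$ in $V_1$ and for $\bar x_2$ in $V/V_1$ through the short exact sequence $0 \to V_1 \to V \to V/V_1 \to 0$; the corresponding statement for $\Lambda^0 \subset \M$ is similar.

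For part (b) the main tool is equivariant localization. Let $\lambda \subset T$ be the 1-PS scaling all quiver data by a common factor. On each of $\Lambda^1(v), \Lambda^0(v), \M(v)$, the scheme-theoretic $\lambda$-fixed locus is reduced to the zero representation $\{0\}$. By the localization theorem for equivariant Borel-Moore homology (\cite[Thm.~6.2]{GKM}), after inverting the multiplicative set $S_\lambda \subset \Bbbk$ complementary to the vanishing ideal of $\mathrm{Lie}(\lambda)$, each of the pushforwards from $H_*^{T \times G(v)}(\{0\})_{S_\lambda}$ into these three modules becomes an isomorphism. Since $S_\lambda^{-1}\Bbbk$ embeds into $K$, the conclusion persists after $\otimes_\Bbbk K$, and together with part (a) this yields the chain of $K$-algebra isomorphisms $\Y^1 \otimes K \xrightarrow{\sim} \Y^0 \otimes K \xrightarrow{\sim} \Y \otimes K$.

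For the second bullet the $\lambda$-fixed locus on $\frakM(v,w)$ is generally not reduced to a point, so I would instead use the $\circ$-action from Proposition \ref{prop:TL}(a): its fixed locus $\frakM(v,w)^\circ$ is contained in $\frakL(v,w)^\circ = \frakL^0(v,w)$ and also coincides with the $\circ$-fixed locus of $\frakL^0(v,w)$ itself. By the localization theorem applied in turn to $\frakL^0(v,w)$ and to $\frakM(v,w)$, both pushforwards $H_*^{T \times H(w)}(\frakM(v,w)^\circ)_{S_\circ} \to H_*^{T \times H(w)}(\frakL^0(v,w))_{S_\circ}$ and $H_*^{T \times H(w)}(\frakM(v,w)^\circ)_{S_\circ} \to H_*^{T \times H(w)}(\frakM(v,w))_{S_\circ}$ are isomorphisms; by functoriality the intermediate map $H_*^{T \times H(w)}(\frakL^0(v,w))_{S_\circ} \to H_*^{T \times H(w)}(\frakM(v,w))_{S_\circ}$ is then also an isomorphism, giving the $\flat = 0$ case after $\otimes K$. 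The case $\flat = 1$ follows analogously using the $\bullet$-action of Proposition \ref{prop:TL}(b). The main technical hurdle should be the cartesianness check in part (a); everything else is a formal consequence of proper base change and localization.
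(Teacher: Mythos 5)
Your part (a) is essentially the paper's argument (proper base change for the refined pullback $(q_3,q_2)^!$ along the closed embeddings), and the cartesianness check you flag is the right point to verify; that part is fine.

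Part (b) has a genuine gap: you are conflating two incomparable localizations. Write $\fraka_x\subseteq\frakt\oplus\frakt(v)$ for the stabilizer Lie algebra of a point $x$. The fixed-point localization theorem applied to the $1$-PS $\lambda$ inverts the multiplicative set complementary to the vanishing ideal of $\mathrm{Lie}(\lambda)$ inside $\Bbbk[v]=H^*_{T\times G(v)}$, and it gives an isomorphism from $H_*^{T\times G(v)}(\{0\})$ precisely because no $\fraka_x$ with $x\neq 0$ \emph{contains} the line $\mathrm{Lie}(\lambda)=\bbC(\xi,0)$. But the statement to be proved is about $\otimes_\Bbbk K$ with $K=\Frac(H^*_T)$, which kills exactly those modules whose support does not \emph{dominate} $\frakt$. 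These conditions are different, and neither implies the other. Concretely, for the Jordan quiver and $v=2$, the point $\bar x=(J,0)$ with $J$ a regular nilpotent has stabilizer $\{(a,b,\diag(c,c+a))\}$ in $\frakt\oplus\frakt(2)$: it does not contain $(\,\xi,0)$, so your localization kills its orbit, but it surjects onto $\frakt$, so $\otimes_\Bbbk K$ does not. Accordingly your intermediate claim is false as a statement over $K$: the map $H_*^{T\times G(v)}(\{0\})\otimes K\to H_*^{T\times G(v)}(\Lambda^\flat(v))\otimes K$ is not an isomorphism (already by degree reasons, $[\{0\}]$ cannot hit the fundamental classes of the components of $\Lambda^\flat(v)$, and Theorem~\ref{thm:dim} shows the graded $K$-dimensions disagree). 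The same defect affects your argument for the second bullet via $\frakM(v,w)^\circ$.

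The missing idea is that the correct ``core'' is not the $\lambda$-fixed locus of the variety but the zero fibre $\Lambda$ of the categorical quotient $\M(v)\to\M(v)/\!\!/G(v)$ (contained in $\Lambda^1(v)$), respectively $\frakL^\flat(v,w)=\pi^{-1}(\frakL_0^\flat(v,w))$ for the second bullet. On the complement of this core there exists a \emph{$G(v)$-invariant} function with a nontrivial $T$-weight $\chi$ which does not vanish, and this forces the $\frakt$-component of $\fraka_x$ into $\ker(d\chi)$, a proper subspace of $\frakt$; hence the complement is killed by $\otimes_\Bbbk K$. This is Lemma~\ref{lem:LOC} in the paper, and it is what makes the $K$-linear statement true while your $\{0\}$-based version fails. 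Inside the core (e.g.\ at $(J,0)$) the $T$-scaling can be absorbed by $G(v)$, which is exactly why no argument localizing onto the $\lambda$-fixed points of the variety can work for $\otimes_\Bbbk K$.
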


\smallskip

\begin{proof}
Part (a) follows from the proper base change property of refined pullbacks applied to the map $(q_3 ,q_2)^!$
in \S\ref{sec:def-product}.
Now, let us check that the map $\Y^\flat\otimes \,K\to\Y\otimes \,K$ is an isomorphism.
Although the set $\Lambda^\flat(v)$ is not projective and may be very singular,
it admits only finitely many orbit types relative to the $T$-action, which means that
only finitely many stabilizers occur since $T$ is abelian. 
Therefore, the localization theorem applies.
We have the following lemma.

\smallskip

\begin{lemma}\label{lem:LOC} Let $T$ be a torus and $G$ be a product of general linear groups over $\k$.
Let $V$ be a finite dimensional rational representation of $T\times G$ and $X\subset V$
be a closed $T\times G$-equivariant subset. Let $I\subset\k[V]$ be the ideal of polynomials vanishing at 0 and
$\Lambda\subset X$ be the 0-set of $I^G$.
Assume that $I^{T\times G}=\{0\}.$
Then, the pushforward yields an isomorphism
$$M^{T\times G}_*(\Lambda)\otimes K\to M^{T\times G}_*(X)\otimes K.$$
\end{lemma}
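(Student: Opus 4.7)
My plan is to combine the equivariant localization theorem with an elementary check that $X^T = \Lambda^T$. Once the $T$-fixed loci coincide, the complement $X \setminus \Lambda$ has no $T$-fixed points, and the lemma will follow by excision and localization.

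The central point is to show $X^T \subseteq \Lambda^T$; the reverse inclusion is immediate from $\Lambda \subseteq X$. I would fix $x \in X^T$ and $f \in I^G$ and decompose $f = \sum_\lambda f_\lambda$ into $T$-weight components. Because $T$ and $G$ commute on $\k[V]$, each $f_\lambda$ remains $G$-invariant, and because $I$ is $T$-stable each $f_\lambda$ still lies in $I$, so $f_\lambda \in I^G$. Evaluating the identity $f_\lambda(t^{-1}v) = \lambda(t) f_\lambda(v)$ at $v = x$ and using $t^{-1} x = x$ yields $(\lambda(t) - 1) f_\lambda(x) = 0$, whence $f_\lambda(x) = 0$ for all $\lambda \neq 0$. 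The weight-zero component $f_0$ lies in $I \cap \k[V]^{T \times G} = I^{T \times G}$, which vanishes by hypothesis. Summing gives $f(x) = 0$, so $x \in V(I^G) \cap X = \Lambda$.

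Having established $X^T = \Lambda^T$, I would use the excision sequence associated to $\Lambda \hookrightarrow X \hookleftarrow X \setminus \Lambda$ (right-exact for $A_*$, a long exact sequence for $H_*$) together with the flatness of $K$ over $\Bbbk$ to reduce the claim to proving $M^{T \times G}_*(X \setminus \Lambda) \otimes K = 0$. For this I would invoke equivariant localization. Using the approximations $U_i$ to $EG$ by open subsets of linear representations of $G$ (valid with $U_i/G$ a variety since $G$ is a product of general linear groups), the $T \times G$-equivariant homology of a variety $Y$ is computed as the $T$-equivariant homology of $Y \times_G U_i$; because $G$ acts freely on $U_i$ and commutes with $T$, one verifies $(Y \times_G U_i)^T = Y^T \times_G U_i$. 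Setting $Y = X \setminus \Lambda$ makes this fixed locus empty, so the classical $T$-equivariant localization theorem forces the corresponding Borel--Moore homology (or Chow group) to vanish after tensoring with $K$.

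The delicate point is the weight-decomposition argument: the hypothesis $I^{T \times G} = \{0\}$ is genuinely stronger than either $I^T = \{0\}$ or $I^G = \{0\}$, and is precisely what kills the $\lambda = 0$ summand $f_0$. Without it, a $G$-invariant function with a nontrivial $T$-invariant part could vanish on $\Lambda$ while being nonzero at a $T$-fixed point of $X$, breaking the reduction to localization.
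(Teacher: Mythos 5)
Your argument is correct and rests on the same two pillars as the paper's proof: the $T$-weight decomposition of $G$-invariant functions in $I$, with the hypothesis $I^{T\times G}=\{0\}$ killing the weight-zero component, followed by equivariant localization. The packaging differs slightly: you phrase the output of the weight argument as the equality of fixed loci $X^T=\Lambda^T$ and then localize only with respect to $T$ on the Borel construction $(\cdot)\times_G U_i$, whereas the paper first reduces $G$ to its maximal torus and shows that the annihilators $\fraka_x\subseteq\frakt\times\frakg$ of points $x\in X\setminus\Lambda$ meet $\frakt$ trivially. These are two formulations of the same localization step, and yours is arguably the more transparent one; your identification $(Y\times_G U_i)^T=Y^T\times_G U_i$, using freeness of the $G$-action on $U_i$, is the correct justification for passing between them.

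One loose end: for $M_*=\A_*$ the excision sequence is only right exact, so the vanishing of $\A^{T\times G}_*(X\setminus\Lambda)\otimes K$ yields surjectivity of the pushforward but not injectivity. This is repaired by invoking the localization theorem in its concentration form for both $\Lambda$ and $X$ rather than for the complement: since $X^T=\Lambda^T$, the pushforwards from the common fixed locus $\Lambda^T\times_G U_i=X^T\times_G U_i$ into $\Lambda\times_G U_i$ and into $X\times_G U_i$ both become isomorphisms after tensoring with $K$, and the map $\A^{T\times G}_*(\Lambda)\otimes K\to \A^{T\times G}_*(X)\otimes K$ is then an isomorphism because it fits between them. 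As you already invoke exactly this theorem, this is a one-line fix rather than a genuine gap.
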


\smallskip

\begin{proof}
Fix a maximal torus $D\subseteq G$ and let $W=N_G(D)/D$ be the Weyl group.
Then, the $N_G(D)$-action on $X$ gives rise to a $W$-action on $H^D_*(X)$ such that
$$M_*^G(X)=(M_*^D(X))^W.$$
Hence, we may assume in the rest of the proof that $G$ is a torus.

\smallskip

Set $A=T\times G$ and $\fraka=\text{Lie}(A)$. The pushforward by the inclusion $\Lambda\subset X$
gives an $M^*_{A}$-module homomorphism $i_*:M^{A}_*(\Lambda)\to M^A_*(X)$ which is invertible over the (open) 
complement of
$\bigcup_{x\in X\setminus \Lambda}\fraka_x$ in $\fraka$. Here $\fraka_x$ is the annihilator of $x$ in $\fraka$.

\smallskip

The assumptions imply that for each point $x\in X\setminus \Lambda$ there is a non zero $G$-invariant polynomial 
$f\in I$ which does not vanish at $x$ 
and a non trivial character $\chi:T\to\bbG_m$ such that $f$ has the weight $\chi$
relative to the $T$-action on $V$.
We deduce that $\fraka_x\subseteq\{0\}\times\frakg$.
Therefore, the map $i_*$ is invertible over the open subset 
$(\frakt\setminus\{0\})\times\frakg$ of $\fraka$.

\end{proof}

\smallskip

Now, set  $V=\R(v)$, $X=\M(v)$ and $G=G(v)$.
The pushforward by the inclusion $\Lambda\subseteq\Lambda^\flat(v)$ yields a chain of maps
$$M^{T\times G(v)}_*(\Lambda)\to M^{T\times G(v)}_*(\Lambda^\flat(v))\to M^{T\times G(v)}_*(X).$$
Since the torus $T$ contains a one parameter subgroup which scales all the quiver data by the same scalar, we have
$I^{T}=\{0\}.$
Then, the first part of (b) follows from Lemma \ref{lem:LOC}.

\smallskip

Now, we prove the second part of (b). Set 
$$\L^\flat(v,w)=\Lambda^\flat(v)\times\{0\}\times\Hom_I(v,w),\quad
\frakL^\flat_0(v,w)=\pi(\frakL^\flat(v,w)).$$
Set also $V=\R(v,w)$
and $G=G(v)$.
By Proposition \ref{prop:TL}, we have
$$\frakL^\flat(v,w)=\pi^{-1}(\frakL^\flat_0(v,w)),\quad 
\frakL^\flat_0(v,w)=\L^\flat(v,w)/\!\!/G,\quad
\Lambda\subseteq\L^\flat(v,w)\subseteq V.$$
We deduce that for each point $x\in \frakM(v,w)\setminus\frakL^\flat(v,w)$ 
there is a non zero $G$-invariant polynomial 
$f\in I$ which does not vanish at $\pi(x)$ 
and a non trivial character $\chi:T\to\bbG_m$ such that $f$ has the weight $\chi$
relative to the $T$-action on $V$.
Set $A=T\times H(w)$.
We deduce that
$\fraka_x\subseteq\{0\}\times\frakh(w)$.
Therefore, the pushforward map 
$$M_*^{T\times H(w)}(\frakL^\flat(v,w))\to M_*^{T\times H(w)}(\frakM(v,w))$$ 
is invertible over the open subset 
$(\frakt\setminus\{0\})\times\frakh(w)$ of $\fraka$.
\end{proof}

\smallskip

\subsection{Dimension}\hfill\\

Given commuting formal variables $z_i$ with $i\in I$, we write
$z^v=\prod_i(z_i)^{v_i}$ for each dimension vector $v\in \bbN^I$.
Consider the formal series in $\bbQ[[z_i\,;\,i \in I]]$ given by
\begin{align*}
\lambda^\flat(q,z)&=\sum_{v}
\frac{|\Lambda^\flat(v)(\mathbb{F}_q)|}{|G(v)(\mathbb{F}_q)|}\, q^{\langle v, v \rangle}z^{\v}.
\end{align*}
Let $A^\flat_v(t)=\sum_ra^\flat_{v,r}\,t^r$ with $\flat=0,1$
be the \emph{nilpotent} and the \emph{1-nilpotent} Kac polynomials in $\bbZ[t]$ considered in \cite{BSV} :
for each prime power $q$ the integers $A^0_v(q)$ and $A^1_v(q)$ count the number absolutely 
indecomposable nilpotent and 1-nilpotent representations of the quiver $Q$ of dimension $v$ over $\fq$.
Then, by \cite{BSV}, we have 
\begin{equation*}
\lambda^\flat(q, z)= \text{exp}\Big(\sum_{v\neq 0}\sum_{l\geqslant 1}
{A}^\flat_{v}(q^{-l}) \,z^{lv}/\,l\,(1-q^{-l})\Big)
\end{equation*}
Hence, 
the variety $\Lambda^\flat(v)$ has polynomial count.
Let $\tau$ be the rank of $T$.
Assume that $M_*=H_*$. 
Then, Corollary \ref{cor:poincare} and Theorem \ref{thm:1.1} yield
\begin{align*}
\sum_k\dim_\bbQ\!\big(\!\Y^\flat(v,k)\big)\,q^{k/2}&=
\sum_k\dim_\bbQ\!\big(H_{k}(\Lambda^\flat(v)\,/\,T\times G(v))\big)\,q^{\langle v,v\rangle+\tau+k/2}\\
&=q^{\langle v,v\rangle+\tau}\,\big|\Lambda^\flat(v)(\fq)\big|\,\big/\,\big|T(\fq)\times G(v)(\fq)\big|.
\end{align*}
We deduce that $\Y^\flat(v,k)=0$ if $k$ is odd and that we have
\begin{align*}
\sum_{v}\sum_k\dim_\bbQ\!\big(\!\Y^\flat(v,k)\big)\,q^{k/2}\,z^v
&=(1-q^{-1})^{-\tau}\cdot\text{exp}\Big(\sum_{v\neq 0}\sum_{l\geqslant 1}\sum_{k\geqslant 0}{A}^\flat_{v}(q^{-l}) \,q^{-lk}\,z^{lv}/\,l\,\Big).
\end{align*}
This yields the following formula.
\smallskip

\begin{theorem}\label{thm:dim}
Assume that $M_*=H_*$. We have
\begin{align*}
\sum_{v}\sum_k\dim_\bbQ\!\big(\!\Y^\flat(v,k)\big)\,q^{k/2}\,z^v
&=(1-q^{-1})^{-\tau}\cdot\prod_{v\neq 0}\prod_{r\geqslant 0}\prod_{k\geqslant 0}\big(1-q^{-k-r}\,z^{v}\big)^{-{a}^\flat_{v,r}}.
\end{align*}
\qed
\end{theorem}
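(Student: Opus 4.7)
The plan is to observe that Theorem~\ref{thm:dim} is essentially a formal consequence of what is already displayed just above its statement, combined with a plethystic exponential identity. So the work is threefold: (i) justify the passage from dimensions of $\Y^\flat(v,k)$ to a point count, (ii) invoke the generating series for $|\Lambda^\flat(v)(\fq)|$ from \cite{BSV}, and (iii) convert the resulting exponential to an infinite product.

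For step (i), I would start by noting that $\Y^\flat(v,k) = H_{k+2d_v}^{T\times G(v)}(\Lambda^\flat(v)) = H_{k-(v,v)}(\Lambda^\flat(v)/(T\times G(v)))$ by definition and by \eqref{B}. By Theorem~\ref{thm:1.1}(a), the Borel--Moore homology of $\Lambda^\flat(v)/G(v)$ is pure and even; tensoring with $H^*_T$ and using the free $T$-action on the approximation $ET$, the same holds for the $T\times G(v)$-equivariant homology, so the stack $\Lambda^\flat(v)/(T\times G(v))$ satisfies the hypotheses of Corollary~\ref{cor:poincare}. The polynomial count hypothesis holds because the point counts of $\Lambda^\flat(v)$, $T$ and $G(v)$ are each polynomial in $q$ (the first by \cite{BSV}, the other two by inspection). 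Applying Corollary~\ref{cor:poincare} one gets
\[
\sum_k \dim_\bbQ\!\big(\Y^\flat(v,k)\big)\,q^{k/2} = q^{\langle v,v\rangle+\tau}\cdot\frac{|\Lambda^\flat(v)(\fq)|}{|T(\fq)||G(v)(\fq)|},
\]
where the factor $q^{\langle v,v\rangle+\tau}$ comes from the degree shift $k-(v,v) \leftrightarrow k$ together with $\dim(T\times G(v))-\dim(T\times G(v))=0$ absorbed into the shift $d_v = v\cdot v - (v,v)/2$ and the $T$-dimension $\tau$.

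For step (ii), I would plug in $|T(\fq)| = (q-1)^\tau$, sum over $v$, and use the identity from \cite{BSV}
\[
\lambda^\flat(q,z) = \sum_v \frac{|\Lambda^\flat(v)(\fq)|}{|G(v)(\fq)|}\,q^{\langle v,v\rangle}\,z^v = \exp\!\Big(\sum_{v\neq 0}\sum_{l\geqslant 1}\frac{A^\flat_v(q^{-l})}{l(1-q^{-l})}z^{lv}\Big),
\]
so that the generating series takes the form already displayed in the paragraph preceding the theorem, namely
\[
\sum_{v,k}\dim_\bbQ\!\big(\Y^\flat(v,k)\big)\,q^{k/2}\,z^v = (1-q^{-1})^{-\tau}\exp\!\Big(\sum_{v\neq 0}\sum_{l\geqslant 1}\sum_{k\geqslant 0}\frac{A^\flat_v(q^{-l})\,q^{-lk}\,z^{lv}}{l}\Big),
\]
after expanding $(1-q^{-l})^{-1} = \sum_{k\geqslant 0}q^{-lk}$.

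For step (iii), I would expand $A^\flat_v(t) = \sum_r a^\flat_{v,r}t^r$ and interchange summations to rewrite the inner sum as
\[
\sum_{r\geqslant 0}\sum_{k\geqslant 0}a^\flat_{v,r}\sum_{l\geqslant 1}\frac{(q^{-k-r}z^v)^l}{l} = -\sum_{r,k\geqslant 0}a^\flat_{v,r}\log\!\big(1-q^{-k-r}z^v\big).
\]
Taking the exponential and the product over $v\neq 0$ gives precisely the right-hand side of the theorem. The main subtlety — and really the only one — is making sure Corollary~\ref{cor:poincare} applies to the equivariant cohomology of a (non-compact, singular) stack; since $T$ is a torus acting freely on a finite-dimensional approximation, the torus factor contributes cleanly via $|T(\fq)|=(q-1)^\tau$, and the purity required for the corollary is exactly what Theorem~\ref{thm:1.1}(a) delivers. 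Everything else is formal manipulation of generating series.
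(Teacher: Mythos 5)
Your proposal is correct and follows exactly the paper's own argument: purity (Theorem~\ref{thm:1.1}) plus Katz's result (Corollary~\ref{cor:poincare}) convert the graded dimensions into the point count $q^{\langle v,v\rangle+\tau}\,|\Lambda^\flat(v)(\fq)|/|T(\fq)\times G(v)(\fq)|$, the \cite{BSV} exponential identity is substituted, and the $\exp$-to-product conversion via $\sum_l x^l/l=-\log(1-x)$ finishes the computation. The only blemish is a harmless bookkeeping slip in the intermediate degree shift (it should be $H_{k-(v,v)-2\tau}$ of the quotient stack, not $H_{k-(v,v)}$), but your final factor $q^{\langle v,v\rangle+\tau}$ and everything downstream agree with the paper.
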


\begin{example}
Let $Q$ be the Jordan quiver.
Assume that $T$ is the rank 2 torus which acts by dilatation on the variables $x$ and $x^*$.
Then we have $\Y^0=\Y^1$ and
$$\sum_{v}\sum_{k}\dim_\bbQ\!\big(\!\Y^\flat(v,k)\big)\,q^{k/2}z^v=
(1-q^{-1})^{-2}\cdot\prod_{v\geqslant 1}\prod_{k\geqslant 0}(1-q^{-k}z^v)^{-1}.$$
In this case, the algebra $\Y^\flat$ is (the positive half of) the spherical degenerate double affine Hecke algebra of type 
$\GL(\infty)$ defined in \cite{SV13b}.
\end{example}

\smallskip

\subsection{Generators}\smallskip

\subsubsection{The stratification of $\Lambda^1(v)$}

\smallskip

\begin{definition} Fix a vertex $i$ and an integer $l\geqslant 0$.
\begin{itemize}[leftmargin=8mm]
\item[$\mathrm{(a)}$] 
For each element $\bar x$ in $\R(v)$ let $\varepsilon_i(\bar x)$ be the codimension of the smallest subspace of $V_i$ containing
$\sum_{j\neq i}\sum_{h\in\bar\Omega_{ji}}\Im(\bar x_h\big)$ and preserved by $\bar x_h$ for all $h\in\bar\Omega_{ii}$.
\item[$\mathrm{(b)}$] 
Let $\R(v)^{l,i}$ and $\Lambda(v)^{l,i}$ be the locally closed subsets of $\R(v)$ given by
\begin{align*}
\R(v)^{l,i}&=\{\bar x\in \R(v)\,;\,\varepsilon_i(\bar x)=l\},\\
\Lambda^1(v)^{l,i}&=\Lambda^1(v)\cap \R(v)^{l,i}.
\end{align*}
\end{itemize}
\end{definition}

\smallskip

Fix dimension vectors $v$, $v_1$, $v_2$ with $v=v_1+v_2$.  
We use the same notation as in \eqref{f5}, \eqref{f6}.
We define the following sets
\begin{align*}
\R_P^{l,i}&=\R_P\cap \R(v)^{l,i},\\
\Lambda_{P}^{l,i}&=\Lambda^1_{P}\cap \R(v)^{l,i},\\
\R_L^{0,i}&=R(v_1)^{0,i}\times R(v_2),\\ 
\Lambda_L^{0,i}&=\Lambda^1(v_1)^{0,i}\times \Lambda^1(v_2).
\end{align*}

\smallskip

Now, assume that $v_2=l\, \delta_i$.
From \eqref{seq4} we get the following fiber diagram of stacks
\smallskip
\begin{equation}\label{diag1}
\begin{split}
\xymatrix{
(\fraku\times \R_L)/P&\R_P/P\ar[l]_-{q_3}&\\
(\fraku\times \R_L^{0,i})/P\ar@{^{(}->}[u]^-{j_3}&R_{P}^{l,i}/P\ar[l]_-{q_{3,0}}\ar@{^{(}->}[u]^-{j_4}&\\
\Lambda_L^{0,i}/P\ar@{^{(}->}[u]^-{}\ar@{_{(}->}[d]_-{j_1}&\Lambda_{P}^{l,i}/P\ar[l]_-{q_{2,0}}\ar@{^{(}->}[u]^-{}\ar@{_{(}->}[d]_-{j_2}
\ar[r]^{p_0}&\Lambda(v)^{l,i}/G\\
\Lambda^1_L/P&\Lambda^1_{P}/P\ar[l]_-{q_2}.&
}
\end{split}
\end{equation}

\smallskip

\begin{lemma} \label{lem:isomorphism1} 
Assume that $v_2=l\, \delta_i$.
\begin{itemize}[leftmargin=8mm]

\item[$\mathrm{(a)}$] $j_1,j_2,j_3$ and $j_4$ are open immersions. 

\item[$\mathrm{(b)}$] $q_{3,0}$ and $q_{2,0}$ are affine space bundles,

\item[$\mathrm{(c)}$] 
$q_{2,0}^*\circ j_1^*=j_2^*\circ (q_3,q_2)^!,$

\item[$\mathrm{(d)}$] 
$p_0$ is an isomorphism,

\item[$\mathrm{(e)}$] 
$p_{0,*}\circ q_{2,0}^*$ is an isomorphism 
$M_*^{T\times P}(\Lambda_L^{0,i})\to M_*^{T\times G}(\Lambda(v)^{l,i})$
such that $[\Lambda_L^{0,i}]\mapsto[\Lambda(v)^{l,i}].$

\end{itemize}
\end{lemma}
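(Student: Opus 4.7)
The plan rests on a single geometric observation: for $\bar x\in\R_P$ with $v_2=l\delta_i$, one has $\varepsilon_i(\bar x)=l$ if and only if $\varepsilon_i(\bar x_1)=0$. Indeed, since $v_{2,j}=0$ for $j\neq i$, the subspace $V_{1,i}\subseteq V_i$ has codimension $l$; because $\bar x$ stabilizes $V_1$ and $V_{1,j}=V_j$ for $j\neq i$, we have $\Im(\bar x_h)\subseteq V_{1,i}$ for every $h\in\bar\Omega_{ji}$ with $j\neq i$, so the smallest subspace $W\subseteq V_i$ containing $\sum_{j\neq i,\,h\in\bar\Omega_{ji}}\Im(\bar x_h)$ and preserved by $\{\bar x_h\,;\,h\in\bar\Omega_{ii}\}$ lies inside $V_{1,i}$ and coincides with the analogous minimal subspace attached to $\bar x_1$. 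This gives the identity $\varepsilon_i(\bar x)=\varepsilon_i(\bar x_1)+l$. From this, Part (a) is immediate: the locally closed condition $\varepsilon_i=l$ on $\R_P$ (resp.\ on $\Lambda^1_P$) pulls back to the open condition $\varepsilon_i=0$ on the first factor of $\R_L$ (resp.\ of $\Lambda^1_L$), so $j_1,j_2,j_3,j_4$ are open immersions. For Part (d), the same reasoning shows that for any $\bar x\in\Lambda(v)^{l,i}$ the subspace $W\subseteq V_i$ is the unique codimension-$l$ loop-stable subspace of $V_i$ containing the prescribed images; the $I$-graded subspace $V_1\subseteq V$ is therefore canonically reconstructed from $\bar x$, producing an inverse to $p_0$ and proving it is an isomorphism of stacks.

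For Part (b), the plan is to apply Lemma~\ref{L:CBext}: for $(\bar x_1,\bar x_2)\in\Lambda^1_L$, the fiber of $q_3$ over $(0,\bar x_1\oplus\bar x_2)$ is an affine space of dimension $v_1\cdot v_2-(v_2,v_1)_Q+\dim\Hom_\Pi(\bar x_1,\bar x_2)$. The condition $\varepsilon_i(\bar x_1)=0$ is equivalent to saying that $\bar x_1$ admits no nonzero quotient $\Pi$-module supported at the vertex $i$; since $\bar x_2$ is concentrated at $i$, this forces $\Hom_\Pi(\bar x_1,\bar x_2)=0$ throughout $\Lambda_L^{0,i}$. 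Hence the fibers of $q_{3,0}$ are kernels of a family of linear maps of constant rank, assembling into a vector bundle, so $q_{3,0}$ is an affine space bundle of rank $v_1\cdot v_2-(v_1,v_2)$. Since the bottom square of \eqref{diag1} is cartesian, $q_{2,0}$ is the base change of $q_{3,0}$ along $c$ and is an affine space bundle of the same rank.

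Part (c) then follows from the operational formalism of \S 2.3: by (b) the morphism $q_{2,0}$ is l.c.i.\ of the same relative dimension as $q_{3,0}$, so the excess intersection formula gives $(q_{3,0},q_{2,0})^!=q_{2,0}^*$, and combined with the compatibility of refined pullback with the smooth open immersion $j_1$ this produces the identity $j_2^*\circ(q_3,q_2)^!=q_{2,0}^*\circ j_1^*$. Finally, Part (e) is formal: $p_{0,*}$ is an isomorphism by (d), $q_{2,0}^*$ is an isomorphism because $q_{2,0}$ is an equivariant affine space bundle, and both operations preserve fundamental classes, so their composition sends $[\Lambda_L^{0,i}]$ to $[\Lambda(v)^{l,i}]$. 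The heart of the argument is thus the pair of geometric facts isolated in the first two paragraphs: the equivalence $\varepsilon_i(\bar x)=l\Leftrightarrow\varepsilon_i(\bar x_1)=0$ on $\R_P$ and the resulting vanishing $\Hom_\Pi(\bar x_1,\bar x_2)=0$ on $\Lambda_L^{0,i}$; once these are secured, everything else follows from Lemma~\ref{L:CBext} together with standard properties of equivariant Borel--Moore homology and Chow groups.
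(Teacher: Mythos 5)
Your overall architecture coincides with the paper's: part (a) and part (d) rest on exactly the observation you isolate (for $\bar x\in\R_P$ with $v_2=l\delta_i$ the minimal loop-stable subspace of $V_i$ containing $\sum_{j\neq i,h\in\bar\Omega_{ji}}\Im(\bar x_h)$ is contained in $V_{1,i}$, giving $\varepsilon_i(\bar x)=\varepsilon_i(\bar x_1)+l$ and, when equality $\varepsilon_i(\bar x)=l$ holds, forcing $V_{1,i}$ to be that minimal subspace), and parts (c) and (e) are the same formal manipulations of refined pullbacks and descent that the paper uses.

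The one place you should be more careful is part (b). Lemma~\ref{L:CBext} does not literally apply: its map $\rho$ has target $\M(v_1)\times\M(v_2)$ (preprojective representations, zero value of the moment map), whereas $q_{3,0}$ has target $(\fraku\times\R_L^{0,i})/P$, so you must control the fiber over an \emph{arbitrary} pair $(z,\bar x_1\oplus\bar x_2)$ with $z\in\fraku$ nonzero and $\bar x_1\oplus\bar x_2$ a representation of $\k\bar Q$ not necessarily killed by $\mu$. That fiber is not a kernel but the preimage $\gamma_{\bar x_1\oplus\bar x_2}^{-1}\bigl(z-\mu(\bar x_1)\oplus\mu(\bar x_2)\bigr)$, a torsor under $\Ker(\gamma_{\bar x_1\oplus\bar x_2})$ which is nonempty only if $\gamma_{\bar x_1\oplus\bar x_2}$ hits that element; so what you actually need is \emph{surjectivity} of $\gamma_{\bar x_1\oplus\bar x_2}$ for every point of $\R(v_1)^{0,i}\times\R(v_2)$, not just constancy of the kernel over $\Lambda_L^{0,i}$. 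This is what the paper proves, by computing $\Im(\gamma_{\bar x_1\oplus\bar x_2})^\perp$ with respect to the trace pairing and showing it vanishes when $\varepsilon_i(\bar x_1)=0$. The good news is that your own module-theoretic observation closes this gap with no extra work: the identification $\Im(\gamma)^\perp\simeq\Hom_{\k\bar Q}(\bar x_1,\bar x_2)$ and the equivalence ``$\varepsilon_i(\bar x_1)=0$ iff $\bar x_1$ has no nonzero quotient $\k\bar Q$-module supported at $i$'' never use the preprojective relation, so the vanishing holds over all of $\R(v_1)^{0,i}\times\R(v_2)$ and yields the required surjectivity. You should phrase it in terms of $\k\bar Q$-modules rather than $\Pi$-modules and cite the computation inside the proof of Lemma~\ref{L:CBext} (the description of $\gamma$ and its cokernel) rather than its statement.
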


\begin{proof} Part (a) is obvious, because, since $v_2=l\,\delta_i$, we have
$\Lambda_{P}^{\geqslant l,i}=\Lambda^1_{P}$ and $R_{P}^{\geqslant l,i}=R_{P}$.
Fix $I$-graded vector spaces $V$, $V_1$, $V_2$ with dimension vectors $v$, $v_1$, $v_2$.
We abbreviate
$$\Hom_{\bar\Omega}(V_2,V_1)=\bigoplus_{h\in\bar\Omega}\Hom(V_{2,h'}\,,V_{1,h''}).$$
The fiber of $q_3$ over any tuple $(z,\bar x_1\oplus\bar x_2)$ in $\fraku\times \R_L$ is
\begin{align*}
q_3^{-1}(z,\bar x_1\oplus\bar x_2)
&=\{\bar y\in\Hom_{\bar\Omega}(V_2,V_1)\,;\,\mu(\bar x_1\oplus\bar x_2\oplus\bar y)=z\},\\
&=\big\{\bar y\in\Hom_{\bar\Omega}(V_2,V_1)\,;\,
\gamma_{\bar x_1\oplus\bar x_2}(\bar y)=z-\mu(\bar x_1)\oplus\mu(\bar x_2)\big\},
\end{align*}
where $\gamma_{\bar x_1\oplus\bar x_2}$ is the linear map given by
$$
\gamma_{\bar x_1\oplus\bar x_2}\,:\,\Hom_{\bar\Omega}(V_2,V_1)\to \fraku,\ 
\bar y\mapsto \sum_{h\in\bar\Omega}\varepsilon(h)\,\Big((\bar x_1)_{h^*}\bar y_h-\bar y_h(\bar x_2)_{h^*}\Big)
.$$
So, this fiber is either empty or a torsor over the vector space $\Ker(\gamma_{\bar x_1\oplus\bar x_2})$.
Now, since $v_2=l\,\delta_i$ we have $\fraku=\Hom(V_{2,i},V_{1,i})$ and, for any $\bar y\in\Hom_{\bar\Omega}(V_2,V_1)$,
\begin{align*}
\bar y_h\neq 0&\Rightarrow h'=i,\\
(\bar x_2)_h\neq 0&\Rightarrow h'=h''=i.
\end{align*}
Thus, we have
\begin{align*}
\gamma_{\bar x_1\oplus\bar x_2}(\bar y)=
\sum_{h\in\bar\Omega_{i\bullet}}\varepsilon(h)\,(\bar x_1)_{h^*}\bar y_h-
\!\sum_{h\in\bar\Omega_{ii}}\varepsilon(h)\,\bar y_h(\bar x_2)_{h^*}.
\end{align*}
We deduce that the orthogonal to $\Im(\gamma_{\bar x_1\oplus\bar x_2})$ relative to the trace is
\begin{align*}
\Im(\gamma_{\bar x_1\oplus\bar x_2})^\perp=\{z\in\Hom(V_{1,i}\,,V_{2,i})\,;\,z\,(\bar x_1)_h=(\bar x_2)_h\,z\,,\,\forall h\in\bar\Omega_{\bullet i}\}.
\end{align*}
In particular, if $z\in \Im(\gamma_{\bar x_1\oplus\bar x_2})^\perp$ then 
\begin{align}\label{imgamma} 
\begin{split}
\sum_{j\neq i}\sum_{h\in\bar\Omega_{ji}}\Im((\bar x_1)_h\big)&\subseteq\Ker(z),\\
\sum_{h\in\bar\Omega_{ii}}(\bar x_1)_h(\Ker(z))&\subseteq\Ker(z).
\end{split}
\end{align}
Now, assume that $\varepsilon_i(\bar x_1)=0$.
From \eqref{imgamma} we deduce that $$\Im(\gamma_{\bar x_1\oplus\bar x_2})^\perp=\{0\}.$$
Hence
the map $\gamma_{\bar x_1\oplus\bar x_2}$ is onto.
Thus the fiber $(q_3)^{-1}(z,\bar x_1\oplus\bar x_2)$ is not empty and has a constant dimension as 
$\bar x_1\oplus\bar x_2$ runs over the set $\R_L^{0,i}$. 
Hence, the map $q_{3,0}$ is an affine space bundle, so the map $q_{2,0}$ is also an affine space bundle.
This proves the claim (b).
Compare the proof of \cite[prop.~1.14]{B14}.

\smallskip

Part (c) follows from (a), (b) and standard properties of the refined pullback. More precisely, given a cartesian square as in 
\eqref{DDIAG} and an open immersion $X_0\subset X$ we set $Y_0=g^{-1}(X_0)$ and we consider the fiber diagram
$$\xymatrix{
W\ar[d]_-{f'}&\ar@{_{(}->}[l]_-{i}W_0\ar[d]_{f'_0}\ar[r]^{}&Z_0\ar@{^{(}->}[r]\ar[d]_{f_0}&Z\ar[d]_f\\
Y&\ar@{_{(}->}[l]_-{j}Y_0\ar[r]^{}&X_0\ar@{^{(}->}[r]&X.}$$
If the map $f_0$ is smooth,
then $f'_0$ is smooth by base change,
so the pullback $(f_0')^*$ is well-defined and coincides with $(f_0,f'_0)^!$, so the functoriality of the refined pull-back 
with respect to the smooth morphisms 
$i$, $j$
yields
$i^*\circ (f,f')^!=(f'_0)^*\circ j^*.$

\smallskip

To prove (d) note that, since we have $v_2=l\,\delta_i$, for
any tuple $(\bar x,\phi_1,\phi_2)$ in $G\times_U\Lambda_P^{l,i}$ the degree $i$ part of the $I$-graded vector 
space $\Im(\phi_1)$ is equal to $\sum_{j\neq i}\sum_{h\in\bar\Omega_{ji}}\Im(\bar x_h)$,
hence it is uniquely determined by $\bar x$.
Part (e) follows from (b) and (d).
\end{proof}

\medskip

\subsubsection{The reduction to one vertex quivers}
\label{sec:noloops}
For each vertex $i\in I$ we abbreviate
$$\Y^\flat(\bbN\,\delta_i)=\bigoplus_{l\geqslant 0}\Y^\flat(l\,\delta_i).$$
The first step to compute 
a system of generators of $\Y^1$ is the following.

\smallskip

\begin{proposition}\label{prop:2.2} The $\Bbbk$-algebra $\Y^1$ is generated by the subspace 
$\bigoplus_{i\in I}\Y^1(\bbN\,\delta_i)$.
\end{proposition}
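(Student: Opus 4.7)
The strategy is to induct on $|v| = \sum_{j\in I} v_j$, using the stratification $\Lambda^1(v) = \bigsqcup_{l=0}^{v_i} \Lambda^1(v)^{l,i}$ introduced in \S\ref{sec:noloops} together with the matching between strata and convolutions provided by Lemma \ref{lem:isomorphism1}. The base case is when $v$ is concentrated at a single vertex, where the statement holds by definition. For the inductive step, let $S \subseteq \Y^1$ denote the subalgebra generated by $\bigoplus_{j\in I}\Y^1(\bbN\delta_j)$; I want to show $\Y^1(v) \subseteq S$ whenever $|\operatorname{supp}(v)| \geqslant 2$.

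Fix a vertex $i$ with $v_i > 0$ and consider the closed filtration $\Lambda^1(v)^{\geqslant l,i} := \bigsqcup_{l'\geqslant l}\Lambda^1(v)^{l',i}$. By Theorem \ref{thm:1.1}, $H_*^{T\times G(v)}(\Lambda^1(v))$ is pure and even, so the excision long exact sequences split into short exact sequences and give a $\Bbbk$-module filtration on $\Y^1(v)$ whose associated graded is $\bigoplus_{l} H_*^{T\times G(v)}(\Lambda^1(v)^{l,i})$. For each $l \geqslant 1$, Lemma \ref{lem:isomorphism1}(e), together with descent along the affine bundle $P \to L$ and Künneth, yields an isomorphism
$$H_*^{T\times G(v)}(\Lambda^1(v)^{l,i}) \simeq H_*^{T\times G(v-l\delta_i)}\bigl(\Lambda^1(v-l\delta_i)^{0,i}\bigr) \otimes_\Bbbk H_*^{T\times G(l\delta_i)}\bigl(\Lambda^1(l\delta_i)\bigr).$$

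The crucial compatibility is Lemma \ref{lem:isomorphism1}(c): the refined pullback $(q_3,q_2)^!$ entering the definition of $\star$ agrees with the smooth pullback over the open locus $\Lambda_L^{0,i}$. Consequently, for $x \in \Y^1(l\delta_i)$ and $y \in \Y^1(v-l\delta_i)$, the convolution $x \star y$ matches, modulo classes supported on strata $\Lambda^1(v)^{l',i}$ with $l' > l$, the image of $y|_{\Lambda^1(v-l\delta_i)^{0,i}} \boxtimes x$ under the above isomorphism. The restriction map $\Y^1(v-l\delta_i) \twoheadrightarrow H_*^{T\times G(v-l\delta_i)}(\Lambda^1(v-l\delta_i)^{0,i})$ is surjective (again by the splitting of the excision sequence), and by the inductive hypothesis every $y \in \Y^1(v - l\delta_i)$ already lies in $S$. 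Descending the filtration from $l = v_i$ down to $l = 1$ then places every class supported on $\Lambda^1(v)^{\geqslant 1, i}$ inside $S$.

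The main obstacle is to absorb the classes supported on the open stratum $\Lambda^1(v)^{0,i}$, where the reduction above is trivial. My plan is to iterate: pick another vertex $j \in \operatorname{supp}(v)\setminus\{i\}$, stratify $\Lambda^1(v)^{0,i}$ further by $\varepsilon_j$, and again absorb the non-open pieces into $S$ by the same mechanism. Cycling through all vertices of $\operatorname{supp}(v)$ eventually reduces the question to classes supported on the ``fully open'' locus $\bigcap_{j \in \operatorname{supp}(v)} \Lambda^1(v)^{0,j}$. The heart of the proof therefore consists in showing that this locus is empty, or at any rate carries no $T \times G(v)$-equivariant Borel--Moore homology, as soon as $|\operatorname{supp}(v)| \geqslant 2$. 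This is the crystal-theoretic fact, in the spirit of Kashiwara--Saito and Bozec \cite{B14}, that any $\bar x \in \Lambda^1(v)$ admitting a restricted semi-nilpotent flag must satisfy $\varepsilon_j(\bar x) \geqslant 1$ for some $j \in \operatorname{supp}(v)$; establishing it rigorously by combining the minimal-step analysis of the flag with the generation condition $\varepsilon_j(\bar x)=0$ is the most delicate step of the argument.
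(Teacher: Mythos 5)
Your overall architecture (stratify by $\varepsilon_i$, use Lemma \ref{lem:isomorphism1} to match the strata $\Lambda^1(v)^{l,i}$ with convolutions $\Y^1(l\delta_i)\star\Y^1(v-l\delta_i)$, and induct) is the same as the paper's, but you leave unproven exactly the step you identify as ``the heart of the proof'', and your plan for it is both harder than necessary and not a complete argument. You propose to cycle through the vertices, stratifying $\Lambda^1(v)^{0,i}$ further by $\varepsilon_j$, and to reduce to showing that $\bigcap_{j}\Lambda^1(v)^{0,j}$ is empty. In fact no iteration is needed: already $\Lambda^1(v)=\bigcup_i\Lambda^1(v)^{\geqslant 1,i}$, and this is a one-line consequence of the definition of $\Lambda^1(v)$. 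Any $\bar x\in\Lambda^1(v)$ lies in $\Lambda_W$ for a \emph{restricted} flag $W$, whose top quotient $W_r/W_{r-1}$ is concentrated at a single vertex $i$; since $x(W_r)\subseteq W_{r-1}$ and $x^*(W_{r-1})\subseteq W_{r-1}$, the subspace $(W_{r-1})_i\subsetneq V_i$ contains $\sum_{j\neq i}\sum_{h\in\bar\Omega_{ji}}\Im(\bar x_h)$ and is stable under the loop maps at $i$, whence $\varepsilon_i(\bar x)\geqslant 1$. This is the content of the paper's Lemma \ref{lem:lemma1}. Your iteration scheme, besides missing this, would also run into a real difficulty: the convolutions $\Y^1(l\delta_j)\star\Y^1(v-l\delta_j)$ produce classes supported on all of $\Lambda^1(v)^{\geqslant l,j}$, not on its intersection with the open set $\Lambda^1(v)^{0,i}$ where your second-round stratification lives, so ``the same mechanism'' does not apply verbatim there.

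There is also a technical gap in how you pass from the set-theoretic stratification to statements about $\Y^1(v)$. You invoke purity of $H_*^{T\times G(v)}(\Lambda^1(v))$ to split the excision sequences and obtain an associated graded $\bigoplus_l H_*^{T\times G(v)}(\Lambda^1(v)^{l,i})$; but Theorem \ref{thm:1.1} gives purity only for the whole variety, not for the closed subsets $\Lambda^1(v)^{\geqslant l,i}$ or the locally closed strata, so the vanishing of the connecting maps is not established. The paper sidesteps this entirely by proving the generation statement for $\Y^1_A$ (Chow groups), where excision and Mayer--Vietoris are right exact with no purity hypothesis, and then transferring it to $\Y^1_H$ via the surjectivity of the cycle map (Proposition \ref{prop:1.15}(d)). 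With Lemma \ref{lem:lemma1} in hand, one then only needs the right-exact excision sequence and a descending induction on $l$ (combined with increasing induction on $v$), exactly along the lines of the diagram you describe via Lemma \ref{lem:isomorphism1}(c),(e); your account of that part of the argument is essentially correct.
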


\smallskip

\begin{proof}
By Proposition \ref{prop:1.15} it is enough to assume that $M_*=\A_*$. 
The proof is by induction. 
For each $i\in I$ we consider the finite filtration 
$$\{0\}\subseteq \cdots\subseteq \Y^1(v)^{>l,i}\subseteq \Y^1(v)^{\geqslant l,i}\subseteq \cdots
\subseteq\Y^1(v)$$ 
such that $\Y^1(v)^{\geqslant l,i}$ is the pushforward of $\A_*^{T\times G}(\Lambda^1(v)^{\geqslant l,i})$ in $\Y^1(v)$ by the closed 
embedding
$$\Lambda^1(v)^{\geqslant l,i}\subseteq\Lambda^1(v).$$

\smallskip

\begin{lemma}\label{lem:lemma1}
For each dimension vector $v$ we have $\Y^1(v)=\sum_{i\in I}\Y^1(v)^{\geqslant 1,i}.$
\end{lemma}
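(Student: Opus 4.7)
The plan is to reduce the equality to a purely geometric covering statement: I will show that $\Lambda^1(v)$ is covered by the closed subsets $\Lambda^1(v)^{\geqslant 1,i}$, and then convert this into the algebraic equality by support arguments in equivariant Chow groups. By Proposition~\ref{prop:1.15}, it suffices to work with $M_*=\A_*$.

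For the geometric step, I fix $\bar x\in\Lambda^1(v)$ and produce a vertex $i$ with $\varepsilon_i(\bar x)\geqslant 1$ by extracting it from a semi-nilpotent flag. Since $\bar x\in\Lambda^1(v)$, there exist a restricted composition $\nu=(\nu_1,\dots,\nu_r)$ of $v$ and a flag $W$ of type $\nu$ such that $\bar x\in\Lambda_W$. Write $\nu_r=l\,\delta_i$ with $l\geqslant 1$. I claim that $V_i^{\mathrm{gen}}\subseteq W_{r-1,i}$, where $V_i^{\mathrm{gen}}\subseteq V_i$ denotes the smallest subspace containing $\sum_{j\neq i}\sum_{h\in\bar\Omega_{ji}}\Im(\bar x_h)$ and stable under $\bar x_h$ for all $h\in\bar\Omega_{ii}$. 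Granting this, $\varepsilon_i(\bar x)\geqslant\dim(V_i/W_{r-1,i})=l\geqslant 1$, so $\bar x\in\Lambda^1(v)^{\geqslant 1,i}$.

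To verify the claim I check the defining conditions of $V_i^{\mathrm{gen}}$ against $W_{r-1,i}$. Since $\nu_r=l\,\delta_i$ is concentrated at $i$, we have $V_j=W_{r-1,j}$ for all $j\neq i$. For $h\in\Omega_{ji}$ with $j\neq i$, the semi-nilpotency condition $x(W_{r-1})\subseteq W_{r-2}\subseteq W_{r-1}$ forces $x_h(V_j)=x_h(W_{r-1,j})\subseteq W_{r-1,i}$; for $h^*\in\Omega^*_{ji}$ with $j\neq i$, the condition $x^*(W_{r-1})\subseteq W_{r-1}$ gives $x_{h^*}(V_j)\subseteq W_{r-1,i}$. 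Applied to loops $h\in\bar\Omega_{ii}$, the same two semi-nilpotency inclusions show that $\bar x_h(W_{r-1,i})\subseteq W_{r-1,i}$, so $W_{r-1,i}$ is loop-stable. Hence $W_{r-1,i}$ meets both conditions defining $V_i^{\mathrm{gen}}$, proving the claim.

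For the algebraic step, each $\Lambda^1(v)^{\geqslant 1,i}$ is a $T\times G(v)$-invariant closed subset and, by the geometric step, $\Lambda^1(v)=\bigcup_{i\in I}\Lambda^1(v)^{\geqslant 1,i}$. The equivariant Chow group $\A_*^{T\times G(v)}(\Lambda^1(v))$ is spanned by fundamental classes of $T\times G(v)$-invariant irreducible closed subvarieties; any such $Y$ decomposes as $Y=\bigcup_i(Y\cap\Lambda^1(v)^{\geqslant 1,i})$, and irreducibility of $Y$ forces $Y\subseteq\Lambda^1(v)^{\geqslant 1,i}$ for some $i$. Hence $[Y]$ lies in the image of the pushforward $\A_*^{T\times G(v)}(\Lambda^1(v)^{\geqslant 1,i})\to\A_*^{T\times G(v)}(\Lambda^1(v))$, and summing over such $Y$ yields $\Y^1(v)=\sum_i\Y^1(v)^{\geqslant 1,i}$.

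The main obstacle is identifying the correct vertex of the flag to exploit: one must take the last step $\nu_r$ rather than the first, because $\varepsilon_i$ is defined via images \emph{into} $V_i$, which are controlled by the semi-nilpotency inclusions only when $V_i$ plays the role of the top piece of the flag. An analogous attempt using $\nu_1$ produces information about kernels of outgoing arrows from $V_i$ instead, which is the dual (and wrong) direction.
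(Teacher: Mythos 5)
Your proof follows the same strategy as the paper's: establish the set-theoretic covering $\Lambda^1(v)=\bigcup_{i}\Lambda^1(v)^{\geqslant 1,i}$ and convert it into the algebraic identity by a support argument in equivariant Chow groups. The geometric step is correct and usefully fills in what the paper only asserts in one line: reading the vertex $i$ off the top piece $\nu_r=l\delta_i$ of a restricted flag with $\bar x\in\Lambda_W$, and checking that $W_{r-1,i}$ contains all incoming images from other vertices and is stable under the loops at $i$, does give $\varepsilon_i(\bar x)\geqslant l\geqslant 1$. Your closing remark about why one must use the top rather than the bottom of the flag is also accurate.

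The one flaw is the justification of the algebraic step. It is not true that $\A_*^{T\times G(v)}(\Lambda^1(v))$ is spanned by fundamental classes of $T\times G(v)$-invariant irreducible closed subvarieties: already $\A_*^{\bbG_m}(\mathrm{pt})=\bbQ[t]$ is infinite-dimensional while the point has a single invariant subvariety. What is true, and all your argument actually uses, is that every class is \emph{supported on} some invariant irreducible closed subvariety (a class of a subvariety of the approximation $X\times_G U_i$ is pushed forward from $Y\times_G U_i$, where $Y$ is the closure of the image in $X$ of the corresponding invariant subvariety of $X\times U_i$). Alternatively, and most directly, apply the Mayer--Vietoris surjectivity of \S 2.3.3 to the finite covering of $\Lambda^1(v)$ by the closed invariant subsets $\Lambda^1(v)^{\geqslant 1,i}$; this is exactly how the paper argues (via irreducible components). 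With that substitution your proof is complete.
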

\begin{proof}
The Mayer-Vietoris exact sequence for Chow groups implies that 
$$\Y^1(v)=\sum_X(i_X)_*\big(\A^{T\times G}_*(X)\big),$$
where $i_X$ is the embedding of an irreducible component $X\subseteq\Lambda^1(v)$.
Therefore, it is enough to check that any irreducible component of $\Lambda^1(v)$ is contained into a closed 
subset of the form
$\Lambda(v)^{\geqslant \delta_i}$ for some vertex $i$. Since any element of 
$\Lambda^1(v)$ belongs to $\Lambda_W$ for some restricted increasing flag of $I$-graded vector spaces $(W_p)$, we have
$\Lambda^1(v)=\bigcup_i\Lambda(v)^{\geqslant \delta_i}$.
This proves the lemma.
\end{proof}

\smallskip

Let $\Y^{\star\, v}$ be the subalgebra of $\Y^1(v)$ generated by $\bigoplus_{i\in I}\Y^1(\bbN\,\delta_i)$.
Since $\Y^1(v)^{>v_i,i}=\{0\}$, by descending induction we may assume that 
for some $l>0$ we have
\begin{align*}\Y^1(v)^{>l,i}\subseteq\Y^{\star\, v}\end{align*} 
and 
we must prove that $\Y^1(v)^{\geqslant l,i}\subseteq\Y^{\star\, v}.$
The excision yields an exact sequence
$$0\to\Y^1(v)^{>l,i}\,\longrightarrow\,\Y^1(v)\,{\buildrel j^*\over\longrightarrow}\,\A_*^{T\times G}(\Lambda^1(v)^{\leqslant l,i})\to 0.$$
It is enough to prove that 
\begin{align}\label{form1}
j^*(\Y^1(v)^{\geqslant l,i})\subseteq j^*(\Y^{\star \,v}).
\end{align}
To do that, set $v_2=l\,\delta_i$ and $v_1=v-v_2$ and define $L$, $P$, etc, as in \eqref{f5}, \eqref{f6}.
Then, we have a commutative diagram
\begin{equation*}
\begin{split}
\xymatrix{
\A^{T\times P}_*(\Lambda^1_L)\ar@{->>}[r]^-{j_1^*}\ar[d]^-{(q_3,q_2)^!}&\A^{T\times P}_*(\Lambda_L^{0,i})\ar[d]^-{q_{2,0}^*}_-\wr\\
\A^{T\times P}_*(\Lambda^1_{P})\ar@{->>}[r]^-{j_2^*}\ar[d]^-{p_*}&\A^{T\times P}_*(\Lambda_{P}^{l,i})\ar[d]^-{p_{0,*}}_-\wr \\
\A^{T\times G}_*(\Lambda^1(v)^{\geqslant l,i})\ar@{->>}[r]^-{j^*}\ar@{->>}[d]^-{c_*}&\A^{T\times G}_*(\Lambda^1(v)^{l,i})\ar[d]^-{c_*}\\
\Y^1(v)^{\geqslant l,i}\ar[r]^-{j^*}&\A^{T\times G}_*(\Lambda^1(v)^{\leqslant l,i}).
}
\end{split}
\end{equation*}
More precisely, note that
\begin{itemize}
\item the commutativity of the upper square and the invertibility of the pullback morphism
$q_{2,0}^*$ follow from Lemma \ref{lem:isomorphism1}, 
\item the commutativity of the middle square follows from proper base change and the equality 
$\Lambda_{P}^{\geqslant l,i}=\Lambda^1_{P}$,
\item the commutativity of the lower square follows from proper base change relative to the cartesian square
\begin{equation*}
\xymatrix{
\Lambda^1(v)^{\geqslant l,i}\ar@{_{(}->}[d]_-c&\Lambda^1(v)^{l,i}\ar@{_{(}->}[d]_-c\ar@{_{(}->}[l]_-{j}\\
\Lambda^1(v)&\Lambda^1(v)^{\leqslant l,i}\ar@{_{(}->}[l]_-{j}.}
\end{equation*}
\item the invertibility of the morphism $p_{0,*}$ follows from the invertibility of the map $p_0$.
\end{itemize}
We deduce that for each $x\in\Y^1(v)^{\geqslant l,i}$ there is an element $y\in \A^{T\times P}_*(\Lambda^1_L)$ such that 
$$j^*(x)=j^*c_*\, p_*(q_3,q_2)^!(y),$$
from which we deduce that the following inclusion holds
\begin{align}\label{form2}
j^*(\Y^1(v)^{\geqslant l,i})\subseteq j^*(\Y^1(v_1)\star\Y^1(v_2)).
\end{align}
Now, since $v_1<v$, by an increasing induction on $v$ we can assume that 
$\Y^1(v_1)=\Y^{\star \,v_1}.$
Therefore, the identity \eqref{form1} follows from \eqref{form2}.
The proposition is proved.
\end{proof}

\smallskip

\begin{remark}\label{rem:q=0}
In the particular case where the quiver $Q$ has no 1-loops, the proposition implies that
the algebra $\Y^1$ is indeed generated by the subspace $\bigoplus_{i\in I}\Y^1(\delta_i).$
\end{remark}

\smallskip

\subsubsection{The case of  one vertex quivers}\label{sec:single}
Fix a vertex $i\in I$. Since $\Y^0(\bbN\,\delta_i)=\Y^1(\bbN\,\delta_i)$ and $\Lambda^0=\Lambda^1$, we drop the exponents 0, 1.

\smallskip

\begin{definition}
Given any integer $l>0$ let $\x_{i,l}$ be the element in $\Y$ given by
\begin{align*}
q_i>0&\,\Rightarrow\,\x_{i,l}=[\,\Lambda_{(l\,\delta_i)}\,],\\
q_i=0&\,\Rightarrow\, \x_{i,l}=\delta_{l,1}\,[\,\Lambda(\delta_i)\,].
\end{align*}
\end{definition}

\smallskip

\begin{proposition}\label{prop:2.6}
$\Y^\flat(\bbN\,\delta_i)$ is generated by the elements
$u\cap\,\x_{i,l}$ with $l>0$ and $u\in\Bbbk[l\,\delta_i]$.
\end{proposition}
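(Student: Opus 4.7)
\medskip

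\noindent\textbf{Proof proposal.} The plan is to induct on $l$ and to show that for every composition $\nu\vDash l$ (or partition $\nu\vdash l$ when $q_i=1$), the fundamental class $[\Lambda_\nu]$ (respectively $[\overline{\Lambda_\nu^\circ}]$) lies in the subalgebra $\Y^\star\subseteq\Y^\flat(\bbN\,\delta_i)$ generated by the proposed elements $u\cap\x_{i,k}$. Combined with Propositions~\ref{prop:2.6a} and~\ref{prop:geomirrlambda}, which describe $\Irr(\Lambda(l\delta_i))$, together with purity and equivariant formality from Theorem~\ref{thm:1.1} and Proposition~\ref{prop:L}, this will suffice: these inputs imply that after localization to the fraction field $K$ the space $\Y^\flat(l\delta_i)\otimes K$ is spanned by classes of the form $u\cap[\Lambda_\nu]$, and the torsion-freeness statement (Proposition~\ref{prop:torsionfree}) then descends the generation assertion to $\Bbbk[l\delta_i]$.

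For the hyperbolic case $q_i>1$, fix $\nu=(\nu_1,\dots,\nu_r)\vDash l$ and convolve the corresponding generators. The key input is Proposition~\ref{prop:comp}, which expresses the convolution of $r$ Hecke correspondences of the form $\frakh[v_1,\Lambda_{(\nu_k)}\,;\,w]$ as $[\frakh[v_1,\Lambda_\nu\,;\,w]]$. Combined with the faithfulness of the diagonal action of $\Y^\flat$ on $\bigoplus_w F_w$ stated in Theorem~B(c)---which transports composition of Hecke correspondences to the $\star$-product in $\Y$---this will yield an identity of the shape
\begin{equation*}
\x_{i,\nu_1}\star\cdots\star\x_{i,\nu_r}=c_\nu\,[\Lambda_\nu]+\text{classes supported on }\Lambda_\mu\text{ with }\mu\lescc\nu,\ \mu\neq\nu,
\end{equation*}
with $c_\nu\in\bbQ^\times$. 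Using the anti-dominant ordering and induction on the finite set of compositions, every $[\Lambda_\nu]$ will then lie in $\Y^\star$.

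The isotropic case $q_i=1$ is handled analogously, but with partitions in place of compositions and with $\Lambda_\nu$ replaced by $\overline{\Lambda_\nu^\circ}$ as in Proposition~\ref{prop:2.6a}; the generation assertion is here essentially the shuffle presentation of the spherical degenerate DAHA worked out in \cite{SV13b}. The case $q_i=0$ is direct: the variety $\Lambda(l\delta_i)$ is a reduced point, so $\Y^\flat(l\delta_i)$ is a rank-one $\Bbbk$-module (up to a degree shift), and the iterated convolution $(u_1\cap\x_{i,1})\star\cdots\star(u_l\cap\x_{i,1})$ computes the $GL(l)$-symmetrization of $u_1\otimes\cdots\otimes u_l$ up to a nonzero Weyl-denominator factor, which makes generation immediate.

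The hardest step will be establishing the nonvanishing of the leading coefficient $c_\nu$ in the hyperbolic induction. Because the Hecke correspondences involved are Lagrangian local complete intersections (Proposition~\ref{prop:LAG}) whose convolutions produce irreducible varieties (Proposition~\ref{prop:irrheckenu}), this reduces to an intersection-multiplicity computation at the generic point of $\Lambda^\circ_\nu$, which I expect to be tractable via the explicit transversality input from the proofs of those propositions. A secondary structural point is that Proposition~\ref{prop:2.6} is invoked in the proof of Theorem~B(c); care must therefore be taken to ensure that the faithfulness statement used here is established independently (e.g.\ via the localization argument) and does not circularly depend on the present proposition.
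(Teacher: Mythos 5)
Your top-degree step --- producing each $[\Lambda_\nu]$ (resp.\ $[\overline{\Lambda^\circ_\nu}]$) from triangular products $\x_{i,\nu_1}\star\cdots\star\x_{i,\nu_r}$ with invertible leading coefficient --- is essentially the paper's Lemmas~\ref{lem:2.8}(b) and~\ref{lem:2.9}(b), although the paper computes the leading coefficient directly inside $\Y^\flat$ by an excess-intersection and relative-dimension argument for $(q_3,q_4)^!$, rather than by transporting Proposition~\ref{prop:comp} through the representation on $F_w$ (a route which, as you half-suspect, is circular: identifying the convolution operator $[\,\frakh[v_1,\Lambda_\nu\,;\,w]\,]$ with the action of $[\Lambda_\nu]$ is precisely the corollary that the paper \emph{deduces from} Lemma~\ref{lem:2.8}(b)). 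The genuine gap is your final descent. From ``the classes $u\cap[\Lambda_\nu]$ span $\Y^\flat(l\delta_i)\otimes K$'' together with torsion-freeness of $\Y^\flat(l\delta_i)$ over $\Bbbk[l\delta_i]$ you cannot conclude that these classes span $\Y^\flat(l\delta_i)$ itself: torsion-freeness gives an embedding into the localization, not surjectivity of a submodule onto the whole module (compare $2\bbZ\subset\bbZ$). Since the entire content of the proposition is to pin down the integral form --- this is exactly where $\Y^0$, $\Y^1$ and $\Y$ differ --- this step cannot be waved away.

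The paper closes this gap by a mechanism absent from your proposal: it stratifies $\Lambda(l\delta_i)$ by the locally closed sets $\Lambda^\circ_\nu$ and runs an excision induction along the anti-dominant order $\lescc$, the key input being that $\A_*^{T\times P}(\Lambda^\circ_W)$ is the \emph{cyclic} $\A^*_{T\times P}$-module generated by $[\Lambda^\circ_W]$ (Lemmas~\ref{lem:2.8}(a) and~\ref{lem:2.9}(a)). In the hyperbolic case this cyclicity is itself a serious theorem: $\Lambda^\circ_\nu$ is identified with an attracting set $\frakL[\rho_\nu]$ inside $\frakM(v,v)$, and one invokes Kirwan surjectivity for local quiver varieties (Claim~\ref{lem:2.12}, Theorem~\ref{T:Kirwan}) to see that its equivariant Chow group is generated over the equivariant cohomology ring by the fundamental class. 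Without this, or some substitute controlling the classes of $\Y^\flat(l\delta_i)$ in homological degrees below the fundamental classes of components, your induction only reaches the top piece $\Y^\flat(l\delta_i,0)$. As a minor point, your description of the case $q_i=0$ is off: $\Y^\flat(l\delta_i)\simeq\Bbbk[l\delta_i]$ has infinite rank over $\Bbbk$, and the generation statement there is the (easy, but not rank-one) fact that iterated shuffles of degree-one elements span the symmetric polynomials.
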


\smallskip

\begin{proof}
We assume that $Q=\J(i,q)$ and $q>0$,  since the case $q=0$ is trivial by Remark \ref{rem:q=0}.
To avoid cumbersome notation we abbreviate 
$\x_l=\x_{i,l}$.
By Theorem \ref{thm:1.1} it is enough to assume that $M_*=\A_*$ is the Chow group.

\smallskip

Let us first concentrate on the hyperbolic case.
Fix a composition $\nu=(\nu_1,\dots,\nu_r)$ of $v$ and
choose an increasing $I$-graded flag $W$ in $V$ of type $\nu$. 
Let $P$ be the parabolic subgroup of $G(v)$ of block type $\nu$ which fixes the flag $W$. 
Let $L$, $U$ be the standard Levi subgroup of $P$ and its unipotent radical.
Let $\frakl$, $\fraku$ be their Lie algebras.
We define the sets $\R_P$, $\R_L$, $\Lambda_P$ and $\Lambda_L$ as in \eqref{f6}, using the parabolic subgroup $P$
instead of a 2 blocks parabolic subgroup as in loc.~cit.
Consider the closed subset $\Lambda_{(L)}$ of $\Lambda_L$ given by
\begin{align}\label{LambdaL}\Lambda_{(L)}=\{0\}\times\frakl^{\,\oplus\, q}.\end{align}
Then, we have an obvious isomorphism
$$\Lambda_W=\Lambda_P\times_{\Lambda_L}\Lambda_{(L)}.$$
Consider the following fiber diagram of stacks
\begin{align}\label{DIAG}
\begin{split}
\xymatrix{
(\fraku\times \R_L)/P&\R_P/P\ar[l]_-{q_3}&\\
\Lambda_L/P\ar@{^{(}->}[u]^-{}&\Lambda_{P}/P\ar@{^{(}->}[u]^-{}\ar[l]_-{q_2}\ar[r]^-p&\Lambda(v)/G(v)\\
\Lambda_{(L)}/P\ar@{^{(}->}[u]^-{c}&\Lambda_{W}/P\ar@{^{(}->}[u]^-{c}\ar[l]_-{q_4}\ar[r]^-p&
\Lambda_{\nu}/G(v)\ar@{^{(}->}[u]^-c\\
&\Lambda_{W}^\circ/P\ar@{^{(}->}[u]^-{j}\ar[r]^-{p}_-\sim&
\Lambda_{\nu}^\circ/G(v)\ar@{^{(}->}[u]^-j
.}
\end{split}
\end{align}
By proper base change we have
\begin{align}\label{proper}c_*\circ p_*\circ (q_3,q_4)^!=p_*\circ (q_3,q_2)^!\circ c_*.\end{align}
Further, we claim that 
\begin{align}\label{main>1}
j^*p_*(q_3,q_4)^!\big(\A_*^{T\times P}(\Lambda_{(L)})\big)=\A_*^{T\times G(v)}(\Lambda^\circ_{\nu}).
\end{align}
Since we have
\begin{align}\label{isom1}\A_*^{T\times P}(\Lambda_{(L)})=\bigotimes_{s=1}^r\A_*^{T\times G(\nu_s)}(\Lambda_{(\nu_s)})=
\bigotimes_{s=1}^r\Bbbk[\nu_s]\cap\,\x_{\nu_s},\end{align}
the proposition will follow by induction using the excision exact sequence
$$\A_*^{T\times G(v)}(\Lambda^\circ_{\triangleleft\nu})\longrightarrow 
\A_*^{T\times G(v)}(\Lambda^\circ_{\lescc\nu})
\,{\buildrel j^*\over\longrightarrow}\, \A_*^{T\times G(v)}(\Lambda^\circ_{\nu})\longrightarrow 0.$$

\smallskip

Now, we prove the identity \eqref{main>1}. 
Since the map $p$ is an isomorphism $G(v)\times_P\Lambda^\circ_W\to \Lambda^\circ_\nu$,
by base change relatively to \eqref{DIAG} we must check the relation
$$j^*(q_3,q_4)^!\big(\A_*^{T\times P}(\Lambda_{(L)})\big)=\A_*^{T\times P}(\Lambda^\circ_{W}).$$

\smallskip

\begin{lemma}\label{lem:2.8} Assume that $q>1$. Then, we have
\begin{itemize}[leftmargin=8mm]
\item[$\mathrm{(a)}$] $A_*^{T\times P}(\Lambda^\circ_{W})=\A^*_{T\times P}\cap\,[\Lambda^\circ_{W}]$,

\item[$\mathrm{(b)}$]  $\x_{\nu_1}\star\dots\star\x_{\nu_r}=[\Lambda_{\nu}]$.
\end{itemize}

\end{lemma}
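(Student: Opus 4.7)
The plan is to prove (b) first, then feed the geometric picture into (a).

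\textbf{For (b).} By associativity of $\star$, the $r$-fold product $\x_{\nu_1} \star \cdots \star \x_{\nu_r}$ is computed by a single application of the $r$-block analogue of the convolution diagram \eqref{seq4}, i.e.\ by $p_* \circ (q_3,q_4)^!$ applied to the external product $[\Lambda_{(L)}] = [\Lambda_{(\nu_1)}]\otimes\cdots\otimes[\Lambda_{(\nu_r)}]$, where the parabolic $P = P_\nu$ stabilizes the full flag $W$. The refined pullback $(q_3,q_4)^!([\Lambda_{(L)}])$ should equal $[\Lambda_W]$. Set-theoretically, $\Lambda_W = \Lambda_P \cap q_3^{-1}\big(c(\Lambda_{(L)})\big)$; by Proposition~\ref{prop:geomirrlambda} (hyperbolic case, $q>1$), $\Lambda_W$ is irreducible of the expected dimension $d(\nu) = qv^2 - \sum_{a<b} \nu_a \nu_b$, matching the dimension predicted by the refined intersection, and the embedding $c:\Lambda_{(L)}\hookrightarrow \fraku\times \R_L$ is regular (both are affine spaces). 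Hence no excess correction appears and the refined pullback produces the fundamental class of the intersection. Finally, by Proposition~\ref{prop:geomirrlambda}(d), the proper map $p:G(v)\times_P\Lambda_W^\circ \to \Lambda_\nu^\circ$ is an isomorphism, so $p_*[\Lambda_W]=[\Lambda_\nu]$.

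\textbf{For (a).} The aim is to exhibit $\Lambda_W^\circ$ as an iterated $P$-equivariant affine space bundle over $\Lambda_{(L)}$. The natural candidate for the bundle projection is the associated-graded map
\[
\gamma:\Lambda_W^\circ \longrightarrow \Lambda_{(L)}, \qquad (x,x^*)\mapsto \bigoplus_{p=1}^r\big(0,\, x^*|_{W_p/W_{p-1}}\big),
\]
which is $P$-equivariant (through the quotient $P\twoheadrightarrow L$). I would verify the bundle structure by induction on the length $r$ of $\nu$, peeling off $\nu_r$ and applying Lemma~\ref{lem:isomorphism1}(b) (whose affine-bundle conclusion already handles the last step in the one-vertex setting) together with a base change along the open stratification $\Lambda(v)^{l,i}$. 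Once $\gamma$ is an iterated affine bundle of constant rank, pullback along $\gamma$ is an $\A^*_{T\times P}$-module isomorphism between $\A_*^{T\times P}(\Lambda_{(L)})$ and $\A_*^{T\times P}(\Lambda_W^\circ)$, taking fundamental class to fundamental class.

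It remains to identify $\A_*^{T\times P}(\Lambda_{(L)})$. Since $U$ is unipotent, $\A^*_{T\times P} = \A^*_{T\times L}$, and $\Lambda_{(L)}\cong \frakl^{\oplus q}$ is an affine space, so
\[
\A_*^{T\times P}(\Lambda_{(L)}) = \A_*^{T\times L}(\Lambda_{(L)}) = \A^*_{T\times L}\cap [\Lambda_{(L)}] = \A^*_{T\times P}\cap [\Lambda_{(L)}].
\]
Combined with the bundle isomorphism above this yields $\A_*^{T\times P}(\Lambda_W^\circ) = \A^*_{T\times P}\cap [\Lambda_W^\circ]$, as required.

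\textbf{Main obstacle.} The delicate point is verifying that the iterated maps really are affine space bundles of the expected rank, not merely surjections with affine fibres. This is a dimension count that relies crucially on $q>1$: the argument follows the same numerics that underpin Proposition~\ref{prop:geomirrlambda}, because both amount to showing that the linear map $\gamma_{\bar x_1\oplus \bar x_2}$ of Lemma~\ref{lem:isomorphism1} is surjective on the relevant open strata. If $q=1$ this would fail, consistent with the need to isolate the hyperbolic case in Lemma~\ref{lem:2.8}.
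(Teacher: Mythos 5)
Your part (b) is essentially the paper's argument: support on the irreducible $\Lambda_\nu$ forces the product to be a multiple of $[\Lambda_\nu]$, and one reduces to $(q_3,q_4)^!([\Lambda_{(L)}])=[\Lambda_W]$ followed by pushforward along the generically one-to-one proper map $p$. The one point you gloss over is why the multiplicity is exactly $1$: irreducibility of $\Lambda_W$ in the expected dimension only gives a positive multiple. The paper pins the coefficient down by showing, via Lemma~\ref{L:CBext}, that $q_4:\Lambda_W\to\Lambda_{(L)}$ is smooth from a dense open subset onto a dense open subset and that $q_3$ and $q_4$ have the same relative dimension.

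Part (a), however, has a genuine gap: the associated-graded map $\gamma:\Lambda_W^\circ\to\Lambda_{(L)}$ is \emph{not} an affine space bundle, iterated or otherwise. By Lemma~\ref{L:CBext}(a) the fibre dimension over a point of $\Lambda_{(L)}$ is $v_1\cdot v_2-(v_2,v_1)_Q+\dim\Hom_\Pi(\bar x_1,\bar x_2)$ at each step, and the $\Hom_\Pi$ term does not vanish identically on $\Lambda_{(L)}$ — it jumps on the locus where consecutive graded pieces are isomorphic as $\k Q^*$-modules. The condition $W_{\bar x}=W$ defining $\Lambda_W^\circ$ is imposed upstairs and does not exclude the fibres over that locus. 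Concretely, for $q=2$ and $\nu=(1,1)$ the fibre of $\gamma|_{\Lambda_W^\circ}$ over $(0,y_1^*)\oplus(0,y_2^*)$ has dimension $3$ when $y_1^*\neq y_2^*$ and dimension $4$ when $y_1^*=y_2^*$, and both loci are hit. Your appeal to Lemma~\ref{lem:isomorphism1}(b) does not transfer: there the affine-bundle property holds over the stratum $\R_L^{0,i}$, a condition $\varepsilon_i(\bar x_1)=0$ on the large factor which forces surjectivity of $\gamma_{\bar x_1\oplus\bar x_2}$; here every graded piece lies in $\Lambda_{(\nu_s)}$, where for a one-vertex quiver $\varepsilon_i$ is maximal, so exactly the opposite situation occurs. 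The paper's proof of (a) uses a genuinely different mechanism which you would need to import: embed $\Lambda(v)$ into the framed stable locus $\L_s(v,v)$ via $\bar x\mapsto(\bar x,0,1)$, identify $\Lambda_W^\circ\times G(v)$ with an open subset of the smooth Bialynicki--Birula attracting set $\L[\rho_\nu]$, apply Poincar\'e duality there, and then use Kirwan surjectivity for the local quiver variety $\frakM[\rho_\nu]$ (Theorem~\ref{T:Kirwan}, via Claim~\ref{lem:2.12}) to see that the operational classes all come from $\A^*_{T\times P}$.
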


\smallskip

\noindent By Lemma \ref{lem:2.8}(a) and the $\A_{T\times P}^*$-linearity of the pullback we are reduced to prove that 
$$[\Lambda^\circ_{W}]\in j^*(q_3,q_4)^!\big(\A_*^{T\times P}(\Lambda_{(L)})\big).$$
This relation follows from Lemma \ref{lem:2.8}(b).

\smallskip

Now, we consider the isotropic case. 
In this case, the proof of the proposition is similar to the proof in the hyperbolic case. 
Since $q=1$, 
the irreducible components of $\Lambda(v)$ are labelled by the partitions $\nu$ of $v$.
Then, we have again the fiber diagram \eqref{DIAG} with the notations as in \S \ref{sec:isotrope}.
Let $\nu_1\geqslant \nu_2\geqslant \cdots\geqslant \nu_r$ be the parts of $\nu$.
The proposition follows from the following lemma.

\smallskip

\begin{lemma}\label{lem:2.9} Assume that $q=1$. Then, we have
\begin{itemize}[leftmargin=8mm]
\item[$\mathrm{(a)}$] $A_*^{T\times P}(\Lambda^\circ_{W})=\A^*_{T\times P}\cap\,[\Lambda^\circ_{W}]$,

\item[$\mathrm{(b)}$]  $\x_{\nu_{1}}\star\dots\star\x_{\nu_{r}}\in [\overline{\Lambda_{\nu}^\circ}]+
\sum_{\mu\lescc\nu}\bbQ\,[\overline{\Lambda^\circ_\mu}]$.
\end{itemize}
\end{lemma}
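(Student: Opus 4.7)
\noindent\emph{Part (a).} The first projection $(x,x^*)\mapsto x$ exhibits $\Lambda^\circ_W$ as a $T\times P$-equivariant vector bundle over $\fraku^\circ$: the fiber over $x$ is the centralizer $\{y\in\frakp\,;\,[x,y]=0\}$, of constant dimension $\dim\frakl$ since $\fraku^\circ=P\cdot x_0$ is a single $P$-orbit (with $x_0$ Richardson). Combining homotopy invariance with the identification $\fraku^\circ\simeq P/P^{x_0}$ and the vanishing of Chow rings of classifying spaces of unipotent groups gives
$$\A^*_{T\times P}(\Lambda^\circ_W)\,\simeq\,\A^*_{T\times L^{x_0}},$$
where $L^{x_0}$ is the reductive Levi of $P^{x_0}$. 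Since $\Lambda^\circ_W$ is smooth, Poincar\'e duality converts this into the identification $\A^{T\times P}_*(\Lambda^\circ_W)=\A^*_{T\times P}\cap[\Lambda^\circ_W]$, so (a) reduces to the surjectivity of the restriction $\A^*_{T\times L}\twoheadrightarrow \A^*_{T\times L^{x_0}}$. For the Jordan quiver, $L=\prod_p\GL(\nu_p)$ and $L^{x_0}\simeq\prod_j\GL(c_j)$ with $c_j$ the multiplicity of $j$ in the dual partition $\nu^*$, the subgroup being embedded via the block decomposition dictated by the Jordan normal form. The required surjectivity is then an elementary statement about restriction of symmetric polynomials.

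\smallskip

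\noindent\emph{Part (b).} Associativity of $\star$ allows computing $\x_{\nu_1}\star\cdots\star\x_{\nu_r}$ in a single pass using the $r$-block version of the fiber diagram \eqref{DIAG}, yielding $p_*\circ(q_3,q_2)^!\circ q_1^*([\Lambda_{(L)}])$ with $L=\prod_s G(\nu_s)$ and $\Lambda_{(L)}=\{0\}\times\frakl$ as in \eqref{LambdaL}. Since $q_2^{-1}(\Lambda_{(L)})=\Lambda_W$ set-theoretically, the class is supported on $p(\Lambda_W)=\Lambda_\nu$. By Proposition~\ref{prop:2.6a}(c) we have $\Lambda_\nu\subseteq\Lambda^\circ_{\lescc\nu}=\bigsqcup_{\mu\lescc\nu}\Lambda^\circ_\mu$, and Proposition~\ref{prop:1.15}(c) identifies $\Y^\flat(v,0)$ with the $\bbQ$-span of the classes $[\overline{\Lambda^\circ_\mu}]$; hence the convolution expands as $\sum_{\mu\lescc\nu}c_\mu[\overline{\Lambda^\circ_\mu}]$ with $c_\mu\in\bbQ$. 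To pin down the leading coefficient $c_\nu$, I restrict both sides via excision to the open stratum $\Lambda^\circ_\nu$. Proposition~\ref{prop:2.6a}(e) provides the isomorphism $p_0:G\times_P\Lambda^\circ_W\simto\Lambda^\circ_\nu$; moreover, on the open locus the refined pullback $(q_3,q_2)^!$ simplifies to an ordinary l.c.i.~pullback (thanks to the vector bundle structure used in (a)) and sends $[\Lambda_{(L)}]$ to $[\Lambda^\circ_W]$. Pushforward along $p_0$ then produces $[\Lambda^\circ_\nu]$, whence $c_\nu=1$.

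\smallskip

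\noindent\emph{Main obstacle.} The most delicate ingredient is the surjectivity $\A^*_{T\times L}\twoheadrightarrow\A^*_{T\times L^{x_0}}$ in (a), which, though ultimately an explicit symmetric-function calculation, relies on the precise combinatorial structure of the Richardson centralizer for $\GL_n$. The rest of the argument, in particular (b), adapts the hyperbolic Lemma~\ref{lem:2.8} almost verbatim; the new subtlety is the reducibility of $\Lambda_\nu$ in the isotropic case, which forces the leading-term extraction via the restriction to the open stratum $\Lambda^\circ_\nu$.
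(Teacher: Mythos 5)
Your part (b) follows the paper's proof essentially step for step: support of the convolution in $\Lambda^\circ_{\lescc\nu}$, expansion in the fundamental classes of the irreducible components, and extraction of the leading coefficient by restricting to the open stratum $\Lambda^\circ_\nu$ via the isomorphism $G(v)\times_P\Lambda^\circ_W\simto\Lambda^\circ_\nu$. The one point where you are vaguer than the paper is the claim that ``on the open locus the refined pullback simplifies to an ordinary l.c.i.\ pullback.'' The correct justification is not the vector bundle structure per se but a dimension match between two regular embeddings: the paper factorizes $q_3=q'_3\circ q''_3$ with $q''_3$ a regular embedding of relative dimension $-\dim\fraku$ (the $\fraku$-component of the moment map), and checks that $q^\circ_4:\Lambda^\circ_W\hookrightarrow\fraku^\circ\times\frakp$ is also a regular embedding of relative dimension $-\dim\fraku$, because $\frakp_e=\Ker\bigl(\frakp\to\fraku,\ y\mapsto[e,y]\bigr)$ and this map is a submersion on the open orbit. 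Only then does the rule $f^!=(f')^*$ for l.c.i.\ morphisms of equal relative dimension apply and give $(q^\circ_4)^*[(\fraku^\circ\times\frakp)/P]=[\Lambda^\circ_W/P]$. You should make that dimension count explicit.

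For part (a) you take a genuinely different route, and it is the one place where your argument has a real gap. You reduce the statement to surjectivity of the restriction $\A^*_{T\times L}\to\A^*_{T\times L^{x_0}}$ and dismiss it as ``an elementary statement about restriction of symmetric polynomials,'' but this is not innocent: the relevant embedding is the composite $L^{x_0}\hookrightarrow P^{x_0}\hookrightarrow P\twoheadrightarrow L$, which depends on how a Jordan basis of the Richardson element interacts with the flag $W$. If one instead used the naive embedding $\prod_j\GL(c_j)\hookrightarrow\GL(v)$ (each factor placed diagonally across its Jordan blocks), the restriction of symmetric functions is already \emph{not} surjective in degree one for $\nu=(2,1)$, so the finer block structure coming from $L$ is essential and the verification is a nontrivial combinatorial induction that you do not carry out. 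The statement you need is true, but the paper proves it without ever touching the centralizer: since $\fraku$ is a $T\times P$-representation, $\A^{T\times P}_*(\fraku)=\A^*_{T\times P}\cap[\fraku]$; excision surjects onto $\A^{T\times P}_*(\fraku^\circ)$ preserving this form; and the flat pullback along the vector bundle $\Lambda^\circ_W\to\fraku^\circ$ is an isomorphism carrying $\alpha\cap[\fraku^\circ]$ to $\alpha\cap[\Lambda^\circ_W]$. I recommend replacing your reduction by this two-line excision argument, or else supplying the proof of the surjectivity you assert.
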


\smallskip

\end{proof}

\smallskip

\begin{proof}[Proof of Lemma \ref{lem:2.8}]
We use the notation in  \S \ref{sec:BBQ} with $w=v$ and 
$\gamma=\xi^{-1}$, $\rho=\rho_\nu$ as in \eqref{action0}, \eqref{rho-nu}. 
Let $P$ denote the parabolic subgroup associated with the cocharacter $\rho_\nu$ in \eqref{274}.
It is the standard parabolic of block type $\nu$.
Since $Q=\J(i,q)$ we can omit the upperscript $\flat$ and we abbreviate 
$\L_s(v,v)=\L_s^\flat(v,v)$ and $\frakL(v,v)=\frakL^\flat(v,v)$.
By \eqref{ATT} we have the locally closed subset $\frakL[\rho_\nu]\subset\frakL(v,v)$, which is the geometric quotient of the 
$T\times P\times G(v)$-equivariant subset
$\L[\rho_\nu]\subset\L_{s}(v,v)$.
By \eqref{subset1}, the open immersion $\alpha:\Lambda(v)\times G(v)\to \L_{s}(v,v)$ in \eqref{diag4} satisfies
$$\Lambda^\circ_{W}\times G(v)=\alpha^{-1}(\L[\rho_\nu]).$$
From descent and excision, this yields a surjective $\A^*_{T\times P}\,$-module homomorphism 
$$\A^{T\times G(v)}_*(\frakL[\rho_\nu])=\A^{T\times P\times G(v)}_*(\L[\rho_\nu])\twoheadrightarrow 
\A^{T\times P\times G(v)}_*(\Lambda^\circ_{W}\times G(v))=\A^{T\times P}_*(\Lambda^\circ_{W}).$$
This homomorphism maps $[\,\frakL[\rho_\nu]\,]$ to $[\Lambda^\circ_{W}]$. 
Therefore, the part (a) of the lemma follows from the next result~:

\smallskip

\begin{lemma}\label{lem:2.12} We have
$\A_*^{T\times P\times G(v)}(\L[\rho_\nu])=\A^*_{T\times P\times G(v)}\cap[\,\L[\rho_\nu]\,].$
\end{lemma}
\begin{proof}
We use the same notation as in the proof of Lemma \ref{lem:2.8}. 
Since $\L[\rho_\nu]$ is an affine space bundle over $\M[\rho_\nu]$, it is smooth.
Thus, the Poincar\'e duality yields
$$A_*^{P \times T\times G(v)}(\L[\rho_\nu]) = A^*_{P \times T\times G(v)}(\L[\rho_\nu]) \cap [\,\L[\rho_\nu]\,].$$
Moreover, we have a chain of isomorphisms
\begin{align*}
A^*_{P \times T\times G(v)}(\L[\rho_\nu]) 
&\simeq A^*_{P \times T\times G(v)}(\M[\rho_\nu]) \\
&\simeq A^*_{L \times T\times G(v)}(\M[\rho_\nu]).
\end{align*}
By Theorem~\ref{thm:Kirwangraded} the map
$$\A^*_{L \times T\times G(v)} \to \A^*_{L \times T\times G(v)}(\M[\rho_\nu])$$
is a surjective ring homomorphism. It follows that 
$$A^*_{L \times T\times G(v)}\to A_*^{L \times T\times G(v)}(\L[\rho_\nu]), \quad \alpha \mapsto \alpha \cap [\,\L[\rho_\nu]\,]$$
is surjective as well. The lemma is proved.
\end{proof}

\smallskip

Now, let us prove the part (b).
The fundamental class of the set $\Lambda_{(L)}$ in \eqref{LambdaL} is identified with the monomial 
$\x_{\nu_1}\otimes\dots\otimes\x_{\nu_r}$ under the isomorphism \eqref{isom1}.
Thus, from the equality \eqref{proper} we deduce that
\begin{align}\label{EQ0}
\x_{\nu_1}\star\dots\star\x_{\nu_r}
&=c_*p_*(q_3,q_4)^!([\Lambda_{(L)}]).
\end{align}
In particular, the element $\x_{\nu_1}\star\dots\star\x_{\nu_r}$ in $\Y^\flat(v,0)$ 
is supported on the closed subset $\Lambda_\nu$ of $\Lambda(v)$.
Since $\Lambda_\nu$ is irreducible by Proposition \ref{prop:geomirrlambda}, we deduce that 
this class is a rational multiple of $[\Lambda_\nu]$.
To prove that it is equal to $[\Lambda_\nu]$,  it is enough to check that 
\begin{align}\label{ID}(q_3,q_4)^!([\Lambda_{(L)}])=[\Lambda_W].\end{align}

\smallskip

To prove this, recall first that $\Lambda_W$ and $\Lambda_{(L)}$ are both irreducible.
Then, applying recursively the Lemma \ref{L:CBext}, we deduce that  the map $q_4:\Lambda_W\to\Lambda_{(L)}$ 
restricts to a smooth map from a dense open subset of $\Lambda_W$ 
onto a dense open subset of $\Lambda_{(L)}$. Next, a direct computation yields
$$\dim(u\times \R_L)=2q\sum_s\nu_s^2+\sum_{s<t}\nu_s\nu_t,\quad
 \dim \R_P=2q(\sum_s\nu_s^2+\sum_{s<t}\nu_s\nu_t),$$
$$ \dim\Lambda_{(L)}=q\sum_s\nu_s^2,\quad
 \dim\Lambda_W=q\sum_s\nu_s^2+(2q-1)\sum_{s<t}\nu_s\nu_t.$$
Hence $q_3$ and $q_4$ have the same relative dimension.
This proves the relation \eqref{ID}.

\end{proof}

\smallskip

\begin{proof}[Proof of Lemma \ref{lem:2.9}]
Part (a) can be proved as in Lemma \ref{lem:2.8}(a).
Let us give a more direct argument. 
Recall that $\Lambda_W^\circ$ is a vector bundle over the $P$-orbit $\fraku^\circ$. 
Hence we have the chain of maps
$$\xymatrix{A^*_{T\times P}\cap[\fraku]=A_*^{T\times P}(\fraku)\ar[r]^-{a}&
A_*^{T\times P}(\fraku^\circ)\ar[r]^-b&A_*^{T\times P}(\Lambda^\circ_W),}$$
where $a$ is given by excision, hence it is surjective, and $b$ is the pullback by the bundle $\Lambda_W^\circ\to\fraku^\circ$,
hence it is an isomorphism. The part (a) follows.

\smallskip

To prove (b), note that the equality \eqref{EQ0} implies that $\x_{\nu_{1}}\star\dots\star\x_{\nu_{r}}$
is supported on the set 
$\Lambda_{\lescc\,\nu}^\circ$. 
Hence a degree argument implies that
$$\x_{\nu_{1}}\star\dots\star\x_{\nu_{r}}\in \bbQ\cdot[\overline{\Lambda_{\nu}^\circ}]+
\sum_{\mu\lescc\nu}\bbQ\cdot[\overline{\Lambda^\circ_\mu}].$$
We must compute the first coefficient.
To do that, we factorize the maps $q_3$ and $q_4$ as in the following fiber diagram
\begin{align}\label{DIAG'}
\begin{split}
\xymatrix{
(\fraku\times \R_L)/P&(\fraku\times \R_P)/P\ar[l]_-{q'_3}&\R_P/P\ar[l]_-{q''_3}&&\\
\Lambda_{(L)}/P\ar@{^{(}->}[u]^-{}&(\fraku\times\frakp)/P\ar@{^{(}->}[u]^-{}\ar[l]_-{q'_4}&
\Lambda_{W}/P\ar@{^{(}->}[u]^-{}\ar[l]_-{q''_4}\ar[r]^-p&\Lambda_{\nu}/G(v)\ar@{^{(}->}[r]^-c&\Lambda(v)/G(v)\\
&(\fraku^\circ\times\frakp)/P\ar@{^{(}->}[u]^-{j}&\Lambda_{W}^\circ/P\ar@{^{(}->}[u]^-{j}\ar[r]^-{p}_-\sim\ar[l]_-{q^\circ_4}&
\Lambda_{\nu}^\circ/G(v).\ar@{^{(}->}[u]^-j&
}
\end{split}
\end{align}
The maps $q'_3$, $q'_4$ are smooth of relative dimension  $2\dim(\fraku)$.
The map $q''_3$ is a regular embedding of relative dimension $-\dim(\fraku)$ given by the component in $\fraku$ 
of the commutator of elements of $\frakp$.
Fix an element $e\in\fraku^\circ$. Let $P_e$ be the stabilizer of $e$ for the adjoint action of $P$ on $\fraku$.
Let $\frakp_e$ be its Lie algebra. The map $q^\circ_4$ is given by the inclusion
$$\fraku^\circ\times\frakp=P\times_{P_e}(\{e\}\times\frakp)\supseteq P\times_{P_e}(\{e\}\times\frakp_e)=\Lambda^\circ_W.$$
Since $\frakp_e$ is the kernel of the submersion $\frakp\to\fraku$ such that $x\mapsto[e,x]$, 
we deduce that the map $q_4^\circ$ is also
a regular embedding of relative dimension $-\dim(\fraku)$.
Hence, we have
\begin{align}\label{eq1}
j^*(q_3,q_4)^!([\Lambda_{(L)}/P])=(q_4^\circ)^*([(\fraku^\circ\times\frakp)/P])=[\Lambda^\circ_W/P].
\end{align}
This finishes the proof of the lemma.

\end{proof}

\smallskip

\subsection{The algebra $\bfY^\flat$}\hfill\\

In this subsection we introduce an extension of $\Y^{\flat}$ by adding a Cartan part.

\subsubsection{Definition}
Let $\Bbbk(\infty)$ 
be Macdonald's ring of symmetric functions with coefficients in $\Bbbk$.
We view it as the free commutative $\Bbbk$-algebra generated by the power sum polynomials
$p_l$ for each integer $l>0$.
It carries a comultiplication $\Delta$, a counit $\eta$ and an antipode $S$ given by
$\Delta(p_l)=p_l\otimes 1+1\otimes p_l,$ $\eta(p_l)=0$ and $S(p_l)=-p_l.$
It is equipped with the $\bbZ$-grading such that the element $p_l$ has the degree $\varepsilon l$.
We define 
$$\bfY(0)=\Bbbk(\infty)^{\otimes I}.$$
For each vertex $i\in I$ and each integer $l>0$, let $p_{i,l}$ be the element of $\bfY(0)$ given by
$$p_{i,l}=(1\otimes\cdots\otimes 1\otimes p_l\otimes 1\otimes \cdots\otimes 1)/l!,$$
where $p_l$ is at the $i$-th spot.
For any dimension vector $v$, restricting a representation of $G(\infty^I)$ to the subgroup $G(v)$, yields a 
$\Bbbk$-algebra homomorphism 
$$\bfY(0)\to\Bbbk[v],\quad \rho\mapsto \rho_v.$$
Let $\calU_i$ be the \emph{tautological} 
$T\times G(v)$-equivariant vector bundle on $\Lambda^\flat(v)$, which is given by
\begin{align*}\begin{split}
\calU_i=\Lambda^\flat(v)\times V_i,
\end{split}\end{align*}
with the obvious $T\times G(v)$-action. 
Consider the $\bfY(0)$-action $\circledast$ on $\Y^\flat$ such that the element
$p_{i,l}$ acts via the cap product with the class $\ch_l(\calU_i)$.
We use Sweedler's notation for the coproduct. The following result is an easy consequence of the definitions and is left to the reader.

\smallskip

\begin{proposition}\label{prop:wilson} 
For each $x\in\bfY(0)$ and each $y,z\in\Y^\flat$ we have 
$$x\circledast(y\star z)=\sum(x_1\circledast y)\star(x_2\circledast z).$$
We deduce that the $\bbX$-graded $\Bbbk$-algebra $\Y^\flat$ is a $\bfY(0)$-module $\bbX$-graded algebra.
\qed
\end{proposition}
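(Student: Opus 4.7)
The plan is to verify the identity on the algebra generators $x=p_{i,l}$ of $\bfY(0)$ and then extend to all of $\bfY(0)$ via the associativity of $\circledast$ and the multiplicativity of the coproduct $\Delta$.

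Fix $v_1,v_2\in\bbN^I$ with $v=v_1+v_2$ and use the diagram \eqref{seq4} defining $\star$. Let $\calU_i$ denote the tautological bundle on $\Lambda^\flat(v)/G(v)$, and let $\calU_i^{(k)}$ denote the tautological bundle on $\Lambda^\flat(v_k)/G(v_k)$; by a slight abuse of notation we also write $\calU_i^{(k)}$ for its pullback to $\Lambda^\flat_L/L$ via the projection to the $k$-th factor. The $P$-invariant flag $V_1\subset V$ induces on $\R_P/P$, hence on the closed substack $\Lambda^\flat_P/P$, a canonical short exact sequence of $T\times P$-equivariant bundles
$$0\longrightarrow q_2^*q_1^*\calU_i^{(1)}\longrightarrow p^*\calU_i\longrightarrow q_2^*q_1^*\calU_i^{(2)}\longrightarrow 0.$$
Additivity of the Chern character then yields the key relation
$$p^*\ch_l(\calU_i)=q_2^*q_1^*\bigl(\ch_l(\calU_i^{(1)})+\ch_l(\calU_i^{(2)})\bigr).$$

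For $y\in\Y^\flat(v_2)$ and $z\in\Y^\flat(v_1)$, viewing $y\otimes z$ as a class on $\Lambda^\flat_L/L$ via K\"unneth, the projection formula for the proper map $p$, the multiplicativity $(q_3,q_2)^!(a\cap b)=q_2^*(a)\cap(q_3,q_2)^!(b)$ of the refined pullback, the compatibility of the smooth pullback $q_1^*$ with cap products, and the displayed Chern character identity combine to give
\begin{align*}
p_{i,l}\circledast(y\star z)
&=p_*\bigl(p^*\ch_l(\calU_i)\cap(q_3,q_2)^!q_1^*(y\otimes z)\bigr)\\
&=p_*(q_3,q_2)^!q_1^*\Bigl(\bigl(\ch_l(\calU_i^{(1)})+\ch_l(\calU_i^{(2)})\bigr)\cap(y\otimes z)\Bigr)\\
&=(p_{i,l}\circledast y)\star z+y\star(p_{i,l}\circledast z).
\end{align*}
Since $\Delta(p_{i,l})=p_{i,l}\otimes 1+1\otimes p_{i,l}$, this is exactly the desired identity for $x=p_{i,l}$.

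Finally, $\bfY(0)$ is generated as a $\Bbbk$-algebra by the $p_{i,l}$'s; moreover, $\circledast$ is an action and $\Delta$ is an algebra homomorphism. A direct calculation shows that if the formula holds for $x_1,x_2\in\bfY(0)$, then it also holds for $x_1x_2$, so by induction it holds for all $x\in\bfY(0)$. The only step of any geometric substance is the short exact sequence of tautological bundles together with the stated commutations of cap products with $q_1^*$ and $(q_3,q_2)^!$; these are standard from Section~2.
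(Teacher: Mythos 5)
Your proof is correct and is precisely the argument the authors have in mind: the paper states the proposition as "an easy consequence of the definitions" and leaves it to the reader, and your chain — the short exact sequence of tautological bundles induced by the $P$-stable flag, additivity of the Chern character, the projection formula for the proper map $p$, and the multiplicativity of $(q_3,q_2)^!$ and $q_1^*$ with respect to cap products — is exactly the expected filling-in, with the extension from the primitive generators $p_{i,l}$ to all of $\bfY(0)$ handled correctly via multiplicativity of $\Delta$.
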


\smallskip

\begin{definition} The COHA is the smash product $\bfY^\flat$ of $\bfY(0)$ and $\Y^\flat$.
It is a free $\bbX_+$-graded $\Bbbk$-algebra, with
$\bfY^\flat(v,k)=\Y^\flat(v,k)$ for each nonzero dimension vector $v$ and each integer $k$ 
and $\bfY^\flat(0,k)$ equal to the degree $k$ part of $\bfY(0)$. We set $\bfY_K=\bfY^1 \otimes K.$
\end{definition}

\smallskip

\subsubsection{The generators of $\bfY^1$}
Let $Q$ be any quiver. Set
$$I^\re=\{i\in I\,;\, q_i=0\}, \quad I^\el=\{i\in I\,;\, q_i=1\}, \quad I^\hyp=\{i\in I\,;\, q_i>1\}.$$

\smallskip

\begin{theorem}\label{thm:gen}\hfill
\begin{itemize}[leftmargin=8mm]

\item[$\mathrm{(a)}$] 
The $\Bbbk$-algebra  $\bfY^1$ is generated by the subset
$$\bfY(0)\cup\{\x_{i,1}\,;\,i\in I^\re\}\cup\{\x_{i,l}\,;\,l>0\,,\,i\in I^\el \cup I^\hyp\}.$$
\item[$\mathrm{(b)}$] 
Assume that $M_*=H_*$, and that $T$ contains $\theta, \theta^*$. Then, the $K$-algebra  $\bfY_K$ is generated by the subset
$$(\bfY(0)\otimes K)\cup\{\x_{i,1}\otimes 1\,;\,i\in I^\re\cup I^\el\}\cup\{\x_{i,l}\otimes 1\,;\,l>0\,,\,i\in I^\hyp\}.$$
\end{itemize}
\end{theorem}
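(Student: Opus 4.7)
The strategy is to use Proposition~\ref{prop:2.2} to reduce to one-vertex quivers, invoke Proposition~\ref{prop:2.6} to describe generators of each $\Y^1(\bbN\delta_i)$, and then realize the cap-products by $\Bbbk[l\delta_i]$ through the Cartan action $\circledast$ of $\bfY(0)$.

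\emph{Part (a).} Proposition~\ref{prop:2.2} reduces the generation of $\Y^1$ over $\Bbbk$ to that of each $\Y^1(\bbN\delta_i)$ for $i\in I$. By Proposition~\ref{prop:2.6}, this subspace is in turn generated by the elements $u\cap\x_{i,l}$ with $l>0$ and $u\in\Bbbk[l\delta_i]$; for $i\in I^{\re}$ only $l=1$ contributes, since $\x_{i,l}=\delta_{l,1}[\Lambda(\delta_i)]$ in that case. It therefore suffices to show that every such $u\cap\x_{i,l}$ lies in the subalgebra of $\bfY^1$ generated by $\bfY(0)$ and $\x_{i,l}$. By definition of $\circledast$, the element $p_{i,k}\in\bfY(0)$ acts by cap product with $\ch_k(\calU_i)$, and the action is iterative: $(p\,p')\circledast y=p\circledast(p'\circledast y)$. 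Since the Chern characters $\{\ch_k(\calU_i|_{l\delta_i})\}_{k\geqslant 0}$ generate $\Bbbk[l\delta_i]=H^*_{T\times GL(l)}$ as a $\Bbbk$-algebra, every $u\cap\x_{i,l}$ is obtained from $\x_{i,l}$ by iterated $\circledast$-action with monomials in the $p_{i,k}$. This establishes part~(a).

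\emph{Part (b).} For real and hyperbolic vertices, part~(a) directly supplies the claimed generators. The essential new case is an elliptic vertex $i\in I^\el$, where I must show that $\x_{i,l}\otimes 1$ for $l\geqslant 2$ lies in the subalgebra $\mathcal{A}_K\subseteq\bfY_K$ generated by $\bfY(0)\otimes K$ and $\x_{i,1}\otimes 1$. The plan is to combine Proposition~\ref{prop:ratform}(b) with Lemma~\ref{lem:LOC} applied to $V=\R(l\delta_i)$, $X=\M(l\delta_i)$, $\Lambda=\{0\}$, to obtain an isomorphism of $(\Bbbk[l\delta_i]\otimes K)$-modules
\begin{align*}
\Y^\flat(l\delta_i)\otimes_\Bbbk K\;\simeq\;H_*^{T\times G(l\delta_i)}(\{0\})\otimes K\;\simeq\;\Bbbk[l\delta_i]\otimes_\Bbbk K,
\end{align*}
exhibiting $\Y^\flat(l\delta_i)\otimes K$ as a free module of rank one. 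By Proposition~\ref{prop:torsionfree} and Lemma~\ref{lem:2.9}(b), both $\x_{i,l}\otimes 1$ and $\x_{i,1}^{\star l}\otimes 1=[\overline{\Lambda^\circ_{(1^l)}}]\otimes 1$ are nonzero in this module, and Atiyah-Bott localization expresses their images as explicit products of equivariant Euler classes associated with the Bialynicki-Birula data of the components $\Lambda_{(l\delta_i)}$ and $\overline{\Lambda_{(1^l)}^\circ}$. The resulting quotient $g=\x_{i,l}/\x_{i,1}^{\star l}$ defines an element of $\Bbbk[l\delta_i]\otimes K$, and by part~(a) $g\cap(\x_{i,1}^{\star l}\otimes 1)$ lies in $\mathcal{A}_K$; hence so does $\x_{i,l}\otimes 1$.

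The principal obstacle lies in the elliptic case of~(b). The semi-nilpotent variety $\Lambda(l\delta_i)$ for the Jordan quiver has multiple irreducible components indexed by partitions of $l$ (Proposition~\ref{prop:2.6a}), whereas $\x_{i,1}^{\star l}$ corresponds only to the partition $(1^l)$; one must therefore \emph{interpolate} between distinct irreducible components using tautological Chern classes. Verifying that the a priori fractional ratio $g$ is integrally defined in $\Bbbk[l\delta_i]\otimes K$, rather than merely in its fraction field, relies decisively on the torsion-freeness statement of Proposition~\ref{prop:torsionfree} together with a Kirwan-type surjectivity for the relevant local quiver varieties, which controls the image of the Hecke operators in cohomology.
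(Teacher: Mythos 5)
Part (a) of your proposal is correct and is exactly the paper's argument: Propositions~\ref{prop:2.2} and \ref{prop:2.6}, together with the observation that the cap products by $\Bbbk[l\delta_i]$ are realized inside the smash product by the $\circledast$-action of $\bfY(0)$, since the $\ch_k(\calU_i)$ generate $\Bbbk[l\delta_i]$ over $\Bbbk$.

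Part (b), however, has a genuine gap in the elliptic case, which is the only case where (b) says more than (a). First, your application of Lemma~\ref{lem:LOC} with $\Lambda=\{0\}$ is not legitimate: in that lemma $\Lambda$ is forced to be the zero locus of $I^{G}$, which for the Jordan quiver is the (large) jointly nilpotent part of the commuting variety, not the origin. Accordingly the asserted isomorphism $\Y^\flat(l\delta_i)\otimes K\simeq\Bbbk[l\delta_i]\otimes K$ is false: $K$ only inverts $H^*_T$, not $H^*_{G(l\delta_i)}$, and already the top-degree piece $\Y^\flat(l\delta_i,0)$ has dimension equal to the number of partitions of $l$ by Propositions~\ref{prop:1.15}(c) and \ref{prop:2.6a}(b), so the module cannot be free of rank one over $\Bbbk[l\delta_i]\otimes K$.

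Second, and fatally, even over the full fraction field of $\Bbbk[l\delta_i]$ (where $\Y(l\delta_i)$ does have rank one, being embedded in $H_*^{T\times G(l\delta_i)}(\R(l\delta_i))\simeq\Bbbk[l\delta_i]$ by Proposition~\ref{prop:torsionfree}), the ratio $g=\x_{i,l}/\x_{i,1}^{\star l}$ does \emph{not} lie in $\Bbbk[l\delta_i]\otimes K$, so the identity $\x_{i,l}=g\cap\x_{i,1}^{\star l}$ is unavailable. This can be checked for $l=2$ in the shuffle model \eqref{shuffle}: one has $i_*(\x_{i,2})=t^2\bigl(t^2-(x_1-x_2)^2\bigr)$, whereas $i_*(\x_{i,1}\star\x_{i,1})=2\bigl((x_1-x_2)^2-t^2-tt^*-t^{*2}\bigr)$, and the latter does not divide the former in $K[x_1,x_2]^{S_2}$. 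The point is that the subalgebra generated by $\x_{i,1}$ and the Cartan is strictly larger than the set of classes of the form $g\cap\x_{i,1}^{\star l}$, because tautological classes may be inserted \emph{between} the factors, i.e., one must use products $D_{k_1}\star\cdots\star D_{k_l}$ with $D_k=u^k\cap\x_{i,1}$. The paper handles this by pushing forward to the shuffle algebra $\Z$ (injectively, by torsion-freeness) and invoking Negut's wheel-condition description \cite{Ne15} of the subalgebra generated in degree one, then checking that $\prod_{1\leqslant i,j\leqslant l}(t+x_i-x_j)$ satisfies the wheel conditions. Your sketch contains no substitute for this step; the appeal to "Atiyah--Bott localization" and "Kirwan-type surjectivity" does not address it.
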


\smallskip

\begin{proof}
Part (a) follows from Propositions \ref{prop:2.2} and \ref{prop:2.6}.
To prove (b) we must check that if $Q$ is the Jordan quiver with vertex $i$ and $T$ is the torus
$\bbG_m\times\bbG_m$ generated by the cocharacters $\theta$, $\theta^*$, then the $K$-algebra
$\bfY_K$ is generated by $\bfY(0)\otimes K$ and the element $\x_{i,1}.$
To do this, let us abbreviate $\x_l=\x_{i,l}$ for each integer $l>0$ as in the proof of Proposition \ref{prop:2.6}.
We may also omit the symbol $\delta_i$ to unburden the notation.
Let $u\in\Bbbk[\delta_i]$ be the first equivariant Chern class of the linear character of $G(\delta_i)$.
For each integer $k$, consider the element $D_k=u^k\cap\x_1$ in $\Y(\delta_i)$.
To simplify the notation we omit the symbol $\delta_i$.
We must check that for each $l$ the element $\x_l$ belongs to the subalgebra $\Y_\bullet\subset\Y$ generated by
$\{D_k\,;\,k\in\bbN\}$. To do that, consider the closed embedding 
$$i\,:\,\bigsqcup_{v\in\bbN}\Lambda(v)\to \bigsqcup_{v\in\bbN}\R(v).$$
By proper base change, the pushforward gives an $\bbX_+$-graded $\Bbbk$-algebra homomorphism
\begin{align}\label{i*} i_*\,:\,\Y=\bigoplus_{v}\Y(v)\to\Z=\bigoplus_{v} \Z(v)\end{align}
where $\Z(v)=H_*^{T\times G(v)}(\R(v))$. The map $i_*$ is injective because the $\Bbbk[v]$-module
$\Y(v)$ is torsion-free by Proposition~\ref{prop:torsionfree}.
We identify $\Z(v)$ with $\Bbbk[v]$ under the map
$$\Bbbk[v]\to \Z(v),\quad\alpha\mapsto\alpha\cap[\R(v)].$$
The multiplication in $\Z$  has been computed in \cite{SV13b}. 
Let $(t,t^*)$ be the basis of $\frakt^*$ dual to $(\theta,\theta^*)$. Modulo the canonical identification
$$\Z(v)=\Bbbk[x_1,x_2,\dots,x_v]^{S_v},\quad \Bbbk=\bbQ[t,t^*],$$
this product is identified with the \emph{shuffle product} given by
\begin{align}\label{shuffle}
\begin{split}f(x_1,\dots,x_v)*g(x_1,\dots,x_w)=&\\
\text{SYM}\Big(f(x_1,\dots,x_v)*g(x_{v+1},\dots,x_{v+w})&
\prod_{i=1}^{v}\prod_{j=v+1}^{v+w}\zeta(x_i-x_j)\Big)\,\Big/v!\,w!
\end{split}
\end{align}
where SYM denotes summing over all permutations of the variables $x_1,\dots,x_{v+w}$ and $\zeta$ is the rational function
$$\zeta(x)={(z-t-t^*)(z+t)(z+t^*)\over z}.$$
Now, let $\Z_\bullet\subset\Z$ be the image of $\Y_\bullet$. To finish the proof, it remains to show that
$i_*(\x_l)\in\Z_\bullet\otimes K$ for each $l$. To do that, we may use \cite[thm.~2.8]{Ne15} which implies that 
$\Z_\bullet\otimes K$ consists of the symmetric polynomials 
over $K$ which satisfy the \emph{wheel conditions} in \cite[def.~2.5]{Ne15}. 
Indeed, since $\x_l$ is the fundamental class of $\Lambda_{(l\delta_i)}=\{0\}\times\frakgl(l)$,
 the element $i_*(\x_l)$ coincides with the 
polynomial 
$$\prod_{1\leqslant i,j\leqslant l}(t+x_i-x_j),$$
hence it satisfies the wheel conditions. 
\end{proof}

\smallskip

\subsection{The representation of $\bfY^\flat$ in the homology of quiver varieties}\hfill\\

Now, we turn to the action of $\bfY^\flat$ on the homology of quiver varieties.
We set $M_*=H_*$.
We need this to compare $\bfY^1$ with the Yangian introduced in \cite{MO}, see \cite{SV17b}.

\smallskip

\subsubsection{The faithful representation of $\Y^\flat$}\label{sec:ffrep}
Let $F_{w}$ be the $\bbX$-graded $\Bbbk\langle w\rangle$-module given by
$$F_w=\bigoplus_{v\in\bbN^I}F_w(v),\quad F_w(v)=\bigoplus_{k\in\bbZ}F_w(v,k),\quad
F_w(v,k)=H_{k+2 d_{v,w}}^{T\times H(w)}\big(\frakM(v,w)\big).$$
Set $|w\rangle$ equal to the fundamental class $[\frakM(0,w)]$ in $F_w.$

\smallskip

\begin{proposition}\label{prop:A} Let $\flat$ be either $0$ or $1$.\hfill
\begin{itemize}[leftmargin=8mm]

\item[$\mathrm{(a)}$] The $\bbX$-graded $\Bbbk$-algebra $\Y^\flat$ acts on the 
$\bbX$-graded $\Bbbk\langle w\rangle$-module $F_w$.

\item[$\mathrm{(b)}$] If $H(v)=G(v)$ then the action 
on the element $|v\rangle$ yields an injective map $\Y^\flat(v)\to F_{v}(v).$ 
In particular, the representation of $\Y^\flat$ in $\bigoplus_wF_w$ is faithful.

\item[$\mathrm{(c)}$] The action of
$\Y^\flat(v)$ on the element $|w\rangle$ yields a $\Bbbk\langle w\rangle$-linear map 
\begin{align*}\Y^\flat(v)\otimes\Bbbk\langle w\rangle\to F_{w}(v).\end{align*}
Its image is the pushward of $H_*^{T\times H(w)}(\frakL^\flat (v,w))$ by the inclusion
$\frakL^\flat (v,w)\subset\frakM(v,w)$.
\end{itemize}
\end{proposition}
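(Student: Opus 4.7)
The construction in part (a) is an almost verbatim adaptation of the convolution product $\star$ on $\Y^\flat$ from Section~\ref{sec:def-product}. For a decomposition $v = v_1 + v_2$ and a fixed $I$-graded subspace $V_1 \subseteq V$ with stabilizer $P \subseteq G(v)$, the plan is to consider the Hecke correspondence $\H[v_1, \Lambda^\flat(v_2); w] = \H[v_1, v_2; w] \cap \{\bar{x}_2 \in \Lambda^\flat(v_2)\}$ from~\eqref{heckeB} and the resulting stack diagram
\[
\Lambda^\flat(v_2)/G(v_2) \times \frakM(v_1, w) \xleftarrow{q} \H[v_1, \Lambda^\flat(v_2); w]/P \xrightarrow{p} \frakM(v, w),
\]
where $p$ is proper (Proposition~\ref{prop:hecke}) and $q$ factorizes, exactly as in diagram \eqref{seq4}, through the zero-section of the relevant piece of $\fraku$ combined with a smooth quotient. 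This gives a refined pullback $q^!$, and one sets $\x \cdot \xi = p_*(q^!(\x \otimes \xi))$. Associativity (for the module structure over $\Y^\flat$) and $\Bbbk\langle w\rangle$-linearity then follow from the standard diagrammatics of refined pullbacks together with Proposition~\ref{prop:comp}, exactly as in~\cite{SV13a, SV13b, YZ14}.

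For part (c), the plan is to specialize $v_1 = 0$: by~\eqref{Z}, $\H[0, \Lambda^\flat(v); w]$ coincides with $\L^\flat_s(v, w)$, and its $G(v)$-quotient is $\frakL^\flat(v, w)$. In this specialization $q$ becomes the composition of an open immersion into $(\Lambda^\flat(v) \times \Hom_I(v, w))/G(v)$ with the smooth vector-bundle projection onto $\Lambda^\flat(v)/G(v)$, so $q^!$ reduces to an honest pullback $q^*$, and the action of $\x \otimes 1 \in \Y^\flat(v) \otimes \Bbbk\langle w\rangle$ on $|w\rangle$ is computed as $c_*(q^*(\x \otimes 1))$ where $c : \frakL^\flat(v, w) \hookrightarrow \frakM(v, w)$ is the inclusion. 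Tracing the definition, this factors as
\[
\Y^\flat(v) \otimes \Bbbk\langle w\rangle \xrightarrow{\alpha_w} H^{T \times H(w)}_*(\frakL^\flat(v, w)) \xrightarrow{c_*} F_w(v),
\]
with $\alpha_w$ the map \eqref{p-beta} from Section~\ref{sec:LL}. The map $\alpha_w$ is surjective, being a composition of the smooth pullback along $\Lambda^\flat(v) \times \Hom_I(v, w) \to \Lambda^\flat(v)$ (an isomorphism in Borel--Moore homology), the restriction to the open subset $\L^\flat_s(v, w)$ (surjective by excision), and the descent isomorphism along the $G(v)$-torsor $\L^\flat_s(v, w) \to \frakL^\flat(v, w)$. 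This identifies the image of the action with $c_*(H^{T \times H(w)}_*(\frakL^\flat(v, w)))$, proving (c).

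For part (b), one applies (c) with $w = v$, $H(v) = G(v)$, and restricts $\alpha_v$ to the subspace $\Y^\flat(v) \otimes 1$. Lemma~\ref{lem:inj-surj} says precisely that this restricted map is injective. After tensoring with the fraction field $K$ of $\Bbbk$, Proposition~\ref{prop:ratform}(b) (which applies since $T$ contains the dilation subgroup $G_{\dil}$) turns $c_*$ into an isomorphism, so the composition $c_* \circ \alpha_v(\,\cdot\, \otimes 1)$ is injective over $K$; and since $\Y^\flat(v)$ is free over $\Bbbk$ by Theorem~\ref{thm:1.1}(d), it embeds into $\Y^\flat(v) \otimes K$, so the composition is already injective before localizing. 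Faithfulness of the diagonal action on $\bigoplus_w F_w$ is an immediate consequence.

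The main technical obstacle lies in part (a), namely verifying that the factorization of $q$ producing a refined pullback genuinely matches the one used to define $\star$ on $\Y^\flat$, and that associativity of the resulting action is compatible with associativity of $\star$. However, once the correspondence is set up as above, the Lagrangian local-complete-intersection property of Proposition~\ref{prop:LAG} (generalized from $\Lambda_{(v_2)}$ to $\Lambda^\flat(v_2)$ via irreducible-component-wise reduction) provides the necessary dimensional control, and then parts (b) and (c) reduce to unpacking the identifications already established in Section~\ref{sec:LL}.
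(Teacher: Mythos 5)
Your treatment of (a) and (b) follows the paper's own route: the action is defined by $p_*\circ(q_3,q_2)^!\circ q_1^*$ using the fiber diagram through $\fraku\times\R_{s,L}$, with associativity deferred to \cite{SV13a}, \cite{SV13b}, \cite{YZ14}; and injectivity in (b) is obtained by combining Lemma~\ref{lem:inj-surj} with a localization/torsion-freeness argument. Your version of the latter (freeness of $\Y^\flat(v)$ over $\Bbbk$ plus flatness of $K$) is equivalent to the paper's, which phrases it as injectivity of $p_*$ on the $\Bbbk[v]$-free module $H_*^{T\times G(v)}(\frakL^\flat(v,v))$. One cosmetic point: the refined pullback only requires $q_3$ to be an l.c.i.\ morphism between smooth ambient spaces, so the Lagrangian l.c.i.\ property of the Hecke correspondence (Proposition~\ref{prop:LAG}) is not actually an input to the definition of the action; it is not the ``main technical obstacle'' you describe.

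The genuine gap is in (c), in the assertion that restriction to the open subset $\L^\flat_s(v,w)$ is ``surjective by excision.'' In this section $M_*=H_*$ is equivariant Borel--Moore homology, for which localization gives only a \emph{long} exact sequence
$$\cdots\to H_i(Y)\to H_i(X)\to H_i(X\setminus Y)\to H_{i-1}(Y)\to\cdots,$$
so restriction to an open subset need not be surjective; right-exactness of the localization sequence is a feature of Chow groups, not of $H_*$. The paper closes exactly this gap by running the surjectivity argument in equivariant Chow groups, where $\beta^*$ \emph{is} surjective by excision, and then transporting the conclusion to Borel--Moore homology via the commutative square of cycle maps, using that $\cl:\A_*^{T\times H(w)}(\frakL^\flat(v,w))\to H_{2*}^{T\times H(w)}(\frakL^\flat(v,w))$ is surjective by Proposition~\ref{prop:L}(c) (which in turn rests on the $\alpha$-partition of $\frakL^\flat(v,w)$). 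Without this input, or some equivalent purity/evenness argument forcing the connecting maps to vanish, you only get that the image of $\Y^\flat(v)\otimes\Bbbk\langle w\rangle$ is \emph{contained in} $c_*H_*^{T\times H(w)}(\frakL^\flat(v,w))$, not that it equals it.
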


\smallskip

\begin{proof}
The proof of (a) is similar to the proof given in \cite[\S 7.6]{SV13a}, \cite[\S 6]{SV13b} in the particular case where $|I|=1$.
See also \cite{YZ14}. 
Let $L$, $P$, $U$, $\fraku$, $v_1$, $v_2$  be as in \eqref{f5}, \eqref{f5b}.
Let $\R_P$ and $\R_{s,P}$ be as in \eqref{R1}. Set
\begin{align}\label{RL}
\begin{split}
\R_L&=\R(v_1,w)\times \,R(v_2),\\
\R_{s,L}&=\R_s(v_1,w)\times\, R(v_2),\\
\M_{s,L}&=\M_s(v_1,w)\times\, \Lambda^\flat(v_2).
\end{split}
\end{align}
For each pair $(\bar x,\bar a)$ in $\R_P$  let $\bar x_1$, $\bar x_2$, $\phi_1$, $\phi_2$ be as in \eqref{seq1}, \eqref{seq2} 
and define
$$a_1=a,\quad a_1^*=a^*\circ\phi_1.$$
Then, we have the following fiber diagram of $P$-varieties
\begin{align}\label{seq5}
\begin{split}
\xymatrix{
&\fraku\times\R_{s,L}&\R_{s,P}\ar[l]_-{q_3}\\
&\M_{s,L}\ar@{^{(}->}[u]^-{c}&\H(v_1,\Lambda^\flat(v_2)\,;\,w)\ar@{^{(}->}[u]^-{}\ar[l]_-{q_2}}
\end{split}
\end{align}
where the maps $c$, $q_2$ and $q_3$ are given by
\begin{align*}
c(z)&=\big(0,z),\\
q_2(\bar x,\bar a)&=(\bar x_1,\bar a_1)\oplus \bar x_2,\\
q_3(\bar x,\bar a)&=\big(\mu(\bar x,\bar a)-\mu(\bar x_1,\bar a_1)\oplus\mu(\bar x_2)\,,\,(\bar x_1,\bar a_1)\oplus \bar x_2\big).
\end{align*}
Since the map $q_3$ is an l.c.i.~morphism, 
the refined pullback $(q_3,q_2)^!$ is well-defined.
Consider the obvious maps
$$p\,:\,\frakh[v_1,\Lambda^\flat(v_2)\,;\,w]\to \frakM(v,w),\quad q_1\,:\,\M_{s,L}/P\to \M_{s,L}/L,$$
where $p$ is as in Proposition \ref{prop:hecke}.
As in
\eqref{star}, the composition $p_*\circ (q_2,q_3)^!\circ q_1^*$
defines a $\Bbbk$-linear map
\begin{align}\label{action3}
H_{k+2d_{v_2}}^{T\times G(v_2)}(\Lambda^\flat(v_2))\otimes H_{l+d_{v_1,w}}^{T\times H(w)}(\frakM(v_1,w))\to  
H_{k+l+d_{v,w}}^{T\times H(w)}(\frakM(v,w)).\end{align}
Taking the sum over all  $v_1, v_2\in \bbN^I$ yields an $\bbX$-graded $\Bbbk$-module homomorphism
\begin{align*}
\star\,:\,\Y^\flat\,\otimes\, F_{w}\to F_{w}.
\end{align*}
The proof that this map gives indeed a representation of $\Y^\flat$ on $F_w$ is the same as in the case of the algebra $\Y$ and
the Jordan quiver in
\cite{SV13a}, \cite{SV13b}, see, e.g., \cite{YZ14}.

\smallskip

Now, we prove part (b). Set $v=v_2$ and $v_1=0$ in the diagram \eqref{seq5}. Set 
\begin{align*}
\L_s(v,w)&=\R_s(v,w)\cap\,\big(\R(v)\times\{0\}\times\Hom_I(v,w)\big).
\end{align*}
We get the following fiber diagram of $G(v)$-varieties
\begin{align*}
\begin{split}
\xymatrix{
\R(v)&\L_s(v,w)\ar[l]_-{q_3}\\
\Lambda^\flat(v)\ar@{^{(}->}[u]&\L_s^\flat(v,w)\ar@{^{(}->}[u]^-{}\ar[l]_-{q_2}\ar[r]^-p&\M_s(v,w),
}
\end{split}
\end{align*}
where $p$ and the vertical maps are the obvious closed immersions, while
$q_2$ and $q_3$ are the projections on the first factors.
The maps $q_2$ and $q_3$  are l.c.i.~morphisms of the same relative dimension.
So, we have $(q_3,q_2)^!=q_2^*$ by the excess intersection formula. 
As in \S\ref{sec:LL}, we define the maps
\begin{align*}
\beta&:\L^\flat_{s}(v,w)\to\Lambda^\flat(v)\times\Hom_I(v,w),\\
\rho_1&:T\times G(v)\times H(w)\to T\times G(v),\\
p_1&:\Lambda^\flat(v)\times\Hom_I(v,w)\to\Lambda^\flat(v).
\end{align*}
Thus, we have $q_2=p_1\circ \beta$. Further, the map 
\begin{align}\label{M1}\Y^\flat(v)\to F_{v}(w),\quad x\mapsto x\star |w\rangle\end{align} 
coincides with the composed map
$$\xymatrix{H_*^{T\times G(v)}(\Lambda^\flat(v))\ar[r]^-{q_2^*\circ\rho_1^\sharp}&
H_*^{T\times G(v)\times H(w)}(\L_s^\flat(v,w))\ar[r]^-{p_*}&
H_*^{T\times G(v)\times H(w)}(\M_s(v,w)).}$$
Now, set $w=v$ and $H(v)=G(v)$. Then, the map
$q_2^*\circ\rho_1^\sharp$
is injective by Lemma \ref{lem:inj-surj}.
Finally, by Proposition \ref{prop:L}, the pushforward by $p$ yields an injective map
$$p_*\,:\,H_*^{T\times G(v)}(\frakL^\flat(v,v))\to H_*^{T\times  G(v)}(\frakM(v,v))$$
because
the left hand side is torsion free by loc.~cit.~and $p_*$ induces an isomorphism after 
base change from $\Bbbk[v]$ to its fraction field by the localization theorem.

\smallskip

Finally, to prove (c) note that 
by  Proposition \ref{prop:L}, the cycle map gives an isomorphism
$$\cl:A_{k+d_{v,w}}^{T\times H(w)}\big(\frakM(v,w)\big)\to H_{2k+2 d_{v,w}}^{T\times H(w)}\big(\frakM(v,w)\big)=
F_w(v,2k).$$
Since the $\Bbbk$-algebra $\Y^\flat_A$ acts on the $\Bbbk\langle w\rangle$-module
$$\bigoplus_v\bigoplus_kA_{k+d_{v,w}}^{T\times H(w)}\big(\frakM(v,w)\big)$$
by convolution, 
the map \eqref{M1} fits into the fiber diagram
\begin{align*}
\xymatrix{
\Y^\flat_H(v)\otimes \Bbbk\langle w\rangle\ar[r]^-{q_2^*}&H_*^{T\times H(w)}(\frakL^\flat(v,w))\ar[r]^-{p_*}&F_v(w)\\
\Y^\flat_A(v)\otimes \Bbbk\langle w\rangle\ar[r]^-{q_2^*}\ar[u]_-\cl&A_*^{T\times H(w)}(\frakL^\flat(v,w)).\ar[u]_-\cl&
}
\end{align*}
Since $q_2$ is the composition of the vector bundle $p_1$ and the open immersion $\beta$, the lower map is surjective by excision.
The right vertical map is surjective by Proposition \ref{prop:L}.
This finishes the proof of the proposition.
\end{proof}

\smallskip

Now, for any $i$, $l$ we compute the action of  the element $\x_{i,l}$ on  $F_w$. 
To do so, assume that $v_2=l\,\delta_i.$
Then, we consider the subvariety of $\frakh[v_1,v_2\,;\,w]$ given by
\begin{align*}
q_i>0,\quad l>0 &\,\Rightarrow\,
\frakC(v_1,v_2\,;\,w)=\frakh[v_1,\Lambda_{(v_2)}\,;\,w],\\
q_i=0,\quad l=1&\,\Rightarrow \,
\frakC(v_1,v_2\,;\,w)=\frakh[v_1,v_2\,;\,w],\\
q_i=0,\quad l>1&\,\Rightarrow\, \frakC(v_1,v_2\,;\,w)=\emptyset.
\end{align*}

\smallskip

Let $\frakM(w)$ be the disjoint sum of all $\frakM(v,w)$'s and $\frakC_{i,l}(w)$ be the disjoint sum of all
$\frakC(v_1,v_2\,;\,w)$'s where $v,v_1,v_2$ are as above.
We view $\frakC_{i,l}(w)$ as a closed $T\times G(w)$-invariant subvariety of  
the symplectic manifold $\frakM(w)^{\op}\times\frakM(w)$ which is proper over the first factor. 
Hence, it acts by convolution on $F_w$.
We use the same symbol to denote $\frakC_{i,l}(w)$ and the convolution by this correspondence,
which is a $\Bbbk\langle w\rangle$-linear operator on $F_w$.

\smallskip

\begin{proposition}\label{prop:B} For each $i$, $l$, $w$ the following hold
\begin{itemize}[leftmargin=8mm]
\item[$\mathrm{(a)}$]
$\frakC_{i,l}(w)$ is either empty or a Lagrangian local complete intersection in $\frakM(w)^{\op}\times\frakM(w)$.
\item[$\mathrm{(b)}$]
If either $q_i>1$ or $q_i=0$ and $l=1$ then $\frakC_{i,l}(w)$ is irreducible.
\item[$\mathrm{(c)}$]
$\x_{i,l}$ acts on $F_w$ via the operator $\frakC_{i,l}(w)$.
 \end{itemize}
\end{proposition}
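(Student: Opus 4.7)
Parts (a) and (b) follow essentially formally from results already established in \S\ref{S:Hecke}. For (a), when $q_i>0$ and $l>0$, the variety $\frakC_{i,l}(w)$ is by definition a disjoint union of Hecke correspondences $\frakh[v_1,\Lambda_{(l\delta_i)};w]$, each of which is Lagrangian l.c.i.\ in $\frakM(v_1+l\delta_i,w)^{\op}\times\frakM(v_1,w)$ by Proposition~\ref{prop:LAG}. When $q_i=0$ and $l=1$, vertex $i$ carries no arrow of $\bar Q$ at all, so $\R(\delta_i)=\{0\}$ and hence $\Lambda(\delta_i)=\Lambda_{(\delta_i)}=\{0\}$, whence $\frakh[v_1,\delta_i;w]=\frakh[v_1,\Lambda_{(\delta_i)};w]$ and Proposition~\ref{prop:LAG} applies once more. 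The case $q_i=0,\,l>1$ is vacuous. For (b), if $q_i>1$ the claim is Proposition~\ref{prop:irrheckenu} applied to the trivial composition $\nu=(l\delta_i)$, since $\Lambda_\nu=\Lambda_{(l\delta_i)}$ in that case. If $q_i=0$ and $l=1$ then $\frakh[v_1,\delta_i;w]$ is the classical Nakajima Hecke correspondence at a real vertex, which is smooth and irreducible by standard quiver variety theory (see e.g. \cite{N98}).

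The substantive part is (c). First, observe that whenever $\x_{i,l}\neq 0$ one has $\x_{i,l}=[\Lambda_{(l\delta_i)}]$: this holds by definition when $q_i>0$, and it also holds when $q_i=0,\,l=1$ because in that case $\Lambda(\delta_i)=\Lambda_{(\delta_i)}$ as noted above. Set $v_2=l\delta_i$. Unraveling \eqref{action3}, the action $y\mapsto\x_{i,l}\star y$ for $y\in F_w(v_1)$ is obtained by applying $p_*\circ(q_3,q_2)^!\circ q_1^*$ to the Kunneth product $\x_{i,l}\otimes y$, in the diagram \eqref{seq5}. Since $\Lambda_{(v_2)}$ is a closed subvariety of $\Lambda^\flat(v_2)$, the class $q_1^*(\x_{i,l}\otimes y)$ is, after descent to the stack quotient, supported on the closed subvariety $\M_s(v_1,w)\times\Lambda_{(v_2)}\subseteq\M_{s,L}$.

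By proper base change for the refined pullback $(q_3,q_2)^!$ applied along the closed embedding $\M_s(v_1,w)\times\Lambda_{(v_2)}\hookrightarrow\M_{s,L}$, the resulting class is supported on the Hecke subvariety $\H[v_1,\Lambda_{(v_2)};w]$, whose quotient by $P$ is $\frakC_{i,l}(w)$. The crucial step---and the only one where part~(a) is genuinely needed---is the identification of the refined pullback of the fundamental class of $\M_s(v_1,w)\times\Lambda_{(v_2)}$ with the fundamental class of $\H[v_1,\Lambda_{(v_2)};w]$. Because by part~(a) the latter is a local complete intersection of the expected (Lagrangian) dimension, the excess intersection formula produces no nontrivial Chern class correction, and the refined pullback collapses to the fundamental class. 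After pushforward by $p$ and descent to the quiver variety, the operator $y\mapsto\x_{i,l}\star y$ becomes precisely convolution by the Lagrangian cycle $[\frakC_{i,l}(w)]$ in $\frakM(w)^{\op}\times\frakM(w)$, as desired. The main obstacle is the dimension/transversality input required for the refined pullback identification, and this is exactly what part~(a) supplies.
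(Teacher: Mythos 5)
Your proposal is correct and follows essentially the same route as the paper: parts (a) and (b) are quoted from Propositions~\ref{prop:LAG} and \ref{prop:irrheckenu} (with the $q_i=0$ case reduced to Nakajima's classical Hecke correspondence), and part (c) rests on the observation that over the closed subvariety $\M_s(v_1,w)\times\Lambda_{(v_2)}$ the map $q_2$ is an l.c.i.\ morphism of the same relative dimension as $q_3$, so the excess intersection formula gives $(q_3,q_2)^!=q_2^*$ and the operator becomes convolution by $[\frakC_{i,l}(w)]$. This is exactly the paper's argument, phrased slightly differently.
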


\smallskip

\begin{proof} If $q_i=0$ then 
$\frakC(v_1,v_2\,;\,w)$ coincides with the Hecke correspondence in \cite{N98} (for the case of quiver without loops)
and all statements are straightforwards. If $q_i>0$ then parts (a), (b) follow from Propositions \ref{prop:LAG}, \ref{prop:irrheckenu}.

\smallskip

Now, we prove (c).
Let $\R_{s,L}$, $\R_{s,P}$ and $q_2$, $q_3$ be as in \eqref{RL}, \eqref{seq5}.
We assume that $q_i>0$, the proof in the case $q_i=0$ is very similar.
We have the Cartesian square
\begin{align*}
\xymatrix{
\fraku\times\R_{s,L}&\R_{s,P}\ar[l]_-{q_3}\\
\M_s(v_1,w)\times \Lambda_{(v_2)}\ar@{^{(}->}[u]^-{c}&
\H[v_1,\Lambda_{(v_2)}\,;\,w]\ar@{^{(}->}[u]^-{}\ar[l]_-{q_2}.}
\end{align*}
The map $q_3$ is an l.c.i.~morphism of relative dimension $v_1\cdot v_2-(v_1,v_2)+w\cdot v_2$,
because $\fraku\times\R_{s,L}$ and $\R_{s,P}$ are smooth.
Since $\frakC(v_1,v_2\,;\,w)$ is a local complete intersection, so is also the variety $\H[v_1,\Lambda_{(v_2)}\,;\,w]$ by \eqref{heckeB}.
Hence, the map $q_2$ is also an l.c.i.~morphism
of relative dimension $v_1\cdot v_2-(v_1,v_2)+w\cdot v_2$.
We deduce that the pullback morphism $q_2^*$ is well-defined and that $(q_3,q_2)^!=q_2^*$ 
by the excess intersection formula,
proving the claim.
\end{proof}

\smallskip

\begin{corollary}
If $i\in I^\hyp$ and $\nu\vDash v_2$, then the element
$[\Lambda_\nu]$ in $\Y^\flat(v_2)$ acts on $F_w$ by convolution with the correspondence $\frakh(\Lambda_\nu)$.
\end{corollary}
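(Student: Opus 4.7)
The plan is to reduce the statement to the three ingredients already established for hyperbolic vertices: the factorization of $[\Lambda_\nu]$ into elementary generators in $\Y^\flat(v_2)$, the explicit action of these generators on $F_w$ via simple Hecke correspondences, and the compatibility of convolution with the refined intersection of such correspondences.

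First, write $\nu=(\nu_1,\dots,\nu_r)\vDash v_2$; since $v_2$ is concentrated at the single hyperbolic vertex $i$, every $\nu_s$ is a positive multiple of $\delta_i$, so the generators $\x_{i,\nu_s}=[\Lambda_{(\nu_s)}]$ are defined. Because $q_i>1$, Lemma~\ref{lem:2.8}(b) gives the factorization
\[
[\Lambda_\nu]\,=\,\x_{i,\nu_1}\star\x_{i,\nu_2}\star\cdots\star\x_{i,\nu_r}
\]
in $\Y^\flat(v_2)$.

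Next, by Proposition~\ref{prop:A}(a) the module $F_w$ carries a representation of $\Y^\flat$, so the action of the left hand side above is the composition (in the appropriate order) of the actions of the factors $\x_{i,\nu_s}$. By Proposition~\ref{prop:B}(c) each such factor acts on $F_w$ by convolution with the Hecke correspondence $\frakC_{i,\nu_s}(w)=\frakh[\,\cdot\,,\Lambda_{(\nu_s)}\,;w]$, which is Lagrangian and irreducible by Proposition~\ref{prop:B}(a)(b). Therefore the operator by which $[\Lambda_\nu]$ acts on $F_w$ is the convolution product
\[
[\,\frakh[v-\nu_1,\Lambda_{(\nu_1)}\,;w]\,]\star\cdots\star[\,\frakh[v_1,\Lambda_{(\nu_r)}\,;w]\,]
\]
in $H_*^{T\times H(w)}\bigl(\frakM(w)^{\op}\times\frakM(w)\bigr)$.

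Finally, this convolution product is identified with $[\,\frakh[v_1,\Lambda_\nu\,;w]\,]$ by Proposition~\ref{prop:comp}, proving the corollary. The only subtle point, which is precisely what Proposition~\ref{prop:comp} takes care of, is the transversality of the successive intersections of Hecke correspondences and the irreducibility of the composed correspondence $\frakh[v_1,\Lambda_\nu\,;w]$, so no correction term or excess-intersection contribution appears. Once these inputs are assembled, the statement follows formally, and the (anticipated) main obstacle is purely bookkeeping in keeping track of the ordering of the convolution versus the ordering of $\nu_1,\dots,\nu_r$, which matches by the same argument as in the proof of Proposition~\ref{prop:comp}.
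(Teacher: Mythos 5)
Your proposal is correct and follows essentially the same route as the paper: factor $[\Lambda_\nu]$ via Lemma~\ref{lem:2.8}(b), use Proposition~\ref{prop:B}(c) for the action of each generator $\x_{i,\nu_s}$, and conclude with Proposition~\ref{prop:comp} identifying the convolution of the elementary Hecke correspondences with $[\,\frakh[v_1,\Lambda_\nu\,;w]\,]$. The paper's proof is exactly this two-line reduction, so no further comment is needed.
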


\smallskip

\begin{proof}
Set $\nu=(\nu_1,\dots,\nu_r)$.
By Lemma \ref{lem:2.8} we have $[\Lambda_\nu]=\x_{\nu_1}\star\cdots\star\x_{\nu_r}$ in $\Y^\flat(v_2)$.
Hence, the claim follows from Propositions \ref{prop:comp}, \ref{prop:B}.

\end{proof}

\smallskip

\subsubsection{The geometric realization of $\bfY^1$ via Hecke correspondences}\label{sec:geomY1}
For each dimension vector $w$,
let $A_w$ be the $\bbX$-graded $\Bbbk\langle w\rangle$-algebra 
given by 
\begin{align}\label{A2}
A_w&=\bigoplus_{v_2\in\bbZ^I} A_{w}(v_2)
=\bigoplus_{v_2\in\bbZ^I}\bigoplus_{k_2\in\bbZ} A_{w}(v_2,k_2)\subset\End_{\Bbbk\langle w\rangle}(F_w),
\end{align}
where $A_{w}(v_2,k_2)$ consists of all $\Bbbk\langle w\rangle$-linear endomorphisms of $F_w$ which are homogeneous of degree $(v_2,k_2)$. Under the convolution, any cycle in $M_*^{T\times H(w)}\big(\frakM(u,w)\times\frakM(v,w)\big)$
which is proper over $\frakM(u,w)$ gives rise to an operator
$F_w(v)\to F_w(u),$ see \cite{CG} for details. Therefore, for each $i\in I$ and $l\in\bbZ_{>0}$ the familly of correspondences $\frakC_{i,l}(w)$ defines an element 
$$\frakC_{i,l}\in\prod_{w}A_{w}.$$
Let $\calV_i$ be the \emph{universal} 
$T\times H(w)$-equivariant vector bundles on $\frakM(w)$, given by
\begin{align}\label{bundle}\begin{split}
\calV_i=\M_s(v,w)\times_{G(v)} V_i.
\end{split}\end{align}
Let 
$\psi_{i,l}$ be the operators in $\prod_wA_w$ given by the cap product 
with the $T$-equivariant cohomology class $\ch_l(\calV_i)$.

\smallskip

\begin{theorem}\label{cor:C}
Set $H(w)=G(w)$ for all $w$'s.
Then, the $\Bbbk$-algebra $\bfY^1$ is isomorphic to the $\Bbbk$-subalgebra of $\prod_{w}A_{w}$ generated by
$$\{\psi_{i,l}\,;\,l>0\,,\,i\in I\}\cup\{\frakC_{i,1}\,;\,i\in I^\re\}\cup\{\frakC_{i,l}\,;\,l>0\,,\,i\in I^\el \cup I^\hyp\}.$$
\end{theorem}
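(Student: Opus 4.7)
The plan is to construct a $\Bbbk$-algebra homomorphism $\Phi\colon \bfY^1 \to \prod_w A_w$, identify its image with the subalgebra $\calA$ generated by the stated elements, and prove injectivity. The natural candidate for $\Phi$ is the diagonal action on $\bigoplus_w F_w$: by Proposition~\ref{prop:A}(a) the algebra $\Y^1$ already acts on each $F_w$ by convolution, and I let the Cartan part $\bfY(0)$ act on $F_w$ by cap product with the Chern characters of the universal bundles $\calV_i$, so that $p_{i,l}$ maps to $\psi_{i,l}$. To assemble these into an action of the smash product $\bfY^1 = \bfY(0) \ltimes \Y^1$, I must verify the module-algebra compatibility
$$p \cdot (y \star f) = \sum (p_{(1)} \circledast y) \star (p_{(2)} \cdot f), \qquad p \in \bfY(0),\ y \in \Y^1,\ f \in F_w,$$
parallel to Proposition~\ref{prop:wilson}. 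This should follow from a direct calculation on the Hecke diagram~\eqref{seq5}: over $\H[v_1,\Lambda^1(v_2);w]$ there is a canonical short exact sequence of equivariant bundles relating the pullback of $\calV_i$ from $\frakM(v,w)$ to the pullbacks of $\calV_i$ from $\frakM(v_1,w)$ and of $\calU_i$ from $\Lambda^1(v_2)$, and the identity falls out from additivity of the Chern character combined with the projection formula for $p_*$ and multiplicativity of the refined pullback $(q_3,q_2)^!$.

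With $\Phi$ in place, the identification $\Phi(\bfY^1) = \calA$ is immediate. Theorem~\ref{thm:gen}(a) asserts that $\bfY^1$ is generated as a $\Bbbk$-algebra by $\bfY(0)$ together with the classes $\x_{i,1}$ for $i \in I^\re$ and $\x_{i,l}$ for $l > 0$ and $i \in I^\el \cup I^\hyp$. By Proposition~\ref{prop:B}(c) each such $\x_{i,l}$ acts on $F_w$ as the correspondence $\frakC_{i,l}(w)$, so $\Phi(\x_{i,l}) = \frakC_{i,l}$, and by construction $\Phi(p_{i,l}) = \psi_{i,l}$; hence the images of the generators of $\bfY^1$ are precisely the generators of $\calA$.

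Injectivity of $\Phi$ is the remaining step. Taking $H(v) = G(v)$ for every $v$, Proposition~\ref{prop:A}(b) gives that $\Phi|_{\Y^1}$ is already faithful. To promote this to $\bfY^1$, I use the PBW decomposition $\bfY^1 = \bfY(0) \otimes_\Bbbk \Y^1$ as $\bbX$-graded $\Bbbk$-modules: writing a given $z \in \bfY^1$ as $z = \sum_j p_j \otimes y_j$ with $(p_j)$ a homogeneous $\Bbbk$-basis of $\bfY(0)$ and finitely many $y_j \in \Y^1$, one has
$$\Phi(z)\cdot |w\rangle = \sum_j \rho_w(p_j) \cap (y_j \cdot |w\rangle) \in F_w.$$
Since $\bfY(0) \to \prod_w \Bbbk\langle w\rangle,\ p \mapsto (\rho_w(p))_w$, is the canonical injection of $\Bbbk(\infty)^{\otimes I}$ into the product of equivariant cohomologies of classifying spaces, and since by Proposition~\ref{prop:L} the module $F_v(v)$ is free over $\Bbbk\langle v\rangle$ and contains $\{y_j \cdot |v\rangle\}$ as a $\Bbbk$-linearly independent family (by Proposition~\ref{prop:A}(b)), a separation-of-variables argument based on the dimension-vector and Cartan-degree gradings forces $z = 0$.

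The main obstacle is the construction step: commuting Chern-character cap products past the refined pullback attached to the non-l.c.i.\ morphism $q_2$ in the Hecke diagram~\eqref{seq5} requires a careful excess-intersection and projection-formula calculation; the remaining ingredients are essentially formal consequences of Theorem~\ref{thm:gen}(a), Propositions~\ref{prop:A}, \ref{prop:B} and~\ref{prop:L}.
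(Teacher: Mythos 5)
Your proposal follows the paper's own proof of this theorem essentially verbatim: the paper's entire argument consists of extending the $\Y^1$-action on $F_w$ to $\bfY^1$ by sending $p_{i,l}$ to $\psi_{i,l}$ and then invoking Theorem~\ref{thm:gen}, Proposition~\ref{prop:A} and Proposition~\ref{prop:B}, which is exactly your construction and surjectivity step. The extra detail you volunteer beyond the paper occurs in the injectivity argument for the Cartan part, and there one formula is off: $p_{i,l}$ acts on $F_w(v)$ by cap product with $\ch_l(\calV_i)$, where $\calV_i$ is the universal bundle of rank $v_i$ (not via $\rho_w(p)\in\Bbbk\langle w\rangle$), and this class vanishes on $\frakM(0,w)$; using the intertwining property of the map $y\mapsto y\star|w\rangle$ one gets $\Phi(p\otimes y)|w\rangle=(p\circledast y)\star|w\rangle=(\rho_v(p)\cap y)\star|w\rangle$. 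Consequently evaluation on the vacuum vectors alone cannot distinguish $p\otimes y$ from $1\otimes(\rho_v(p)\cap y)$, so your separation-of-variables argument does not establish injectivity on the full tensor product $\bfY(0)\otimes\Y^1$; it does suffice for the algebra $\bfY^1$ as the paper actually defines it, whose graded piece in each nonzero dimension vector is just $\Y^1(v,k)$, leaving only the dimension-zero part $\bfY(0)$ to be checked separately (where injectivity follows from the algebraic independence of the classes $\ch_l(\calV_i)$ for large $v$). The paper itself offers no more detail than you do on this point, so this is a caveat about your added material rather than a divergence from the paper's route.
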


\smallskip

\begin{proof}
We extend the $\Y^1$-action on $F_w$ to a $\bfY^1$-action taking the element
$p_{i,l}\in\bfY(0)$ to $\psi_{i,l}$.
Then, the theorem follows from Theorem \ref{thm:gen} and Propositions \ref{prop:A}, \ref{prop:B},
because $H(w)=G(w)$.

\end{proof}

\vfill

\newpage


\appendix

\section{}

\medskip

\subsection{Proof of Proposition \ref{P:CYpreproj}}\label{sec:6.1} \hfill\\

The proposition is proved in the literature under the assumption that $Q$ does not carry oriented cycles (or edge loops). The following 
argument which works in full generality was explained to us by W. Crawley-Boevey.  
Let us begin by sketching the proof that $\Pi$ is of homological dimension two. We may and will restrict ourselves to the case of a 
connected quiver. 
For any vertex $i \in I$ we denote by $e_i$ the corresponding idempotent of $\Pi$. 
Consider the complex of $(\Pi,\Pi)$-bimodules
\begin{equation}\label{E:CBarg}
\begin{split}
\xymatrix{ 0 \ar[r] & \bigoplus_i \Pi e_i \otimes e_i \Pi \ar[r]^-{f} & \bigoplus_{h \in \bar{\Omega}} \Pi e_{h''} \otimes e_{h'} \Pi \ar[d]^-{g}\\
0& \Pi \ar[l]&\bigoplus_i \Pi e_i \otimes e_i \Pi \ar[l]^-{k}}
\end{split}
\end{equation}
where all the tensor products are taken over $\mathbb{C}$, and where the maps are defined as follows~: $k$ is the multiplication map,
$g(ae_{h''} \otimes e_{h'}b)=ahe_{h'} \otimes e_{h'}b - ae_{h''}\otimes e_{h''}hb$ and 
$$f(ae_i \otimes e_ib)=\sum_{h \in \bar{\Omega}, h''=i} \epsilon_h ae_{(h^*)'} \otimes 
e_{(h^*)''}h^*b - \sum_{h \in \bar{\Omega}, h'=i} \epsilon_h ah^*e_{h''} \otimes e_{h'}b$$
where $\epsilon_h = 1$ if $h\in \Omega$ and $\epsilon_h=-1$ if $h \in \Omega^*$. The fact that (\ref{E:CBarg}) is a complex is a direct 
consequence of the defining relations of $\Pi$. We claim that it is in addition exact. This exactness everywhere except for the leftmost 
term may be checked using standard arguments. 
Recall that we have assumed that $Q$ is not of finite Dynkin type.
Equipping $\Pi$ with the $\mathbb{N}$-grading obtained by assigning degree $1$ to any edge $h \in \bar{\Omega}$ we then have that 
$\Pi$ is a Koszul algebra and that its Hilbert series
$$H(t)=(h_{i,j}(t))_{i,j \in I}, \qquad h_{i,j}(t)=\sum_{l \geqslant 0} \dim(e_j \Pi[l]e_i)\,t^l$$
is equal to
$H_Q(t)=(\Id + t \,(\bfQ+ {}^t\bfQ) + t^2\,\Id)^{-1}$ where $\bfQ$ is the adjacency matrix of $Q$, see \cite{MOV}, \cite{EE}. Observing that
$f,g,k$ are of respective degrees $1,1,0$ we deduce that 
$$\dim(\Ker(f))=-H(t)+H(t)^2 -tH(t)(\bfQ + {}^t\bfQ)H(t) +t^2H(t)^2=0$$
as wanted. This proves that $\Pi$ is of homological dimension two. 
Next, let $M,$ $N$ be any finite dimensional $\Pi$-modules. Tensoring (\ref{E:CBarg}) by $M$ yields the projective resolution of $M$
\begin{equation}\label{E:CBargMod}
\xymatrix{ 0 \ar[r] & \bigoplus_i \Pi e_i \otimes e_i M \ar[r]^-{f} & \bigoplus_{h \in \bar{\Omega}} \Pi e_{h''} \otimes e_{h'} M \ar[r]^-{g} &
\bigoplus_i \Pi e_i \otimes e_i M \ar[r]^-{k} & M \ar[r] & 0.}
\end{equation}
Applying the functor $\Hom(N,\cdot)$ we obtain the following complex computing $\Ext^\bullet(N,M)$~:
$$\xymatrix{0 \ar[r] &\bigoplus_i  (e_iN)^* \otimes e_i M \ar[r]^-{g'} & \bigoplus_{h \in \bar{\Omega}}  (e_{h''}N)^* \otimes e_{h'} M \ar[r]^-{f'} &
\bigoplus_i  (e_iN)^* \otimes e_i M  \ar[r] &0.}$$
It remains to observe that this complex is canonically isomorphic to the dual of the same complex in which the roles of $M$ and $N$
are exchanged.

\medskip

\subsection{Proof of Lemma~\ref{L:muirr}}\hfill\\

Being the fiber of the moment map $\frakgl(v)^{2q} \to \fraksl(v)$, the set $\M(v)$ has all its irreducible components of dimension
at least $(2q-1)v^2+1$. Note also that $\M(v)$ is defined over $\mathbb{Z}$ and may be therefore reduced to
any finite field. By the Lang-Weil theorem the irreducibility of $\M(v)$ will thus be a consequence of the estimate
as $l\to\infty$
\begin{equation}\label{E:langweil}
 \#\M(v)(\mathbb{F}_l) \sim  l^{(2q-1)v^2+1} .
\end{equation}
By \cite[thm. 5.1]{Moz} the number of points of $\M(v)(\mathbb{F}_l)$ is given by the following generating series
\begin{equation}\label{E:moz}
\sum_{v \geqslant 0} \frac{\#\M(v)(\mathbb{F}_l)}{\#GL(v)(\mathbb{F}_l)}l^{\langle v, v \rangle} z^v= \text{Exp}\left( \frac{l}{l-1}\sum_v A_v(l) z^v \right)
\end{equation}
where $\text{Exp}$ is the plethystic exponential and $A_v(l)$ is the Kac polynomial, see e.g. \cite[\S 2]{BSV}.  
It is well-known
that $A_v(l)$ is a monic polynomial of degree $1-\langle v,v\rangle=1+(g-1)v^2$. The coefficient of $z^v$ in (\ref{E:moz}) reads
\begin{equation}\label{E:moz2}
\#\M(v)(\mathbb{F}_l) \frac{l^{(1-q)v^2}}{l^{v^2}\prod_{i=1}^v (1-l^{-i})}=\frac{l}{l-1}A_v(l) + \sum_{r \geqslant 2} \frac{1}{r!}\sum_{(n_i,v_i)_i}
\frac{1}{\prod_i n_i} \prod_i \frac{l^{n_i}}{l^{n_i}-1}A_{v_i}(l^{n_i})
\end{equation}
where the sum ranges over all $r$-tuples $((n_1,v_1), \ldots, (n_r,v_r))$ satisfying $\sum_i n_iv_i=v$. We claim that only the first term
on the right hand side of (\ref{E:moz2}) contributes to the leading term. Indeed, $\frac{l}{l-1}A_v(l) \sim l^{1+(q-1)v^2}$ while for any $r\geqslant 2$
and tuple $(n_i,v_i)_i$ we have 
$$\prod_i \frac{l^{n_i}}{l^{n_i}-1}A_{v_i}(l^{n_i}) \sim l^{\sum_i n_i(1+(q-1)v_i^2)}$$
and it is elementary to check that $\sum_i n_i(1+(q-1)v_i^2) < 1 + (q-1)v^2$ whenever $r \geqslant 2$. It now follows by taking the leading term of
(\ref{E:moz2}) that $\#\M(v)(\mathbb{F}_l) \sim l^{(2q-1)v^2+1}$ as wanted.

\smallskip

\begin{remark} One may alternatively use Beilinson and Drinfeld's notion of a \textit{very good stack}, see 
\cite[sec.~1.1.1.]{BD} and show that 
$Rep(\k Q,v)/G(v)$ is very good for any $v$ and any $Q=\mathcal{Q}(i,q)$ with $q >1$.

\end{remark}

\medskip

\subsection{Proof of Proposition~\ref{P:impi}}\hfill\\

A simple representation $S$ of $\tilde{\Pi}$ will be called rigid
if $\Ext^1(S,S)=\{0\}$. Since 
\begin{equation*}
\begin{split}
(\dim S,\dim S) &= \dim(\Hom(S,S)) - \dim(\Ext^1(S,S)) + \dim(\Ext^2(S,S))\\
&=2-\dim(\Ext^1(S,S))
\end{split}
\end{equation*}
a simple representation $S$ is rigid if and only if $(\dim S,\dim S)=2$. Alternatively, a representation $S$ is rigid if and only if every other simple representation of the same dimension is isomorphic to $S$. On the other hand, if $S$ is non rigid then there are infinitely many non-isomorphic simple representations of the same dimension (recall that we work over the field $\k=\mathbb{C}$), and we have $(\dim S,\dim S) \leqslant 0$ (note that $(u,u') \in 2\mathbb{Z}$ for any dimension vectors $u,u'$).
For any dimension type $d$ there exists a unique representation type $\tau=\tau(d)$ of dimension type $d$ in which 
all non rigid simple representations occur with multiplicity one. The stratum $\frakM_0(\tau)$ is open and dense in 
$\frakM_0(\!(d)\!)$. Since $\pi$ is proper, the set $\Im(\pi)$ is closed and the proposition will be proved once we show
 that for any $d$ we have $\frakM_0(\tau) \subset \Im(\pi)$ as soon as there exists $\tau'$ of dimension type $d$ 
 such that $\frakM_0(\tau') \subset \Im(\pi)$.  By definition, the existence of such a $\tau'$ means that there exists a stable 
representation $M \in \M_s(v,w)$ of semisimple type $\tau'$. In other words, there exists a collection of simple 
$\tilde{\Pi}$-modules $S_1, S_2, \ldots, S_r$ such that $\#\{i\;;\; \dim S_i=u\}=d_u$ for any $u$, and an iterated extension
\begin{equation}\label{E:extension}
M_1 \subset M_2 \subset \cdots \subset M_r, \qquad M_i / M_{i-1} \simeq S_{i}
\end{equation}
which is \textit{stable}. Note that we automatically have $\dim S_1 \in \mathbb{N}^I \times \{w\}$ and 
$\dim S_i \in \mathbb{N}^I \times \{0\}$ for any $i >1$. Our aim is then to prove, under the above hypothesis, the existence of a similar iterated extension $N_1 \subset N_2 \subset \cdots \subset N_r$ which is stable, for which
$T_i:= N_i/N_{i-1}$ is simple, satisfies $\dim T_i=\dim S_i$ and in which all non-rigid simple factors $T_i$'s are \textit{non isomorphic}.
We will use the following simple observation, which might be of independent interest.

\smallskip

\begin{lemma}\label{L:stable} Let $N_1 \subset N_2 \subset \cdots \subset N_r$ be a filtration with simple factors $T_i:= N_i/N_{i-1}$, such that $\dim T_1 \not\in \mathbb{N}^I \times \{0\}$ while 
$\dim T_i \in \mathbb{N}^I \times\{0\}$ for any $i >1$. Let $\xi_i \in \Ext^1_{\tilde{\Pi}}(T_i,N_{i-1})$ be the associated elements. Then $N_r$ is stable if and only if $\xi_i \neq 0$ for all $i$.
\end{lemma}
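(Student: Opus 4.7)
The plan is to prove both implications by induction on $r$, using the extension-theoretic characterisation of stability. Recall that a representation in $\M(v,w)$, viewed as a $\tilde\Pi$-module of dimension $v+\delta_\infty$ via Crawley-Boevey's trick, is stable if and only if it admits no nonzero submodule of dimension lying in $\bbN^I\times\{0\}$. Throughout the argument we set $N_0=0$, so $\xi_1\in\Ext^1_{\tilde\Pi}(T_1,0)=0$ and the condition ``$\xi_i\neq 0$ for all $i$'' is read as ``for all $i\geqslant 2$''.

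For the forward direction ($\Rightarrow$), I would argue by contrapositive: if $\xi_i=0$ for some $i\geqslant 2$, then the short exact sequence $0\to N_{i-1}\to N_i\to T_i\to 0$ splits, so $T_i$ embeds into $N_i$ and hence into $N_r$. Since $i\geqslant 2$, the hypothesis forces $\dim T_i\in\bbN^I\times\{0\}$, so $T_i\subset N_r$ is a nonzero destabilising submodule. This direction requires no induction.

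For the backward direction ($\Leftarrow$), I would proceed by induction on $r$. The base $r=1$ is immediate: $N_r=T_1$ is simple of dimension not in $\bbN^I\times\{0\}$, so the only submodule of dimension in $\bbN^I\times\{0\}$ is $0$. For the inductive step, assume all $\xi_i\neq 0$ for $i=2,\dots,r$ and let $M\subseteq N_r$ be a nonzero submodule with $\dim M\in\bbN^I\times\{0\}$. Set $M'=M\cap N_{r-1}$ and consider the exact sequence $0\to M'\to M\to M/M'\to 0$, noting that $M/M'$ injects into $T_r=N_r/N_{r-1}$. Since $r\geqslant 2$ we have $\dim T_r\in\bbN^I\times\{0\}$, hence $\dim M'=\dim M-\dim(M/M')$ lies in $\bbZ^I\times\{0\}$; combined with $\dim M'\in\bbN^{\tilde I}$ this gives $\dim M'\in\bbN^I\times\{0\}$. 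Applying the inductive hypothesis to the filtration $N_1\subset\cdots\subset N_{r-1}$ (which satisfies the same nonvanishing condition $\xi_i\neq 0$ for $2\leqslant i\leqslant r-1$) shows that $N_{r-1}$ is stable, so $M'=0$.

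It remains to derive a contradiction from $M\neq 0$ and $M\cap N_{r-1}=0$. In this case $M$ maps injectively into the simple module $T_r$, and since $M\neq 0$ this injection is an isomorphism $M\iso T_r$. Then the composition $T_r\iso M\hookrightarrow N_r\twoheadrightarrow T_r$ is a nonzero endomorphism of the simple module $T_r$, hence (by Schur's lemma applied to the endomorphism ring, which contains the scalars) an isomorphism, providing a splitting of the top sequence $0\to N_{r-1}\to N_r\to T_r\to 0$. This forces $\xi_r=0$, contradicting the hypothesis. The only subtle point worth checking explicitly is that $\dim M'\in\bbN^I\times\{0\}$, which follows automatically from the $\infty$-component being zero at every stage; there is no genuine obstacle in the argument.
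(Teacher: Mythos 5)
Your proof is correct and follows essentially the same route as the paper: an induction on the length of the filtration, with the key step being the equivalence between $\xi_i\neq 0$ and the absence of a copy of $T_i$ inside $N_i$ not contained in $N_{i-1}$. The paper packages this step via the exact sequence $\Hom(T_i,N_{i-1})\to\Hom(T_i,N_i)\to\Hom(T_i,T_i)\xrightarrow{\partial}\Ext^1(T_i,N_{i-1})$ together with $\xi_i=\partial(\id)$, whereas you unpack the same fact by hand by intersecting a destabilising submodule with the filtration; the content is identical.
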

\begin{proof} The module $N_r$ is stable if and only if it contains no submodule whose dimension vector belongs to $\mathbb{N}^I \times \{0\}$, and hence if and only if $\Hom_{\tilde{\Pi}}(T_i, N_j)=\{0\}$ for all $i,j$. This in turn is equivalent to the condition that $\Hom_{\tilde{\Pi}}(T_i, N_i) =\{0\}$ for all $i$.
From the exact sequence
$$\xymatrix{ \Hom(T_i,N_{i-1}) \ar[r] & \Hom(T_i,N_i) \ar[r] & \Hom(T_i,T_i) \ar[r]^-{\partial} & \Ext^1(T_i,N_{i-1})}$$
and the fact that $\xi_i=\partial(\id)$ we see that $N_{i}$ is stable if and only if $\xi_i \neq 0$ and $N_{i-1}$ is stable. The claim follows.
\end{proof}

\smallskip

Let us fix some $w$ and prove the Proposition by induction on $v$. Let us also fix a dimension type $d$ such that 
$\sum_u d_u =(v,w)$. Let us assume that Proposition~\ref{P:impi} is proved for $(w,v)$ for any dimension vector 
$v' <v$. Let $\tau'$ be a representation type of dimension type $\underline{d}$, and of dimension $(v,w)$. 
Let $M_1 \subset M_2 \subset \cdots \subset M_r$ be a filtration as above of representation type $\tau'$, and set $S_i=M_i/M_{i-1}$. If $r=1$ there is nothing to prove, so we assume that $r >1$. There are two cases to consider~:

%

\smallskip

\noindent
\emph{$\mathbf{Case\ 1 :}$  $S_r$ is not rigid}.
By our induction hypothesis, there exists a filtration $N_1 \subset N_2 \subset\cdots \subset N_{r-1}$ such that $T_i:=N_i/N_{i-1}$ is a simple $\tilde{\Pi}$-module of dimension $\dim S_i$, $N_{r-1}$ is stable and all non-rigid simple factors $T_j$ are non isomorphic. Let us choose a simple object $T_r$ of the same dimension as $S_i$, but non-isomorphic to all the $T_j$ for $j <i$. 
This is possible since there are infinitely many non-isomorphic simples of dimension $\dim S_r$. We have to show that $\Ext^1(T_r, N_{r-1}) \neq \{0\}$. Let $i_0 \leqslant i$ be the smallest index for which $\dim T_{i_0} =\dim S_r$.
Since $(\dim T_r,\dim T_r) \leqslant 0$ and $(\dim T_r,\dim S) = -\dim\Ext^1(T_r,S) \leqslant 0$ for any $S$ which is not isomorphic to $T_r$, we have 
$$(\dim T_r,\dim N_{i_0-1}) <0,\quad 
(\dim T_r,\dim(N_{r-1}/N_{i_0-1})) \leqslant 0,$$ therefore $(\dim T_r,\dim N_{i-1}) <0$ and hence
$\Ext^1(T_r,N_{i-1}) \neq \{0\}$. We choose any nonzero $\xi \in \Ext^1(T_r,N_{i-1})$ to define $N_r$. By Lemma~\ref{L:stable} the representation $N_r$ is stable, and by construction it is of representation type $\tau$.

\smallskip

\noindent
\emph{$\mathbf{Case\ 2 :}$  $S_r$ is rigid}.
Set $n= \dim\Hom(M_r,S_r)$. We claim that there exists a filtration 
$M'_1 \subset M'_2 \subset \cdots \subset M'_{r-1} \subset M'_{r}=M_r$ of $M_{r}$ for which $M'_{r-j}/M'_{r-j-1} \simeq S_r$ for $j=0, \ldots, n-1$ and 
$$\Hom(M'_{r-n},S_r)=\{0\}, \qquad \dim \Ext^1(S_r, M'_{r-n}) \geq n.$$
 Indeed, let $M$ be any $\tilde{\Pi}$-module satisfying $\Hom(S_r,M)=\{0\}$ and $\Hom(M,S_r) \neq \{0\}$. Applying $\Hom(\cdot, S_r)$ to any exact sequence $\xymatrix{0 \ar[r] &K \ar[r] & M \ar[r] & S_r \ar[r] &0}$ yields a short exact sequence
$$\xymatrix{0 \ar[r] &\mathbb{C} \ar[r] & \Hom(M, S_r) \ar[r] & \Hom(K,S_r) \ar[r] &0}$$
from which we get $\dim\Hom(K,S_r)=\dim\Hom(M,S_r) -1$. Similarly, applying $\Hom(S_r, \bullet)$ 
we get the short exact sequence
$$\xymatrix{0 \ar[r] &\mathbb{C} \ar[r] & \Hom(M, S_r) \ar[r] & \Hom(K,S_r) \ar[r] &0}$$
from which we deduce that $\dim\Ext^1(S_r,K)=\dim\Ext^1(S_r,M) +1.$
Iterating this process $n$ times starting with $M$ we get a filtration of $M_r$ with the required properties. The representation type $\nu'$ of $M'_{r-n}$ is obtained from $\tau'$ by decreasing the multiplicity of
$S_r$ by $n$. By the induction hypothesis, there exists a stable representation $N_{r-n}$ of type $\nu$, obtained from $\tau$ by decreasing the multiplicity of $S_k$ by $n$. Now, we have
\begin{equation*}
\begin{split}
(S_r&,N_{r-n})\\
&=(S_r,M'_{r-n})=\dim\Hom(S_r,M'_{r-n})-\dim\Ext^1(M'_{r-n},S_r)+\dim\Hom(M'_{r-n},S_r)\\
&=-\dim\Ext^1(S_r,M'_{r-n}) \leqslant -n
\end{split}
\end{equation*}
hence $\dim\Ext^1(S_r,N_{r-n}) \geqslant n$. Reversing the process above, we may now consider $n$ successive non split extensions of $N_{r-n}$ by $S_r$. The resulting module $N_r$ is stable by Lemma~\ref{L:stable} and is of type $\tau$ as wanted.

\smallskip

The induction step is completed, and Proposition~\ref{P:impi} is proved.

\medskip

\subsection{Kirwan surjectivity for local quiver varieties}\label{sec:APPL}\hfill\\

In this appendix, we provide an independent proof of the Kirwan surjectivity for the fixed point quiver varieties used in this paper. 

\subsubsection{Local quiver varieties} \label{sec:local}
We define a new quiver $Q^\#=(I^\#,\Omega^\#)$ defined as follows :
\begin{itemize}
\item $I^\#=\{(i,l)\}=I\times\bbZ$,
\item $\Omega^\#=\{(h,l):(h',l)\to (h'',l-1)\,;\,h\in\Omega\}\sqcup
\{(h,l):(h',l)\to (h'',l)\,;\,h\in\Omega^*\}$.
\end{itemize}
For each dimension vectors $v$,  $w$ of $Q^\#$ we define 
$$\R(v,w)^\#=Rep(\k Q^\#\,,v)\oplus\Hom_{I^\#}(W,V[-1])\oplus\Hom_{I^\#}(V,W),$$
where we abbreviate 
$V=\k^v$, $W=\k^w$ and $(V[-1])_{i,l}=V_{i,l-1}$.
The group $G(v)\times G(w)$ acts in the obvious way on $\R(v,w)^\#$. 
The map
$$\mu_\#\,:\,\R(v,w)^\#\to\Hom_{I^\#}(V,V[-1]),\quad(\bar x,\bar a)\mapsto[x,x^*]+aa^*$$
is $G(v)\times G(w)$-equivariant. Let 
\begin{align*}
\M(v,w)^{\#}&=\mu_\#^{-1}(0),\\
\frakM_0(v,w)^{\#}&=\M(v,w)^{\#}/\hspace{-.05in}/ G(v).
\end{align*}
The affine variety $\frakM_0(v,w)^{\#}$ carries an action of the group $T\times G(w)$.
Let $s$ be the character of $G(v)$ defined by 
$$s(g)=\prod_{i,l}\det(g_{i,l})^{-1}.$$
The \emph{local quiver variety} associated with $Q$, $v$, $w$ 
is the quasiprojective variety given by
$$\frakM_s(v,w)^\#=\text{Proj}\Big(\bigoplus_{n\in\bbN}\k[\M(v,w)^{\#}]^{\,s^n}\Bigr).$$
We may abbreviate $\frakM(v,w)^\#=\frakM_s(v,w)^\#$.
There is a natural projective morphism 
$$\pi\,:\,\frakM(v,w)^\# \to \frakM_0(v,w)^{\#}.$$
A representation $(\bar x,\bar a)$ in $\M(v,w)^{\#}$ is said to be \emph{semistable} 
if and only if it does not admit any nonzero subrepresentation
whose dimension vector belongs to $\bbN^{I^\#}\times\{0\}$.
Let $$\M_s(v,w)^{\#}\subset\M(v,w)^{\#}$$ be the open subset consisting of the semistable points. 
This set is preserved by the $T\times G(v)\times G(w)$-action on $\R(v,w)^\#$.
The same arguments as in \cite{N94}, \cite{N04} implies that $\frakM(v,w)^\#$ is the geometric quotient of $\M_s(v,w)^{\#}$ 
by $G(v)$ and is smooth quasi-projective with an action of $T\times G(w)$. 
If $Q^{\#}$ (or equivalently $Q$) has no oriented cycles then $\frakM_0(v,w)^{\#}$ is reduced to a single point and 
thus $\frakM(v,w)^\#$ is projective. However, this is not the case in general.

\smallskip

The varieties $\frakM(v,w)^\#$ arise as a union of fixed point connected components 
for some suitable $\mathbb{G}_m$-actions on the
 Nakajima quiver varieties attached to the quiver $Q$, see~\S \ref{sec:BBQ}. More precisely, fix dimension vectors 
 $u,$ $y$ in $\bbN^I$ and consider the $\circ$-action on  $\frakM(u,y)$ in \eqref{action1}.
We have
\begin{align}\label{decomposition1}
\frakM(u,y)^\circ = \bigsqcup_{v,w} \frakM(v,w)^\#
\end{align}
where the union ranges over the elements $v,$ $w$ in $\bbN^{I^{\#}}$ such that 
$$\sum_l v_{i,l}=u_i,\quad w_{i,l}=\delta_{l,0}\,y_i.$$
This decomposition is compatible with the $T$-action.

\medskip

\subsubsection{The homology of local quiver varieties}
For any vertex $(i,l)\in I^{\#}$ the universal and the tautological $T\times G(w)$-equivariant 
vector bundles over $\frakM(v,w)^\#$ are denoted by
\begin{align}\label{local-bundle}\calV_{i,l},\quad\calW_{i,l}.\end{align} 
They are defined as in \eqref{bundle}. Consider the subspace
$$M_{v,w}^\#\subseteq M_*^{T\times G(w)}(\frakM(v,w)^\#)$$ 
generated by the action of the Chern character components
$\ch_k(\calV_{i,l})$ and $\ch_k(\calW_{i,l})$, for all possible $i,l,k$,
on the fundamental class $[\frakM(v,w)^\#]$.

\smallskip

\begin{theorem}\label{T:Kirwan} For any $v,$ $w$ we have 
$M_{v,w}^\#=M_*^{T\times G(w)}(\frakM(v,w)^\#)$.
\end{theorem}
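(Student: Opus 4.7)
The plan is to adapt Nakajima's decomposition-of-the-diagonal argument for graded quiver varieties \cite[\S 7]{N01} to the local quiver varieties $\frakM(v,w)^\#$, which are themselves Nakajima varieties attached to the auxiliary quiver $Q^\#$. The essential novelty compared to the classical setting of \cite{N01}, \cite{N04} is that $Q^\#$ typically carries oriented cycles as soon as $Q$ has a $1$-loop or an oriented cycle, so the affine base $\frakM_0(v,w)^\#$ is not reduced to a point, and the Morse-theoretic setup underlying \cite[\S 7.3]{N01} must be modified.

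First, I would introduce an auxiliary cocharacter $\blacktriangle : \bbG_m \to T \times G(w)$ whose induced action contracts $\frakM_0(v,w)^\#$ to its origin and whose fixed-point locus in $\frakM(v,w)^\#$ is projective. Such a cocharacter is obtained by combining a subaction of $\triangledown$ from \eqref{action2} with generic positive weights $a,b$ (ensuring positivity of the weights on every coordinate of $\R(v,w)^\#$), together with a generic cocharacter of the center of $G(w)$; this parallels the construction in the proof of Proposition \ref{prop:L}. The Bialynicki-Birula theorem then yields an $\alpha$-partition $\frakM(v,w)^\# = \bigsqcup_s \frakM_s$ by smooth locally closed subvarieties, each an equivariant affine space bundle $p_s : \frakM_s \to F_s$ over a projective connected component $F_s$ of the fixed-point locus $\frakM(v,w)^{\#,\blacktriangle}$. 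Exactly as in the proof of Proposition \ref{prop:L}, \cite[lem.~7.1.3, 7.1.4]{N01} reduces the theorem to proving Kirwan surjectivity on each $F_s$, provided one checks that the restrictions of the tautological and universal bundles $\calV_{i,l}$, $\calW_{i,l}$ to $F_s$ generate its cohomology.

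Each fixed-point component $F_s$ decomposes as a product of smaller (more finely graded) local quiver varieties, with the restrictions of the bundles $\calV_{i,l}$ and $\calW_{i,l}$ decomposing into direct sums according to the weight decomposition under $\blacktriangle$; in particular every such restriction is itself an external product of tautological/universal bundles on the factors. I would then induct on the total dimension $|v|$, reducing the statement on $F_s$ to an induction hypothesis on each factor together with a decomposition of the diagonal $[\Delta_{F_s}]$ in $M_*^{T \times G(w)}(F_s \times F_s)$ as a polynomial in Chern characters of tautological bundles pulled back from the two factors. The latter is obtained by a relative version of Nakajima's construction \cite[\S 7.3]{N01}: one expresses $[\Delta_{F_s}]$ as a convolution of explicit Hecke-correspondence classes over $\frakM_0(v,w)^\#$, using the natural projectivity of these correspondences and the explicit formulas for their pullbacks in terms of the bundles $\calV_{i,l}$ and $\calW_{i,l}$.

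The hardest step will be this relative decomposition of the diagonal. In \cite{N01} the identity is established by exploiting that $\pi : \frakM \to \frakM_0$ is birational, a property that fails for $Q^\#$ when $Q$ carries a $1$-loop. I would work around this by stratifying $\frakM_0(v,w)^\#$ by representation types (see \S\ref{sec:QV}) and proving the diagonal identity stratum-by-stratum, inducting downward on the partial order $\leqslant$ on representation types. On the open stratum of generic representation type $\kappa_{v,w}$ the fiber of $\pi$ has a controlled structure and the classical argument of \cite[\S 7.3]{N01} applies verbatim; on deeper strata the transverse slice description of $\frakM(v,w)^\#$ near a semisimple point, together with the inductive hypothesis on smaller dimension vectors, allows one to propagate the decomposition to the closure. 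This inductive scheme is what most closely parallels the derived Kirwan surjectivity of McGerty-Nevins \cite{MNKirwan}, though our argument remains entirely within equivariant Borel-Moore homology.
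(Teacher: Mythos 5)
Your strategy diverges substantially from the paper's, and it has two gaps that I do not see how to close.

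First, the decisive step of any decomposition-of-the-diagonal argument is the identity $\alpha = \sum_k a_k \cap p_*(\alpha \cap b_k)$, where $p_*(\alpha \cap b_k)$ must land in $M_*^{T\times G(w)}(\mathrm{pt})$, i.e.\ the second tensor factor of the diagonal class must be \emph{integrated out over a point}. This forces the second copy of the variety (or at least the support of $\alpha \cap b_k$) to be proper over a point, not merely over the affine base. Your ``relative decomposition of the diagonal over $\frakM_0(v,w)^\#$'' cannot deliver this: $\frakM(v,w)^\#$ is only proper over $\frakM_0(v,w)^\#$, so the classes you would obtain by pushing forward along $p_1$ are again classes on $\frakM(v,w)^\#$ rather than scalars, and the conclusion becomes circular (you would show that the tautological ring times ``something'' exhausts the homology). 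This is precisely the obstruction the paper is designed to overcome: it embeds $\frakM(v,w)^\#$ as an open subset of a \emph{projective} graded quiver variety $\frakM(v^\diamond,w^\diamond)^\#$ attached to the strongly bipartite quiver $Q^\diamond$ (doubling each vertex and inserting an arrow $i_1\to i_2$), and then decomposes the diagonal on the asymmetric product $\frakM^\# \times \frakM^{\diamond,\#}$, whose second factor is compact so that $p_{1*}$ and $p_*$ are defined. Your proposal contains no compactification, and relatedly you misattribute the difficulty in \cite{N01}: what fails for $Q^\#$ with oriented cycles is projectivity of $\frakM(v,w)^\#$, not birationality of $\pi$.

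Second, the reduction to the Bialynicki--Birula fixed components $F_s$ does not preserve the statement you need. The components $F_s$ are doubly graded quiver varieties, and their tautological bundles are the \emph{weight summands} of $\calV_{i,l}|_{F_s}$ and $\calW_{i,l}|_{F_s}$. Nakajima's diagonal decomposition on $F_s\times F_s$ (granting it applies) expresses $[\Delta_{F_s}]$ in Chern classes of these finer bundles, which generate a ring that is in general strictly larger than the one generated by the Chern classes of the direct sums $\calV_{i,l}|_{F_s}$; the finer bundles do not extend to $\frakM(v,w)^\#$, so the resulting generators cannot be lifted to global tautological classes. You state the correct intermediate goal (``the restrictions of $\calV_{i,l}$, $\calW_{i,l}$ to $F_s$ generate its cohomology''), but the method you propose proves a different statement. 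This is why the $\alpha$-partition argument of \cite[lem.~7.1.3, 7.1.4]{N01} suffices for purity, evenness and surjectivity of the cycle map (Proposition~\ref{prop:L}) but not for Kirwan surjectivity, and why the paper abandons the fixed-point reduction altogether in favour of the one-sided resolution of the diagonal on $\frakM^{\diamond,\heartsuit}\times\frakM^{\diamond}$, where injectivity of $\mathbf{a}$ and surjectivity of $\mathbf{b}$ are checked directly using the invertibility of the maps $x_i^\diamond$ on the heart.
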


When $Q$ has no oriented cycle then $Q^\#$ has no oriented cycle either, hence $\frakM(v,w)^\#$ is projective.
In this case, the theorem follows from Nakajima's resolution of the diagonal as in \cite[\S 7.3]{N01}. 
We will prove the theorem by constructing a suitable compactification of the local quiver variety. 
A very similar construction has independently appeared in McGerty and Nevins' recent proof of Kirwan surjectivity for
(non-graded, non-local) quiver varieties, see \cite{MNKirwan}. 

\smallskip

\begin{corollary} For any $v,w \in \mathbb{N}^{I\#}$, the variety $\frakM(v,w)^\#$ is connected. 
\end{corollary}

\smallskip

\subsubsection{The quivers $Q^\diamond$ and $\bar Q^\diamond$}

Consider a new quiver $Q^{\diamond}=(I^\diamond,\Omega^\diamond)$ defined as follows : 
\begin{itemize}
\item $I^\diamond=\{i_1\,;\,i\in I\}\sqcup\{i_2\,;\,i\in I\}\simeq I \times \{1,2\},$
\item $\Omega^\diamond=\{h : (h')_1 \longrightarrow (h'')_2\,;\, h \in \Omega\} \sqcup \{i : i_1 \longrightarrow i_2\,;\, i \in I\}.$
\end{itemize}

\smallskip

\noindent
\textit{Example.} We have
$$Q= \xymatrix{ i \ar@/^/[r]^-{a} & j \ar@/^/[l]^-{b}}\quad\Rightarrow\quad
Q^{\diamond}=\xymatrix{i_1\ar[r]_-{i} \ar@/^1.5pc/[rrr]^-{a} & i_2 & j_1 \ar[r]_-{j} \ar@/^/[l]^-{b} & j_2}.$$

\smallskip

The quiver $Q^\diamond$ is \emph{strongly bipartite}~: 
all arrows go from a vertex in $I \times \{1\}$ to one in $I \times \{2\}$. In particular, it contains no oriented cycles. 

\smallskip

The moduli stacks of representations of $Q$ and $Q^\diamond$ are related in the following fashion. 
For each
$v\in \bbN^I$ we define $v^\diamond \in \bbN^{{}I^\diamond}$ 
such that $v^\diamond_{i_1}=v^\diamond_{i_2}=v_i$. 
Let 
$$Rep(\k\,Q^\diamond,{}v^\diamond)^{\heartsuit}\subset Rep(\k\,Q^\diamond,{}v^\diamond)$$
be the open subset consisting of the representations $x^\diamond$ such that $x^\diamond_{i}$ 
is invertible for all $i$. Then, the assignment 
$ x^\diamond\mapsto(x\,,\,g)$ where
$$x_h= (x^\diamond_{{h''}})^{-1}\circ x^\diamond_h\,, \quad g_i=x^\diamond_{i}$$ 
gives an isomorphism 
\begin{align}\label{loc1}
Rep(\k\, Q^\diamond, v^\diamond)^\heartsuit\simeq Rep(\k Q,v) \times G(v),
\end{align}
hence an isomorphism of Artin stacks 
$$Rep(\k\,Q^\diamond,{}v^\diamond)^{\heartsuit}\,/\,G(v^\diamond)\simeq Rep(\k{}Q,v)\,/\,G(v).$$ 
This allows us to view the stack $Rep(\k Q,v)\,/\,G(v)$ as an open substack of $Rep(\k\,Q^\diamond,{}v^\diamond)\,/\,G(v^\diamond)$.

\smallskip

We will now perform a similar construction for the double quiver. Let  $\bar Q^\diamond$ be
the double quiver of $Q^\diamond$. For each $v,$ $w$ in $\bbN^I$ we define 
$v^\diamond$, $w^\diamond$ in $\bbN^{I^\diamond}$ such that 
\begin{align}\label{diamond-dim}
\begin{split}
v^\diamond_{i_1}&=v^\diamond_{i_2}=v_i\\
w^\diamond_{i_1}&=w_i,\\
w^\diamond_{i_2}&=0.
\end{split}
\end{align}
The group $G({}v^\diamond)$ acts in a Hamiltonian fashion on
$$\R({}v^\diamond,{}w^\diamond)= Rep(\k\,\bar Q^\diamond, {}v^\diamond) \oplus 
\Hom_{{}I^\diamond}({}w^\diamond,{}v^\diamond) \oplus \Hom_{{}I^\diamond}({}v^\diamond,{}w^\diamond)$$
and we denote by 
$\mu_\diamond: \R({}v^\diamond,{}w^\diamond) \to \mathfrak{g}(v^\diamond)$
the associated moment map. 
Let $\R({}v^\diamond,{}w^\diamond)^{\heartsuit}$ be the open subset of $\R({}v^\diamond,{}w^\diamond)$ consisting of pairs $(\bar x^\diamond, \bar a^\diamond)$ such that 
$x^\diamond_{i}$ is an isomorphism for all $i$. We also put 
\begin{align*}
\M(v^\diamond,w^\diamond)&=\mu_{\diamond}^{-1}(0),\\
\M(v^\diamond,w^\diamond)^{\heartsuit}&=\R({}v^\diamond,{}w^\diamond)^{\heartsuit} \cap \M(v^\diamond,w^\diamond).
\end{align*}

\smallskip

Now, let $\R(v,w)$ and $\M(v,w)$ be as in \S \ref{sec:QV}. Consider the map 
$$\phi\,:\,\R(v,w) \times G(v) \to \R(v^\diamond, w^\diamond)^{\heartsuit},\quad
((\bar{x},\bar{a})\,,\,g)\mapsto(\bar x^{\diamond}, \bar a^\diamond),$$ 
defined, for all $h\in\Omega$ and $i\in I$, by
\begin{align}\label{E:defmap}
\begin{split}
\bar a^{\diamond}_{i_1}&=\bar a_i,\\
x^{\diamond}_{h}&= g_{h''}\circ x_h,\\ 
x^{\diamond}_{h^*}&= x_{h^*} \circ (g_{h''})^{-1},\\
x^\diamond_{i}&=g_i,\\ 
x^\diamond_{i^*}&=-\sum_{h''=i} (x_{i}^\diamond)^{-1}\circ x^\diamond_h\circ x^\diamond_{h^*}.
\end{split}
\end{align}
A direct computation gives

\begin{lemma} Given $v,w$ let $v^\diamond, w^\diamond$ be as in \eqref{diamond-dim}.
Then, the map $\phi$ gives an isomorphism $$\M(v,w) \times G(v) \to \M(v^\diamond, w^\diamond)^{\heartsuit}.$$
\qed
\end{lemma}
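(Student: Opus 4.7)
The plan is to verify the lemma by a direct but careful bookkeeping of the moment map relations. I would proceed in three stages.

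First, I would establish that $\phi$ defines an isomorphism between $\R(v,w) \times G(v)$ and the open subset $\R(v^\diamond, w^\diamond)^\heartsuit$, \emph{ignoring} the moment map conditions. This essentially extends \eqref{loc1} to the doubled/framed setting: the formulas $g_i = x^\diamond_i$, $x_h = g_{h''}^{-1} x^\diamond_h$, $x_{h^*} = x^\diamond_{h^*} g_{h''}$, $a_i = a^\diamond_{i_1}$, $a^*_i = a^{\diamond,*}_{i_1}$ give an explicit inverse on the open set where all $x^\diamond_i$ are invertible. Note that the fifth formula in \eqref{E:defmap} expresses $x^\diamond_{i^*}$ as a dependent quantity, so it does not appear as a free input.

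Second, I would check that $\phi$ restricts to an isomorphism between the zero fibers of the respective moment maps. The quiver $\bar Q^\diamond$ has two types of vertices. At a vertex $i_2$, there is no outgoing arrow in $\Omega^\diamond$ and no framing ($w^\diamond_{i_2}=0$), so
\[
\mu_\diamond(\bar x^\diamond, \bar a^\diamond)_{i_2} = x^\diamond_i \,x^\diamond_{i^*} + \sum_{h\in\Omega,\,h''=i} x^\diamond_h \,x^\diamond_{h^*},
\]
and this vanishes tautologically by the definition of $x^\diamond_{i^*}$ in \eqref{E:defmap}. At a vertex $i_1$, there is no incoming arrow in $\Omega^\diamond$ and the framing contributes $a^\diamond_{i_1}a^{\diamond,*}_{i_1} = a_i a^*_i$; a direct substitution using $x^\diamond_h x^\diamond_{h^*} = g_i\, x_h x_{h^*}\, g_i^{-1}$ for $h'' = i$ and $x^\diamond_{h^*} x^\diamond_h = x_{h^*} x_h$ for $h'=i$ reduces the $i_1$-component of $\mu_\diamond$ to exactly $\sum_h [x_h, x_{h^*}]_i + (aa^*)_i = \mu(\bar x, \bar a)_i$. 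Hence $(\bar x, \bar a, g) \in \M(v,w) \times G(v)$ if and only if $\phi(\bar x, \bar a, g) \in \M(v^\diamond, w^\diamond)^\heartsuit$.

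Finally, I would put the two pieces together: the map $\phi$ and its inverse both preserve the moment map locus, so restriction yields mutually inverse morphisms between $\M(v,w)\times G(v)$ and $\M(v^\diamond,w^\diamond)^\heartsuit$. The only slightly subtle point (and what I would call the main technical step) is the vertex-$i_1$ moment map computation: one must track the $g_i$-conjugation introduced by \eqref{E:defmap} and verify that the $g_i^{-1}$ from the definition of $x^\diamond_{i^*}$ cancels correctly with the factor $g_i = x^\diamond_i$ sitting on its right, so that no residual twisting remains. Once this cancellation is confirmed the lemma follows immediately.
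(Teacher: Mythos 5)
Your computation is exactly the ``direct computation'' the paper leaves to the reader: the moment map component at $i_2$ vanishes tautologically by the defining formula for $x^\diamond_{i^*}$, and the component at $i_1$ collapses to $\mu(\bar x,\bar a)_i$ after the conjugation by $g_i$ cancels. Both of these checks are correct (with the sign conventions $\mu_\diamond(\cdot)_{j}=\sum_{h''=j}x^\diamond_h x^\diamond_{h^*}-\sum_{h'=j}x^\diamond_{h^*}x^\diamond_h+a^\diamond_j a^{*\diamond}_j$, one gets $x^\diamond_{h^*}x^\diamond_h=x_{h^*}x_h$ for $h'=i$ and $x^\diamond_{i^*}x^\diamond_i=-\sum_{h''=i}x_hx_{h^*}$, whence $\mu_\diamond(\cdot)_{i_1}=\mu(\bar x,\bar a)_i$), so the substance of the argument is right and matches the paper's intent.

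One statement in your first stage is false as written: $\phi$ is \emph{not} an isomorphism from $\R(v,w)\times G(v)$ onto all of $\R(v^\diamond,w^\diamond)^\heartsuit$. In a general point of $\R(v^\diamond,w^\diamond)^\heartsuit$ the components $x^\diamond_{i^*}$ are genuinely free, so the target is larger by $\sum_i v_i^2$ dimensions; the image of $\phi$ is the closed subvariety of $\R(v^\diamond,w^\diamond)^\heartsuit$ cut out precisely by the $i_2$-components of $\mu_\diamond$. This does not damage your conclusion, but the assembly step should be reorganized accordingly: $\phi$ is a closed immersion onto $\{\mu_{\diamond}|_{i_2}=0\}\cap\R(v^\diamond,w^\diamond)^\heartsuit$, and on $\M(v^\diamond,w^\diamond)^\heartsuit$ the equation $\mu_\diamond(\cdot)_{i_2}=0$ is exactly what lets the candidate inverse (which forgets $x^\diamond_{i^*}$) recover $x^\diamond_{i^*}$, so that $\phi$ and this inverse are mutually inverse between $\M(v,w)\times G(v)$ and $\M(v^\diamond,w^\diamond)^\heartsuit$ once your $i_1$-computation identifies the remaining moment map conditions on both sides.
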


\smallskip

We will next consider appropriate stability conditions on $\R(v^\diamond, w^\diamond)$. 
Given a tuple of integers $(\theta_i)$, let $\theta$ be the character of $G(v^\diamond)$ defined by $\theta(g)=\prod_{i \in I^\diamond}\det(g_i)^{-\theta_i}$. 
We set
\begin{align*}
\frakM_\theta(v^\diamond, w^\diamond)&=\text{Proj}\Big(\bigoplus_{n\in\bbN}\k[\M(v^\diamond, w^\diamond)]^{\,\theta^n}\Bigr),
\end{align*}
which is a quasi-projective $T\times G(w^\diamond)$-variety equipped with a projective morphism
$$\pi : \frakM_{\theta}(v^\diamond, w^\diamond) \to \frakM_0(v^\diamond, w^\diamond)=\M(v^\diamond, w^\diamond)\,/\hspace{-.03in}/\,G(v^\diamond).$$
We can describe the set of closed points of $\frakM_{\theta}(v^\diamond, w^\diamond)$ explicitly. 
Set 
$$\theta_{\infty}=- \sum_{i \in I^\diamond} \theta_i\, v^{\diamond}_i,\quad W^\diamond=\bigoplus_{i \in I^\diamond} W^\diamond_i
,\quad V^\diamond=\bigoplus_{i \in I^\diamond} V^\diamond_i.$$ 
We say that an element $(\bar x^\diamond\,,\, \bar a^\diamond)$ is \textit{$\theta$-semistable} if the following conditions are satisfied~:
\begin{itemize}
\item for any $(\bar x^\diamond, \bar a^\diamond)$-stable $I^\diamond$-graded subspace $U \subseteq V^{\diamond}$ we have
$$\sum_{i \in I^\diamond} \theta_i \dim(U_i) \leqslant 0,$$
\item for any $I^\diamond$-graded subspace $U \subseteq V^{\diamond}$ such that $U\oplus W^\diamond$ is 
$(\bar x^\diamond, \bar a^\diamond)$-stable we have
$$\sum_{i \in I^\diamond} \theta_i \dim(U_i) +\theta_{\infty} \leqslant 0.$$
\end{itemize}
We will further say that $(\bar x^\diamond, \bar a^\diamond)$ is \textit{stable} if the above inequalities are strict for proper subspaces $U$.
Let us denote by $\R_\theta(v^{\diamond},w^{\diamond})$ the open subset of $\R(v^\diamond,w^\diamond)$ consisting of $\theta$-semistable
elements and by $\M_\theta(v^{\diamond},w^{\diamond})$ its intersection with $\M(v^{\diamond},w^{\diamond})$. Then \cite[prop.~2.9]{N09} yields
$$\frakM_{\theta}(v^\diamond,w^\diamond)=\M_\theta(v^{\diamond},w^{\diamond})\,/\hspace{-.04in}/\, G(v^\diamond).$$
We also set 
\begin{align*}
\M_\theta(v^{\diamond},w^{\diamond})^{\heartsuit}&=\M_\theta(v^{\diamond},w^{\diamond})\cap \R(v^\diamond,w^\diamond)^\heartsuit,\\
\frakM_{\theta}(v^\diamond,w^\diamond)^\heartsuit&=\M_\theta(v^{\diamond},w^{\diamond})^{\heartsuit}\,/\hspace{-.04in}/ \,G(v^\diamond).
\end{align*}
We say that $\theta$ is \textit{generic} if neither equations
$$\sum_{i \in I^\diamond} \theta_i u_i=0 \ \text{and}\  \sum_{i \in I^\diamond} \theta_i u_i + \theta_{\infty}=0$$
have integer solutions $(u_i)$ satisfying $0 \leqslant u_i \leqslant v_i^{\diamond}$ other than the trivial solutions 
$$(u_i)=0\ \text{or}\ (u_i)=(v^\diamond_i).$$
If $\theta$ is generic then any semistable pair $(\bar x^\diamond,\bar a^\diamond)$ is stable and in that case the map 
$$\M_\theta(v^{\diamond},w^{\diamond}) \to \frakM_{\theta}(v^\diamond,w^\diamond)$$ is a $G(v^{\diamond})$-torsor, 
hence the variety $\frakM_{\theta}(v^\diamond,w^\diamond)$ is smooth.


\smallskip

\begin{lemma}\label{L:4.3} Given $v,w$ let $v^\diamond, w^\diamond$ be as in \eqref{diamond-dim}.
Let $\theta\in \bbZ^{I^\diamond}$ be generic and such that
\begin{align}\label{theta}\theta_{i_1} \gg 0, \qquad \theta_{i_2} \ll 0, \qquad \theta_{i_1} + \theta_{i_2}=1, \quad \forall i \in I.
\end{align}
Then, the map $\phi$ restricts to an isomorphism  
$$\M_\theta(v,w) \times G(v) \stackrel{\sim}{\longrightarrow} \M_\theta(v^{\diamond},w^{\diamond})^{\heartsuit}.$$
\end{lemma}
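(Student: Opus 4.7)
Since the unnumbered lemma immediately preceding \ref{L:4.3} exhibits $\phi$ as an isomorphism
$\M(v,w)\times G(v)\simeq\M(v^\diamond,w^\diamond)^{\heartsuit}$, the statement reduces to checking that,
for every $g\in G(v)$, a pair $(\bar x,\bar a)$ is $s$-stable if and only if its image
$(\bar x^\diamond,\bar a^\diamond):=\phi(\bar x,\bar a,g)$ is $\theta$-stable in
$\R(v^\diamond,w^\diamond)^{\heartsuit}$. The plan is to unravel both stability conditions and exhibit a
bijection between their obstructions.

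The crucial observation is that on the $\heartsuit$ locus the $\theta$-stability condition collapses to a
single requirement. Indeed, since $x^\diamond_i=g_i$ is invertible for every $i\in I$, any
$I^\diamond$-graded $U\subseteq V^\diamond$ stable under $x^\diamond$ satisfies
$g_i(U_{i_1})\subseteq U_{i_2}$ and hence $\dim U_{i_1}\leqslant\dim U_{i_2}$ for every $i$. Writing
$\theta_{i_1}=N_i$, $\theta_{i_2}=1-N_i$ and $\theta_\infty=-\sum_iv_i$, a direct evaluation of
$\sum_{j\in I^\diamond}\theta_j\dim U_j$ and of $\sum_{j\in I^\diamond}\theta_j\dim U_j+\theta_\infty$
shows that, as soon as the $N_i$ are sufficiently large, the second $\theta$-stability inequality is
automatic (strictly, unless $U=V^\diamond$), while the first is violated precisely by those
nonzero $x^\diamond$-stable $U\subseteq V^\diamond$ satisfying $a^{\diamond,*}(U)=0$ and
$\dim U_{i_1}=\dim U_{i_2}$ for every $i$. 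Thus $(\bar x^\diamond,\bar a^\diamond)$ is $\theta$-stable
iff no such \emph{equidimensional} $U$ exists.

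It then remains to match these forbidden $U$ with the obstructions to $s$-stability of
$(\bar x,\bar a)$, i.e., with nonzero $I$-graded $U'\subseteq V$ which are $\bar x$-stable and satisfy
$a^*(U')=0$. Given such a $U'$, set $U_{i_1}=U'_i$ and $U_{i_2}=g_i(U'_i)$; a direct check using the
explicit formulas \eqref{E:defmap}, and in particular
$x^\diamond_{i^*}=-\sum_{h''=i}x_hx_{h^*}g_i^{-1}$ (whose image on $U_{i_2}=g_i(U'_i)$ is
$-\sum_{h''=i}x_hx_{h^*}(U'_i)\subseteq U'_i$ by $\bar x$-stability of $U'$), shows that $U$ is
$x^\diamond$-stable, equidimensional, nonzero and annihilated by $a^{\diamond,*}$. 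Conversely, given
an equidimensional $U$, the inclusion $g_i(U_{i_1})\subseteq U_{i_2}$ together with the equality of
dimensions and the invertibility of $g_i$ forces $U_{i_1}=g_i^{-1}(U_{i_2})$; setting $U'_i:=U_{i_1}$
and translating the $x^\diamond_h$- and $x^\diamond_{h^*}$-stability of $U$ yields $\bar x$-stability
of $U'$, while $a^{\diamond,*}(U)=0$ gives $a^*(U')=0$. These two constructions are mutually inverse,
which completes the proof. The step requiring the most care is the reduction of $\theta$-stability to
equidimensionality in paragraph two, which decisively uses the $\heartsuit$ hypothesis together with
the constraint $\theta_{i_1}+\theta_{i_2}=1$ and the sign pattern of $\theta$; once this is established,
the bijection between forbidden $U$ and obstructions to $s$-stability is a routine bookkeeping exercise
made transparent by the formulas defining $\phi$.
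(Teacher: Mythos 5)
Your proof is correct and follows essentially the same route as the paper's: both reduce, via the preceding lemma, to comparing the two stability conditions on the $\heartsuit$ locus, and both use the inequality $\dim U_{i_1}\leqslant\dim U_{i_2}$ (forced by the invertibility of $x^\diamond_{i}$) together with the normalization $\theta_{i_1}+\theta_{i_2}=1$, $\theta_{i_2}\ll 0$ to show that the only possible destabilizing subspaces are the equidimensional ones, which correspond exactly to destabilizing subspaces for $(\bar x,\bar a)$. The only difference is that you write out the final bijection between destabilizing subspaces explicitly, a step the paper dismisses as ``easily seen.''
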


\begin{proof} Fix a tuple $z=(\bar x^\diamond, \bar a^\diamond)$ in $\M(v^{\diamond},w^{\diamond})^{\heartsuit}$. 
We will prove that 
$$z\in \M_\theta(v^{\diamond},w^{\diamond})^{\heartsuit}\iff\phi^{-1}(z) \in \M_\theta(v,w) \times G(v).$$
Let $U$ be an $I^\diamond$-graded subspace of $V^\diamond$.
\smallskip
Let us first assume that $U$ is $z$-stable and non-zero. Since $x^\diamond_{i}$ is invertible, we have
$$\dim(U_{i_1}) \leqslant \dim(U_{i_2}),\quad i \in I.$$ If there exists $j\in I$ such that $\dim(U_{j_1}) < \dim(U_{j_2}),$ then
\begin{equation*}
\begin{split}
\sum_{i \in I^\diamond} \theta_i \dim(U_i) &= \sum_{i \in I} \big[ \theta_{i_2}(\dim(U_{i_2})-\dim(U_{i_1})) + \dim(U_{i_1})\big]\\
&\leqslant \theta_{j_2} + \sum_{i \in I} v_i\\
&<0
\end{split}
\end{equation*}
because $\theta_{j_2} \ll 0$. Thus $U$ is not destabilizing. On the other hand, if $x_{i}^\diamond$ is an isomorphism
$U_{i_1} \to U_{i_2}$ for all $i$, then we have
$$\sum_{i \in I^\diamond} \theta_i \dim(U_i)=\sum_i \dim(U_{i_1}) >0$$
hence $U$ is destabilizing.

\smallskip

Assume now that $U \oplus W^\diamond$ is $z$-stable and proper. 
As above, we have $\dim(U_{i_1}) \leqslant \dim(U_{i_2})$ for all $i$. Thus
\begin{equation*}
\begin{split}
\sum_{i \in I^\diamond} \theta_i \dim(U_i) + \theta_{\infty} &=\sum_{i \in I} \big[ \theta_{i_2}(\dim(U_{i_2})-\dim(U_{i_1})) + \dim(U_{i_1})\big] +\theta_{\infty}\\
&\leqslant \sum_{i \in I} \dim(U_{i_1})-v_i\\
&<0
\end{split}
\end{equation*}
for otherwise $\dim(U_{i_1})=\dim(U_{i_2})=v_i$ for all $i$ and $U$ is not a proper subspace of
$V^\diamond$. Therefore $U$ is not destabilizing.

\smallskip

By the above, we conclude that $z$ is $\theta$-semistable if and only if there does not exist a nonzero $z$-stable 
$U \subset V^\diamond$ stable under all the maps
$x_{i}^{-1}$. This is easily seen to be equivalent to the condition that $\phi^{-1}(z)$ belongs to $\M_s(v,w) \times T\times G(v)$.
\end{proof}

\smallskip

\begin{corollary}\label{cor:psi} Given $v,w$ let $v^\diamond, w^\diamond$ be as in \eqref{diamond-dim}.
Let $\theta$ be a generic character as in \eqref{theta}.
Then, we have an isomorphism $\psi\,:\,\frakM(v,w)\, {\to}\, \frakM_\theta(v^\diamond,w^\diamond)^\heartsuit$ and an open immersion 
\begin{align*} i~: \frakM(v,w) \to \frakM_\theta(v^\diamond,w^\diamond).\end{align*}
\qed
\end{corollary}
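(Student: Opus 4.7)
The plan is to deduce Corollary~\ref{cor:psi} as a descent statement from Lemma~\ref{L:4.3}. That lemma already provides an isomorphism $\phi\,:\,\M_s(v,w)\times G(v)\simeq\M_\theta(v^\diamond,w^\diamond)^\heartsuit$ on total spaces, and the remaining task is to promote it to an isomorphism of the GIT (in fact geometric) quotients. Since $\theta$ is generic, the $G(v^\diamond)$-action on $\M_\theta(v^\diamond,w^\diamond)$ is free, so it suffices to identify the pulled-back action on $\M_s(v,w)\times G(v)$ and compute the quotient explicitly.

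First, I would write $G(v^\diamond)=G(v)\times G(v)$ using the partition $I^\diamond=I\times\{1,2\}$, and transport the natural $G(v^\diamond)$-action on $\M_\theta(v^\diamond,w^\diamond)^\heartsuit$ to $\M_s(v,w)\times G(v)$ through $\phi$. A direct computation from the formulas \eqref{E:defmap}, applied separately to each of the maps $x^\diamond_h$, $x^\diamond_{h^*}$, $x^\diamond_i$, $x^\diamond_{i^*}$ and $\bar a^\diamond_{i_1}$, shows that an element $(g^{(1)},g^{(2)})\in G(v)\times G(v)$ acts by
\begin{equation*}
(g^{(1)},g^{(2)})\cdot\big((\bar x,\bar a),g\big)=\big(g^{(1)}\cdot(\bar x,\bar a),\,g^{(2)}\,g\,(g^{(1)})^{-1}\big),
\end{equation*}
where $g^{(1)}\cdot(\bar x,\bar a)$ is the standard conjugation action of $G(v)$ on $\M_s(v,w)$.

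Second, the resulting action on $\M_s(v,w)\times G(v)$ is free and its geometric quotient is easily computed in two steps: quotienting by the subgroup $\{1\}\times G(v)$, which acts by left translation on the factor $G(v)$, eliminates that factor and yields $\M_s(v,w)$; and the residual action of $G(v)\times\{1\}$ then reduces to the usual conjugation action of $G(v)$ on $\M_s(v,w)$. The final quotient is therefore $\M_s(v,w)/G(v)=\frakM(v,w)$, which produces the isomorphism $\psi$.

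Finally, for the open immersion $i$, I would observe that $\R(v^\diamond,w^\diamond)^\heartsuit$ is a $G(v^\diamond)$-invariant open subset of $\R(v^\diamond,w^\diamond)$, since conjugation preserves invertibility of the maps $x^\diamond_i$. Hence $\M_\theta(v^\diamond,w^\diamond)^\heartsuit$ is open and $G(v^\diamond)$-invariant in $\M_\theta(v^\diamond,w^\diamond)$, and its geometric quotient $\frakM_\theta(v^\diamond,w^\diamond)^\heartsuit$ is open in $\frakM_\theta(v^\diamond,w^\diamond)$. Composing $\psi$ with this open immersion yields $i$. The only delicate point in this plan is the equivariance computation in the first step, where one must track carefully the two copies of $G(v)$ inside $G(v^\diamond)$ and verify that the formula for $x^\diamond_{i^*}$ in \eqref{E:defmap} transforms consistently; everything else is a routine descent argument once the action is correctly identified.
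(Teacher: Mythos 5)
Your proposal is correct and follows exactly the route the paper intends: the corollary is stated with no proof precisely because it is the descent of the isomorphism of Lemma~\ref{L:4.3} along the free $G(v^\diamond)=G(v)\times G(v)$-action (a torsor on both sides since $\theta$ is generic and $\M_s(v,w)\to\frakM(v,w)$ is a $G(v)$-torsor by Remark~\ref{rem:torsor}). Your identification of the transported action as $(g^{(1)},g^{(2)})\cdot((\bar x,\bar a),g)=(g^{(1)}\cdot(\bar x,\bar a),\,g^{(2)}g(g^{(1)})^{-1})$ checks out against \eqref{E:defmap}, and the two-step quotient and the openness of the $\heartsuit$-locus complete the argument as you describe.
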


\smallskip

\subsubsection{The compactification of local quiver varieties}

From now on we fix $u,y,u^\diamond, y^\diamond$ and $\theta$ as above. The existence of such a character $\theta$ is clear. 
For simplicity, we abbreviate
$$\frakM=\frakM(u,y), \qquad \frakM^\diamond=\frakM_{\theta}(u^\diamond,y^\diamond), \quad 
\frakM^{\diamond,\heartsuit}=\frakM_\theta(u^\diamond,y^\diamond)^\heartsuit.$$
It is straightforward to check from (\ref{E:defmap}) that the $\bullet$-actions on
$\frakM$ and $\frakM^\diamond$ given by
\begin{align*}
t \bullet (\bar{x},\bar{a})&=(x,tx^*,a,ta^*), \\ t \bullet (\bar x^\diamond, \bar a^\diamond)&=(x^\diamond, tx^{*,\diamond},a^\diamond,ta^{*,\diamond}),
\end{align*}
are compatible with the open embedding $i$ in Corollary \ref{cor:psi}.
Applying \eqref{decomposition1} to the quivers $Q$ and $Q^\diamond$,
we get the decompositions into smooth disjoint subvarieties
\begin{align*}
\frakM^{\bullet} = \bigsqcup_{v,w} \frakM(v,w)^\#,\quad
(\frakM^{\diamond})^{\bullet}=\bigsqcup_{v^\diamond,w^\diamond}\frakM(v^\diamond,w^\diamond)^\#
\end{align*}
where $v,$ $w$, $v^\diamond$, $w^\diamond$ run  over the sets of dimension vectors in $\bbN^{I^{\#}}$ and $\bbN^{I^{\diamond,\#}}$ 
such that 
$$\sum_l v_{i,l}=u_i,\quad
\sum_L w_{i,l}=y_i,\quad
u^{\diamond}_i=\sum_l v^\diamond_{i,l},\quad
y^{\diamond}_i=\sum_l w^\diamond_{i,l}.$$
The variety $\frakM(v^\diamond,w^\diamond)^\#$ is realized in the same way as $\frakM(v,w)^\#$ by
replacing the quiver $Q$ by the quiver $Q^\diamond$ throughout. 
Taking the fixed points under the $\bullet$-actions, the map $i$ yields an open immersion
\begin{align}\label{psibullet}i~: \frakM(v,w)^\# \to \frakM(v^\diamond,w^\diamond)^{\diamond,\#}\end{align}
where $v^\diamond,w^\diamond$ are defined by 
$$v^{\diamond}_{i_1,l}=v^{\diamond}_{i_2,l}=v_{i,l}\,,\quad w^\diamond_{i_1,l}=w_{i,l}\,,\quad w^\diamond_{i_2,l}=0.$$
Since the quiver $Q^\diamond$ has no oriented cycle, each $\frakM^{\diamond,\#}$ is projective and we have obtained in this way the 
desired compactification of $\frakM^\#$.

\smallskip

\subsubsection{Proof of Theorem~\ref{T:Kirwan}}

Now, let us consider the resolution of the diagonal. 
The smooth variety $\frakM^\diamond \times \frakM^\diamond$ carries $T\times G(w^\diamond)\times \mathbb{G}_m$-equivariant universal and
tautological bundles given, for all $i\in I^\diamond$, by
$$\calV_i^{\diamond, 1}=\calV_i^\diamond \boxtimes \mathcal{O}_{\frakM^\diamond},\quad  
\calV_i^{\diamond, 2}=\mathcal{O}_{\frakM^\diamond} \boxtimes \calV_i^\diamond,\quad
\calW_i^{\diamond,1},\quad \calW_i^{\diamond, 2}.$$
Note that $\calW_i^\diamond=\{0\}$ for $i \in I_2$ and that $\calW_i^{\diamond,1}=\calW_i^{\diamond,2}$ 
is trivial as a vector bundle. We will simply denote this bundle by $\calW_i^{\diamond}$.
We set also $\calV^{\diamond}=\bigoplus_{i \in I^\diamond}\calV_i^{\diamond}$ and
$\calW^{\diamond}=\bigoplus_{i \in I^\diamond}\calW_i^{\diamond}$.
Consider the complex of equivariant bundles on $\frakM^\diamond \times \frakM^\diamond$ given by
\begin{equation}\label{Nakcomplex}
\xymatrix{
\Hom_{I^\diamond}(\calV^{\diamond,1}, \calV^{\diamond,2}) \ar[r]^-{\mathbf{a}} & C \ar[r]^-{\mathbf{b}} &
\Hom_{I^\diamond}(\calV^{\diamond,1},\calV^{\diamond,2})[1] }
\end{equation}
where
\begin{itemize}
\item
$C= \bigoplus_{h \in \bar{\Omega}^\diamond}\Hom(\calV_{h'}^{\diamond,1}, \calV_{h''}^{\diamond,2})[\varepsilon_h] \; \oplus \; 
\Hom_{I^\diamond}(\calV^{\diamond,1},\calW^\diamond)[1]\; \oplus\; 
\Hom_{I^\diamond}(\calW^\diamond,\calV^{\diamond,2}),$
 \item
$\varepsilon_h=1$ if $h \in \Omega^{\diamond,*}$ and $0$ otherwise, 
\item $[1]$ denotes a grading shift with respect to the 
$\mathbb{G}_m$-action,
\item the maps $\mathbf{a}, \mathbf{b}$ are defined as
$$\mathbf{a}((\xi_i))=(x^{\diamond,2}\,\xi_{h'} -\xi_{h''}\,x^{\diamond,1}_h) \oplus (a_i^{*,\diamond, 2}\,\xi_i)\oplus (-\xi_i\,a^{\diamond,1}_i)$$
$$\mathbf{b}(y_h, \gamma_i, \delta_i)=\sum_{h \in \Omega^\diamond} (x^{\diamond,2}_{h}\,y_{h^*} - y_{h^*}\,x^{\diamond,1}_h)+
\sum_i (a^{\diamond,2}_i\,\delta_i +\gamma_i\,a^{*,\diamond,1}).
$$
\end{itemize}

\smallskip

\begin{lemma}\label{L:Nakcompl} Over the open subset $\frakM^{\diamond,\heartsuit} \times \frakM^\diamond$ the map $\mathbf{a}$ is injective and the map 
$\mathbf{b}$ is surjective.
\end{lemma}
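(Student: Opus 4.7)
The plan is to follow the standard Nakajima-type argument for the resolution of the diagonal. First I would unpack what it means for $\xi$ to lie in the kernel of $\mathbf{a}$ at a point $(z_1,z_2) \in \frakM^{\diamond,\heartsuit}\times \frakM^\diamond$. Lifting $z_s$ to $(\bar x^{\diamond,s},\bar a^{\diamond,s}) \in \M_\theta(v^\diamond,w^\diamond)$, such a $\xi$ is a tuple $(\xi_i\colon V_i^{\diamond,1}\to V_i^{\diamond,2})_{i\in I^\diamond}$ satisfying
(i) $x^{\diamond,2}_h\xi_{h'} = \xi_{h''} x^{\diamond,1}_h$ for all $h\in\bar\Omega^\diamond$,
(ii) $a^{*,\diamond,2}\circ \xi = 0$, and
(iii) $\xi\circ a^{\diamond,1} = 0$. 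Condition (i) says that $\xi$ intertwines the two $\bar x^\diamond$-actions, condition (iii) implies $K_i:=\ker \xi_i$ defines an $I^\diamond$-graded subspace $K\subseteq V^\diamond$ with $K\oplus W^\diamond$ stable under $(\bar x^{\diamond,1},\bar a^{\diamond,1})$, and condition (ii) implies $J_i:=\Im \xi_i$ is $\bar x^{\diamond,2}$-stable and annihilated by $a^{*,\diamond,2}$, hence $J$ is a $(\bar x^{\diamond,2},\bar a^{\diamond,2})$-subrepresentation of $V^{\diamond,2}$.

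Next I would apply $\theta$-stability to both pieces. Since $\theta$ is generic and $z_1,z_2$ are semistable, they are automatically stable, so any proper subspace of $V^\diamond$ satisfying the relevant invariance property must violate the stability inequality strictly. Thus $K\subsetneq V^\diamond$ forces $\sum_i \theta_i\dim K_i +\theta_\infty<0$, while $J\neq 0$ forces $\sum_i\theta_i \dim J_i<0$. Using $\dim J_i = v_i^\diamond-\dim K_i$ together with $\sum_i\theta_i v_i^\diamond = -\theta_\infty$, the second inequality becomes $\sum_i\theta_i\dim K_i +\theta_\infty\geqslant 0$. These two inequalities can only be compatible in the degenerate case $\xi=0$, proving injectivity of $\mathbf{a}$.

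For the surjectivity of $\mathbf{b}$ I would use the self-duality of the complex \eqref{Nakcomplex}. Up to twists coming from the degree shifts $[\varepsilon_h]$ and $[1]$, the complex is symmetric under swapping the two factors and taking duals, reflecting the Calabi--Yau structure of $\Pi_{Q^\diamond}$ (compare Proposition \ref{P:CYpreproj}); under this identification the transpose of $\mathbf{b}$ is precisely the analogue of $\mathbf{a}$ for the ordered pair $(z_2,z_1)$. Since $z_2 \in \frakM^\diamond$ is stable, Steps 1--2 applied to $(z_2,z_1)$ show that this dual map is injective, hence $\mathbf{b}$ is surjective.

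The main obstacle is bookkeeping: checking that the various dimension shifts and signs in the definition of $\mathbf{a}$ and $\mathbf{b}$ correctly match the intertwining and dual framing conditions, and that the self-duality matches the combinatorics of the bipartite quiver $Q^\diamond$. Note that the $\heartsuit$ hypothesis on the first factor is not needed for the lemma itself---$\theta$-stability alone suffices---but it enters crucially in the subsequent application, where the open embedding $\frakM(v,w)\hookrightarrow \frakM^{\diamond,\heartsuit}$ of Corollary \ref{cor:psi} is used to transport the resolution of the diagonal from $\frakM^\diamond$ to a compactification of $\frakM(v,w)^\#$.
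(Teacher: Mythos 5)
Your proof is correct, but it takes a genuinely different route from the paper's. The paper never invokes stability of $z_1$; instead it uses the $\heartsuit$ condition on the first factor -- since the maps $x^{\diamond,1}_{i}\colon V^{\diamond,1}_{i_1}\to V^{\diamond,1}_{i_2}$ are isomorphisms preserving $K=\ker\xi$, one gets $\dim K_{i_1}\leqslant\dim K_{i_2}$ and hence $\dim J_{i_1}\geqslant\dim J_{i_2}$ -- and combines this with the explicit shape of $\theta$ (namely $\theta_{i_1}\gg 0$ and $\theta_{i_1}+\theta_{i_2}=1$) to show directly that $\sum_i\theta_i\dim J_i\geqslant 0$, with strict inequality unless $J=0$, concluding from semistability of $z_2$ alone. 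You instead play the strict stability inequality for the subrepresentation $K\oplus W^\diamond$ of $z_1$ against the semistability inequality for $J\subseteq V^{\diamond,2}$ via rank--nullity; since $\theta$ is generic, semistable points are stable, so this is legitimate, and as you observe it yields the stronger statement that $\mathbf{a}$ is injective and $\mathbf{b}$ surjective over all of $\frakM^{\diamond}\times\frakM^{\diamond}$, the $\heartsuit$ hypothesis being needed only later to identify the zero locus of the section $s$ with the diagonal. Two points to tidy up. First, the strict inequality $\sum_i\theta_i\dim J_i<0$ coming from stability of $z_2$ applies only to proper subspaces, so the degenerate case $J=V^{\diamond,2}$ (i.e.\ $\xi$ an isomorphism) must be excluded separately; it forces $a^{*,\diamond,2}=0$, so that $V^{\diamond,2}$ itself becomes a destabilizing subspace of $\theta$-weight $-\theta_\infty>0$, contradicting semistability of $z_2$. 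Second, the ``self-duality'' you invoke for the surjectivity of $\mathbf{b}$ is precisely the explicit computation of $\mathbf{b}^*$ via the trace pairing that the paper carries out; once $\Ker(\mathbf{b}^*)$ is identified with intertwiners $V^{\diamond,2}\to V^{\diamond,1}$ annihilating $\Im(a^{\diamond,2})$ and killed by $a^{*,\diamond,1}$, your swapped argument (stability of $z_2$ against semistability of $z_1$) does go through.
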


\begin{proof} 
Fix tuples $z_1 \in \frakM^{\diamond,\heartsuit}$ and $z_2 \in \frakM^{\diamond}$. 
Set $z_1=(\bar x_1^\diamond,\bar a_1^\diamond)$ and $z_2=(\bar x_2^\diamond,\bar a_2^\diamond)$. 
We will first prove that the map
$\mathbf{a}$ is injective over the point $(z_1,z_2)$. Let $(\xi_i) \in \Ker(\mathbf{a})$ and put
$$K_i=\ker(\xi_i) \subseteq V_i^{\diamond,1}, \quad J_i=\Im(\xi_i) \subseteq V_i^{\diamond,2}, \quad 
K=\bigoplus_i K_i \oplus W^\diamond, \quad J=\bigoplus_i J_i.$$
Since $(\xi_i) \in \Ker(\mathbf{a})$,
the space $K$ is a subrepresentation of $z_1$ while $I$ is a subrepresentation of $z_2$. Since $z_1 \in \frakM^{\diamond,\heartsuit}$ we 
have $\dim(K_{i_1}) \leqslant \dim(K_{i_2})$ and hence $\dim(J_{i_1}) \geqslant \dim(J_{i_2})$ for all $i \in I^\diamond$. 
We deduce that
$$\sum_{i \in I^\diamond} \theta_i \dim(J_i)=\sum_{i \in I} \big[ \theta_{i_1}(\dim(J_{i_1})-\dim(J_{i_2})) + \dim(J_{i_2})\big] \geqslant 0$$
with a strict inequality if there exists $j$ such that $J_{j_2} \neq \{0\}$ or $\dim(J_{j_1}) > \dim(J_{j_2})$, i.e., if $J\neq \{0\}$. 
By $\theta$-semistability of $z_2$ we conclude that $J=\{0\}$ 
and that $\mathbf{a}$ is injective.

\smallskip

To prove that $\mathbf{b}$ is surjective, we dualize the previous argument. Using the trace pairing, we may view the transpose
$\mathbf{b}^*$ of $\mathbf{b}$ as a map
$$\mathbf{b}^*~:\Hom_{I^\diamond}(\calV^{\diamond,2}, \calV^{\diamond,1})[-1] \to C^*$$
where
 $$C^*= \bigoplus_{h \in \bar{\Omega}^\diamond}\Hom(\calV_{h''}^{\diamond,2}, \calV_{h'1}^{\diamond,1})[-\varepsilon_h] \; \oplus \; 
 \bigoplus_{i \in I^\diamond} \Hom(\calV_i^{\diamond,2},\calW_i^\diamond)[-1]\; \oplus\; 
 \bigoplus_{i \in I^\diamond} \Hom(\calW_i^\diamond,\calV_i^{\diamond,1}).$$
Let $(\xi_i) \in \Ker(\mathbf{b}^*)$ and set 
$$K_i=\ker(\xi_i) \subseteq V_i^{\diamond,2}, \quad J_i=\Im(\xi_i) \subseteq V_i^{\diamond,1}, \quad K=\bigoplus_i K_i \oplus W^\diamond, 
\quad J=\bigoplus_i J_i.$$
Because $(\xi_i) \in \Ker(\mathbf{b}^*)$, the spaces $J$ and $K$ are respectively subrepresentations of $z_1$ and $z_2$. Since $z_1 \in \frakM^{\diamond,\heartsuit}$ it follows that $\dim(J_{i_1}) \leqslant \dim(J_{i_2}),$ hence $\dim(K_{i_1}) \geqslant \dim(K_{i_2})$ for all $i$. 
If $\dim(K_{j_1}) > \dim(K_{j_2})$ for some $j$, then we have
\begin{align*}
\sum_{i \in I^\diamond} \theta_i \dim(K_i) +\theta_{\infty} 
&= \sum_{i \in I^\diamond} \big[ \theta_{i_1}(\dim(K_{i_1})-\dim(K_{i_2})) + \dim(K_{i_2})\big] + \theta_{\infty} \\
&\geqslant \theta_{j_1} -\sum_i v_i ,\\
&>0,
\end{align*}
which would contradict the semistablity of $z_2$. Thus $\dim(J_{i_1})=\dim(J_{i_2})$ for all $i$. But then
$$\sum_{i \in I^\diamond} \theta_i \dim(J_i)=\sum_{i \in I} \dim(J_{i_1}) \geqslant 0$$
with strict inequality as soon as $J_{i} \neq \{0\}$ for some $i$. By the semistability of $z_1$, this forces
$J_i=\{0\}$ for all $i$, i.e., we have $(\xi_i)=0$ as wanted. Lemma~\ref{L:Nakcompl} is proved.
\end{proof}

\smallskip

By Lemma~\ref{L:Nakcompl} the restriction of the complex (\ref{Nakcomplex}) to $\frakM^{\diamond,\heartsuit} \times \frakM^\diamond$ is quasi-isomorphic to an 
equivariant vector bundle 
$\mathcal{E}=\Ker(\mathbf{b})/\Im(\mathbf{a})$. 
Moreover, the bundle $\mathcal{E}$ carries a section which vanishes precisely on the diagonal 
$\Delta \frakM^{\diamond,\heartsuit}$ in 
$$\frakM^{\diamond,\heartsuit} \times \frakM^{\diamond,\heartsuit} \subset \frakM^{\diamond,\heartsuit} \times \frakM^{\diamond}.$$
Observe that $\Delta \frakM^{\diamond,\heartsuit}$ is a closed subset of $\frakM^{\diamond,\heartsuit} \times \frakM^\diamond$. 
Indeed, see \cite{N94}, the map 
$$s=\big(0 \oplus -a^{\diamond,2} \oplus  a^{*,\diamond,1}\big)$$ defines
a section of $\mathcal{E}$ which vanishes precisely over the set of pairs $(z_1,z_2)$ such that $\Hom(z_1,z_2)\neq \{0\}$ as $\k\bar Q^\diamond$-modules. 
By the semistability condition, both $z_1$ and $z_2$ are simple so that $\Hom(z_1, z_2) \neq \{0\}$ if and only if $z_1 =z_2$.
Thus the zero set of $s$ is  $\Delta \frakM^{\diamond,\heartsuit}.$ 

\smallskip

As a consequence, we have the following equality in
$\A^{T\times G(w^\diamond) \times \mathbb{G}_m}_*(\frakM^{\diamond,\heartsuit} \times \frakM^\diamond)$~:
$$[{\Delta \frakM^{\diamond,\heartsuit}}]=
\sum_{i \geqslant 0} (-1)^i\, \ch(\Lambda^i \mathcal{E})\cap[\frakM^{\diamond,\heartsuit} \times \frakM^\diamond].$$

\smallskip

Let us now fix dimension vectors $v,$ $w$ for $Q^\#$ and denote by $v^\diamond,$ $w^\diamond$ 
the associated dimension vectors for $Q^{\diamond}$.
We abbreviate $\frakM^\#=\frakM(v,w)^\#$ and $\frakM^{\diamond,\#}=\frakM(v^\diamond,w^\diamond)^\#$
as before.
Let $\mathcal{E}^\#$ be the restriction of $\mathcal{E}$ to $\frakM^\# \times \frakM^{\diamond,\#}$ under the map \eqref{psibullet}.
We have
$$[{\Delta \frakM^\#}]=\sum_{i \geqslant 0} (-1)^i\,\ch(\Lambda^i \mathcal{E}^\#)\cap[\frakM^\# \times \frakM^{\diamond,\#}].$$

\smallskip

Consider the following diagram
$$\xymatrix{ & \frakM^\# \times \frakM^{\diamond,\#} \ar[r]^-{p_1} & \frakM^\#\\
\frakM^\# \ar[r]^-{\delta} \ar[ur]^-{\delta'} & \frakM^\# \times \frakM^\# \ar[u]_-{1 \times i} &}$$
in which $\delta~: \frakM^\# \to \frakM^\# \times \frakM^\#$ is the diagonal embeddings and $p_1$ is the projection onto the first factor. 
The maps $\delta,$ $\delta'$ and $p_1$ are proper. 
Let $\alpha \in \A^{T\times G(w)}_*(\frakM^\#)$ and choose
$\alpha^\diamond \in \A^{T\times G(w^\diamond)}_*(\frakM^{\diamond,\#})$ 
such that $i^*(\alpha^\diamond)=\alpha$. 

\smallskip

We have, on the one hand
\begin{equation}\label{E:ktheory}
\begin{split}
p_{1*}\left(\big([{\frakM^\#}] \boxtimes \alpha^\diamond\big) \cap [{\Delta\frakM^\#}]\right)
&=p_{1*}\left(\big([{\frakM^\#}] \boxtimes \alpha^\diamond\big) \cap\delta'_{*} [{\frakM^\#}]\right)\\
&=p_{1*}\delta'_*\,{\delta'}^{*}\big([{\frakM^\#}] \boxtimes \alpha^\diamond\big)\\
&={\delta'}^{*}\big([{\frakM^\#}] \boxtimes \alpha^\diamond\big)\\
&=i^*(\alpha^\diamond)\\
&=\alpha.
\end{split}
\end{equation}
On the other hand, using the Giambelli formula we may write 
$$\sum_{i \geqslant 0} (-1)^i\,\ch(\Lambda^i \mathcal{E}^\#)=\sum_k a_k \boxtimes b_k$$ for some classes $a_k$ belonging to the subring
of $\A^*_{T\times G(w)}(\frakM^\#)$ generated by the classes of the tautological bundles 
$\calV_{i,l}$ and $\calW_{i,l}$ as $(i,l)$ runs over $I^\#$, and some classes $b_k$ belonging to the subring  
of $\A^*_{T\times G(w^\diamond)}(\frakM^{\diamond,\#})$ generated by the classes of the tautological bundles 
$\calV^\diamond_{i,l}$ and $\calW^\diamond_{i,l}$ as $(l\delta_i)$ runs over $I^{\diamond,\#}$. 
We have isomorphisms $i^*(\calV^{\diamond}_{i_k,l})\simeq\calV_{i,l}$ and $\calW^\diamond_{i,l}\simeq\calW_{i,l}$.
Further, we have
\begin{equation}\label{E:ktheory2}
\begin{split}
p_{1*}\left( \big( [{\frakM^\#}] \boxtimes \alpha^\diamond\big) \cap [{\Delta \frakM^\#}]\right)
&=\sum_k p_{1*}\left( \big( [{\frakM^\#}] \boxtimes \alpha^\diamond\big) \cap \big( a_k \boxtimes b_k\big)\right)\\
&=\sum_k a_k \cap p_*(\alpha^\diamond \cap b_k)
\end{split}
\end{equation}
where $p: \frakM^{\diamond,\#} \to \bullet$ is the map to a point. 

\smallskip

Combining (\ref{E:ktheory}) and (\ref{E:ktheory2}), 
we deduce that 
$$\A_{v,w}^\#=\A_*^{T\times G(w)}(\frakM^\#).$$
The statement for the equivariant homology groups 
can be deduced from that for the Chow homology groups. 
Indeed, by the same argument as in Proposition~\ref{prop:L} (c), 
since there is a contracting $\mathbb{G}_m$-action on $\frakM^\#$ whose fixed point 
subvariety is a projective graded quiver variety, the cycle map 
$$\cl\,:\,\A_*^{T\times G(w)}(\frakM^\#) \to H_{2*}^{T\times G(w)}(\frakM^\#)$$ is surjective. 
This map preserves the tautological subrings.
Hence, we have 
$$H_{v,w}^\#=H_*^{T\times G(w)}(\frakM(v,w)^\#).$$
Theorem~\ref{T:Kirwan} is proved.

\medskip

\subsection{Non-injectivity of the shuffle realization map for small tori} \label{sec:non-injectivity}
In this section we provide an explicit instance in which the map to the shuffle algebra
$$i_* : H_*^{T \times G(v)}(\Lambda^0(v)) \to H_*^{T \times G(v)}(\R(v))$$ 
 is not injective when the assumption $\theta, \theta^*\subset T$ is not verified. 
 We will consider the case where $Q$ is the Jordan quiver and $v=2$.
For any torus $T$ we have, by Theorem~A, 
$$\dim H^{T \times G(2)}_{8-2i}(\Lambda(2))=\begin{cases} 2 & \text{if\;} i=0 \\ 2 + 2\;dim(T) & \text{if\;} i=1.\end{cases}$$
Note that the dimension of $\Lambda(2)$ is $4$. In addition, by Theorem~\ref{thm:gen}, 
$$H_*^{T \times G(2)}(\Lambda(2))=H^*_{T \times G(2)} [\Lambda_2] + (H^*_{T \times G(1)} [\Lambda_1]) \star (H^*_{T \times G(1)} [\Lambda_1]).$$
 
We will consider the image of the map $i_*$ and compare its graded dimension with that of 
$H^{T \times G(2)}_{\star}(\Lambda(2))$ for various choices of $T$. For this, we first compute the image of $i_*$ 
for the torus $T=\theta \times \theta^*$ and then specialize. 
Let us write $H^*_T=\mathbb{Q}[t,t_*]$ and $H^*_{G(2)}=\mathbb{Q}[z_1+z_2, z_1z_2]$. 
Since $\Lambda_{(2)}$ is the zero section of the the map $\R(2) \to Rep(\k Q^*,2)$ we have 
$$i_*([\Lambda_{(2)}])=(z_1+t)(z_2+t)=z_1z_2 + (z_1 + z_2)t + t^2 =:x_0$$
while a direct computation using the shuffle multiplication, see \eqref{shuffle}, yields
$$i_*([\Lambda_1] \star [\Lambda_1])=2(z_1-z_2)^2-2tt_* -2 (t^2+t_*^2)=:x_0'$$
$$i_*(z[\Lambda_1] \star [\Lambda_1])=(z_1+z_2)(z_1-z_2)^2-tt_*(z_1+z_2)-(t^2+t_*^2)(z_1+z_2)-tt_*(t+t_*)=:x_1$$
$$i_*(z[\Lambda_1] \star [\Lambda_1])=(z_1+z_2)(z_1-z_2)^2-tt_*(z_1+z_2)-(t^2+t_*^2)(z_1+z_2)+tt_*(t+t_*)=:x'_1
$$
We describe the image of $i_*$ in small cohomological degree. We have
$$\Im(i_*) \cap H_8^{T \times G(2)}(\R(2)) = Span_{\mathbb{Q}}(x_0,x_0')$$
$$\Im(i_*) \cap H_6^{T \times G(2)}(\R(2)) = Span_{\mathbb{Q}}(tx_0, t_*x_0, tx_0', t_*x_0',x_1,x_1')$$
 Let $T' \subset T$ be a one dimensional subtorus. Let us denote by $t,$ $t_*$ the images of $t,$ $t_*$ under the restriction map $H^*_{T \times G(2)} \to H^*_{T' \times G(2)}$. Then it is easy to check that the span of the six elements $\{tx_0, t_*x_0, tx_0', t_*x_0',x_1,x_1\}$ is of dimension $4$ over $\mathbb{Q}$ if and only if $tt_*(t+t_*) \neq 0$ and of dimension $3$ otherwise (observe that $x'_1-x_1 = 2 tt_*(t+t_*)$). In particular, the map $i_*$ is not injective when $T=\theta$ or $T=\theta^*$, or $T=\theta-\theta^*$.

\newpage

\centerline{\bf{Index of Notations}}

\bigskip

\begin{enumerate}
\item[\textbf{2.1.}] $H_*$, $\A_*$, $M_*$, $\varepsilon=1$ or $2$, $H_*^G$, $\A_*^G$, $M_*^G$,\\

\item[\textbf{2.2.}] $f^{\heartsuit}$,\\

\item[\textbf{2.3.}] $(f,f')^! $, $\cl:\A_*^G(X)\to H^G_{2*}(X)$, $\ch=\sum_{l\geqslant 0}\ch_l$,\\

\item[\textbf{3.1.}]
$Q, $ $Q^*$, $\bar{Q}$, $\Omega$, $\Omega^*$, $\bar{\Omega}$, $\varepsilon(h)$, $h', $ $h''$, $q_{ij},$ $ \bar{q}_{ij}, $ $q_i, $
$\Omega_{ij},$ $ \bar\Omega_{ij}$, $\langle\bullet\,,\,\bullet\rangle,$ 
$ (\bullet\,,\,\bullet)$, $d_v$, $\delta_i$, $\k Q$, $\bar{x}=(x,x^*)$, $\R(v)$,\\

\item[\textbf{3.2.}] $G(v)$, 
$\mathfrak{g}(v)$, 
$\mu: \R(v) \to \mathfrak{g}(v)$, $\M(v),$ $ \Pi$, $\langle \bullet\,,\,\bullet\rangle_{\Pi}$,\\

\item[\textbf{3.3.}] $\theta(z), $ $\theta^*(z),$ $ T=T_\sp\times T_\dil$,\\

\item[\textbf{3.4.}] $\Lambda_W, $ $\Lambda_{\nu}$, $\Lambda^\flat(v)$, $\flat=0$ or $1$,\\

\item[\textbf{3.5.}] $\Lambda^\circ_W,$ $ \Lambda^\circ_\nu$ ,${}^\circ\!\Lambda_{F},$ ${}^\circ\!\Lambda_{\nu}$, 
$\Lambda_{(v)}$,\\

\item[\textbf{3.6.}] $\J(i,q)$, $\mu\,\lescc\,\nu$,\\

\item[\textbf{3.7.}] $\R(v,w), $ $\Hom_I(V,W)$, $G(w)$, $\bar{x},$ $ \bar{a}$, $\M(v,w)$, $\frakM_0(v,w), $ $\frakM(v,w)$, 
$\pi_{\theta}: \frakM_\theta(v,w) \to \frakM_0(v,w)$,  $ \rho_{\theta}$, $R_{\theta}(v,w),$ $ \M_{\theta}(v,w), $ $\M_s(v,w)$, 
$d_{v,w}$,
$\frakM(v,w)_{\mathbb{A}^1}, $ $\frakM_0(v,w)_{\mathbb{A}^1},$
$\tilde{Q}=(\tilde{I}, $ $\tilde{\Omega}),$
$\tilde{v},$
$\tau$, $RT(v,w)$, $\frakM_0(\tau)$, $\kappa_{v,w}$, $\M(\tau),$ $\M_s(\tau)$, $ \frakM(\tau)$,
$\frakL(v,w)$, $\M[\rho],$ $\L[\rho],$ $ \frakM[\rho],$ $ \frakL[\rho]$,\\

\item[\textbf{3.8.}] $\H[v_1,v_2\,;\,w], $ $\mathfrak{h}[v_1,v_2\,;\,w],$
$\H[v_1,S\,;\,w], $ $\mathfrak{h}[v_1,S\,;\,w],$
$\H[v_1,S], $ $\mathfrak{h}[v_1,S]$,\\

\item[\textbf{4.1.}] $L^\flat_s(v,w),$ $ \frakL^\flat(v,w)$,
$\circ : \mathbb{G}_m \times \frakM(v,w) \to \frakM(v,w)$, $\bullet : \mathbb{G}_m \times \frakM(v,w) \to \frakM(v,w)$,\\

\item[\textbf{4.2.}] $H(w) \subset G(w),$\\

\item[\textbf{4.4.}] $\Bbbk$, $\Bbbk[v]$,\\

\item[\textbf{5.}]  $H(v)\subset G(v)$,  $\Bbbk\langle v \rangle$, $K,$\\

\item[\textbf{5.1.}] $\bbX,$ $ \bbX_+$,
$\Y^\flat,$ $\Y^\flat_A,$ $\Y^\flat_M,$ $\Y^\flat(v),$ $\Y^\flat(v,k),$\\

\item[\textbf{5.2.}] $\Y$\\

\item[\textbf{5.3.}] $\lambda^\flat(q,z)$, $\tau=\rank(T)$,
$A^\flat_v(t),$\\

\item[\textbf{5.4.}] $\varepsilon_i(\bar x)$, $\R(v)^{l,i},$ $ \Lambda^1(v)^{l,i}$,
$\Y^1(v)^{\geqslant l,i},$
$x_{i,l},$ $ x_l$,\\

\item[\textbf{5.5.}] $\bfY^{\flat}$, $\Bbbk(\infty)$, $\bfY(0)$, $\bfY_K$,
$\mathcal{U}_i$,
$I^\re,$ $ I^\el,$ $ I^\hyp$,\\

\item[\textbf{5.6.}]  $F_w$, $F_w(v)$, $F_w(v,k)$, $A_w$, $A_w(v)$, $A_w(v,k)$,
$\frakC(v_1,v_2\,;\,w)$, $\frakC_{i,l}(w)$, $\frakC_{i,l}$, $\frakM(w)$,
$\calV_i, $ $\psi_{i,l}$.

\end{enumerate}

\newpage

\bigskip
\bigskip

\newpage

\end{document}